\documentclass[a4paper,11pt]{article}
\usepackage{geometry}
\usepackage{mathtools, amsthm, amsmath, amssymb, amsfonts, bbm}
\usepackage{etoolbox}
\usepackage[colorlinks=true,citecolor=red,linkcolor=blue,urlcolor=RubineRed,pdfpagetransition=Blinds,pdftoolbar=false,pdfmenubar=false]{hyperref}

\numberwithin{equation}{section}

\usepackage{graphicx, framed}
\graphicspath{{Images/}}
\usepackage{afterpage, xcolor}

\usepackage{thmtools}
\theoremstyle{plain}
\declaretheorem[title=Theorem, parent=section]{theorem}

\declaretheorem[title=Proposition,sibling=theorem]{proposition}
\declaretheorem[title=Corollary,sibling=theorem]{corollary}

\theoremstyle{definition}
\declaretheorem[title=Definition,sibling=theorem]{definition}
\declaretheorem[title=Remark,sibling=theorem]{remark}
\declaretheorem[title=Remark, numbered=no]{remark*}

\declaretheorem[title=Assumption, numbered=no]{assumption*}

\newcommand{\cs}{c^*}

\newcommand{\di}{\displaystyle}
\newcommand{\RR}{\mathbb{R}}
\newcommand{\bm}{\mathbf{m}}

\newcommand{\bu}{\mathbf{u}}
\newcommand{\bi}{\mathbf{i}}

\newcommand{\ws}{w^*}
\newcommand{\ls}{\lambda^*}
\newcommand{\e}[1]{e^{#1}}
\newcommand{\cd}{{\cdot}}
\newcommand{\varep}{\varepsilon}
\newcommand{\bnu}{\boldsymbol\nu}

\newcommand{\R}{\mathbb{R}}

\newcommand{\cI}{\mathcal{I}}
\newcommand{\cW}{\mathcal{W}}

\newcommand{\cN}{\mathcal{N}}

\newcommand{\ck}{\mathcal{K}}
\newcommand{\ci}{\mathcal{I}}

\newcommand{\cV}{\mathcal{V}}

\newcommand{\diag}{\mathrm{diag}}

\newcommand{\md}{\mathrm{d}}

\newcommand{\rn}{\mathbb{R}^N}
\newcommand{\sn}{\mathbb{S}^{N-1}}

\renewcommand{\Re}{\textnormal{Re}}

\allowdisplaybreaks

\title{\bf
	Large time behaviour in the multidimensional nonlocal Fisher-KPP equation}
\author{ Jean-Michel Roquejoffre$^{a}$~and~Mingmin Zhang$^{b}$
\thanks{This work has been supported by the French ANR  ReaCh  project (ANR-23-CE40-0023-01).  J.-M. R. acknowledges, in addition, an invitation by the School of Mathematical Sciences of the University of Science and Technology of China, which was instrumental in the completion of the work. 
		Email addresses:  jean-michel.roquejoffre@math.univ-toulouse.fr; mingmin.zhang.math@gmail.com.}}
\date{}

\begin{document}
\maketitle
\vspace{-5mm} 

\begin{center}
	\small
	$^a$ Univ Toulouse, CNRS, Institut de Math\'ematiques de Toulouse, Toulouse, France\\[1mm]
	$^b$ School of Mathematical Sciences, University of Science and Technology of China, Hefei, Anhui 230026,  China
\end{center}

\begin{abstract}
This work studies the expansion of the level sets of the solutions of a Fisher-KPP type reaction-diffusion equation, the diffusion being given by a nonnegative, compactly supported, even kernel. Because the latter is not assumed to be shperically symmetric, the level sets advance at an asymptotic speed given by a Freidlin-G\"artner type formula. The main contribution of this paper is to push the expansion up to terms that vanish as $t\to+\infty$, consisting in an asymptotically logarithmic delay corrected by an function that is constant in time, but depends on the direction of propagation.
\end{abstract}

\section{Introduction and main results}
\subsection{Model and question}
This paper is devoted to the sharp large-time asymptotics for the Cauchy problem for the reaction-diffusion equation
\begin{equation}\label{kpp}
	u_t=\ck*u-u+f(u), ~~~~~ t>0, ~~x\in \rn,
\end{equation}
starting from a smooth, nontrivial, and compactly supported initial data $u_0$  satisfying $0\le u_0(x)\le 1$. The nonlinearity $f$ is a $C^1$ function of Fisher-KPP type, i.e. 
\begin{equation}\label{f-cdn1}
f(0)=f(1)=0, \ f'(0)>0, \ f'(s)\leq f'(0)  ~~\text{ for all } s\in(0,1),
\end{equation}
and there exists $M>0$, $\alpha>0$ and $s_0\in(0,1)$ such that
\begin{equation}\label{f-cdn2}
f(s)\ge f'(0)s-Ms^2~~ \ \text{ for all } s\in(0, s_0],
\end{equation}
for which a typical example is $f(u)=u-u^2$.  More than likely, the assumption on $f'$ can be replaced by the usual assumption $f(s)\leq f'(0)s$ for all $s\in[0,1]$.

These equations arise naturally in many mathematical models in biology, ecology, chemical kinetics, etc., with $u(t,x)$ usually denoting a local population density. They appeared originally in the works of Fisher \cite{fisher} and Kolmogorov, Petrovskii, Piskunov \cite{kpp} in the context of population genetics, and have ever since been extensively studied with both PDE and probabilistic tools, due to their connection to branching Brownian motion (see \cite{mck}). The solutions of \eqref{kpp} are characterised by the invasion of the unstable steady state by the stable one, in that case $0$ and $1$ respectively, as determined by the conditions on $f$. The question in this paper is to  study precisely the geometry and speed at which this invasion occurs.
\subsection{Known results}
Model \eqref{kpp} is related to the well-known equation 
\begin{equation}
\label{kpp1}
u_t-\Delta u=f(u), ~~~~t>0, ~~x\in\rn,
\end{equation}
the function $f$ satisfying assumption \eqref{f-cdn1}.  The goal of the present section is to explain the connection of the question with the rich literature, pertaining both to models \eqref{kpp1} and \eqref{kpp}.
\subsubsection{Sharp asymptotics in one space dimension}
For \eqref{kpp1} with $N=1$, Bramson \cite{bram1,bram2} proves the following celebrated result: if $x(t)$ is the rightmost point $x$ such that $u(t,x)=\di\frac12$, then there is $x_\infty\in\RR$ such that
\begin{equation}
\label{bram}
x(t)=c^*t-\frac3{2\lambda^*}\mathrm{ln}~t+x_\infty+o_{t\to+\infty}(1).
\end{equation}
Here, $c^*=2\sqrt{f'(0)}$ is the lowest speed of a linear wave, and $\lambda^*$ the decay exponent at infinity of the bottom linear wave. A remarkable fact is that it is proved by probabilistic tools such as the Feynman-Kac formula. As there is a connection with the Branching Brownian Motion discovered by McKean in \cite{mck}, Bramson's results have led to important developments. A PDE proof was later provided by  Nolen, Ryzhik and the first author in \cite{nrr}. 

\subsubsection{Nonlocal one dimensional results}
The same type of connection as the McKean one between nonlocal equations of the type \eqref{kpp} -- albeit only with a certain type of polynomial nonlinearities -- exists with branching random walks, whose jumps are distributed according to the kernel $\ck$. This fact has been used in the probabilistic literature and has produced progress in the understanding of  \eqref{kpp}. We mention the work of A\"idekon \cite{Ai}, which gives a relation similar to that of \eqref{bram}  for the solutions of \eqref{kpp} in one space dimension. Another line of arguments, mixing probabilistic and PDE tools, is given in  Graham \cite{Gr}; it does not, however, capture the constant term in the expansion \eqref{bram}. A purely PDE proof of \eqref{bram}, still with $O(1)$ precision for $x(t)$, is proposed by Besse, Faye and the authors \cite{BFRZ}; this work may, to a certain extent, be viewed as a version of \eqref{kpp} with a kernel that is reduced to Dirac masses. The first author provided the full analogue of \eqref{bram}  in \cite{Roquejoffre2023} for the problem \eqref{kpp} in one space dimension, for a general nonlinearity satisfying the condition \eqref{f-cdn1}.
\subsubsection{Multidimensional results involving second order  elliptic operators}
For equations of the type \eqref{kpp1} with $N>1$, or, more generally,
\begin{equation}
\label{kpp2}
u_t-\Delta u=f(x,u), ~~~~t>0, ~~x\in\rn,
\end{equation}
where, for each $x\in\rn$, the function $s\in[0,1]\mapsto f(x,s)$ satisfies the conditions  \eqref{f-cdn1}, it is expected that, for any direction $e\in\sn$, there is a transition zone between $u\sim 1$ and $u\sim0$ travelling at asymptotically constant speed $w^*(e)$, and the issue is to locate this transition as precisely as possible. In the homogeneous case $f(x,s)\equiv f(s)$, the first work is due to
Aronson-Weinberger \cite{aw1}, who compute $w^*(e)=2\sqrt{f'(0)}$. G\"artner \cite{gartner} makes this estimate more precise and finds out in essence, by arguments of a similar line as those of \cite{bram1},  that  the set $\{u(t,x)\geq\di\frac12\}$  in every direction $e$ has an asymptotically spherical symmetric form:
\begin{equation}
\label{gart}
\{u(t,x)\geq\frac12\}\cap\RR e=[0,X(t)e]~~\hbox{with}~X(t)=c^*t-\frac{N+2}{2\lambda^*}\mathrm{ln}~t+O(1).
\end{equation}
This estimate is improved by Roussier-Michon, Rossi and the first author \cite{RRR}, who prove that the $O(1)$ term is asymptotic to a Lipschitz function $s_\infty(e)$ of $e$. An interesting problem  is to understand whether $s_\infty(e)$ is constant or not if $u_0$ is not spherically symmetric.  Ducrot \cite{Duc} presents extensions of Gärtner's result, that is \eqref{gart}, in settings where $f$ depends on $x$ but is asymptotically constant in $x$ with a rate of convergence. 

In the periodic setting, that is, in model of the type \eqref{kpp2}, the picture is different. The pioneering work is that of  Freidlin-G\"artner \cite{fg}, who prove
\begin{equation}
\label{fg}
w^*(e)=\min_{e\cdot e'>0}\frac{c^*(e')}{e\cdot e'},
\end{equation}
where $c_*(e')$ is the bottom speed of a planar linear wave of the linearised equation travelling in the direction $e'$. We will come back at length to this beautiful formula. It has given rise to alternative interpretations or proofs, for instance of the dynamical systems type (Weinberger \cite{weinberger}), or PDE type (Berestycki-Hamel \cite{bh1}, Berestycki-Hamel-Nadirashvili \cite{bhn1}, Guo-Hamel-Rossi \cite{GHR}, Rossi \cite{Rossi2017}), all bringing an additional element of understanding. When $f$ is not periodic in $x$ anymore, the concept of asymptotic speed of propagation may not be valid anymore; the works of Berestycki-Nadin \cite{BNad}, Rossi \cite{Rossi2017} explain by what concept it should be replaced, aided by  the concept of generalised travelling waves as elaborated, for instance, in \cite{bh2}.

For inhomogeneous equations, asymptotics of the level sets up to bounded terms -- not to speak about corrections up to vanishing terms -- are not as numerous. When $f$ is bistable, we quote a work by Matano-Mori-Nara \cite{MMN}, that studies  up to bounded terms for the level sets of the equation
$$
u_t-\mathrm{div}_p a(\nabla u)=f(u),
$$
where $f$ has an unstable intermediate zero between 0 and 1. While the flavour of the results that we are going to prove  might look similar to those of \cite{MMN}, both propagation mechanism and methods of proofs are quite different. The work that is closest to ours is  that of Shabani~\cite{Sha}, which concerns model \eqref{kpp2} in a periodic setting, and shows that in every direction $e$ the set $\{u(t,x)\geq\di\frac12\}$ has the following form:
\begin{equation}
\label{shab}
\{u(t,x)\geq\frac12\}\cap\RR e=[0,Z_e(t)e]~~\hbox{with}~Z_e(t)=w^*(e)t-\frac{N+2}{2\lambda^*(n_e)e\cdot n_e}\mathrm{ln}~t+O(1).
\end{equation}
Here, $w^*(e)$ is given by \eqref{fg} and $n_e$ denotes its unique minimiser, and the positive number $\lambda^*(n)$ is the decay exponent of a linear wave travelling in the direction $n$.

\subsubsection{Multidimensional results involving nonlocal operators}
As the kernel $\ck$ is not assumed to be spherically symmetric, it is expected that the propagation speed depends on the direction. The first work that identifies it, in the context of model \eqref{kpp},  is that of
Finkelshtein-Kondratiev-Tkachov \cite{FKT},  who prove the analogue of the Freidlin-G\"artner formula for the expansion of the level sets of the solution $u(t,x)$ of \eqref{kpp}; see an alternative proof  in \cite{Roquejoffre2023}. For a nonlocal equation of different spirit, the propagation velocity had already been identified by  Perthame-Souganidis \cite{PS}. A model of the type \eqref{kpp} on the lattice $\mathbb{Z}^d$ is studied by Hu \cite{Hu}, who proves results of the type \eqref{shab} in the directions $e$ that are parallel to the coordinate axes.
\subsection{Travelling waves}
A planar travelling wave connecting 1 and 0 is a solution of \eqref{kpp} which propagates in a given
unit direction $n\in\sn$ with a speed $c(n)$, and which can  be written as $u(t, x) = U_{c(n)}(x\cdot n-c(n)t)$, where  the wave profile $U_{c(n)}$ satisfies 
\begin{equation}
\label{TW}
	\begin{cases}
			U_{c(n)}-\ck*U_{c(n)}-c(n)U_{c(n)}'=f(U_{c(n)})~~~\text{in}~~\R,\\
		U_{c(n)}(-\infty)=1,~~U_{c(n)}(+\infty) =0,~~0<U_{c(n)}<1.
	\end{cases}
\end{equation}
We have the identities
 $$
 \ck*U_{c(n)}(z)=\int_{\rn}\ck(y)U_{c(n)}(z-y_1)\md y=\int_{\R}\ck_n(y)U_{c(n)}(z-y)\md y=\ck_n*U_{c(n)}(z),
 $$ 
 with 
 $$\ck_n(s):=\int_{n_\perp}\ck(sn+y_{n_\perp})\md y_{n_\perp},\  \hbox{and}\ \ n_{\!\perp}:=\{x\in\R^N|x\cdot n=0\}\simeq\mathbb{R}^{N-1}.
 $$ 
A planar travelling wave exists if and only if wave speed $c(n)\ge c^*(n)$, and it, if any, is unique up to translation. Moreover, its level sets are parallel hyperplanes which are orthogonal to the direction $n$, and the
solution is invariant in the moving frame with speed $c(n)$ in the direction $n$.  For later use, 
we normalise $U_{c^*(n)}(z)$ such that\footnote{Throughout this paper, we use the convention $f(t)\approx g(t)$ as $t\to+\infty$ if $\lim_{t\to+\infty}f(t)/g(t)=1$; $f(t)\asymp g(t)$ as $t\to+\infty$ if $C_1 g(t)\le f(t)\le C_2 g(t)$ as $t\to+\infty$ with some positive constants $C_1\le C_2$.}
\begin{equation}\label{normalization of mTW}
	U_{c^*(n)}(z)\approx z e^{-\lambda^*(n)z}~~~~\text{as}~~~z\to+\infty.
\end{equation}
All the above material is well known; we refer to Coville \cite{Cov} and the references therein for these questions. The asymptotic behaviour of the bottom wave is studied in Carr-Chmaj \cite{CC}, and explains why we can normalise the wave as in \eqref{normalization of mTW}. For all these questions, see also  Berestycki-Nirenberg \cite{BN} the latter, while only concerned with elliptic operators, deals with qualitative and asymptotic properties and contains arguments that can be used with great profit for nonlocal equations.  
\subsection{Results}
The main contribution of our paper is an asymptotic expansion up to vanishing terms of the level sets of the solution $u(t,x)$ of \eqref{kpp}.
\begin{theorem}\label{t1.1}
The set $\{u(t,x)\geq\di\frac12\}$  in every direction $e\in\sn$ has the following form:
\begin{equation}
\label{rz1}
\{u(t,x)\geq\frac12\}\cap\RR e=[0,Z_e(t)e]~~\hbox{with}~Z_e(t)=w^*(e)t-\frac{N+2}{2\lambda^*(n_e)e\cdot n_e}\mathrm{ln}~t+s_\infty(e)+o(1),
\end{equation}
where $w^*(e)$ is given by \eqref{fg}, and $\lambda^*(n)$ is the decay exponent of a linear wave travelling in the direction $n\in\sn$, and the vector $n_e$ is the unique minimiser in the Freidlin-G\"artner formula \eqref{fg}.  
\end{theorem}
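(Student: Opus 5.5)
The plan is to reduce the multidimensional problem, direction by direction, to a one-dimensional problem in the direction $n_e$ of the Freidlin-G\"artner minimiser, and then to import the machinery of \cite{Roquejoffre2023} (the one-dimensional nonlocal Bramson expansion) together with the strategy of \cite{RRR}.

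\textbf{Linear analysis in the moving frame.} Fix $e\in\sn$ and let $n=n_e$. We work in the frame $x=w^*(e)te+y$, and decompose $y=\xi n+y_\perp$ with $y_\perp\in n_\perp$. The linear equation $v_t=\ck*v-v+f'(0)v$ is then conjugated by $e^{-\lambda^*(n)(x\cdot n-c^*(n)t)}$. Because $n$ minimises $c^*(e')/(e\cdot e')$, the drift created in the moving frame has no first-order component in the tangential directions at the critical point. Since $\ck$ is compactly supported, the exponential moments are smooth, and the transformed operator has a symbol of the form
\[
-\tfrac12 D^2\Phi(\lambda^* n)\,\zeta\cdot\zeta + O(|\zeta|^3).
\]
In the direction $n$ the symbol is degenerate: it is the critical one-dimensional KPP symbol, with a double root. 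In the $N-1$ transverse directions it is a nondegenerate Gaussian.

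\textbf{Heat kernel and Dirichlet condition.} Accordingly, the linearised heat kernel behaves like
\[
t^{-(N-1)/2}\cdot t^{-3/2}\,\xi\, e^{-\xi^2/(4Dt)}
\]
once one imposes the Dirichlet-type condition at the front, exactly as in \cite{nrr,Roquejoffre2023}. The total power is $t^{-(N+2)/2}$. Converting the resulting shift in the $n$-direction into a shift along $e$ divides by $e\cdot n_e$, which gives the coefficient $\frac{N+2}{2\lambda^*(n_e)e\cdot n_e}$. The first step is therefore to derive, by Fourier/Laplace analysis in the spirit of \cite{Sha}, an $O(1)$ version of the expansion: sub- and supersolutions built from $U_{c^*(n)}$ glued to the linear self-similar profile give $Z_e(t)=w^*(e)t-\frac{N+2}{2\lambda^*e\cdot n}\ln t+O(1)$. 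A Lipschitz dependence on $e$ is then obtained via uniform estimates.

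\textbf{Convergence of the $O(1)$ term.} Following \cite{RRR}, I would pass to the self-similar variables $\tau=\ln t$ and $\eta=y/\sqrt t$ in the diffusive zone ahead of the front. In these variables the conjugated, rescaled solution should converge as $\tau\to\infty$ to a multiple $\alpha_\infty(e)$ of the principal eigenfunction of the limiting Ornstein-Uhlenbeck-type operator, which lives on a half-space bounded by the front hyperplane. The strict monotonicity ($N+2$ versus the next eigenvalue) gives exponential-in-$\tau$ convergence, hence polynomial convergence in $t$. Matching this inner limit with the front region through a Liouville-type argument for the nonlocal travelling-wave equation \eqref{TW} is the next step. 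Any entire solution trapped between two shifts of $U_{c^*(n)}$ and with the prescribed $\xi e^{-\lambda^*\xi}$ tail is itself a shift; this is where the normalisation \eqref{normalization of mTW} and \cite{CC} enter. This matching identifies $s_\infty(e)=\frac{1}{\lambda^*e\cdot n}\ln\alpha_\infty(e)$ plus constants, and yields the $o(1)$ remainder.

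\textbf{Main obstacle.} I expect the hardest step to be controlling the nonlocal operator in the self-similar rescaling. The kernel $\ck$ does not commute with the rescaling, so the limiting operator is only obtained up to errors of order $e^{-\tau/2}$. These errors must be shown to be integrable in $\tau$, uniformly in $e$, while the Dirichlet boundary is itself moving by the unknown $O(1)$ shift. I would first treat the linear problem with the exact Fourier symbol, proving Gaussian-type kernel bounds. After that, I would absorb the nonlinearity $f(u)-f'(0)u=O(u^2)$ by \eqref{f-cdn2}, using that $u^2$ is exponentially small ahead of the front.
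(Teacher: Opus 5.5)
Your architecture matches the paper's: frame moving at $w^*(e)e$, tilted kernel $\ck_{*,n_e}$ with vanishing first moment, Gaussian with covariance $tH_{n_e}=tD^2\Phi(\lambda^*(n_e)n_e)$, Dirichlet decay $(x\cdot n_e)\,t^{-\frac N2-1}$, barriers for the $O(1)$ bound, convergence of the tail, matching at the front, and division by $e\cdot n_e$ at the end. The logarithmic coefficient is right. The gap is in the step you yourself flag as the main obstacle. You give no working mechanism for it, and the one you sketch fails as stated, because $u_t=\ck*u-u+f(u)$ has no regularising effect.

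In the variables $\eta=y/\sqrt t$, $\tau=\ln t$, replacing $\cI_{*,n_e}$ by $A_{n_e}=-\frac12\mathrm{div}(H_{n_e}\nabla)$ does not cost $O(e^{-\tau/2})$. It costs $e^{-\tau/2}$ times third $\eta$-derivatives of the rescaled solution, and nothing guarantees these are bounded. The paper only has polynomial growth of Sobolev norms (Proposition \ref{p6.1}), and mere size bounds near $x_1=0$. Passing to the exact symbol does not fix this. A Dirichlet condition for a nonlocal operator has no closed-form kernel and must be enforced through reflection and comparison. So one applies $e^{-(t-s)\cI_*}$ to (reflections of) $v(s,\cdot)$, and Theorem \ref{thm_heat kernel estimate} then carries the error $e^{-(t-s)^{1-2\varpi}}\Vert\cdot\Vert_{H^m}$. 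That error beats polynomial growth only when $t-s$ is large compared with a power of $\ln s$.

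The missing idea is that of Section \ref{sec5}:
\begin{enumerate}
\item By order preservation, sandwich $v(s,\cdot)$ between envelopes $v^\pm_{s,\varepsilon}$ that vary on the scale $\sqrt s$ with $s$-uniform Sobolev bounds. They are rebuilt in a layer of width $O(\varepsilon\sqrt s)$ around $x_1=0$, where nothing precise is known.
\item Reflect them through $\mathcal{R}_{e_1}$.
\item Control their evolution by Propositions \ref{p3.1} and \ref{p3.3} for $t-s\le s^\varsigma$, and by Theorem \ref{thm_heat kernel estimate} with Proposition \ref{p6.1} afterwards.
\item Build from them the restarted barriers of Theorem \ref{t6.2}.
\end{enumerate}
This gives \eqref{e6.2018}, so the first moment $\mu_{e_1}[\bnu(t,\cdot)]$ converges. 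No rate is obtained or needed. Your claim of exponential convergence in $\tau$ does not follow from the spectral gap alone, since the gap says nothing about how the non-Dirichlet boundary zone moves the coefficient of the principal mode. Proposition \ref{p6.1} itself rests on uniform bounds for all $\partial^\alpha u$ (Theorem \ref{t2.20}, Corollary \ref{cor_2.20}), which are not automatic when $f'(0)\ge1$.

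The matching step has the same hidden dependence, plus two further problems:
\begin{itemize}
\item Extracting entire solutions from $u(t_k+\cdot,X_k+\cdot)$ needs equicontinuity, which is exactly those derivative bounds.
\item The $N$-dimensional nonlocal Liouville theorem you invoke is not supplied by the Carr--Chmaj result, which concerns one-dimensional waves.
\item For the limit to inherit the tail $\kappa\xi e^{-\lambda^*\xi}$, you need control, uniform in $t$, of the zone between the front and the region $x_1\gtrsim t^\delta$ where the diffusive asymptotics hold.
\end{itemize}
Once that control is in hand, the Liouville theorem is superfluous. Proposition \ref{prop-5.3} compares $V=t^{\frac N2+1}v$ directly with the conjugated shifted waves $\psi_{\kappa\pm\varepsilon}$, matched at $x_1=\mathcal{Y}^++R$, in the strip $\vert x_1+\frac{N+2}{2\lambda^*}\ln t\vert\le t^\mu$. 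It uses only $\bu\le1$ on the left edge and absorbs the difference with the explicit supersolution \eqref{bar S}.

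Two smaller inaccuracies:
\begin{itemize}
\item $H_{n_e}$ is positive definite, so nothing is degenerate in the direction $n_e$. The double root of the dispersion relation only removes the zeroth- and first-order terms.
\item $H_{n_e}$ is in general not block-diagonal in $(n_e,n_e^\perp)$, so the Dirichlet kernel is not the product you write. It comes from the oblique reflection \eqref{eqn_reflection} and gives a profile proportional to $(x\cdot n_e)\,e^{-x^TH_{n_e}^{-1}x/2t}\,t^{-\frac N2-1}$. The power of $t$, hence the logarithmic coefficient, is unaffected.
\end{itemize}
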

Theorem \ref{t1.1} is an easy consequence of the following Theorem \ref{t1.2}, whose proof will be the main concern of the paper.
\begin{theorem}\label{t1.2}
 In every direction $e\in\sn$, we have the asymptotic expansion
\begin{equation}\label{rz2}
u(t,x)=U_{c_*(n_e)}\biggl(x\cdot n_e-w^*(e)t+\frac{N+2}{2\lambda^*(n_e) e\cdot n_e}\mathrm{ln}~t+s_\infty(e)\biggl)+o(1).
\end{equation}
\end{theorem}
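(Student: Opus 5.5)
The plan follows the strategy of \cite{RRR} and \cite{Roquejoffre2023}, transplanted to the anisotropic setting. Fix $e\in\sn$ and write $n=n_e$, $\lambda=\lambda^*(n)$, $c=c^*(n)$. We work in the moving frame
\[
x=\bigl(w^*(e)t-r(t)\bigr)e+y,\qquad r(t)=\frac{N+2}{2\lambda\, e\cdot n}\ln t .
\]
The first step is to remove the exponential decay. Set $u(t,x)=e^{-\lambda(x\cdot n-ct)}v(t,x)$. Since $\lambda$ is the critical exponent for the direction $n$, the linearised operator $\ck*u-u+f'(0)u$ becomes, for $v$, a nonlocal operator
\[
\mathcal L v=\int \ck(y)e^{\lambda y\cdot n}\bigl(v(x-y)-v(x)\bigr)\md y .
\]
This operator has zero drift along $n$, by criticality: the derivative of $\int\ck(y)e^{\lambda y\cdot n}\md y-1-c\lambda$ in $\lambda$ vanishes at $\lambda=\lambda^*(n)$. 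It may have a transverse drift, but the Freidlin--G\"artner minimisation in \eqref{fg} is exactly the statement that, after moving with speed $w^*(e)e$ rather than $cn$, the full drift of the tilted kernel vanishes. Indeed, $n_e$ minimising $c^*(e')/(e\cdot e')$ means that the gradient of the tilted Laplace transform is parallel to $e$.

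So $v$ solves a genuinely centered, anisotropic, nonlocal diffusion equation with covariance matrix $A$ (the second moment of the tilted kernel, positive definite). The nonlinearity enters only through the term $-e^{\lambda(\cdot)}g(e^{-\lambda(\cdot)}v)$, which is small where $x\cdot n\gg \sqrt t$.

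The second step is to treat the problem in the half space $\{x\cdot n\ge \text{front}\}$, as a perturbation of the heat equation $v_t=\mathrm{div}(A\nabla v)$ with a Dirichlet-type absorbing condition near the front. In the region $x\cdot n\sim\sqrt t$, self-similar variables $\xi=x/\sqrt t$, $\tau=\ln t$ give an Ornstein--Uhlenbeck-type operator. Its principal eigenfunction in the half space, with zero condition at $\xi\cdot n=0$, is $(\xi\cdot n)e^{-|A^{-1/2}\xi|^2/4}$, with decay rate $t^{-(N+2)/2}$. The $N/2$ comes from the Gaussian in all directions and the extra $1$ from the Dirichlet condition. This produces the $\ln t$ shift with exactly the stated coefficient, after converting a delay along $n$ into a delay along $e$ (hence the factor $1/(e\cdot n)$).

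To get convergence rather than mere bounds, we follow \cite{nrr,Roquejoffre2023}. First obtain sub- and supersolutions giving $O(1)$ precision in the spirit of \cite{Sha} and Gärtner. Then show that $t^{(N+2)/2}v(t,x)$ converges, locally uniformly in the diffusive zone near the front, to $\alpha_\infty(e)\,(x\cdot n)$ for some constant $\alpha_\infty(e)>0$. The constant term is then $s_\infty(e)=\lambda^{-1}\ln\alpha_\infty$, up to normalisation \eqref{normalization of mTW}. The nonlocal terms are handled by Taylor expansion, since $\ck$ is compactly supported, and by the a priori Lipschitz and Harnack estimates of nonlocal KPP theory.

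The third step is matching. In the bounded zone $|x\cdot n-(\text{front})|=O(1)$, we compare $u$ with shifts $U_{c}(x\cdot n-w^*(e)t+r(t)+s_\infty\pm\varepsilon)$ via the comparison principle. Here we use the exponential stability of the minimal wave in weighted spaces, together with the matched tail $\alpha_\infty(x\cdot n)e^{-\lambda x\cdot n}$. Transverse variation is harmless over the relevant time scales because the $O(1)$-bounds already hold uniformly.

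The main obstacle will be the second step: proving convergence of $t^{(N+2)/2}v$ with a nonlocal, anisotropic, non-self-adjoint operator in a moving half space. A potential further complication is that the front is curved (its shape is given by $w^*(\cdot)$), so the half-space picture only holds locally near direction $e$. I would first control the boundary layer through the quadratic bound \eqref{f-cdn2}, which makes the nonlinear source integrable in $\tau$. I would then use a spectral gap for the tilted, rescaled nonlocal operator, proven by comparison with its local diffusive limit with errors $O(t^{-1/2})$.
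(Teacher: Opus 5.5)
Your overall architecture matches the paper's: leading-edge transform with $\lambda^*(n_e)n_e$ in the frame moving at $w^*(e)e$, where $\ck_{*,n_e}$ is centred by \eqref{e2.10}; a half-space Dirichlet problem for $-\frac12\mathrm{div}(H_{n_e}\nabla)$ giving $v\sim x_1t^{-\frac N2-1}$; $O(1)$ barriers; convergence of the tail; matching with shifted critical waves.

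\textbf{Gap in the convergence step.} You transfer the Ornstein--Uhlenbeck spectral gap to the rescaled nonlocal operator by Taylor expansion with $O(t^{-1/2})$ errors. That requires $\Vert D^3v(t,\cdot)\Vert_\infty\lesssim t^{-3/2}$ times the size of $v$, uniformly in $t$, i.e.\ that the solution itself be smooth on the scale $\sqrt t$. Nothing of the sort is available:
\begin{itemize}
\item $\ck*\cdot$ has no regularising effect.
\item The only regularity is the unit-scale bound of Theorem \ref{t2.20} and Corollary \ref{cor_2.20}. It is not an a priori estimate: when $f'(0)\geq1$ one has $1-f'(u)<0$ in the leading edge and the Duhamel argument fails, so the paper derives it from the front bounds of Proposition \ref{p2.100}.
\item Even so, it only yields $\Vert v(s,\cdot)\Vert_{H^m}\lesssim s^{m-1}$ (Proposition \ref{p6.1}).
\item For $\vert x_1\vert\lesssim t^\delta$, near the absorbing layer, you know essentially nothing about $v$ beyond its size, and it varies on the unit scale, so the Taylor remainder is not small there.
\item A parabolic Harnack inequality also fails for an integrable kernel.
\end{itemize}

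\textbf{The missing idea.} Never expand $\cI_*$ on $v$ itself. At a large time $s$, the paper replaces $v(s,\cdot)$ by envelopes $v^{\pm,*}_{s,\varepsilon}$ built from the $O(1)$ bounds \eqref{e6.3}. These are altered in a strip of width $\varepsilon\sqrt s$ and oddly extended by the oblique reflection $\mathcal R_{e_1}$ about $x_1=\mp\varepsilon\sqrt s$. The barriers are then $e^{-(t-s)\cI_*}v^{\pm,*}_{s,\varepsilon}$ plus the correctors $\cV_1,\cV_2,\cV_3$ (Theorem \ref{t6.2}). The region $\vert x'\vert\gtrsim\sqrt t$, which you flag as the curved-front complication, is handled by $\cV_2$ and Corollary \ref{c3.10}, not by a local half-space picture. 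Time is split into two regimes:
\begin{itemize}
\item For $t-s\leq s^\varsigma$, the barriers are controlled through smooth minorants with $s$-uniform Sobolev norms, via Propositions \ref{p3.1}--\ref{p3.3}.
\item For $t-s\geq s^\varsigma$, Theorem \ref{thm_heat kernel estimate} applies; its error $e^{-(t-s)^{1-2\varpi}}$ beats the polynomial growth of $\Vert v(s,\cdot)\Vert_{H^m}$.
\end{itemize}
This gives $\bigl\vert\mu_{e_1}[\bnu(t,\cdot)]-\mu_{e_1}[\bnu(s,\cdot)]\bigr\vert=O(\varepsilon)+o(1)$, hence the limit $\kappa_\infty$ of Theorem \ref{t6.1}.

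\textbf{Error in the matching step.} Your third step rests on a false statement. The minimal KPP wave is a pulled front and is not exponentially stable in any space with weight $e^{\alpha z}$. Writing $p=e^{-\alpha z}q$, the zero-frequency symbol of the linearisation at $z=+\infty$ is $\gamma(n,\alpha)-c^*(n)\alpha\geq0$, with equality only at $\alpha=\lambda^*(n)$. So the essential spectrum always reaches the closed right half-plane.

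The paper needs no stability. Set $V=t^{\frac N2+1}v$. On the window $\vert x_1+\frac{N+2}{2\lambda^*}\ln t\vert\leq t^\mu$, the function $\psi_\sigma$ solves the $V$-equation up to $O(t^{\mu-1})$. The data at the right end come from Theorem \ref{t6.1} at $x_1\approx t^\mu$, and those at the left end are $O(e^{-\lambda^*t^\mu})$. Then $(V-\psi_{\kappa\pm\varepsilon})^\pm$ is killed by the explicit supersolution $\mathcal B t^{-\upsilon}\cos\bigl((x_1+\frac{N+2}{2\lambda^*}\ln t)/t^\rho\bigr)$ with $2\rho+\upsilon<1-\mu$. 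The only structural input is the monotonicity $\mathcal H\geq0$.
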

While our work mainly concerns the nonlocal Fisher-KPP equation with a smooth kernel, we believe that extensions are possible.  In particular, it should be possible, with our line of arguments, to push~\eqref{shab} to $o(1)$ terms. The main obstacle here, also circumvented in \cite{Sha} by Nash-type estimates, is  to retrieve sufficient knowledge of a special Dirichlet heat kernel. In the present work we are dealing with $e^{-t\cI_*}$; to study it precisely everywhere is a technical challenge that is dealt with in Section 3. We also believe that our work can be extended so as to cover the case of a discrete lattice $\mathbb{Z}^d$, as well as the case where the kernel $\ck$ is a measure with no density with respect to the Lebesgue measure. One of the main modifications of our arguments that theses settings would involve is an alternative proof of the propagation of regularity for the solution of \eqref{kpp}.
\subsection{Organisation of the paper}
Our paper proposes several new ideas that have, to the best of our knowledge, no real equivalent in the works concerning the nonlocal version or parabolic version of \eqref{kpp}. Some of them are the control of the solutions of the linearised equation in the natural scale of the initial datum, the control of $u(t,x)$ at infinity, that is, away from the diffusive zone, and the design of new barrier functions. The paper comprises six sections, each of them devoted to an important part of the proof of Theorem \ref{t1.2}. Section~2 deals with the geometry of the spreading set, that is, the set enclosing the area $\{u(t,x)\sim\di\frac12\}$. The study of sharp asymptotics involves the computation of a heat kernel, that is performed in Section 3. This will {bear consequences} on the behaviour of $u(t,x)$ outside the diffusive area that will be detailed.  Barrier functions for $u(t,x)$ in the diffusive zone are constructed in Section 4, which will lead us to the identification of the correct space scale for $u(t,x)$ at large times, that is $x\sim\sqrt t$, as well as a good control of $u(t,x)$ outside the diffusive area. Section 5 refines the barriers and shows the convergence  of the tail of $u(t,x)$, that is, convergence in the diffusive zone. The convergence at the front is the final step of the proof, given in Section 6.

\bigskip

\noindent {\bf AI declaration.} No AI tools were used in this work.
\section{Spreading sets, spreading speeds}\label{TW}
\noindent  We start from the definition of the spreading set, which is, roughly speaking, the set beyond which the solution transitions from 1 to 0. In every direction $e$, its boundary is the set $\{w^*(e),\ \vert e\vert=1\}$, where $w^*(e)$ will be defined as the invasion speed in the direction $e$. The velocity $w^*(e)$ is given by the Freidlin-G\"artner formula; some deeper understanding of it will be needed. \subsection{The spreading set }
\noindent The first notion we need is that of spreading set. While it dates back to Weinberger \cite{weinberger}, the one that we will use is borrowed from Rossi \cite{Rossi2017}.
\begin{definition} 
	We say that a closed set $\mathcal{W}\subset\rn$, coinciding with the closure of its interior, is the {\it asymptotic set of spreading}  or {\it invasion set} for \eqref{kpp} if
	\begin{equation*}
		\forall ~\text{compact}~K\subset\text{int}(\mathcal{W}),~~~\inf_{x\in K}u(t,xt)\to 1~~~~\text{as}~t\to+\infty,
	\end{equation*}
\begin{equation*}
	\forall ~\text{closed}~K~\text{such that}~~K\cap\mathcal{W}=\emptyset,~~~\sup_{x\in K}u(t,xt)\to 0~~~~\text{as}~t\to+\infty.
\end{equation*}
\end{definition}

This definition essentially says  that the upper level sets of $u$ is roughly $t\mathcal{W}$ for $t$ large. Let us recall some standard material about it.
 From Finkelshtein-Kondratiev-Tkachov \cite{FKT} (see Perthame-Souganidis \cite{PS} for an earlier version involving  a model with nonlocal competition), the asymptotic set $\mathcal{W}$ of spreading for \eqref{kpp}  is given by
\begin{equation*}
	\mathcal{W}=\big\{re~|~e\in\sn, 0\le  r\le \ws(e)\big\}.
\end{equation*}
the velocity $w^*(e)$ being given by the Freidlin-G\"artner formula
\begin{equation*}
\label{FGformula}
w^*(e)=\min_{n\in\sn}\frac{c^*(n)}{n\cdot e}.
\end{equation*}
The velocity $c^*(n)$ is defined as follows:
we  look at the linearised equation
\begin{equation}\label{linearequation}
v_t=\ck* v-v+f'(0)v,~~~~~t>0,~x\in\rn.
\end{equation}
For any given direction $n\in\sn$, we apply the linear wave ansatz of the form $e^{-\lambda(x\cdot n-c(n)t)}$ for $(n,c,\lambda)\in\sn\times\R_+\times\R_+$, then  we arrive at the following dispersion relation
\begin{equation*}
	\label{dr}
D(n,c,\lambda):=-	\lambda c+\underbrace{\int_{\rn}\ck(x)e^{\lambda x\cdot n}\md x-1+f'(0)}_{=:\gamma(n,\lambda)}=0.
\end{equation*}
By defining 
\begin{equation*}\label{definition of c*} 
	\cs(n)=\inf\limits_{\lambda>0}\frac{\gamma(n,\lambda)}{\lambda},~~~\forall~n\in\sn,
\end{equation*}
there exists a unique $\lambda^*(n)>0$ such that the infimum above is achieved, namely $c^*(n)\lambda^*(n)=\gamma(n,\lambda^*(n))$, whence $D\big(n,\cs(n),\ls(n)\big)=0$. Furthermore, considering 
$$F(n,c,\lambda):=\partial_\lambda D(n,c,\lambda)=-c+\int_{\R^N}(x\cdot n)\ck(x)e^{\lambda x\cdot n}\md x
$$
 for $(n,c,\lambda)\in\sn\times\R_+\times\R_+$, we also have $F\big(n,\cs(n),\ls(n)\big)=0$. In addition, it is easily seen that $\partial^2_\lambda D\big(n,\cs(n),\ls(n)\big)>0$. 
\begin{proposition}
	\label{prop_smoothness in n}
The maps $n\in \mathbb{S}^{N-1}\mapsto c^*(n)$ and $n\in \mathbb{S}^{N-1}\mapsto \ls(n)$ are smooth.
\end{proposition}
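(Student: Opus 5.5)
The plan is to apply the implicit function theorem to the system $D=0$, $F=\partial_\lambda D=0$ in the unknowns $(c,\lambda)$, with $n$ as a parameter. Since $\ck$ is compactly supported, the map
\[
(n,\lambda)\mapsto\gamma(n,\lambda)=\int_{\rn}\ck(x)e^{\lambda x\cdot n}\,\md x-1+f'(0)
\]
is $C^\infty$, and in fact real analytic, on $\R^N\times\R$. Here we extend $n$ to a neighbourhood of $\sn$ in $\R^N$, so that smoothness on the sphere follows by restriction. Derivatives can be taken under the integral sign because the integrand and all its derivatives are bounded by $C_k\ck(x)e^{|\lambda|R}$ on $\mathrm{supp}\,\ck\subset B_R$. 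Consequently $D$ and $F$ are smooth in $(n,c,\lambda)$.

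Fix $n_0\in\sn$ and consider the map $G(n,c,\lambda)=(D(n,c,\lambda),F(n,c,\lambda))$. Its Jacobian with respect to $(c,\lambda)$ at $(n_0,\cs(n_0),\ls(n_0))$ is
\[
\begin{pmatrix}\partial_c D&\partial_\lambda D\\ \partial_c F&\partial_\lambda F\end{pmatrix}=\begin{pmatrix}-\ls(n_0)&0\\-1&\partial^2_\lambda D\end{pmatrix}.
\]
We used $\partial_\lambda D=F=0$ at that point. The determinant is $-\ls(n_0)\,\partial_\lambda^2\gamma(n_0,\ls(n_0))$. It is nonzero, since $\ls(n_0)>0$ and $\partial^2_\lambda D=\int(x\cdot n_0)^2\ck(x)e^{\lambda x\cdot n_0}\md x>0$. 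The latter holds because $\ck\ge0$ is nontrivial, so $(x\cdot n_0)^2\ck$ cannot vanish identically. The implicit function theorem therefore gives smooth functions $(\tilde c(n),\tilde\lambda(n))$ on a neighbourhood of $n_0$ solving $G=0$ and coinciding with $(\cs(n_0),\ls(n_0))$ at $n_0$.

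The remaining step, and the main obstacle, is to identify $(\tilde c(n),\tilde\lambda(n))$ with $(\cs(n),\ls(n))$ for $n$ near $n_0$. The difficulty is that the implicit function theorem only yields \emph{some} local solution, and we must know it is the minimiser.

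First I would observe that $G(n,c,\lambda)=0$ with $\lambda>0$ says exactly that $\lambda$ is a critical point of $h_n(\lambda)=\gamma(n,\lambda)/\lambda$ with critical value $c$. Indeed,
\[
h_n'(\lambda)=\frac{\lambda\partial_\lambda\gamma-\gamma}{\lambda^2},
\]
and $\lambda\partial_\lambda\gamma-\gamma=\lambda F-D$ when $D$ and $F$ are evaluated at $c=h_n(\lambda)$. Next, $h_n$ has a unique critical point on $(0,\infty)$. The function $g(\lambda)=\lambda\partial_\lambda\gamma-\gamma$ satisfies $g'(\lambda)=\lambda\partial^2_\lambda\gamma>0$ and $g(0^+)=-f'(0)<0$, since $\gamma(n,0)=f'(0)$ because $\int\ck=1$ (the normalisation implicit in the model). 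So $g$ has at most one zero, and this zero is the minimiser $\ls(n)$, whose existence is stated in the text.

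Finally, by continuity $\tilde\lambda(n)>0$ near $n_0$. Hence $\tilde\lambda(n)=\ls(n)$ and $\tilde c(n)=h_n(\ls(n))=\cs(n)$ on that neighbourhood. Since $n_0$ was arbitrary, both maps are smooth on $\sn$.

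Alternatively, one could skip the two-equation system: apply the implicit function theorem directly to $g(n,\lambda)=0$, using $\partial_\lambda g=\lambda\partial_\lambda^2\gamma>0$. This gives smoothness of $\ls$, and then $\cs(n)=\partial_\lambda\gamma(n,\ls(n))$ is smooth as a composition.
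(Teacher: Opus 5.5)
Your proposal is correct and follows the paper's proof. Both apply the implicit function theorem to $(D,F)=(0,0)$ in the unknowns $(c,\lambda)$, with the same nonvanishing Jacobian determinant $-\ls(n_0)\,\partial^2_\lambda\gamma(n_0,\ls(n_0))$; your extra step, the strict monotonicity of $\lambda\partial_\lambda\gamma-\gamma$ showing that the local implicit-function solution must be $(\cs(n),\ls(n))$, makes explicit what the paper only asserts, and the sign of $g(0^+)$ is not needed there once existence of the minimiser is taken from the text.
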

\begin{proof}
For any fixed $n_0\in\mathbb{S}^{N-1}$, we notice that $(c^*(n_0),\lambda^*(n_0))$ satisfies the following constraints:
\begin{align*}
	D(n_0,c^*(n_0),\lambda^*(n_0))=0,~~~F(n_0,c^*(n_0),\lambda^*(n_0))=0.
\end{align*}
Moreover, 
\begin{equation*}
	\text{det}\begin{pmatrix} 
		\frac{\partial D}{\partial c}(c^*(n_0),\lambda^*(n_0)) & \frac{\partial D}{\partial \lambda}(c^*(n_0),\lambda^*(n_0)) \\[10pt]
		\frac{\partial F}{\partial c}(c^*(n_0),\lambda^*(n_0)) & \frac{\partial F}{\partial \lambda}(c^*(n_0),\lambda^*(n_0))
	\end{pmatrix}=\begin{vmatrix} 
		-\lambda^*(n_0) & 0 \\[10pt] 
		-1 & \int_{\R^N}(x\cdot n_0)^2\ck(x)e^{\lambda^*(n_0)x\cdot n_0}\md x
	\end{vmatrix}\neq 0.
\end{equation*}
The implicit function theorem along with the compactness of $\mathbb{S}^{N-1}$ then implies that for each $n\in\mathbb{S}^{N-1}$, there exist uniquely determined smooth functions $c=c^*(n)$ and $\lambda=\ls(n)$ such that $D(n,c^*(n),\lambda^*(n))=0$ and $F(n,c^*(n),\lambda^*(n))=0$.
\end{proof}

By the positivity of $\cs$, we can rewrite $\mathcal{W}$ as follows:
\begin{equation*}
	\mathcal{W}=\big\{x~|~x\cdot n\le \cs(n),~\forall n\in\sn\big\}=\displaystyle\bigcap_{n\in\sn}\big\{x~|~x\cdot n\le \cs(n)\big\}.
\end{equation*}
This implies that $\mathcal{W}$ is convex. 

\subsection{Uniqueness and Lipschitz regularity of the minimiser}

From now on, let us consider any given  $n\in\rn\backslash\{0\}$. We notice that the following ansatz
\begin{equation*}
\e{-\lambda(x\cd n-c(n)t)}=\e{-\lambda| n| \big(x\cd\frac{n}{| n| }-\frac{c(n)}{| n| }t\big)}
\end{equation*}
solves the linearised equation \eqref{linearequation}, and it gives 
\begin{equation}\label{def-gamma}
\lambda	c(n)=\underbrace{\int_{\rn}\ck(x)e^{\lambda x\cdot n}\md x-1+f'(0)}_{=:\gamma(n,\lambda)},
\end{equation}
and  the quantity
\begin{equation*}\label{c*} 
	\cs(n):=\inf\limits_{\lambda>0}\frac{\gamma(n,\lambda)}{\lambda}
\end{equation*}
is achieved at $\lambda^*(n)>0$.

Since 
\begin{equation*} 
\e{-\lambda\big(\frac{n}{| n|}\big)\big(x\cd\frac{n}{| n| }-c\big(\frac{n}{| n| }\big)t\big)}
\end{equation*}
also satisfies \eqref{linearequation}, this together with $\cs(n)=| n| \cs\left(\frac{n}{| n|}\right)$ and the uniqueness of the solution yields that 
\begin{equation*}
	\ls(n)| n|=\ls\left(\frac{n}{| n|}\right).
\end{equation*}
Moreover, for any given $ e\in\rn\backslash\{0\}$,
\begin{equation}\label{w*}
\ws\left(\frac{ e}{|  e| }\right)=\min\limits_{n\in\rn\atop n\cd e>0}\frac{\cs\big(\frac{n}{| n|}\big)}{\frac{n}{| n|}\cd\frac{ e}{| e|}}=| e|\min\limits_{n\in\rn\atop n\cd e>0}\frac{\cs(n)}{n\cd e}=| e| w^*( e).
\end{equation}

 In what follows, we would like to consider
\begin{equation*}
w^*( e)=\min\limits_{n\in\rn\atop n\cd e>0}\frac{\cs(n)}{n\cd e}, ~~~~~ e\in\rn\backslash\{0\}, 
\end{equation*}
 and discuss the ``uniqueness'' of the minimiser. Set $$~~~~~~~~~~~~~~~~~\Psi_e(n)=\frac{\cs(n)}{n\cdot e}~~~~~~~~\text{for}~~n\in\mathbb{H}_e:=\{n\in\rn | n\cdot e>0\}.$$
 Let us first point out that, suppose that $n_e\in\mathbb{H}_e$ is a minimiser for $ e$ given above, in the sense that 
$\ws( e)=\Psi_e(n_e)$,
it is easy to see that $kn_e$ with any $k>0$ is still a minimiser. Therefore, by ``uniqueness'' we actually aim to show that there exists at most one minimiser $n_e\in\{n\in\sn| n\cdot e>0\}$. This property was first noticed by Shabani \cite{Sha}. The proof that we present below is inspired by her remarks, but we add elements of understanding of our own. The Lipschitz property is, to our knowledge, new in this context.
\begin{theorem}\label{t2.1}
	For any given $ e\in\sn$,  there exists a unique minimiser $n_e\in \{n\in\sn| n\cdot e>0\}$ such that
	\begin{equation*}
		\ws( e)=\min\limits_{n\in\sn\atop n\cdot e>0} \frac{\cs(n)}{n\cdot e}=\frac{\cs(n_e)}{n_e\cdot e}.
	\end{equation*}
 The minimiser $n_e$ is exactly the unit outer normal of $\cW$ at $w^*(e)e$. Moreover, the map $e\in\mathbb{S}^{N-1}\mapsto n_e$ is Lipschitz, 
 and the invasion set $\mathcal{W}$ is $C^{1,1}$.
\end{theorem}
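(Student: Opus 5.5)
The plan is to read everything off convex duality. Extend $\cs$ to $\rn\setminus\{0\}$ by the homogeneous formula $\cs(n)=|n|\,\cs(n/|n|)$. Then $\cs$ is the support function of $\cW$, which is how $\cW$ was written above as $\bigcap_{n\in\sn}\{x\cdot n\le\cs(n)\}$. The first step is to identify $\cs$ as a Legendre-type object that is strictly convex transversally to rays.

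Since $\gamma(n,\lambda)=\int\ck(x)e^{\lambda x\cdot n}\,\md x-1+f'(0)$ depends on $(n,\lambda)$ only through $p=\lambda n$, write $G(p)=\int_{\rn}\ck(x)e^{x\cdot p}\,\md x-1+f'(0)$. This function is smooth and uniformly strictly convex on compact sets, because $D^2G(p)=\int x\otimes x\,\ck(x)e^{x\cdot p}\,\md x$ is positive definite, $\ck$ being nontrivial, even and having a density. We get
\[
\cs(n)=\inf_{\lambda>0}\frac{G(\lambda n)}{\lambda},
\]
and $\cW$ is then exactly the set $\{x:\ x\cdot p\le G(p)\ \forall p\}$.

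Indeed, $x\cdot \lambda n\le G(\lambda n)$ for all $\lambda>0$ is equivalent to $x\cdot n\le\cs(n)$. So $\cW=\{x:\ \sup_p (x\cdot p-G(p))\le 0\}=\{G^*\le 0\}$, where $G^*$ is the Legendre transform of $G$. The function $G^*$ is smooth and strictly convex on the interior of its domain, and its Hessian is $(D^2G)^{-1}$ evaluated at $p=\nabla G^*(x)$. At a point $x$ with $G^*(x)=0$ we have $\nabla G^*(x)=p$, where $p$ solves $x=\nabla G(p)$ and $x\cdot p=G(p)$. One has $p\ne0$ because $G(0)=f'(0)>0$ forces $0\in\mathrm{int}\,\cW$. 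Since $\nabla G^*\neq 0$ on $\partial\cW$, the boundary $\partial\cW$ is a smooth hypersurface by the implicit function theorem. It is also uniformly convex there, because $D^2G^*$ is positive definite and bounded above and below on the compact set $\partial\cW$. Hence $\cW$ is $C^{1,1}$, in fact smooth and uniformly convex, and its unit outer normal at $x$ is $p/|p|$.

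For uniqueness, note that $\ws(e)=\min_{n\cdot e>0}\cs(n)/(n\cdot e)$ says precisely that $\ws(e)e$ is the point of $\partial\cW$ on the ray $\RR_+e$. Indeed, $re\in\cW$ if and only if $r\,n\cdot e\le\cs(n)$ for all $n$, and $n\cdot e\le0$ imposes no constraint since $\cs>0$. If $n$ attains the minimum, then $\ws(e)e\cdot n=\cs(n)$ while $\cW\subset\{x\cdot n\le\cs(n)\}$, so $n$ is an outer normal of a supporting hyperplane at $\ws(e)e$. Because $\partial\cW$ is $C^1$, the supporting hyperplane is unique, and so $n=n_e$ is the unit outer normal. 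Conversely, the outer normal does realise the minimum.

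Finally, $e\mapsto n_e$ is the composition of the radial map $e\mapsto \ws(e)e$ with the Gauss map of $\partial\cW$. The radial map is Lipschitz, since $\partial\cW$ is a Lipschitz (indeed smooth) star-shaped graph over $\sn$ with $0$ in the interior. The Gauss map is Lipschitz, indeed smooth, since $\partial\cW$ is $C^{1,1}$. So $e\mapsto n_e$ is Lipschitz.

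The main obstacle is the identification of $\cW$ with the sublevel set $\{G^*\le 0\}$ together with the nondegeneracy needed on it. One must check that $\nabla G$ is a diffeomorphism onto an open set containing $\partial\cW$, which uses that $\lambda^*(n)$ is finite and attained, as already established. One must also check that $p=\ls(n)n$ is exactly the dual point, which uses $F(n,\cs(n),\ls(n))=0$, that is $n\cdot\nabla G(\ls n)=\cs(n)$. If this is delicate, a more hands-on fallback is available. Parametrise $\partial\cW$ directly by $n\mapsto x(n)=\nabla G(\ls(n)n)$, which is smooth by Proposition \ref{prop_smoothness in n}. Then check that $x(n)\cdot n=\cs(n)$ and that $n$ is normal to the image. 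Injectivity and a nondegenerate differential follow from the positive definiteness of $D^2G$, which again yields that $\partial\cW$ is smooth and uniformly convex.
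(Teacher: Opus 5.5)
Your proof is correct, and it takes a genuinely different route from the paper's.

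The paper never passes to the Legendre transform. It supposes two distinct minimisers $n_1,n_2$ and writes the first-order condition $\nabla\Psi_e(n_i)=0$ for the degree-zero extension of $\Psi_e(n)=\cs(n)/(n\cdot e)$. It then uses the envelope identity $\nabla\cs(n)=\nabla_n\gamma(n,\lambda)|_{\lambda=\ls(n)}/\ls(n)$ to get $\nabla\Phi(\ls(n_i)n_i)=\ws(e)e$ for $i=1,2$, where $\Phi$ is your $G$, and this contradicts the strict convexity of $\Phi$. The normal property is read off from the supporting hyperplane. The Lipschitz bound comes from the Lipschitz continuity of $\ws$ combined with the local Lipschitz invertibility of $\nabla\Phi$, applied at $e$ and at $e_h$.

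You reverse the order. You first establish that $\partial\cW=\{G^*=0\}$ is regular, then deduce uniqueness from the uniqueness of the supporting hyperplane at a $C^1$ point, and Lipschitz continuity from smoothness of the Gauss map. The engine is the same in both arguments, namely injectivity of $\nabla G$; your dual point $\nabla G^*(\ws(e)e)$ is exactly the paper's $\ls(n_e)n_e$.

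Your packaging buys strictly more, however. $\partial\cW$ comes out smooth (indeed real-analytic, since $G$ is entire) and uniformly convex, so the caution of Remark \ref{r2.1} is unnecessary in this setting. It also makes explicit two steps the paper leaves implicit: ``unique minimiser $\Rightarrow$ $\cW$ is $C^1$'' and ``Lipschitz normal $\Rightarrow$ $C^{1,1}$''. In exchange, the paper's route is more elementary and avoids convex duality altogether.

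Two small points.

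\textbf{The flagged obstacle.} It disappears once you note that $G$ is super-coercive. Because $\ck$ is even and has a density, there is $\epsilon>0$ with $\int_{x\cdot\hat p>\epsilon}\ck(x)\,\md x\ge\frac14$ for all $\hat p\in\sn$. Hence $G(p)\ge\frac14e^{\epsilon|p|}-1$. So $G^*$ is finite and smooth on all of $\RR^N$, and $\nabla G$ is a diffeomorphism of $\RR^N$ onto itself, with no need to go through $\ls$.

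\textbf{The support-function claim.} Your opening assertion that $\cs$ is the support function of $\cW$ does not follow from writing $\cW$ as an intersection of half-spaces. It requires convexity of the homogeneous extension of $\cs$. This is true, since $\cs(n)=\inf_{\mu>0}\mu\,G(n/\mu)$ is a partial minimisation of the jointly convex perspective of $G$. You only use it in the ``conversely'' direction, and that direction is dispensable. A minimiser exists by compactness, because $\cs>0$ forces $\cs(n)/(n\cdot e)\to+\infty$ as $n\cdot e\to0^+$. It also follows from your dual-point computation: with $\nu=p/|p|$ one has $\cs(\nu)\le G(p)/|p|=\ws(e)e\cdot\nu\le\cs(\nu)$.
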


\begin{proof} Assume by contradiction that there exist two distinct minimisers $n_1,n_2\in \mathbb{H}_e$ with $| n_1|=| n_2|=1$ such that 
\begin{equation*}
	\ws( e)=\Psi_e(n_1)=\Psi_e(n_2), 
\end{equation*}
which gives 
\begin{equation*}\label{gamma-ei}
\cs(n_i)=\ws( e) e\cdot n_i, ~~~~i=1,2.
\end{equation*}
Moreover, we have  $$\nabla\Psi_e(n_i)=\nabla\left(\frac{\cs(n)}{n\cdot e}\right)|_{n=n_i}=\frac{1}{(n_i\cdot e)^2}\big(\nabla\cs(n_i)(n_i\cdot e)-\cs(n_i) e\big)=0,$$ namely, 
\begin{equation}\label{c*-ei}
	\nabla \cs(n_i)=\frac{\cs(n_i)}{n_i\cdot  e} e=\ws( e) e,~~~~i=1,2.
\end{equation}
On the other hand, it follows from $\cs(n)=\frac{\gamma(n,\ls(n))}{\ls(n)}$ for $n\in\mathbb{H}_e$ that $\nabla\cs(n)=\frac{\nabla\gamma(n,\lambda)|_{\lambda=\ls(n)}}{\ls(n)}$, whence
\begin{equation*}
	\nabla\cs(n_i)=\frac{\nabla\gamma(n_i,\lambda)|_{\lambda=\ls(n_i)}}{\ls(n_i)},~~~~i=1,2.
\end{equation*}
This together with \eqref{c*-ei} leads to  $\nabla\gamma(n_i,\lambda)|_{\lambda=\ls(n_i)}=\ls(n_i)\ws( e) e.$  Moreover, from the definition \eqref{def-gamma} of $\gamma$, we find that
\begin{equation*}
	\nabla\gamma(n_i,\lambda)|_{\lambda=\ls(n_i)}=\ls(n_i)\int_{\R^N} x\ck(x)e^{\ls(n_i) x\cdot n_i}\md x.
\end{equation*}
Altogether, we get 
\begin{equation}\label{parallel to xi}
	\int_{\R^N} x\ck(x)e^{\ls(n_i) n_i\cdot x}\md x=\ws( e) e.
\end{equation}

Define now 
\begin{equation*}
	\Phi(p):=\int_{\R^N} \ck(x)e^{ p\cdot x}\md x-1+f'(0),~~~~p\in\mathbb{H}_e.
\end{equation*}
It is obvious that $p\in\mathbb{H}_e\mapsto \Phi(p)$ is strictly convex and analytic, and
\begin{equation*}
	\nabla\Phi(p)=\int_{\R^N} x\ck(x)e^{ p\cdot x}\md x.
\end{equation*}
We derive from \eqref{parallel to xi} that there exist $p_i:=\ls(n_i)n_i\in\mathbb{H}_e$ with $i=1,2$ such that $p_1\neq p_2$, and 
\begin{equation}\label{contra}
\nabla\Phi(p_1)=\nabla\Phi(p_2)=\ws( e) e.
\end{equation}

Set $\varphi(t):=\Phi\big(p_1+t(p_2-p_1)\big)$ for $t\in[0,1]$, then  by the strict convexity and analyticity of $\Phi$ in $p\in\mathbb{H}_e$,  it follows that $\varphi(t)$ is strictly convex and smooth in $t\in[0,1]$,  
\begin{equation*}
	\varphi'(t)=\nabla\Phi(p_1+t(p_2-p_1)\big)\cdot(p_2-p_1),
\end{equation*}
and $\varphi'(t)$ is strictly increasing in $t\in[0,1]$.
This along with \eqref{contra} indicates that 
\begin{equation*}
	0<\varphi'(1)-\varphi'(0)=\ws( e) e\cdot(p_2-p_1)-\ws( e) e\cdot(p_2-p_1)=0,
\end{equation*}
which is a contradiction. This, together with the existence of the minimiser \cite{Roquejoffre2023}, implies that, for any given $ e\in\rn\backslash\{0\}$, there exists a unique minimiser $n_e\in \{n\in\sn| n\cdot e>0\}$  such that 
$\ws( e)=\frac{\cs(n_e)}{n_e\cdot e}$.  

Let now $n_e\in \{n\in\sn| n\cdot e>0\}$ be  the minimiser for $\ws( e)$ with a given direction $ e\in\sn$, then
\begin{equation*}
	\forall  e'\in\sn,~~~\ws( e')=\min\limits_{n\in\sn\atop n\cdot e'>0} \frac{\cs(n)}{n\cdot e'}\le \frac{\cs(n_e)}{n_e\cdot e'},
\end{equation*}
namely, $\ws( e') e'\cdot n_e\le \cs(n_e)=\ws( e) e\cdot n_e$. Geometrically, it amounts to saying the “highest” hyperplane orthogonal to $n_e$ that
touches $\mathcal{W}$ is $\{x\in\rn | x\cdot n_e= \ws( e) e\cdot n_e\}$, also known as the supporting hyperplane for $\mathcal{W}$ containing $\ws( e) e\in\partial\mathcal{W}$. Thus, $n_e$ is the outer normal to $\mathcal{W}$ at the point $w( e) e$.

The Lipschitz character of $e\in\mathbb{S}^{N-1}\mapsto n_e$ can be proved along the same lines by noticing that $e\mapsto w^*(e)$ is Lipschitz as the infimum of Lipschitz functions with a uniform Lipschitz constant, for which one just needs to be careful to avoid $n\cdot e=0$. 
To this end, we first notice from \eqref{w*} that 
$$\ws\Big(\frac{ e}{|  e|}\Big)=| e|\ws( e)=| e|\frac{\cs(n_e)}{n_e\cdot e}=\frac{\cs(n_e)}{n_e\cdot\frac{ e}{|  e|}}.$$  Together with the convexity of the invasion set $\mathcal{W}$, we get that  $\mathcal{W}$ is at least $C^{1}$. Now, we start from \eqref{contra} but we take this time
\begin{equation*}
	p_1=\lambda^*(n_e)n_e, ~~~~p_2=\lambda^*(n_{e_h})n_{e_h},~~~\text{with}~~e_h:=\di\frac{e+h}{\vert e+h\vert},~~\vert h\vert\leq\di\frac{1}{2}.
\end{equation*} 
By the strict convexity of $\Phi$, we have
$$
\lambda^*(n_{e_h})n_{e_h}-\lambda^*(n_e)n_e =O(h).
$$
Projecting this identity on $n_e$ and $n_e^\perp$ yields $\vert n_{e_h}-n_e\vert=O(h)$.
 \end{proof}

\begin{remark} \label{r2.1} We do not know if $\mathcal{W}$ is more than $C^{1,1}$, however fortunately we will not need more than that in the sequel. This is a typical feature of Hamilton-Jacobi equations: wherever the solution is $C^1$, it is in fact $C^{1,1}$, but further regularity is not granted, see for instance Fathi \cite{Fa}.
\end{remark}

\subsection{Regularity estimates  of $u$}
As no regularising effect exists in our class of equations, we need to work with barehands, and the main result of this section is the following gradient estimate.
\begin{theorem}
\label{t2.20}
There holds
\begin{equation*}
	\sup_{s>1}\Vert\nabla u(s,\cdot)\Vert_{L^\infty(\R^N)}<+\infty.
\end{equation*}
\end{theorem}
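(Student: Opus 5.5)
The plan is to differentiate the equation and use the structure ``$-u$ plus a smoothing convolution''. Since $\ck$ is smooth and compactly supported, $\ck*u$ inherits the regularity of $\ck$ regardless of the regularity of $u$. Setting $v=\partial_{x_i}u$, we get
$$
v_t=-\big(1-f'(u)\big)v+(\partial_{x_i}\ck)*u ,
$$
and the source term is bounded by $\Vert\nabla\ck\Vert_{L^1(\R^N)}$ uniformly in time, because $0\le u\le1$. We assume, as the paper indicates, that $\ck$ is a smooth density. On $[0,1]$ a Gronwall argument from the smooth $u_0$ gives a bound on $\Vert\nabla u(1,\cdot)\Vert_\infty$. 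The whole difficulty is uniformity for $s>1$, since the zero-order coefficient $1-f'(u)$ need not be positive, for instance when $f'(0)>1$ and $u$ is small. Along each characteristic (here just the fixed point $x$), $v$ solves a linear ODE. It therefore suffices to control, on each time interval, the quantity $\int(1-f'(u(\tau,x)))\,\md\tau$ from below, or to bound $v$ by another mechanism where this fails.

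First I will split according to the size of $u$. Where $u\ge 1-\eta$ we have $1-f'(u)\ge 1-f'(1)-o_\eta(1)\ge \delta>0$ if $f'(1)<1$, which holds whenever $f'(1)\le 0$. If $u(\tau,x)$ stays in this range on a time interval, the ODE gives $|v|\le \max(|v(\tau_0,x)|,C/\delta)$ there. To make this usable, I will show that for each $x$ the time during which $u(\cdot,x)$ lies in the intermediate range $[\eta,1-\eta]$ is uniformly bounded. This should follow from the spreading results of Section~2: the transition zone moves with speed $w^*(e)>0$ in every direction and has bounded width, so $u(\cdot,x)$ crosses it in bounded time, and afterwards $u\to1$ locally uniformly on $t\,\mathrm{int}(\cW)$. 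During such a bounded interval Gronwall loses only a fixed factor $e^{C T}$.

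The remaining region, where $u\le\eta$, is the main obstacle: there $1-f'(u)\approx 1-f'(0)$ may be negative and the ODE alone gives exponential growth. Here I would change the unknown and prove a logarithmic gradient bound $|\nabla u|\le C u$ in $\{u\le\eta\}$, which in particular gives $|\nabla u|\le C\eta$. The cleanest route is to compare translates: for $|h|\le1$ compare $u(t,x+h)$ with $e^{A|h|}u(t,x)$. The latter is a supersolution of \eqref{kpp} on the set where it stays below $s_0$, because $f(ks)\le kf(s)$ for $k\ge1$ by \eqref{f-cdn1} ($f(s)/s$ nonincreasing). One then applies the comparison principle for the nonlocal equation in the domain $\{u\le\eta\}$, using the previous step to control the ``boundary'' data on the nonlocal neighbourhood of that domain (of width $\mathrm{diam}\,\mathrm{supp}\,\ck$). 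The delicate point is the initial time: at $t=1$ the solution decays superexponentially in $|x|$, so $u(1,\cdot+h)\le e^{A|h|}u(1,\cdot)$ fails at infinity. I would handle this by starting the comparison at time $t_0$ only in the zone already reached by the exponential profile $e^{-\lambda^*(n)(x\cdot n-c^*(n)t)}$. Beyond that zone, where $u$ is below $e^{-\lambda|x|}$ for large $\lambda$, I would bound $\nabla u$ crudely by differentiating the Duhamel formula for the linear equation \eqref{linearequation}, whose solution has a gradient controlled by itself times a constant once $t\ge1$. Patching the three regions then gives $\sup_{s>1}\Vert\nabla u(s,\cdot)\Vert_\infty<+\infty$.
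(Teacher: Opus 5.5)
There is a genuine gap, and it sits exactly where you place the main obstacle: the region $\{u\le\eta\}$.

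\textbf{The translate comparison does not close.} To compare $u(t,x+h)$ with $e^{A|h|}u(t,x)$ in $\{u\le\eta\}$, you need the inequality on the $R$-neighbourhood of that set, i.e.\ at points where $u\ge\eta$. You take it from your transition-zone step. But that step is a Gronwall argument started from the entrance value $|\nabla u|\le A\eta$, and it loses a factor $e^{KT}$, where $K=\max_{[\eta,1-\eta]}(f'-1)>0$ in the hard case $f'(0)>1$. Since $u(t,x+h)\le u(t,x)+G|h|$ and $e^{A|h|}u(t,x)\ge u(t,x)+A\eta|h|$ at level $\ge\eta$, you would need the bound $G\le e^{KT}(A\eta+C)$ obtained there to be $\le A\eta$. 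That is impossible since $e^{KT}>1$. The scheme is circular and the constants never close.

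Three further points fail as stated.
\begin{enumerate}
\item The supersolution property of $e^{A|h|}u$ needs $f(ks)\le kf(s)$, i.e.\ $f(s)/s$ nonincreasing. This does not follow from \eqref{f-cdn1}, which only gives $f(s)\le f'(0)s$.
\item The claim that the linear flow satisfies $|\nabla u|\lesssim u$ once $t\ge1$ is false. At every fixed $t>0$ the solution decays superexponentially in $|x|$, whereas $|\nabla\log g|\le A$ forces $g(x)\ge g(0)e^{-A|x|}$.
\item The bounded width of the transition zone is not a consequence of the spreading set of Section~2, which only gives width $o(t)$. It is the content of Proposition~\ref{p2.100}, which rests on Sections~\ref{sec4} and~\ref{sec6}.
\end{enumerate}

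\textbf{The missing idea.} Ahead of the front you need no relative (Harnack-type) bound, only smallness of the source term in \eqref{e3.330}. Indeed $|\partial_{x_i}\ck*u(s,\cdot)(x)|\le\Vert\partial_{x_i}\ck\Vert_{L^1}\sup_{B_R(x)}u(s,\cdot)$. Proposition~\ref{p2.100} yields $u(s,y)\lesssim e^{-\lambda[y\cdot n_e-X_e(s)]}$ near the ray through $x$, for every $\lambda<\lambda^*(n_e)$. Write $e^{-\lambda[x\cdot n_e-X_e(s)]}=e^{-\lambda[x\cdot n_e-X_e(t)]}e^{-\lambda[X_e(t)-X_e(s)]}$. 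The amplification $e^{(f'(0)-1)(t-s)}$ allowed by $f'\le f'(0)$ is then beaten by $e^{-\lambda c^*(n_e)(t-s)}$, up to a harmless polynomial factor from the logarithmic delay. This is because the dispersion relation gives $1+\lambda^*(n_e)c^*(n_e)-f'(0)=\int\ck(y)e^{\lambda^*(n_e)y\cdot n_e}\md y>0$, and this positivity persists for $\lambda$ close to $\lambda^*(n_e)$.

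Hence $|\nabla u(t,x)|\lesssim e^{-\lambda[x\cdot n_e-X_e(t)]}$ until the time $\tau_x$ at which the front reaches $x$. Afterwards the damping where $u\ge\theta_-$, which is essentially your step on $\{u\ge1-\eta\}$, concludes.

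The logarithmic delay matters here. With the elementary supersolutions $e^{-\lambda^*(n)(x\cdot n-c^*(n)t)}$, the same Duhamel argument only works outside $t\mathcal{W}$. It leaves uncovered a logarithmically thick strip behind $t\partial\mathcal{W}$, where these supersolutions overestimate $u$ by a factor up to order $t^{\frac{N+2}2}$.
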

Theorem \ref{t2.20} has an immediate consequence for the higher derivatives. Let us take this opportunity to set some standard notations:  if $\alpha=(\alpha_1,\alpha_2,\cdots,\alpha_N)$ is a multi-index, the length of $\alpha$ is $\vert\alpha\vert=\alpha_1+\ldots+\alpha_N$; we also denote  $\alpha!=\alpha_1!\alpha_2!\cdots\alpha_N!$. If  $\vert\alpha\vert=m$ and $v$ is a $C^m$ function from $\RR^N$ to $\RR$, we set $\partial^\alpha v=\partial_{x_1}^{\alpha_1}\ldots\partial_{x_N}^{\alpha_N}$. If $x=(x_1,x_2,\cdots,x_N)$ ,
$x^\alpha=x_1^{\alpha_1} x_2^{\alpha_2}\cdots x_N^{\alpha_N}$ and $\beta^\alpha=\beta_1^{\alpha_1} \beta_2^{\alpha_2}\cdots \beta_N^{\alpha_N}$. Finally, if $x\in\RR^N$ and $R>0$, the Euclidian ball with centre $x$ and radius $R$ will be denoted by $B_R(x)$.

\begin{corollary}\label{cor_2.20}
For all integer $m$, and for all mult-index $\alpha$ of length $m$ we have 
$$
\sup_{s>1}\Vert \partial^\alpha u(s,\cdot)\Vert_{L^\infty(\R^N)}<+\infty.
$$
\end{corollary}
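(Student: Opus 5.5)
We argue by induction on $m=\vert\alpha\vert$, the case $m=1$ being Theorem \ref{t2.20} and $m=0$ following from $0\le u\le 1$ (comparison principle). The mechanism is the same as for the gradient: there is no smoothing, so regularity is only propagated, and the task is to show that the linear damping $-u$ dominates the possible growth coming from the reaction term at high frequencies. Formally differentiate \eqref{kpp} $\alpha$ times. Since convolution commutes with derivatives, $v_\alpha:=\partial^\alpha u$ satisfies
\begin{equation*}
\partial_t v_\alpha=\ck*v_\alpha-v_\alpha+f'(u)v_\alpha+R_\alpha(t,x),
\end{equation*}
where, by the Fa\`a di Bruno formula, $R_\alpha$ is a finite sum of products $f^{(k)}(u)\,\partial^{\beta_1}u\cdots\partial^{\beta_k}u$ with $k\ge2$, $\beta_1+\ldots+\beta_k=\alpha$, and each $\vert\beta_j\vert\le m-1$. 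By the induction hypothesis, $R_\alpha$ is bounded uniformly for $s>1$. This requires $f$ to be $C^m$; I would assume $f$ smooth for this corollary, or at least $C^{m}$ for the order considered.

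The difficulty is that the coefficient $-1+f'(u)$ of $v_\alpha$ is not negative where $u$ is small. There, $f'(u)\approx f'(0)>0$, so a naive Gronwall estimate gives exponential growth. This is exactly the obstacle already met in Theorem \ref{t2.20}, and I would reuse that argument verbatim with $v_\alpha$ in place of $\nabla u$. The plan is to split $\RR^N$ into two regions.

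\emph{Region where $u\ge\delta$.} Here $f'(u)\le f'(\delta)$, and near the state $1$ we have $f'(u)<0$, so the linear part is dissipative. The comparison principle for the nonlocal equation, applied to $\pm v_\alpha$ against a constant supersolution, gives a bound on $\vert v_\alpha\vert$ depending only on $\sup R_\alpha$.

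\emph{Region where $u$ is small.} Here I would use the quotient $v_\alpha/u$, as in the gradient estimate. Since $u$ solves the same linear equation with $f'(u)$ replaced by $f(u)/u$, the quotient satisfies an equation whose zero-order term is $f'(u)-f(u)/u\le 0$ by the KPP concavity condition \eqref{f-cdn1}. This gives a bound $\vert v_\alpha\vert\le C u$ plus a contribution from the source term. The forced term $R_\alpha/u$ is handled using the inductive bounds $\vert\partial^\beta u\vert\le C_\beta u$: each product contains at least two factors, so $R_\alpha=O(u^2)$ and $R_\alpha/u=O(u)$ is bounded.

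This strongly suggests strengthening the induction to prove the \emph{relative} bound $\vert\partial^\alpha u\vert\le C_\alpha u$ on $\{u\le\delta\}$, together with the absolute bound elsewhere. If Theorem \ref{t2.20} was proved in that relative form, the induction closes. The main obstacle I expect is gluing the two regions: the nonlocal term $\ck*v_\alpha$ couples them over distances up to the diameter of the support of $\ck$. I would handle this by Harnack-type comparison of $u(x-y)$ with $u(x)$ for $y$ in the support of $\ck$, which follows from $\vert\nabla u\vert\le Cu$ in the small region. The initial time $s=1$ is harmless, since on a finite time interval the derivatives grow at most exponentially from the smooth datum $u_0$.
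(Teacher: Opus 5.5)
There is a genuine gap, and it sits where your argument carries its weight: the strengthened hypothesis $\vert\partial^\beta u\vert\le C_\beta u$ on $\{u\le\delta\}$, and the Harnack comparison of $u(x-y)$ with $u(x)$ that you derive from it. Theorem \ref{t2.20} is not proved in that form. Its proof (cf. Remark \ref{r2.20}) only gives $\vert\nabla u(t,x)\vert\lesssim\inf\bigl(1,e^{-\lambda(x\cdot n_e-X_e(t))}\bigr)$ with $\lambda<\lambda^*(n_e)$.

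Moreover, the relative bound is false for this equation. Since $f(u)\le f'(0)u$ and $\int\ck=1$, if $u_0$ is supported in $B_{R_0}(0)$ you get $u(t,x)\le e^{(f'(0)-1)t}\Vert u_0\Vert_\infty\sum_{k\ge(\vert x\vert-R_0)/R}t^k/k!$. So $u(t,\cdot)$ decays super-exponentially in $\vert x\vert$. On the other hand, $\vert\nabla u\vert\le Cu$ on $\{u\le\delta\}$ would force $u(t,x)\gtrsim e^{-C\vert x\vert}$ along rays. Consequently the base case of your relative induction fails at every fixed time. The quotient $\partial^\alpha u/u$ is unbounded at $s=1$, so the initial time is not harmless. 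And $u(x-y)/u(x)$ is not uniformly controlled on the support of $\ck$.

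There are further problems, though the first point is already fatal:
\begin{itemize}
\item On $\{u\ge\delta\}$ the coefficient $f'(u)$ can be positive unless $u$ is close to $1$, since \eqref{f-cdn1} gives no monotonicity of $f'$.
\item The sign $f'(u)-f(u)/u\le 0$ requires concavity, which \eqref{f-cdn1} does not provide.
\item A source $R_\alpha/u=O(\delta)$ with no decay in time yields only linear growth, not a uniform bound.
\end{itemize}

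What the paper does is also what you would get by genuinely reusing the proof of Theorem \ref{t2.20}, which is not a quotient argument. It puts the derivatives on the smooth kernel, $\partial^\alpha(\ck*u)=(\partial^\alpha\ck)*u$. Then, at each fixed $x$, $u_\alpha=\partial^\alpha u$ solves the ODE $\partial_tu_\alpha+(1-f'(u))u_\alpha=(\partial^\alpha\ck)*u+\sum f^{(k)}(u)\prod(\text{lower-order derivatives})$. There is no nonlocal term in $u_\alpha$, hence no gluing problem. The damping $-1$ is genuinely available here; in your formulation $\ck*v_\alpha-v_\alpha$ annihilates constants, so the coefficient that matters for comparison is $f'(u)$, not $-1+f'(u)$.

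The induction is carried on the weighted bound $\vert\partial^\beta u(t,x)\vert\lesssim\inf\bigl(1,e^{-\lambda(x\cdot n_e-X_e(t))}\bigr)$, $\lambda<\lambda^*(n_e)$, through the Duhamel formula:
\begin{itemize}
\item Behind the front, Proposition \ref{p2.100} gives $u\ge\theta_-$, hence damping $1-f'(u)\ge 1-f'(1)/2>0$.
\item Ahead of the front, the possible growth $e^{(f'(0)-1)(t-s)}$ is beaten by the decay of the source in the frame moving with $X_e$. This works because $1+\lambda c^*(n_e)-f'(0)>0$ for $\lambda$ close to $\lambda^*(n_e)$, a consequence of $\lambda^*c^*=\gamma(n_e,\lambda^*)$.
\end{itemize}
Giving up a little of the exponent, instead of asking for a bound by $u$ itself, is what makes the induction hypothesis true. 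Your remark that $f$ must be $C^m$ is correct and applies to the paper's argument as well.
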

In order to make clear why Theorem \ref{t2.20} holds, let us give here a remarkably short proof in the case $f'(0)<1$. To this end, we look at the equation \eqref{kpp} satisfied by $u$. By setting $u_i(t,x)=\partial_{x_i}u(t,x)$, the equation for $u_i$ reads
$$
\partial_tu_i+u_i-f'(u)u_i=\partial_{x_i}\mathcal{K}*u,~~~~t>0,~x\in\R^N,
$$
The Duhamel formula entails
\begin{equation}
\label{e3.330}
u_i(t,x)=e^{-t+\int_0^tf'[u(s,x)]\md s}\partial_{x_i}u_0(x)+\int_0^te^{-\bigl[(t-s)+\int_s^tf'[u(\sigma,x)]\md\sigma\bigl]} \partial_{x_i}\mathcal{K}*u(s,.)\md s,~~~~t>0,~x\in\R^N,
\end{equation}
so that we have, thanks to the inequality $1-f'(u)\ge 1-f'(0)$:
$$
\vert u_i(t,x)\vert\leq e^{-(1-f'(0))t}\Vert\partial_{x_i}u_0\Vert_\infty+\int_0^te^{-(1-f'(0))(t-s)}\Vert \partial_{x_i}\mathcal{K}\Vert_{L^1(\R^N)} ds,~~~~t>0,~x\in\R^N,
$$
in which the right-hand side is uniformly bounded in $t$. 
This proves the result.

This is not the first time that this observation is made, see for instance  Shen-Shen \cite{ShSh} where  the assumption that the slope at the origin of the reaction term is $<1$ is used to establish an inconditional regularity result for transition fronts.  The  general  idea  of the proof of Theorem \ref{t2.20} relies, however, on the notions displayed in this section, as well as on 
 Corollary \ref{cor_6.2}.   We point out that this corollary only relies on the results of Section \ref{sec4} as well as Proposition \ref{prop-5.3}, that do not use any gradient estimate. 

In the sequel, we will first seize the opportunity to reformulate the problem in a way that will be useful in all the forthcoming sections. We will then draw some simple consequences of the outcomes of Sections \ref{sec4} and \ref{sec6}, which, let us repeat it here, do not need a gradient estimate. This will allow us to analyse the Duhamel formula \eqref{e3.330} in a  way that makes a more substantial use of  the structure of the solution  than in the case $f'(0)<1$, and to conclude the proof.
\subsubsection{Reformulation of Problem \eqref{kpp} in a direction of propagation} \label{sec2.3}
An important relation found in the proof of Theorem \ref{t2.1} is \eqref{parallel to xi}. For any $e\in\sn$, let  $n_e$ be the unique minimiser in the Freidlin-G\"artner formulagiven by Theorem \ref{t2.1}, associated with $e$. Then \eqref{parallel to xi} can be recast as
\begin{equation}
\label{e2.10}
~~~~~~	w^*(e)e=\int_{\R^N}x\ck_{*,n_e}(x)\md x\in\R^N,~~~~ \hbox{with}~~ \ck_{*,n_e}(x)=e^{\lambda^*(n_e)x\cdot n_e}\ck(x).
\end{equation}
Theorem \ref{t2.1} implies that
\begin{equation*}
	c^*(n_e)=\int_{\R^N}(x\cdot n_e)\ck_{*,n_e}(x)\md x,
\end{equation*}
and that the front propagation occurs along the unit outer normal $n_e$  at the boundary point $tw^*(e)e\in t\partial\mathcal{W}$ with speed $c^*(n_e)$. As $w^*(e)=\di\frac{c^*(n_e)}{n_e \cdot e}$, we denote $w^*(e)e=c^*(n_e)n_e+\bm_{n_e}^\perp$, then we have 
\begin{equation*}
	\vert\bm_{n_e}^\perp\vert =c^*(n_e)\sqrt{\frac1{(n \cdot e)^2}-1},~~~\bm_{n_e}^\perp=\int_{\R^N}\ck_{*,n_e}(x) \bigl(x-(x\cdot n_e)n_e\bigl)d x.
\end{equation*}
For commodity we now set, for all $y\in\RR^N$:  $y_e^\bot =y-(y\cdot n_e)n_e$ and, whenever this is convenient to us, we will make the abuse of notation that consists in setting $y=(y\cdot n_e,y_e^\bot)$.

 \subsubsection{A consequence of Sections \ref{sec4} and \ref{sec6}}
Let us remember that the kernel $\ck$ is supported in the ball $\{\vert x\vert\leq R\}$. Consider a point $x$, fixed until the end of the proof, with, to fix ideas,  $\vert x\vert\geq R$ and set $e=\displaystyle\frac{x}{\vert x\vert}$.  Let us introduce $\theta_-\in(\displaystyle\frac12,1)$ and $\theta_+>1$, close to 1, such that such that 
$$f'(u)\leq\displaystyle\frac{f'(1)}2~~~\hbox{if $u\in[\theta_-,\theta_+]$}.
$$
 Consider two nonlinearities $f_-\leq f_+$ that coincide with $f$ on $[0,\displaystyle\frac12]$ and such that $f_-$ (resp. $f_+)$ is of the Fisher-KPP type on $[0,\theta_-]$ (resp. on $[0,\theta_+]$. Consider the travelling waves $U^\pm_{c^*(n_e)}$  solving \eqref{TW} with the respective nonlinearities $f_-$ and $f_+$, both normalised so that we have $U_{c^*(n_e)}(0)=\displaystyle\frac12$. Let us finally set 
 $$
 X_e(t)=c^*(n_e)t-\frac{N+2}{2\lambda^*(n_e)}\mathrm{ln}~t;
 $$
 notice that the front position $Z_e(t)$ in the direction $e$ at time $t$ introduced in Theorem \ref{t1.1} is given by $Z_e(t)=\di\frac{X_e(t)}{e.n_e}$.
 The consequence of Sections \ref{sec4} and \ref{sec6} that we want to draw is the following proposition.
\begin{proposition}
\label{p2.100}
There are $t_0>0$, independent of  $x$ and $y$, and two constants $z_+>z_-$ such that  
$$
\begin{array}{rll}
\forall t\geq t_0,\forall y\in\RR^N~\hbox{with $\vert y_e^\bot\vert\leq2R$,}~~~ &u(t,y)\leq U_{c^*(n_e)}^+\bigl(y\cdot n_e-X_e(t)-z_+\bigl);\\
\forall t\geq t_0,\forall y\in\{\vert y_e^\bot\vert\leq 2R\}\cap\{y\cdot n_e\leq X_e(t)+\eta_1\sqrt t\},~~~ &u(t,y)\geq U_{c^*(n_e)}^-\bigl(y\cdot n_e-X_e(t)-z_-\bigl).
\end{array}
$$
\end{proposition}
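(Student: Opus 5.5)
We plan to derive Proposition \ref{p2.100} by combining the Gaussian-scale barriers of Section \ref{sec4} with the front-level comparison of Section \ref{sec6}, all in the moving frame attached to $n_e$ introduced in Section \ref{sec2.3}. We write $\xi=y\cdot n_e-X_e(t)$, $y_e^\bot$ for the transverse variable, and look at $u$ in the frame drifting with velocity $w^*(e)e=c^*(n_e)n_e+\bm^\perp_{n_e}$. We factor out $e^{-\lambda^*(n_e)\xi}$, which turns the linearised operator into $\cI_*$ built on $\ck_{*,n_e}$. The first moment of $\ck_{*,n_e}$ is exactly $w^*(e)e$ by \eqref{e2.10}, so $\cI_*$ is centred. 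Its Dirichlet heat kernel on a half space, analysed in Section 3, produces the $t^{-(N+2)/2}$ decay behind the $\frac{N+2}{2\lambda^*(n_e)}\ln t$ shift in $X_e(t)$. The region $\vert y_e^\bot\vert\le 2R$ is a bounded transverse window, and along it the transverse displacement from the point $X_e(t)n_e$ of $\partial(t\mathcal{W})$ stays $O(1)$. Every estimate below must be uniform in $e$, because $x$ is arbitrary; this uses the smoothness of $c^*,\lambda^*$ (Proposition \ref{prop_smoothness in n}) and the Lipschitz dependence of $n_e$ (Theorem \ref{t2.1}).

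\emph{Upper bound.} Section \ref{sec4} gives a supersolution for the linearised problem of the form $u(t,y)\le C(1+\xi_+)e^{-\lambda^*(n_e)\xi}$ for $\xi\ge0$ in the window. This is the ``$x\sim\sqrt t$'' control coming from the heat kernel of $\cI_*$, and we will use it in the bounded window with constants uniform in $e$. We then compare $u$ with the travelling wave $U^+_{c^*(n_e)}(\xi-z_+)$, which satisfies \eqref{TW} with $f_+\ge f$ and is therefore a supersolution of \eqref{kpp}. Its tail is $\approx(\xi-z_+)e^{-\lambda^*(n_e)(\xi-z_+)}$ by \eqref{normalization of mTW}. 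Taking $z_+$ large makes this tail dominate the linear bound for $\xi\ge0$. For $\xi\le0$ we note that $u\le1<\theta_+$ and that $U^+$ is close to $\theta_+$ when $\xi-z_+\ll0$. Because $\ck$ has support in $B_R$, a comparison run on a strip slightly wider than $2R$, starting at time $t_0$, closes the argument. The logarithmic correction does cost a small drift term $\frac{N+2}{2\lambda^*t}U'$, but this term is absorbed by the margin in the tail.

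\emph{Lower bound.} On $\xi\le\eta_1\sqrt t$ we use Proposition \ref{prop-5.3} and the subsolution of Section \ref{sec4}, which give $u(t,y)\ge c\,\xi\, e^{-\lambda^*(n_e)\xi}$ for $1\le\xi\le\eta_1\sqrt t$. This is the region where the Dirichlet heat kernel is comparable to its linear profile. Behind the front, the spreading result (the definition of $\mathcal{W}$ together with the invasion estimate) gives $u\ge\theta_-$ for $\xi\le -A$. The subsolution $U^-_{c^*(n_e)}(\xi-z_-)$, with $f_-\le f$ and $U^-<\theta_-$, is placed below $u$ at time $t_0$ using these two facts, with $z_-$ very negative. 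It is then propagated by comparison on the domain $\{\xi\le\eta_1\sqrt t\}$, and the boundary data there are controlled by the linear lower bound. We will in fact invoke Corollary \ref{cor_6.2}, which packages exactly this comparison.

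The main obstacle is uniformity, both in $e$ and in the boundary $\xi=\eta_1\sqrt t$. At that boundary the subsolution and $u$ are both of order $\sqrt t\,e^{-\lambda^*\eta_1\sqrt t}$, and the constants have to match. We would handle this by shifting $z_-$ by a fixed amount, so that $U^-$ is smaller than $u$ there by a constant factor.
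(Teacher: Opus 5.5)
Your argument has a genuine gap at the front, in both halves.

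\textbf{Upper bound.} Section~\ref{sec4} does not give $u\le C(1+\xi_+)e^{-\lambda^*(n_e)\xi}$ down to $\xi=O(1)$. The upper barrier contains the correction $\cV_1\sim t^{-(\frac N2+1-\beta)}$ on $-t^\delta\le x_1\le\frac{3\pi}{2}t^\alpha$. Undoing $v=e^{\lambda^*x_1}\bu$ and the $\ln t$ shift then only yields $u\lesssim(\xi_++t^\beta)e^{-\lambda^*(n_e)\xi}$, so the barrier is linear only for $x_1\gg t^\beta$. If your bound were available, no PDE comparison would be needed at all, since $\min\bigl(1,C(1+\xi_+)e^{-\lambda^*\xi}\bigr)\le U^+_{c^*(n_e)}(\xi-z_+)$ pointwise for $z_+$ large.

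Your comparison with $U^+_{c^*(n_e)}(\xi-z_+)$ does not repair this, because that function is not a supersolution. Its residual is $\frac{N+2}{2\lambda^*t}U^{+\prime}(\xi-z_+)+(f_+-f)(U^+)$. Where $U^+\le\frac12$ one has $f_+=f$, so the residual is strictly negative and no margin in the tail absorbs it. Making a planar wave a supersolution would require an extra shift $z_+(t)$ with $\dot z_+\ge\frac{N+2}{2\lambda^*t}$, which cancels the logarithmic delay entirely. A comparison on a transversally bounded strip would also need the unknown inequality on an $R$-thick lateral layer.

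The missing ingredient is Corollary~\ref{cor_6.2}(i), that is, Proposition~\ref{prop-5.3}. Work with $V=t^{\frac N2+1}v$ on the window $\vert x_1+\frac{N+2}{2\lambda^*}\ln t\vert\le t^\mu$ with $\mu\in(\beta,\frac14)$, so that the Section~\ref{sec4} bound is genuinely linear at the right edge. Absorb the $O(t^{\mu-1})$ residual of $\psi_\sigma$ with the decaying supersolution $\mathcal{B}t^{-\upsilon}\cos\bigl((x_1+\frac{N+2}{2\lambda^*}\ln t)/t^\rho\bigr)$. This gives $\limsup_{t\to+\infty}\bigl(u-U_{c^*(n_e)}(\xi-z_+)\bigr)\le0$ on $-\rho\le\xi\le t^\mu$. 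The rest is pointwise: $\xi\ge t^\mu$ comes from Section~\ref{sec4}, and $\xi\le-\rho$ from $u\le1<U^+$.

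\textbf{Lower bound.} The asymptotic spreading set only gives $u\to1$ on $tK$ with $K$ compact in $\mathrm{int}\,\mathcal W$, i.e.\ at distance of order $t$ behind the front. It says nothing on $\{\xi\le-A\}$ with $A$ fixed, so nothing controls $u$ from below on $-\epsilon t\le\xi\le-A$.

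Nor can the planar subsolution $U^-_{c^*(n_e)}(\xi-z_-)$ be started under $u(t_0,\cdot)$ on a half-space, even though, unlike $U^+$, its residual has the right sign. It is close to $\theta_-$ at transversally distant points where $u(t_0,\cdot)$ is tiny. On a bounded strip you again lack lateral data.

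Corollary~\ref{cor_6.2} is also not the comparison you describe. It controls $V$ only up to $o(1)$, which is informative for $u$ only when $\xi\ge-\rho$ with $\rho$ fixed.

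The paper closes the region behind the front with a local argument:
\begin{itemize}
\item Set $z_1=U_{c^*(n_e)}^{-1}(\varepsilon_1)$, and let $t_y$ be the first time at which $\xi-z_-=z_1$ at $y$.
\item Corollary~\ref{cor_6.2}(ii) gives $u(t_y,\cdot)\ge\underline u_0(\cdot-y)$ for a small bump $\underline u_0$ whose solution exceeds $\frac{1+\theta_-}2>\theta_->U^-$ on $B_{2R}$ after a time $T_0$.
\item Choosing $\rho>c^*(n_e)T_0+\vert z_-\vert+\vert z_1\vert+1$ lets Corollary~\ref{cor_6.2}(ii) cover $[t_y,t_y+T_0]$.
\item Comparison with the bump covers all later times.
\end{itemize}

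Finally, the matching at $\xi=\eta_1\sqrt t$ that you single out as the main obstacle is not one. No comparison is performed there: on $t^\mu\le\xi\le\eta_1\sqrt t$ the Section~\ref{sec4} lower barrier applies pointwise, because the cut-off $\chi_1$ in $\cV_2$ still vanishes. This is exactly why the statement stops at $\eta_1\sqrt t$.
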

The constant $\eta_1$ is defined at the beginning of Section \ref{sec4} below. 
\begin{proof}
Let us start with the  bound from above. On the one hand, the inequality \eqref{conclusion-super+sub} of Section \ref{sec4},  translated in the langage of this section, entails the inequality
$$
u(t,y)\lesssim e^{-\lambda^*(n_e)[y\cdot n_e-X_e(t)]}\biggl(y\cdot n_e-X_e(t)+O\bigl(t^\vartheta e^{-\frac{A[y\cdot n_e-X_e(t)]}{\sqrt t}}\chi_1[\frac{y\cdot n_e-X_e(t)]}{\sqrt t}\bigl)\biggl),
$$
for $t^\delta\leq y\cdot n_e-X_e(t)$ and for all $A>0$. The constants $\delta$ and $\vartheta$ defined in \eqref{parameters-n} of Section \ref{sec4}. This entails, the existence of $\kappa_+>0$ such that,
for $y\cdot n_2=t^\delta$, we have
$$
u(t,y\cdot n_e,0)\le	\kappa_+\bigl(y\cdot n_e-X_e(t)\bigl) t^{-\frac{N}{2}-1}.
$$
Corollary \ref{cor_6.2}, (i), then implies
$$\limsup_{t\to+\infty}\Big(u(t,y\cdot n_e,y_e^\bot)- U_{c^*(n_e)}\Big(y\cdot n_e-X_e(t)-z_+\Big)\Big)\le 0
$$ 
uniformly in $\{-\rho\le y\cdot n_e-X_e(t)\le t^\delta\}$ and $\{\vert y_e^\bot\vert\leq2R\}$, with $z_+=\di\frac{1}{\lambda^*}\ln\kappa_+$, and $\rho\geq 2R$. As $u(t,y)\leq 1$ and $U_{c^*}(z)>\di\frac{1+\theta_+}2>1$ if $z$ is sufficiently negative, the bound from above easily follows as soon as $t$ is large enough.

The bound from below needs a slightly longer development. Let us first pick any $\varepsilon_0\in(0,\di\frac14)$, $r_0\leq\min(\di\frac14,\di\frac{R}2)$,  and let us define $\underline u_0$ as a function which is equal to $\varepsilon_0$ in $B_{\frac{r_0}2}(0)$, 0 outside $B_{r_0}(0)$ and which is smooth and nonnegative. Let us introduce $T_0>0$  such that the solution $\underline u(t,y)$ of \eqref{kpp} starting from $\underline u_0$ satisfies
$$
\forall t\geq T_0,\forall \tilde y\in B_{2R}(0),~~~ \underline u(t,\tilde y)\geq\frac{1+\theta_-}2.
$$
The inequality \eqref{conclusion-super+sub} of Section \ref{sec4},  translated in the langage of this section, entails the inequality
$$
u(t,y)\gtrsim e^{-\lambda^*(n_e)[y\cdot n_e-X_e(t)]}\biggl(y\cdot n_e-X_e(t)+O\bigl(t^\vartheta\chi_1[\frac{y\cdot n_e-X_e(t)}{\sqrt t}]\bigl)\biggl),
$$
for $t$ large enough, $t^\delta\leq y\cdot n_e-X_e(t)\lesssim\sqrt t $, and $\vert y_e^\bot\vert\leq 2R$. And so, for $y\cdot n_e-X_e(t)=t^\delta$ there holds, for some $\kappa_->0$:
$$
u(t,y\cdot n_e,y_e^\bot)\ge	\kappa_-\bigl(y\cdot n_e-X_e(t)\bigl) t^{-\frac{N}{2}-1}=\kappa_-t^{-\frac{N}{2}-1+\delta}.
$$  Corollary \ref{cor_6.2}, (ii) then implies, for all $\rho\geq 2R$,
 $$\liminf_{t\to+\infty}\Big(u(t,y\cdot n_e,y_e^\bot)- U_{c^*(n_e)}\Big(y\cdot n_e+X_e(t)-z_-\Big)\Big)\ge 0
 $$  in $\{-\rho\le y\cdot n_e-X_e(t)\le t^\delta\}$ and $\{\vert y_e^\bot\vert\leq2R\}$, with $z_-=\di\frac{1}{\lambda^*}\ln\kappa_-$. Let us pick $\varepsilon_1\in (\varepsilon_0,\di\frac12)$  such that 
 $$
 \forall z\leq z_1+r_0,~~~U_{c^*(n_e)}(z)\geq\varepsilon_0,
 $$
 with $z_1=U_{c^*(n_e)}^{-1}(\varepsilon_1)$. Finally, we choose 
 $$
 \rho=1+\max\bigl(2R,c^*(n_e)T_0+\vert z_-\vert+\vert z_1\vert+1\bigl).
 $$
For $y\in\{\vert y_e^\bot\vert\leq 2R\}$, let  $t_y$ be the first $t>0$ such that $y\cdot n_e-X_e(t)-z_-=z_1$; we may assume $t_y\geq t_0$ and notice that the choice of $\rho$ implies
$$
\forall t\in[t_y,t_y+T_0], ~~~u(t,y)\geq \sup\biggl(\underline u(t-t_y,y),U_{c^*(n_e)}\bigl(y\cdot n_e+X_e(t)-z_-\bigl) \biggl).
$$
This implies in turn  
$$\forall t\geq t_y+T_0,~~~u(t,y)\geq \underline u(t-t_y,y)\geq\di\frac{1+\theta_-}2.
$$
As $U_{c^*(n_e)}^-\leq U_{c^*(n_e)}$ and $\di\lim_{z\to-\infty}U_{c^*(n_e)}^-(z)<\di\lim_{z\to-\infty}U_{c^*(n_e)}(z)$, this entails the desired lower bound for $u(t,y)$, regardless of how negative $y\cdot n_e-X_e(t)$ is.
\end{proof}
\subsubsection{Bounds for the derivatives of $u$}
Let us come back to our reference point $x$ with $e=\di\frac{x}{\vert x\vert}$. 

\begin{proof}[Proof of Theorem \ref{t2.20}] Let us first notice that $t\mapsto \vert\nabla u(t,x)\vert$ has an easy $\vert x\vert$-dependent bound. Indeed, still for our reference point $x$, there is $t_{\vert x\vert}>0$ such that $u(t,y)\geq\theta^-$ on $B_{2\vert x\vert}(0)$ as soon as $t\geq t_{\vert x\vert}$.  Let us examine the integral between 0 and $t$ in the Duhamel formula \eqref{e3.330}.  If $t\leq t_{\vert x\vert}$ we have
$$t-s-\di\int_s^tf[u(\sigma,x)]\md\sigma\geq\bigl(1-f'(0)\bigl)t_{\vert x\vert},
$$
 while, if $t>t_{\vert x\vert}$, we have, for $s<t_{\vert x\vert}$:
$$
t-s-\di\int_s^tf'[u(\sigma,x)]\md\sigma=\di t-s-\biggl(\int_s^{t_{\vert x\vert}}+\int_{t_{\vert x\vert}}^t\biggl)f'[u(\sigma,x)]\md\sigma\\
\geq\bigl(1-\di\frac{f'(1)}2\bigl)(t-t_{\vert x\vert})-f'(0)t_{\vert x\vert};
$$
so that we have
$$
\int_0^te^{-(t-s)+\int_s^t[f'[u(\sigma,x)]\md\sigma}\vert\partial_{x_i}\ck_*\vert*u(s,x)\md s\leq t_{\vert x\vert}\Vert\partial_{x_i}\ck_*\Vert_\infty e^{f'(0)t_{\vert x\vert}}.
$$
The term $e^{-t+\int_0^t[f'[u(\sigma,x)]\md\sigma}\vert\partial_{x_i}u_0\vert$ is bounded in the same fashion, which implies the $\vert x\vert$-dependent bound. 

Let us now prove a gradient bound that is uniform in $x$. Let $t_0$ be such that Proposition \ref{p2.100} is valid, and $t_1\geq t_0$ be such that $\dot X_e(t)>0$ for all $t\geq t_1$. Pick $\lambda<\lambda^*(n_e)$, close to $\lambda^*(n_e)$, such that
$$
1+\lambda^*(n_e)c^*(n_e)-f'(0)\geq\frac12\int_{\RR^N}e^{\lambda^*(n_e)y\cdot n_e}\ck(y\cdot n_e,y_e^\bot)\md y.
$$
Let us assume 
$$
x\cdot n_e\geq X_e(t_1)+z_-,
$$
a bound for $\vert\nabla u(t,x)\vert$ being granted if the opposite is true. Let $\tau_x>t_1$ be such that 
$$
x\cdot n_e=X_e(\tau_x)+z_-;
$$
the discussion is organised around two cases in which it is once again enough to examine the  the integral between 0 and $t$ in the Duhamel formula.

\noindent {\bf Case 1. $t\leq\tau_x$.} We use the bound $u(s,y)\leq Ce^{-\lambda[x-X_e(s)]}$, for $s\leq t$ and $\vert y_e^\bot\vert\leq R$, so that the integrand in the Duhamel formula \eqref{e3.330} satisfies, still for $s\leq t$:
$$
\begin{array}{rll}
\di e^{-(t-s)+\int_s^t[f'[u(\sigma,x)]\md\sigma}\vert\partial_{x_i}\ck_*\vert*u(s,x)\leq&C\di e^{-(t-s)+\int_s^t[f'[u(\sigma,x)]\md\sigma-\lambda[x-X_e(s)]}\\
=&C\di e^{-(t-s)+\int_s^t[f'[u(\sigma,x)]\md\sigma-\lambda[X_e(t)-X_e(s)]}e^{-\lambda[x-X_e(t)]}\\
\leq&C\di e^{-(t-s)[f'(0)+1+c^*(n_e)\lambda]}e^{-\lambda[x-X_e(t)]}\\
\leq&C\di e^{-\frac{(t-s)}2\int_{\RR^N}e^{\lambda^*(n_e)y\cdot n_e}\ck(y\cdot n_e,y_e^\bot)\md y}e^{-\lambda[x-X_e(t)]},
\end{array}
$$
the constant $C>0$ being universal. As a consequence we have $\vert\partial_{x_i}u(t,x)\vert\leq Ce^{-\lambda[x-X_e(t)]}$.

\noindent \noindent {\bf Case 2. $t\geq\tau_x$.}  We   write $\di \int_0^t=\int_0^{\tau_x}+\int_{\tau_x}^t$; the first integral is bounded as in Case 1, while the second is bounded using that we now have $f'[u(\sigma,x)]\leq\di\frac{f'(1)}2$ if $\sigma\geq s\geq \tau_x$.   This concludes the proof. \end{proof}

\begin{remark}
\label{r2.20}
In fact we have proved more than what is claimed by Theorem \ref{t2.20}: we have $\vert\nabla u(t,x)\vert\lesssim \inf\bigl(1,e^{-\lambda(x\cdot n_e-X_e(t)}\bigl)$, for all $\lambda<\lambda^*(n_e)$. A more precise estimate will be proved in Section \ref{sec5}. 
\end{remark}
\begin{proof}[Proof of Corollary \ref{cor_2.20}] Consider any multi-index $\alpha$ of length $m$, and pick any $\lambda<\lambda^*(n_e)$. The proof runs by induction on $m$, the induction assumption being  $\vert\partial^\alpha u(t,x)\vert\lesssim \inf\bigl(1,e^{-\lambda(x\cdot n_e-X_e(t)}\bigl)$. The equation for $u_\alpha:=\partial^\alpha u$ has the following structure:
\begin{equation}
\label{e6.211}
\partial_tu_{\alpha}+\bigl(1-f'(u)\bigl)u_{\alpha}=\partial^\alpha\ck*u+\sum_{\substack{2\leq k\leq m\\
1\leq\vert\gamma\vert,\vert\gamma'\vert\leq m-1}}a_{k,\gamma,\gamma'}f^{(k)}(u)u_{\gamma_1}^{\gamma_1'}\ldots u_{\gamma_N}^{\gamma_N'},
\end{equation}
the constants $a_{k,\gamma,\gamma'}$ having no remarkable value and the possibility to be zero. Using the induction assumption for all multi-indices of length $<m$ and arguing as in the proof of Theorem \ref{t2.20} easily yields the corollary. \end{proof}

\subsection{The operator $\mathcal{I}_*$ and its first properties}
We transfer \eqref{kpp} to the reference frame moving at speed $w^*(e)$ in the direction $e$ by setting $\bu (t,x)=u(t,x+tw^*(e)e)$,  then we get    
\begin{equation}
	\label{bu}
	\bu_t+\bu-\ck*_{*,n_e}\bu\underbrace{-c^*(n_e)\partial_{n_e}\bu-\bm_{n_e}^\perp\cdot\nabla \bu}_{=-w^*(e)\partial_{e}\bu}=f(\bu).
\end{equation}
The leading edge transformation 
$v(t,x)=e^{\lambda^*(n_e)x\cdot n_e}\bu(t,x)$ yields the following equation for $v$:
\begin{equation}
	\label{e5.101}
	v_t+\cI_{*,n_e}v
	+R(t,x;v)=0,~~~t>0,~x\in\R^N,
\end{equation}
where $\cI_{*,n_e}$ is the linear first order integro-differential operator given by\footnote{Throughout this paper, We use the following definition of the Fourier transform:
	\begin{equation*}
		\widehat{g}(\xi)\triangleq\mathcal{F}[g](\xi):=\int_{\R^N}e^{-\mathbf{i}x\cdot\xi}g(x)\md x,~~~~~~~\mathcal{F}^{-1}[g](x):=\frac{1}{(2\pi)^N}\int_{\R^N}e^{\bi x\cdot\xi}g(\xi)\md\xi.
\end{equation*} }
\begin{equation} 
	\label{e5.100}
	\cI_{*,n_e}v
	=\widehat{\ck_{*,n_e}}(0)v-\ck_{*,n_e}*v-w^*(e)\partial_{e}v
	=\widehat{\ck_{*,n_e}}(0)v-\ck_{*,n_e}*v-c^*(n_e)\partial_{n_e}v-\bm_{n_e}^\perp\cdot\nabla v,
\end{equation}
and
$$R(t,x;v)=f'(0)v-e^{\lambda^*(n_e)x\cdot n_e}f\big( e^{-\lambda^*(n_e)x\cdot n_e}v
			\big).
$$
 Obviously, the nonlinear term $R(t,x;s)$ is always nonnegative for $t>0$, $x\in\R^N$ and $s\in\R$, and it belongs to $O(s^2)$ for $s$ small.
 
 Let us notice right away that $\cI_*$ has elliptic features. Some of the following computations will be especially useful when $\cI_{*,n_e}$ acts on a function whose derivatives become increasingly small, as is the case, for instance, for functions with slow spatial variations. 

For a smooth  function $v(x)$, we have
$$
\begin{array}{rll}
\ck_{*,n_e}*v(x)=\displaystyle\int_{\R^N}\ck_{*,n_e}(y)v(x-y)\md y=&\widehat{\ck_{*,n_e}}(0)v(x)-\nabla v(x)\cdot\displaystyle\int_{\R^N}\ck_{*,n_e}(y)y\md y\\
&\di+\frac12\int_{\R^N}\ck_{*,n_e}(y)y^TD^2v(x)y \md y+O(\Vert D^3v\Vert_{L^\infty(B_R(x)}).
\end{array}
$$
This together with \eqref{e5.100} and \eqref{e2.10} gives that  
$$\cI_{*,n_e}v(x)=-\displaystyle\frac12\displaystyle\int_{\R^N}\ck_{*,n_e}(y)y^TD^2v(x)y\md y+O(\Vert D^3v\Vert_{L^\infty(B_R(x)}).
$$   
Actually, we have
\begin{equation}
	\label{e5.103}
	\displaystyle\int_{\R^N}\ck_{*,n_e}(y)y^TD^2v(x)y\md y=\mathrm{div}\bigl(H_{n_e}\nabla v\bigl)(x)=\sum_{i,j=1}^{N}h_{ij}\partial_{x_i x_j}v(x),
\end{equation}
with $H_{n_e}$ given by
\begin{equation}\label{def_H matrix}
H_{n_e}=\left(\int_{\R^N}x_ix_j \ck_{*,n_e}(x)\md x\right)_{1\le i,j\le N}=(h_{ij,n_e})_{1\leq i,j\leq N}.
\end{equation}
 Obviously, $H_{n_e}$ is symmetric. It is also positive definite: for any $\xi=(\xi_i)_{1\le i\le N}\in\R^N\backslash\{0\}$, we have
 \begin{equation}
 	\label{H_quadratic form}
 	\xi^TH_{n_e}\xi=\int_{\R^N}\ck_{*,n_e}(x)(x\cdot\xi)^2\md x.
 \end{equation}
Choose for instance $\Omega:=\big\{x\in\RR^N| \ck_{*,n_e}(x)\geq\displaystyle\frac12\sup\ck_{*,n_e}\big\}$ which has a positive measure, and does not depend on the particular direction $e$. This, together with the fact that the set $\{x\in\R^N|x\cdot\xi=0\}$ has zero measure, indicates the existence of $\mu>0$ such that
\begin{equation}
	\label{H_lower bound}
	\xi^TH_{n_e}\xi\ge \int_{\Omega}\ck_{*,n_e}(x)(x\cdot\xi)^2\md x>\mu\vert\xi\vert^2.
\end{equation}
 This amounts to saying that the minimal eigenvalue  of $H_{n_e}$ is positive, and that the operator $-\mathrm{div}(H_{n_e}\nabla)$ is uniformly elliptic. The formulation \eqref{e5.103} will appear many times in the sequel.  
 
    Another simple, and nonetheless useful fact, is a Kato-type inequality:
  \begin{proposition}
  \label{p3.200}
  If $\varphi$ is a convex function of a real variable, we have $\varphi'(u)\cI_{*,n_e}u\geq\cI_{*,n_e}(\varphi(u))$ for all functions $u\in C^1(\R^N)$.
  \end{proposition}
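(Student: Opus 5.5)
The plan is to split $\cI_{*,n_e}$ into its local first-order part and its nonlocal zero-order part, and to prove the inequality for each piece separately. By \eqref{e5.100},
$$
\cI_{*,n_e}v(x)=\int_{\R^N}\ck_{*,n_e}(y)\bigl(v(x)-v(x-y)\bigr)\md y-w^*(e)\partial_e v(x),
$$
using $\widehat{\ck_{*,n_e}}(0)=\int_{\R^N}\ck_{*,n_e}$. The integral converges because $\ck_{*,n_e}$ is nonnegative and compactly supported, so no principal value is needed.

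\textbf{Transport part.} For $u\in C^1(\R^N)$ and $\varphi$ convex, I will use the chain rule
$$
\partial_e(\varphi(u))=\varphi'(u)\,\partial_e u .
$$
This gives equality for the first-order part: $\varphi'(u)\bigl(-w^*(e)\partial_e u\bigr)=-w^*(e)\partial_e\bigl(\varphi(u)\bigr)$. If $\varphi$ is only convex and not $C^1$, I will first assume $\varphi\in C^1$ and then recover the general case by approximation: mollify $\varphi$ into convex $\varphi_\varepsilon$ with $\varphi_\varepsilon'\to\varphi'$ at points of differentiability, and pass to the limit pointwise. Alternatively, one can read the statement with $\varphi'$ denoting a subgradient, and the argument below goes through unchanged.

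\textbf{Nonlocal part.} Convexity gives, for all real $a,b$,
$$
\varphi(b)\ge\varphi(a)+\varphi'(a)(b-a).
$$
I apply this with $a=u(x)$ and $b=u(x-y)$ to get
$$
\varphi'(u(x))\bigl(u(x)-u(x-y)\bigr)\ge\varphi(u(x))-\varphi(u(x-y)).
$$
Multiplying by $\ck_{*,n_e}(y)\ge0$ and integrating in $y$ yields
$$
\varphi'(u(x))\int_{\R^N}\ck_{*,n_e}(y)\bigl(u(x)-u(x-y)\bigr)\md y\ \ge\ \int_{\R^N}\ck_{*,n_e}(y)\bigl(\varphi(u(x))-\varphi(u(x-y))\bigr)\md y.
$$
The integrals are finite because the integrands are continuous in $y$ and the kernel has compact support. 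Adding the equality from the transport part gives $\varphi'(u)\cI_{*,n_e}u\ge\cI_{*,n_e}(\varphi(u))$ pointwise in $x$.

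\textbf{Main obstacle.} The only delicate point is the regularity of $\varphi$, namely giving a meaning to $\varphi'(u)\partial_e u$ when $\varphi$ is merely convex. I will handle it by the $C^1$-approximation above, or by restricting to $C^1$ convex $\varphi$, which is all the later sections need. The rest is the one-line tangent-line inequality, and it relies on the positivity of the kernel $\ck_{*,n_e}=e^{\lambda^*(n_e)x\cdot n_e}\ck$.
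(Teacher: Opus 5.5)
Your proof is correct and is essentially the paper's. The nonlocal part is handled by the tangent-line inequality $\varphi(u(x-y))\ge\varphi(u(x))+\varphi'(u(x))\bigl(u(x-y)-u(x)\bigr)$ integrated against the nonnegative kernel $\ck_{*,n_e}$, and the chain rule $\varphi'(u)\nabla u=\nabla\varphi(u)$ gives equality for the drift part.

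One caveat on your side remark about non-smooth $\varphi$. If $\varphi'$ is only a subgradient (for instance $\varphi(s)=|s|$, which is how the paper later applies the inequality, via $\mathrm{sgn}$), the drift identity fails at points where $u(x)$ is a kink of $\varphi$ and $\partial_e u(x)\neq0$. So the argument does not go through ``unchanged'' pointwise; it holds off that Lebesgue-null set, or in integrated form after smoothing $\varphi$.
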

  \begin{proof}
We have, by the convexity inequality:
  $$
  \int_{R^N}\mathcal{K}_{*,n_e}(x-y)\bigl[\varphi\bigl(u(x)\bigl)-\varphi\bigl(u(y)\bigl)\bigl]\bigl)\md y\leq \varphi'\bigl(u(x)\bigl)\int_{\R^N}\mathcal{K}_{*,n_e}(x-y)\bigl(u(x)-u(y)\bigl)\md y.
  $$
Moreover, since $\varphi'(u)\nabla u=\nabla\varphi(u)$, the result is proved.
  \end{proof}

\section{The fundamental solution ahead of the front}
Let $e$ be a direction of the unit sphere and $n_e$ be given by Theorem \ref{t2.1}. The objective of this section is to study the linear problem
  \begin{equation}
  	\label{w-eqn-linear}
  	w_t+\cI_{*,n_e}w=0,~~~t>0,~x\in\R^N.
  \end{equation}
When endowed with a smooth initial datum $w_0(x)$ the solution of \eqref{w-eqn-linear} will be denoted by $\bigl(e^{-t\cI_*}w_0\bigl)(x)$ or, when no ambiguity is possible, $e^{-t\cI_*}w_0(x)$. From now on, in order to alleviate the notation, we will drop the subscript $e$ in $n_e$. 
  
\subsection{The linear equation in the whole space}\label{s3.2}

\begin{theorem}
	\label{thm_heat kernel estimate}
  Let $w_0$ be a bounded function satisfying $\Vert w_0\Vert_{H^m(\R^N)}<+\infty$ for all $m\in\mathbb{Z}$, and $w(t,x)=e^{-t\cI_*}w_0(x)$. Then, there is $T>0$ sufficiently large, and  a 
   kernel ${\mathcal{G}}_n(t,x)$, defined for $t\ge T$ and $x\in\R^N$, such that for any $\varpi\in(\frac{3}{8},\frac{1}{2})$, the following estimate holds true:  
	\begin{equation}
		\label{linear sol estimate}
		\Vert w(t,\cdot)-{\mathcal{G}}_n(t,\cdot)*w_0\Vert_{L^\infty(\R^N)}\le e^{-t^{1-2\varpi}}\Vert w_0\Vert_{H^m(\R^N)},~~~~t\ge T.
	\end{equation}
	Furthermore, by choosing $\delta\in(0,\frac{1}{2}-\varpi)$, the following estimates hold  for $t\ge T$.
\begin{itemize}
\item {\bf The range $\vert x\vert\le t^{\frac12+\delta}$.} 
We have
\begin{equation}\label{estimate-G-1}
{\mathcal{G}}_n(t,x)\approx\frac{e^{-\frac{x^T H_n^{-1} x}{2t}}}{(\sqrt{2\pi t})^{\frac{N}{2}}\sqrt{\det H_n}}\bigg(1+\frac{1}{\sqrt{t}}\mathcal{P}(\beta)+O\big(t^{1-4\varpi}\big)\bigg),
		\end{equation}
		where 
		$$\mathcal{P}(\beta)=\di\sum_{|\alpha|=3}\frac1{\alpha!}\di{\int_{\rn}x^\alpha\ck_{*,n}(x) \md x}\ \beta^\alpha-\frac{1}{2}\int_{\R^N} (x^T H_n^{-1}x)(\beta\cdot x)\ck_{*,n}(x) \md x,~~  \text{with }\beta:=\frac{H_n^{-1}x}{\sqrt{t}}.
		$$
\item {\bf The range $\vert x\vert\ge t^{\frac12+\delta}$.} We have		
		\begin{equation}\label{estimate-G-2}
	{\mathcal{G}}_n(t,x)\le C t^{-\varpi N} e^{-Nt^\delta}.
		\end{equation}
		\end{itemize}
\end{theorem}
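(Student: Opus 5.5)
The approach is through the Fourier transform. The operator $\cI_{*,n}$ has constant coefficients, so $\widehat{w}(t,\xi)=e^{-t\sigma(\xi)}\widehat{w_0}(\xi)$ with symbol
$$
\sigma(\xi)=\widehat{\ck_{*,n}}(0)-\widehat{\ck_{*,n}}(\xi)-\bi\, w^*(e)\,e\cdot\xi .
$$
By \eqref{e2.10}, $\nabla\widehat{\ck_{*,n}}(0)=-\bi\int x\ck_{*,n}=-\bi w^*(e)e$, so the linear term cancels. Taylor expansion then gives
$$
\sigma(\xi)=\tfrac12\xi^TH_n\xi+\tfrac{\bi}{6}\int(x\cdot\xi)^3\ck_{*,n}+O(|\xi|^4)
$$
near $0$. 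Since $\ck$ is compactly supported, $\widehat{\ck_{*,n}}$ is entire and bounded on real $\xi$. Also $\Re\,\sigma(\xi)=\int\ck_{*,n}(x)(1-\cos x\cdot\xi)\,\md x$ is positive for $\xi\neq0$, and it is bounded below by $c\min(|\xi|^2,1)$: for $\xi\neq0$ the set where $\cos(x\cdot\xi)$ is close to $1$ cannot carry all the mass of $\Omega$. It does not tend to $\widehat{\ck_{*,n}}(0)$ at infinity in a way that gives decay, though. Indeed $e^{-t\sigma}$ does not decay in $|\xi|$, because the transport part is unitary and there is no smoothing. This is why the statement uses $H^m$ norms of $w_0$.

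I split frequencies at $|\xi|=t^{-\varpi}$ and define $\widehat{\mathcal G_n}(t,\xi)=\chi(t^{\varpi}\xi)e^{-t\sigma(\xi)}$, where $\chi$ is a smooth cutoff equal to $1$ near $0$.

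\emph{High frequencies.} On the complement, $|e^{-t\sigma}|\le e^{-ct\min(t^{-2\varpi},1)}=e^{-ct^{1-2\varpi}}$. The remainder $w-\mathcal G_n*w_0$ is bounded in $L^\infty$ by the $L^1$ norm of $(1-\chi)e^{-t\sigma}\widehat{w_0}$. Cauchy–Schwarz with the weight $(1+|\xi|^2)^{m/2}$, $m>N/2$, gives \eqref{linear sol estimate} after absorbing constants (slightly adjusting $c$ or $T$).

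\emph{Low frequencies, range $|x|\le t^{1/2+\delta}$.} I rescale $\xi=\zeta/\sqrt t$ and write $t\sigma(\zeta/\sqrt t)=\frac12\zeta^TH_n\zeta+\frac{\bi}{\sqrt t}P_3(\zeta)+tO(|\zeta/\sqrt t|^4)$. On $|\xi|\le t^{-\varpi}$ the error is $O(t|\xi|^4)\le O(t^{1-4\varpi})$, which is small precisely because $\varpi>3/8>1/4$. Expanding $e^{-\bi P_3/\sqrt t}=1-\bi P_3/\sqrt t+O(|\zeta|^6/t)$ and inverting the Fourier transform gives the Gaussian $\frac{e^{-x^TH_n^{-1}x/2t}}{(2\pi t)^{N/2}\sqrt{\det H_n}}$ plus a correction $t^{-1/2}$ times a cubic Hermite-type polynomial in $\beta=H_n^{-1}x/\sqrt t$. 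Computing $\int\zeta^\alpha e^{-\zeta^TH\zeta/2+\bi\zeta\cdot x/\sqrt t}$ for $|\alpha|=3$ yields exactly the two pieces of $\mathcal P(\beta)$: the cubic term in $\beta$, and the $\beta\cdot x$ contraction with $H_n^{-1}$ coming from the linear part of the Hermite polynomial. Truncating at $|\zeta|\le t^{1/2-\varpi}$ costs only Gaussian-small errors. To get a relative error, and not merely an absolute one, when $|x|$ is as large as $t^{1/2+\delta}$, I shift the integration contour to $\xi+\bi\eta$ with $\eta\approx -H_n^{-1}x/t$ (saddle point), which is allowed since $\sigma$ is entire. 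The error terms are then measured against $e^{-x^TH^{-1}x/2t}$, and $\delta<1/2-\varpi$ keeps the shifted contour inside the region $|\xi|\lesssim t^{-\varpi}$ where the Taylor expansion holds.

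\emph{Low frequencies, range $|x|\ge t^{1/2+\delta}$.} I deform further: shift the contour by $\bi\eta$ with $|\eta|$ of order $t^{-\varpi}$ in the direction $-x/|x|$. Then $|\mathcal G_n|\le e^{-\eta\cdot x}\int_{|\xi|\lesssim t^{-\varpi}}|e^{-t\sigma(\xi+\bi\eta)}|\,\md\xi$. The integral is at most $Ct^{-\varpi N}e^{Ct^{1-2\varpi}}$, while $e^{-|\eta||x|}\le e^{-t^{1/2+\delta-\varpi}}$; this dominates since $\tfrac12+\delta-\varpi>1-2\varpi$ requires $\delta>\tfrac12-\varpi$. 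This clashes with the hypothesis, so instead I take $|\eta|=Nt^{-1/2}$. Then $t\,\Re\,\sigma$ changes by $O(1)$ along the shifted contour, and the factor $e^{-N|x|/\sqrt t}\le e^{-Nt^\delta}$ gives \eqref{estimate-G-2}. The cutoff $\chi$ must be taken analytic-friendly (e.g. Gaussian) so that the shift is legitimate.

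The main obstacle is the uniform relative error in \eqref{estimate-G-1} up to $|x|=t^{1/2+\delta}$. That is where the contour shift and the compatibility of the exponents $\varpi$ and $\delta$ must be handled carefully.
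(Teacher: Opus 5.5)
Your overall route is the paper's: Fourier representation, a split at $|\xi|\sim t^{-\varpi}$, Cauchy--Schwarz with an $H^m$ weight at high frequencies, Taylor expansion with a saddle-point shift for $|x|\le t^{\frac12+\delta}$, and an $O(t^{-1/2})$ imaginary shift for $|x|\ge t^{\frac12+\delta}$. There is, however, a genuine gap where the frequency cutoff meets the contour deformation, which you need in both ranges.

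The cutoff you offer to make the shift legitimate, a Gaussian $\chi(t^{\varpi}\xi)=e^{-t^{2\varpi}|\xi|^2}$, breaks \eqref{linear sol estimate}. Near the origin $1-\chi(t^\varpi\xi)\approx t^{2\varpi}|\xi|^2$ does not vanish. So the remainder $(1-\chi(t^\varpi\xi))e^{-t\sigma(\xi)}\widehat{w_0}(\xi)$ still contains the region $|\xi|\sim t^{-1/2}$, where $|e^{-t\sigma}|\approx 1$. As a result $w-\mathcal{G}_n*w_0$ is of order $t^{2\varpi-1-\frac N2}$, for instance at $x=0$ when $\widehat{w_0}(0)\neq 0$. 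That is only polynomially small, whereas the theorem asserts $e^{-t^{1-2\varpi}}$; this exponential smallness is also what Section~\ref{sec5} relies on to absorb the polynomially growing $H^m$ norms of Proposition~\ref{p6.1}. With a compactly supported smooth cutoff, on the other hand, your high-frequency bound is fine, but the integrand is no longer holomorphic, so the shift $\xi\mapsto\xi+\bi\eta$ cannot simply be performed.

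The paper avoids this by defining $\mathcal{G}_n$ with the sharp box cutoff $\mathbbm{1}_{\mathcal{D}(t^{-\varpi})}$ and deforming the holomorphic integrand over the box. The price is the lateral part of the deformed contour, which your proposal never estimates. In the scaled variable $\zeta=\sqrt t\,\xi$ it is $\{\eta+\bi s\beta:\ \eta\in\partial\mathcal{D}(t^{\frac12-\varpi}),\ s\in[0,1]\}$ with $\beta=H_n^{-1}x/\sqrt t$. There the real part of the phase is $-\frac12\eta^TH_n\eta+\bigl(\frac{s^2}{2}-s\bigr)\frac{x^TH_n^{-1}x}{t}+O(t^{1-3\varpi})\le-\frac{d_*}{2}t^{1-2\varpi}(1+o(1))$, since $\frac{s^2}{2}-s\le 0$ on $[0,1]$.

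These pieces are therefore $O(e^{-ct^{1-2\varpi}})$. They are negligible relative to the Gaussian main term $e^{-\frac{x^TH_n^{-1}x}{2t}}\ge e^{-Ct^{2\delta}}$ precisely because $2\delta<1-2\varpi$. That, together with keeping the shifted box inside the region where the holomorphic Taylor remainder is controlled, is where $\delta<\frac12-\varpi$ really enters. The same bound disposes of the lateral pieces for your far-range shift, since $e^{-ct^{1-2\varpi}}\ll e^{-Nt^{\delta}}$.

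A minor point: with the convention $e^{\bi x\cdot\xi}$ used for $w$, the saddle is at $\Im\xi=+H_n^{-1}x/t$, and the far-range shift must point along $+x/|x|$. With your minus signs the factor $e^{-\eta\cdot x}$ grows instead of decaying.
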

\begin{remark}\label{rk_diffusion eqn}
	The main consequence of Theorem \ref{thm_heat kernel estimate} is that the solution $w$ to the linear problem \eqref{w-eqn-linear} is actually approximated by   $w^{app}(t,x)=\mathcal{G}^{app}_n(t,\cdot)*w_0(x)$, with 
	\begin{equation}
		\label{e3.4}
		\mathcal{G}^{app}_n(t,x)=\frac{1}{\sqrt{\det H_n}(2\pi t)^{\frac{N}{2}}}e^{-\frac{x^T H_n^{-1}x}{2t}}.
	\end{equation}
	The function $w^{app}(t,x)$  solves the diffusive equation
	\begin{equation}\label{e3.5}
		\partial_t w^{app}-\frac{1}{2}\mathrm{div}\bigl(H_n\nabla w^{app}\bigl)=0,~t>0,x\in\R^N,~~~~ w^{app}(0,x)=w_0(x).
	\end{equation}
\end{remark}

\begin{proof}[Proof of Theorem \ref{thm_heat kernel estimate}]

	As the set of smooth compactly supported functions of $\RR^N$ is dense in every $H^m(\RR^N)$, we may assume $w_0$ to be compactly supported when it is convenient to us. Applying the Fourier transform to \eqref{w-eqn-linear} gives that
	\begin{equation*}
		\widehat{w}(t,\xi)=e^{t\left(\widehat{\ck_{*,n}}(\xi)+\mathbf{i}w^*(e) e\cdot\xi-\widehat{\ck_{*,n}}(0)\right)}\widehat{w_0}(\xi),~~~~~~t\ge0,~\xi\in\R^N.
	\end{equation*}
	It then follows from the inverse Fourier transform that
	\begin{align}\label{w-expression}
		w(t,x)=\frac{1}{(2\pi)^N}\int_{\R^N}e^{\mathbf{i}x\cdot \xi}e^{t\left(\widehat{\ck_{*,n}}(\xi)+\mathbf{i}w^*(e) e\cdot\xi-\widehat{\ck_{*,n}}(0)\right)}\widehat{w_0}(\xi)\md\xi,~~~t>0,~x\in\R^N.
	\end{align}
	
	Since $\ck_{*,n}\in L^1(\rn)$, the Riemann-Lebesgue lemma implies that $|\widehat{\ck_{*,n}}(\xi)|\to 0$ as $|\xi|\to+\infty$. Thus, there exists $R>1$ sufficiently large such that $|\widehat{\ck_{*,n}}(\xi)|<\frac{1}{2}\widehat{\ck_{*,n}}(0)$ for $|\xi|\ge R$.
	
	Next, let us look at\footnote{For notational simplicity, we denote  the N-dimensional box of radius $r>0$ by $\mathcal{D}\big(r\big):=[-r,r]^N$.}  $t\ge T$  and $\xi\in\mathcal{D}(R)\backslash \mathcal{D}(t^{-\varpi})$, with some $T>0$ sufficiently large, and with  $\varpi\in(1/3,1/2)$ to be determined later. First of all, we notice that when $\xi$ is very close to the inner boundary $\partial \mathcal{D}(t^{-\varpi})$,
	\begin{equation*}
		\widehat{\ck_{*,n}}(0)-\Re\widehat{\ck_{*,n}}(\xi)=\int_{\R^N}(1-\cos(x\cdot \xi))\ck_{*,n}(x)\md x\ge\int_{\R^N}\frac{(x\cdot\xi)^2}{4}\ck_{*,n}(x)\md x\ge C t^{-2\varpi},
	\end{equation*}
	by noticing that $1-\cos z=\di\frac{z^2}{2}+o(z^2)$ as $z\to 0$, and the kernel $\ck_{*,n}$ is compactly supported in $\R^N$. In the remaining case, we claim that
	$\widehat\ck_{*,n}(0)-\Re\widehat\ck_{*,n}(\xi)\ge \delta>0$ for some $\delta>0$. Indeed, for each $\xi\neq 0$, we have that 
	$S_\xi:=\{x\in\textnormal{supp}(\ck_{*,n})|\cos(x\cdot \xi)<1\}$ has a positive measure. Therefore, the function 
	$$p(\xi):=\widehat{\ck_{*,n}}(0)-\Re\widehat{\ck_{*,n}}(\xi)=\int_{\R^N}(1-\cos(x\cdot\xi))\ck_{*,n}(x)\md x
	$$ is continuous and positive whenever $\xi\neq 0$. By the extreme value theorem, it attains a minimum value $\delta>0$ in this annulus, as claimed. Consequently, we conclude that 
	\begin{equation*}
		\widehat{\ck_{*,n}}(0)-\Re\widehat{\ck_{*,n}}(\xi)\ge C t^{-2\varpi}
	\end{equation*}
	for $t\ge T$ and $\xi\in\R^N\backslash \mathcal{D}(t^{-\varpi})$.
	
	Based upon the above analysis,  \eqref{w-expression} can be decomposed as $w(t,x)=\cI_1(t,x)+\cI_2(t,x)$ for $t\ge T$ and $x\in\R^N$, where
	\begin{align*}
		\cI_1(t,x)=&\displaystyle\frac{1}{(2\pi)^N}\int_{\mathcal{D}(t^{-\varpi})}e^{\mathbf{i}x\cdot \xi}e^{t\left(\widehat{\ck_{*,n}}(\xi)+\mathbf{i}w^*(e) e\cdot\xi-\widehat{\ck_{*,n}}(0)\right)}\widehat{w_0}(\xi)\md\xi\\
		\cI_2(t,x)=&\displaystyle\frac{1}{(2\pi)^N}\int_{\R^N\backslash \mathcal{D}(t^{-\varpi})}e^{\mathbf{i}x\cdot \xi}e^{t\left(\widehat{\ck_{*,n}}(\xi)+\mathbf{i}w^*(e) e\cdot\xi-\widehat{\ck_{*,n}}(0)\right)}\widehat{w_0}(\xi)\md\xi.
	\end{align*}

	We first notice that
	\begin{align*}
		|\cI_2(t,x)|\le& \frac{1}{(2\pi)^N}\int_{\R^N\backslash \mathcal{D}(t^{-\varpi})}\Big|e^{t\left(\widehat{\ck_{*,n}}(\xi)+\mathbf{i}w^*(e) e\cdot\xi-\widehat{\ck_{*,n}}(0)\right)}\Big|~|\widehat{w_0}(\xi)|\md\xi\\
		\le&\frac{e^{-t^{1-2\varpi}}}{(2\pi)^N}\left(
		\int_{\R^N\backslash \mathcal{D}(t^{-\varpi})}\frac{1}{(1+|\xi|^2)^m}\md\xi
		\right)^{\frac{1}{2}}\left(\int_{\R^N\backslash \mathcal{D}(t^{-\varpi})}(1+|\xi|^2)^m |\widehat{w_0}(\xi)|^2\md\xi\right)^\frac{1}{2}\\
		\le& C\Vert w_0\Vert_{H^m(\R^N)}e^{-t^{1-2\varpi}},~~~~t\ge T,~~~x\in\R^N.
	\end{align*}
	
	To estimate $\cI_1$, we define  ${\mathcal{G}}_n$ by
	\begin{equation*}
		{\mathcal{G}}_n(t,x):= \frac{1}{(2\pi)^N} \int_{\mathcal{D}(t^{-\varpi})}e^{\mathbf{i}x\cdot \xi}e^{t\left(\widehat{\ck_{*,n}}(\xi)+\mathbf{i}w^*(e) e\cdot\xi-\widehat{\ck_{*,n}}(0)\right)} \md\xi,~~~~t\ge T,~~~x\in\R^N,
	\end{equation*}
	then we have 
	\begin{align*}
		\cI_1(t,x)={\mathcal{G}}_n(t,\cdot)*w_0(x),~~~~t\ge T,~~~x\in\R^N.
	\end{align*}
	As a consequence, we arrive at \eqref{linear sol estimate}.
	
	Next, our goal is to investigate the asymptotics of ${\mathcal{G}}_n$. Using \eqref{e2.10}, we have the expansion
	\begin{equation*}
		\widehat{\ck_{*,n}}(\xi)=\widehat{\ck_{*,n}}(0)-\mathbf{i}w^*(e)e\cdot \xi-\frac{1}{2}\xi^TH_n\xi+\mathbf{i}P_n(\xi)+O\big(|\xi|^4\big),~~~\xi\in \mathcal{D}(t^{-\varpi}),
	\end{equation*} 
	where $H_n$ is defined by \eqref{def_H matrix}, and $P_n(\xi)$ is a polynomial with respect to $\xi$ given by
	\begin{equation}\label{def of poly}
		P_{n}(\xi):=\sum_{|\alpha|=3}\frac1{\alpha!}{\di\int_{\rn}x^\alpha\ck_{*,n}(x) \md x}\xi^\alpha.
	\end{equation}
	The symbol $O\big(|\xi|^4\big)$ above shall be understood as a
	holomorphic Taylor remainder. That is to say, there exists a holomorphic function 
	\begin{equation*}
		\mathcal{R}_{4,n}(z):=\widehat{\ck_{*,n}}(z)-\left(\widehat{\ck_{*,n}}(0)-\mathbf{i}w^*(e)e\cdot z-\frac{1}{2}z^TH_n z+\mathbf{i}P_n(z)\right)
	\end{equation*}
	defined in a complex
	neighborhood of the set swept out by the complex-deformed contour 
	satisfying $|\mathcal{R}_{4,n}(z)|=O(|z|^4)$ uniformly in that neighborhood. We shall use this convention for the remainder terms throughout this proof  without further comment.
	
	After the scaling $\zeta=\xi\sqrt{t}$, since $\di\big|t\mathcal{R}_{4,n}\big(\frac{\zeta}{\sqrt t}\big)\big|=
	O\big(\frac{|\zeta|^4}{t}\big)$, we get
	\begin{equation}
		\label{N-D kernel}
		\begin{aligned}
			{\mathcal{G}}_n(t,x)
			=&\frac{1}{(2\pi\sqrt{t})^N} \int_{\mathcal{D}( t^{\frac{1}{2}-\varpi})}\exp\bigg(\underbrace{\mathbf{i}\frac{x\cdot \zeta}{\sqrt{t}}-\frac{1}{2}\zeta^TH_n\zeta+\mathbf{i}\frac{P_n(\zeta)}{\sqrt{t}}+O\left(\frac{|\zeta|^4}{t}\right)	}_{=:\Psi(t,x,\zeta)}
			\bigg)  \md \zeta.
		\end{aligned}
	\end{equation}
	Define the complex contour\footnote{ Let us remark that
		the multidimensional contour deformation is performed coordinate by coordinate, as an iteration of one-dimensional contour deformations.}
	\begin{equation}\label{Gamma^*-N}
		\Gamma^*:=\left\{\zeta=\eta+\bi\beta~\big|~ \eta\in \mathcal{D}( t^{\frac{1}{2}-\varpi})\right\},~~~~\text{with}~~\beta:=\frac{H_n^{-1}x}{\sqrt{t}}.
	\end{equation}
	Along $\Gamma^*$, $\Psi$ can be formulated as
	\begin{equation*}
		\Psi(t,x,\zeta)=-\frac{1}{2}\eta^T H_n\eta-\frac{x^T H_n^{-1} x}{2t}+\underbrace{\bi\frac{P_n(\eta+\bi \beta)}{\sqrt{t}}+O\Bigg(\frac{|\eta+\bi\beta |^4}{t}\Bigg)}_{=:\psi(t,x,\eta)}.
	\end{equation*}
	Suppose that the remainder $\psi$ is negligible, i.e.
	\begin{equation}\label{cdn-1-N}
		\big|\psi(t,x,\eta)\big|=\bi\frac{P_n(\eta+\bi \beta)}{\sqrt{t}}+\frac{|\eta+\bi \frac{H_n^{-1}x}{\sqrt{t}} |^4}{t}\ll 1~~~~~\text{for}~~\eta\in \mathcal{D}( t^{\frac{1}{2}-\varpi}),
	\end{equation}
	then $\Psi$ can be dominated by $-\frac{1}{2}\eta^T H_n\eta-\frac{x^T H_n^{-1} x}{2t}$. Actually, \eqref{cdn-1-N} can be achieved by requiring
	\begin{equation*}
		\frac{|\eta|}{t^{\frac{1}{6}}}\ll 1,~~~~~\frac{|x|}{t^{\frac{2}{3}}}\ll 1~~~\text{for}~~\eta\in \mathcal{D}( t^{\frac{1}{2}-\varpi}).
	\end{equation*}
	To this end, it is sufficient to impose
	\begin{equation*}
		\varpi\in\left(\frac{3}{8},\frac{1}{2}\right),~~~~~~~~x\in\mathcal{D}\big(t^{\frac{1}{2}+\delta}\big), ~~~~~\text{with}~~\delta\in\left(0,\frac{1}{2}-\varpi\right).
	\end{equation*}
	In what follows,  we will therefore proceed with our analysis via dividing into two situations: either $x\in\mathcal{D}\big(t^{\frac{1}{2}+\delta}\big)$ or $x\in\R^N\backslash \mathcal{D}\big(t^{\frac{1}{2}+\delta}\big)$.
	\medskip

	\noindent
	{\bf Case 1.} $x\in \mathcal{D}\big(t^{\frac{1}{2}+\delta}\big)$. Define
	\begin{equation*}
		\Gamma:=\left\{\zeta=\eta+\bi s\beta~\big|~ \eta\in\mathcal{D}\big(t^{\frac{1}{2}-\varpi}\big),~0\le s\le 1\right\}.
	\end{equation*}
	Together with $\Gamma^*$ given in \eqref{Gamma^*-N}, we recast \eqref{N-D kernel} as
	\begin{align*}
		{\mathcal{G}}_n(t,x)=\frac{1}{(2\pi\sqrt{t})^N}\int_{\Gamma^*\cup\Gamma}\exp\big(\Psi(t,x,\zeta)\big) \md \zeta.
	\end{align*}
	
	On the one hand, we observe that on $\Gamma$,
	\begin{equation*}
		\Psi(t,x,\zeta)=\bi\frac{x\cdot\eta}{\sqrt{t}}-s\frac{ x\cdot \beta}{\sqrt{t}}-\frac{1}{2}(\eta+\bi s\beta)^T H_n (\eta+\bi s\beta) +O(t^{1-3\varpi}),
	\end{equation*}
	due to the fact that $\psi$ is in $O(t^{1-3\varpi})$.
	Therefore, 
	\begin{align*}
		\Re\Psi(t,x,\zeta)&=-s\frac{ x\cdot \beta}{\sqrt{t}}-\frac{1}{2}\Big(\eta^T H_n \eta-s^2\beta^T H_n \beta
		\Big)+O(t^{1-3\varpi})\le -\frac{1}{2}\eta^T H_n\eta(1+o(1)).
	\end{align*}
	Since the matrix $H_{n}$ is symmetric and positive definite, there exists an orthogonal matrix $Q_n$ such that
	\begin{equation*}\label{min eigen}
		Q_n^TH_nQ_n=\diag(d_{1,n},d_{2,n},\dots,d_{N,n})=:\Lambda_n,~~~\text{with}~~d_*:=\min\big(d_{1,n},d_{2,n},\dots,d_{N,n}\big)>0.
	\end{equation*}
	Then, we get
	\begin{align*}
		\frac{1}{(2\pi\sqrt{t})^N}\left|\int_{\Gamma}\exp\big(\Psi(t,x,\zeta)\big) \md \zeta\right|\le C t^{-\varpi N}e^{-\frac{1}{2}d_*t^{1-2\varpi}}.
	\end{align*}

	On the other hand, along $\Gamma^*$,  we derive  from the definition \eqref{def of poly} of the polynomial $P_n$ that
	\begin{align*}
		&P_n(\zeta)
		=P_{n}(\eta+\bi \beta)=\sum_{|\alpha|=3}\frac1{\alpha!}{\di\int_{\rn}x^\alpha\ck_{*,n}(x) \md x}\big(\eta+\bi \beta\big)^\alpha\\
		=&\frac{1}{6}\sum_{i,j,k=1}^N \bigg(\int_{\R^N} x_i x_j x_k\ck_{*,n}(x) \md x\bigg)(\eta_i+\bi \beta_i)(\eta_j+\bi \beta_j)(\eta_k+\bi \beta_k)\\
		=&P_n(\eta)-\frac{1}{2}\sum_{i,j,k=1}^N \bigg(\int_{\R^N} x_i x_j x_k\ck_{*,n}(x) \md x\bigg)\eta_i\beta_j\beta_k
		+\bi\Bigg(
		\frac{1}{2}\sum_{i,j,k=1}^N \bigg(\int_{\R^N} x_i x_j x_k\ck_{*,n}(x) \md x\bigg)\eta_i\eta_j\beta_k- P_n(\beta)\Bigg)
	\end{align*}
	with 
	\begin{equation*}
		E_n(\beta)=\Bigg(\sum_{k=1}^N \bigg(\int_{\R^N} x_i x_j x_k\ck_{*,n}(x) \md x\bigg)\beta_k\Bigg)_{1\le i,j\le N}.
	\end{equation*}
	It then follows from a Taylor expansion and the change of variable $\rho=\sqrt{H_n}\eta$  that
	\begin{align*}
		&\frac{1}{(2\pi\sqrt{t})^N}\int_{\Gamma^*}\exp\big(\Psi(t,x,\zeta)\big) \md \zeta\\
		&=\frac{1}{(2\pi\sqrt{t})^N}\int_{\mathcal{D}(t^{\frac{1}{2}-\varpi})}\exp\bigg(-\frac{1}{2}\eta^T H_n\eta-\frac{x^T H_n^{-1} x}{2t}+\psi(t,x,\eta)\bigg) \md \eta\\
		&\approx\frac{1}{(2\pi\sqrt{t})^N}\int_{\R^N}\exp\bigg(-\frac{1}{2}\eta^T H_n\eta-\frac{x^T H_n^{-1} x}{2t}+\psi(t,x,\eta)\bigg) \md \eta\\
		&=\frac{e^{-\frac{x^T H_n^{-1} x}{2t}}}{(2\pi\sqrt{t})^N}\int_{\R^N}e^{-\frac{1}{2}\eta^T H_n\eta}\bigg(1-\frac{1}{\sqrt{t}}\Big(\frac{1}{2}\eta^TE_n(\beta)\eta-P_n(\beta)\Big)+O(t^{1-4\varpi})\bigg) \md \eta\\
		&=\frac{e^{-\frac{x^T H_n^{-1} x}{2t}}}{(2\pi\sqrt{t})^N\sqrt{\det H_n}}\int_{\R^N}e^{-\frac{1}{2}|\rho|^2}\bigg(1+\frac{1}{\sqrt{t}}\Big( P_n(\beta)-\frac{1}{2}\rho^TE_n^*(\beta)\rho\Big)+O(t^{1-4\varpi})\bigg)\md\rho\\
		&=\frac{e^{-\frac{x^T H_n^{-1} x}{2t}}}{(\sqrt{2\pi t})^{\frac{N}{2}}\sqrt{\det H_n}}\bigg(1+\frac{1}{\sqrt{t}}\Big(P_n(\beta)-\frac{1}{2}\text{tr}E_n^*(\beta)\Big)+O\Big(t^{1-4\varpi}\Big)\bigg),
	\end{align*}
	where, beacuse $H_n$ is symmetric and positive definite, we have $E_n^*(\beta)=H_n^{-\frac{1}{2}} E_n(\beta)H_n^{-\frac{1}{2}}$,  and
	\begin{equation*}
		\text{tr}E_n^*(\beta)=	\text{tr}\big(E_n(\beta)H_n^{-1}\big)=\int_{\R^N} (x^T H_n^{-1}x)( \beta\cdot x)\ck_{*,n}(x) \md x,
	\end{equation*}
	whence we conclude that 
	\begin{equation}\label{estimate1-N}
			{\mathcal{G}}_n(t,x)=\frac{1}{(2\pi\sqrt{t})^N}\int_{\Gamma^*\cup\Gamma}\exp\big(\Psi(t,x,\zeta)\big) \md \zeta  \approx
			 \frac{e^{-\frac{x^T H_n^{-1} x}{2t}}}{(\sqrt{2\pi t})^{\frac{N}{2}}\sqrt{\det H_n}}\bigg(1+\frac{1}{\sqrt{t}}\mathcal{P}(\beta)+O\Big(t^{1-4\varpi}\Big)\bigg),
	\end{equation}
	with
	\begin{equation*}
		\mathcal{P}(\beta):=	P_{n}(\beta)-\frac{1}{2}\text{tr}E_n^*(\beta)=\sum_{|\alpha|=3}\frac1{\alpha!}{\di\int_{\rn}x^\alpha\ck_{*,n}(x) \md x}\beta^\alpha-\frac{1}{2}\int_{\R^N} (x^T H_n^{-1}x)(\beta\cdot x)\ck_{*,n}(x) \md x.
	\end{equation*}

	\noindent
	{\bf Case 2.} $x\in\R^N\backslash \mathcal{D}\big(t^{\frac{1}{2}+\delta}\big)$. The term $\psi(t,x,\eta)$ may not be negligible, for which we introduce 
	\begin{equation*}
		\Gamma^1:=\left\{\zeta=\eta+\bi s\mathbf{1}~\big|~\eta\in\partial  \mathcal{D}\big(t^{\frac{1}{2}-\varpi}\big), ~0\le s\le 1\right\},~~~~~\Gamma^2=\left\{\zeta=\eta+\bi\mathbf{1}~\big|~ \eta\in \mathcal{D}\big(t^{\frac{1}{2}-\varpi}\big)\right\},
	\end{equation*}
	with
	\begin{equation*}
		\mathbf{1}=(1,1,\cdots,1)\in\rn.
	\end{equation*}
	On $\Gamma^1$, we find that
	\begin{align*}
		\Psi(t,x,\zeta)=\mathbf{i}\frac{x }{\sqrt{t}}\cdot(\eta+\bi s\mathbf{1})-\frac{1}{2}(\eta+\bi s\mathbf{1})^T H_n (\eta+\bi s\mathbf{1}) +O\left(t^{1-3\varpi}\right),
	\end{align*}
	whence 
	\begin{equation*}
		\Re\Psi(t,x,\zeta)=-s\frac{x\cdot\mathbf{1}}{\sqrt{t}}-\frac{1}{2}\Big(
		\eta^T H_n \eta-\mathbf{1}^T H_n \mathbf{1}\Big) +O(t^{1-3\varpi})
		\le -\frac{1}{2}
		\eta^T H_n \eta(1+o(1)). 
	\end{equation*}
	On $\Gamma^2$,
	\begin{equation*}
		\Psi(t,x,\zeta)=	\mathbf{i}\frac{x }{\sqrt{t}}\cdot(\eta+\bi\mathbf{1})-\frac{1}{2}(\eta+\bi\mathbf{1})^T H_n (\eta+\bi\mathbf{1})+O\left(t^{1-3\varpi}\right),
	\end{equation*}
	therefore 
	\begin{equation*}
		\Re \Psi(t,x,\zeta)\le -\frac{x\cdot \mathbf{1}}{\sqrt{t}}+O(1)\le -t^{\delta}+O(1).
	\end{equation*}
	We then conclude that
	\begin{align}\label{estimate2-N}
		\big|{\mathcal{G}}_n(t,x)\big|&=\frac{1}{(2\pi\sqrt{t})^N}\bigg|\int_{\Gamma^1\cup\Gamma^2}\exp\big(\Psi(t,x,\zeta)\big) \md \zeta\bigg|\le Ct^{-\varpi N}e^{-Nt^\delta}.
	\end{align}

	\medskip
	
	\noindent
	{\bf Conclusion.} Gathering \eqref{estimate1-N} and \eqref{estimate2-N} altogether, we achieve \eqref{estimate-G-1} and \eqref{estimate-G-2}, as desired. This completes the proof of Theorem \ref{thm_heat kernel estimate}.
\end{proof}

The analysis leading to Theorem \ref{thm_heat kernel estimate} starts by the analysis of a Fourier transform. This can also be pursued to prove the following proposition: while its proof is almost immediate and, in addition, will not necessarily be useful in the sequel, it reinforces the elliptic analogy. 
\begin{proposition}
If $v\in L^2(\RR^N;\R)$, we have
\begin{equation*}
	\int_{\R^N}v(x)\cI_{*,n}v(x)\md x\geq C\int_{\R^N}\min\big(|\xi|^2,1\big)|\hat v(\xi)|^2\md \xi.
\end{equation*} 
\end{proposition}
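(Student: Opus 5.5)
The plan is to pass to Fourier variables through Plancherel and compute the symbol of $\cI_{*,n}$, exactly as was done at the start of the proof of Theorem \ref{thm_heat kernel estimate}. By the Fourier convention of the paper, $\widehat{\cI_{*,n}v}(\xi)=m(\xi)\hat v(\xi)$ with
$$
m(\xi)=\widehat{\ck_{*,n}}(0)-\widehat{\ck_{*,n}}(\xi)-\bi w^*(e)\,e\cdot\xi .
$$
Since $v$ is real, Plancherel gives
$$
\int_{\R^N}v\,\cI_{*,n}v\,\md x=(2\pi)^{-N}\int_{\R^N}m(\xi)|\hat v(\xi)|^2\md\xi .
$$
The left-hand side is real, and $|\hat v|^2$ is even in $\xi$. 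The imaginary part of $m$ is odd in $\xi$, because $\ck_{*,n}$ is real. Hence only the real part contributes:
$$
\int_{\R^N}v\,\cI_{*,n}v\,\md x=(2\pi)^{-N}\int_{\R^N}p(\xi)|\hat v(\xi)|^2\md\xi,\qquad p(\xi)=\int_{\R^N}\bigl(1-\cos(x\cdot\xi)\bigr)\ck_{*,n}(x)\md x .
$$
(Strictly speaking, $\cI_{*,n}v$ involves $\nabla v$. I will state the identity for $v\in H^1$ or Schwartz functions, where the drift term contributes $\int v\,\partial_e v=0$ anyway, and interpret the left-hand side by density or as the quadratic form for general $v\in L^2$.)

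It then remains to show $p(\xi)\ge C\min(|\xi|^2,1)$, which is the same lower bound already obtained in the proof of Theorem \ref{thm_heat kernel estimate}. I would split into three regimes.

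\emph{Small $|\xi|\le r_0$.} Use $1-\cos z\ge z^2/4$ for $|z|\le1$. Since $\ck_{*,n}$ is supported in $B_R$, taking $r_0=1/R$ ensures $|x\cdot\xi|\le 1$ on the support. Then $p(\xi)\ge\frac14\xi^TH_n\xi\ge\frac\mu4|\xi|^2$ by \eqref{H_lower bound}.

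\emph{Intermediate $r_0\le|\xi|\le R_1$.} Here $p$ is continuous and positive, since $\ck_{*,n}$ has a density and the set $\{\cos(x\cdot\xi)=1\}$ has measure zero. On this compact annulus it is therefore bounded below by some $\delta>0$.

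\emph{Large $|\xi|\ge R_1$.} By Riemann–Lebesgue, $|\widehat{\ck_{*,n}}(\xi)|\le\frac12\widehat{\ck_{*,n}}(0)$ once $R_1$ is large, so $p(\xi)\ge\frac12\widehat{\ck_{*,n}}(0)$.

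Combining the three regimes gives the claim. The constant $C$ can be taken uniform in $e$, because $\lambda^*$ is bounded above and below, so $\ck_{*,n}$ ranges over a compact family of kernels. The only point needing care is this uniformity in the intermediate region, which follows by continuity of $p$ jointly in $(n,\xi)$ on a compact set.
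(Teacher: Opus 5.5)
Your proposal is correct and follows the paper's proof essentially verbatim. Plancherel reduces the quadratic form to $(2\pi)^{-N}\int p(\xi)|\hat v(\xi)|^2\,\md\xi$ with $p=\widehat{\ck_{*,n}}(0)-\Re\widehat{\ck_{*,n}}$, and $p(\xi)\ge C\min(|\xi|^2,1)$ is exactly the three-regime argument from the beginning of the proof of Theorem~\ref{thm_heat kernel estimate}. Your extra care only makes explicit what the paper leaves implicit: defining the form on $H^1$ and extending by density, the oddness of the imaginary part of the symbol, and the explicit radius $r_0=1/R$.

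The closing remark on uniformity in $e$ is not needed for the statement. If you keep it, the Riemann--Lebesgue radius $R_1$ must also be chosen uniformly in $n$, not just the intermediate bound $\delta$. This holds because $\{\ck_{*,n}\}_{n\in\sn}$ is a compact family in $L^1(\R^N)$.
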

\begin{proof}
By Plancherel's equality, it is equivalent to looking at $\int_{\RR^N}\overline{\hat v(\xi) }\widehat{\cI_{*,n}v}(\xi)\md\xi$. As it is  real, we  have
\begin{align*}
	\int_{\RR^N}\overline{\hat v(\xi) }\widehat{\cI_{*,n}v}(\xi)\md x=\Re\int_{\RR^N}\overline{\hat v(\xi) }\widehat{\cI_{*,n}v}(\xi)\md x
	&=\displaystyle\int_{\R^N}\big((\widehat{\ck_{*,n}}(0)-\Re\widehat{\ck_{*,n}}(\xi)\big)\vert \hat v(\xi)\vert^2 \md\xi.
\end{align*}
The conclusion follows by repeating the argument given at the beginning of Proof of Theorem \ref{thm_heat kernel estimate}. 
 \end{proof}

\subsection{The linear equation with a well spread initial datum}
While Theorem \ref{thm_heat kernel estimate} will be an essential brick in the construction of  barriers for the solutions of the nonlinear problem, it has the drawback of involving a time $T$, after which it is valid.  This will turn out to be harmless when we examine the solutions from the initial time onwards. It will however become a liability if we wish to examine the Cauchy problem starting from an arbitrarily large time. Nevertheless, as time goes on, the solutions of the nonlinear problem ahead of the front will eventually varies on the diffusive spatial scale $\sqrt t$. The following proposition, inspired form \cite[Chapter 2]{Roquejoffre2023}, takes advantage of this structure to give estimates  not involving a waiting time.  Hereafter, we set
\begin{equation}
\label{e3.75}
	A_n:=-\frac{1}{2}\mathrm{div}\big(H_n\nabla\big).
\end{equation} 
The main result of this section is the
 \begin{proposition}
\label{p3.1}
Consider a family of functions $(v_s)_{s\geq0}$ defined by
 \begin{equation}
\label{e3.70}
v_s(x)=\bnu\Big(s,\frac{x}{\sqrt{s}}\Big),~~~~x\in\R^N,
\end{equation}
such that, for each $s>0$ we have $\bnu(s,.)\in H^m(\R^N)\cap L^1(\R^N)$, for every integer $m$. Pick $\varsigma\in(0,\displaystyle\frac1{8(2N+5)}]$, set $q_N=2N+5$ and $m_N=5N+7$. There is a family $\bigl(Q_\alpha(\tau)\bigl)_{3\leq\vert\alpha\leq q_N}$ of polynomials in $\tau$, of degree less than $q_N$, such that,
for all $\tau\in(0,s^{\varsigma})$ we have 
\begin{equation*}
\label{e3.71}
\biggl\vert e^{-\tau\cI_{*,n}}v_s(x)-\biggl(\bigl(e^{\frac{\tau}{s}A_n}\bnu(s,.)\bigl)+\sum_{3\leq\vert\alpha\vert\leq q_N}\frac{Q_\alpha(\tau)}{s^{\frac{\vert\alpha\vert}2}}\bigl(e^{\frac{\tau}{s}A_n}\partial^\alpha \bnu(s,.)\bigl)\biggl)(\frac{x}{\sqrt s})\biggl\vert\lesssim\frac{\tau}{s^{\frac{N}{2}+1}} \Vert\bnu(s,.)\Vert_{L^1\cap H^{m_N}}.
\end{equation*}
\end{proposition}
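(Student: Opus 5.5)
Since $\cI_{*,n}$ is a Fourier multiplier, we work entirely on the Fourier side. Write $m(\xi):=\widehat{\ck_{*,n}}(0)-\widehat{\ck_{*,n}}(\xi)-\bi w^*(e)e\cdot\xi$, so that $\widehat{e^{-\tau\cI_{*,n}}v}(\xi)=e^{-\tau m(\xi)}\hat v(\xi)$. The scaling \eqref{e3.70} gives $\hat v_s(\xi)=s^{N/2}\hat\bnu(s,\sqrt s\,\xi)$. After the change of variable $\zeta=\sqrt s\,\xi$ we obtain
$$e^{-\tau\cI_{*,n}}v_s(x)=\frac{1}{(2\pi)^N}\int_{\R^N}e^{\bi\frac{x}{\sqrt s}\cdot\zeta}e^{-\tau m(\zeta/\sqrt s)}\hat\bnu(s,\zeta)\,\md\zeta.$$
By \eqref{e2.10}, the first-order terms cancel. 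Taylor expansion of $\widehat{\ck_{*,n}}$, which is entire since $\ck_{*,n}$ has compact support, gives
$$m(\xi)=\tfrac12\xi^TH_n\xi-\sum_{3\le|\alpha|\le q_N}\frac{(-\bi)^{|\alpha|}}{\alpha!}\Bigl(\int y^\alpha\ck_{*,n}\Bigr)\xi^\alpha+O(|\xi|^{q_N+1}).$$
The quadratic part produces exactly $e^{\frac{\tau}{s}A_n}$ acting on $\bnu(s,\cdot)$ in the variable $x/\sqrt s$.

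\textbf{Step 1 (splitting).} Split the $\zeta$-integral into a low-frequency zone $|\zeta|\le s^{\kappa}$, with $\kappa$ small (say $\kappa=\varsigma$), and its complement. On the complement, use $|e^{-\tau m}|\le 1$ (since $\Re m\ge0$) together with the identical bound for the approximating expression. The $H^{m_N}$ norm gives
$$\int_{|\zeta|>s^\kappa}|\hat\bnu|\,\md\zeta\lesssim s^{-\kappa(m_N-N/2)}\|\bnu\|_{H^{m_N}}.$$
The value $m_N=5N+7$ is chosen so that this is $\lesssim s^{-N/2-1}$. For the claimed factor $\tau$, we use that both exact and approximate expressions equal $\bnu$ at $\tau=0$. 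It then suffices to bound the $\tau$-derivative, $\int|m(\zeta/\sqrt s)||\hat\bnu|$, and similarly for the approximation; the extra powers of $|\zeta|$ are absorbed by the Sobolev weight.

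\textbf{Step 2 (low frequencies).} On $|\zeta|\le s^\kappa$, write
$$e^{-\tau m(\zeta/\sqrt s)}=e^{-\frac{\tau}{2s}\zeta^TH_n\zeta}\exp\Bigl(\tau\sum_{\alpha}c_\alpha\frac{\zeta^\alpha}{s^{|\alpha|/2}}+\tau O\bigl(|\zeta|^{q_N+1}s^{-(q_N+1)/2}\bigr)\Bigr).$$
Since $\tau\le s^\varsigma$ and $|\zeta|\le s^\kappa$, the argument of the second exponential is $O(s^{\varsigma+3\kappa-3/2})\ll1$. We expand it as a power series truncated at total order $q_N$ in $s^{-1/2}$. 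Each monomial $\tau^k\prod_j\zeta^{\alpha_j}s^{-|\alpha_j|/2}$ regroups, by collecting multi-indices, as $Q_\alpha(\tau)s^{-|\alpha|/2}\zeta^\alpha$. Here $Q_\alpha$ is a polynomial in $\tau$ of degree at most $|\alpha|/3<q_N$ that vanishes at $\tau=0$. The factor $\zeta^\alpha$ turns into $\partial^\alpha$ (up to powers of $\bi$ absorbed into $Q_\alpha$), which gives the terms $e^{\frac\tau sA_n}\partial^\alpha\bnu(s,\cdot)$. The truncation error is bounded by $\tau^{k}s^{-(q_N+1)/2}|\zeta|^{C q_N}$, possibly with a $\tau^{q_N}$ factor. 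Using $\tau\le s^{\varsigma}$ with $\varsigma\le 1/(8q_N)$, $\tau^{q_N}\le s^{1/8}$, so the error is $\lesssim\tau s^{-(q_N+1)/2+1/8}|\zeta|^{Cq_N}$. Since $(q_N+1)/2=N+3$, this is far below $\tau s^{-N/2-1}$. Integrating against $|\hat\bnu|$ is controlled by $\|\bnu\|_{H^{m_N}}$, or by $\|\bnu\|_{L^1}$ times the volume $s^{N\kappa}$. The terms dropped for $|\zeta|>s^\kappa$ in the approximation are handled as in Step 1.

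\textbf{Step 3 (returning to physical space).} Each piece is an inverse Fourier transform evaluated at $x/\sqrt s$. The $L^\infty$ bound comes from $\|\hat g\|_{L^1}$, which yields the stated estimate; the $L^1$ norm of $\bnu$ is used for the bounded region, where $|\hat\bnu|\le\|\bnu\|_{L^1}$.

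\textbf{Main obstacle.} The main difficulty is the bookkeeping in Step 2. We must make sure that the truncated exponential expansion is uniform in $\tau\in(0,s^\varsigma)$, which is where $\varsigma\le 1/(8q_N)$ and the high power of $\tau$ in the remainder must be beaten. We must also extract a genuine factor $\tau$ in every error term rather than just smallness. I would handle both at once by writing the error as $\int_0^\tau\partial_\sigma(\cdots)\,\md\sigma$ and estimating the integrand uniformly.
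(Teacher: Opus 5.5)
\textbf{Verdict: the approach is right, but the step fixing the frequency cutoff fails as written.} Your route is the paper's own (Appendix A): Fourier multiplier, the rescaling $\zeta=\sqrt s\,\xi$, a low/high frequency split, Taylor expansion of the symbol to order $q_N$, expansion of the exponential of the cubic and higher terms, and regrouping into $Q_\alpha(\tau)s^{-|\alpha|/2}e^{\frac\tau s A_n}\partial^\alpha\bnu$.

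\textbf{The gap is in Step 1.} With the cutoff $|\zeta|>s^{\kappa}$ and $\kappa=\varsigma$, the Sobolev tail is only $s^{-\varsigma(m_N-N/2)}$. Since $\varsigma(m_N-\frac N2)\le\frac{9N+14}{16(2N+5)}<\frac{9}{32}$, this is nowhere near the required exponent $\frac N2+1$. So $m_N=5N+7$ is \emph{not} ``chosen so that'' this works. With $\kappa=\varsigma$ you would need $m\ge 8(2N+5)(\frac N2+1)$.

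\textbf{The fix is a fixed cutoff.} Nothing forces $\kappa$ to be small: Step 2 only needs $\tau|\zeta|^3s^{-3/2}\ll1$, i.e.\ $\kappa<\frac12-\frac{\varsigma}{3}$. The paper cuts at $|\zeta|=s^{1/4}$ ($\varpi=\frac14$). With that choice your Step 1 closes, even with your $\tau$-derivative weights: $|m(\zeta/\sqrt s)|\lesssim 1+|\zeta|$, the term $|\zeta|^2/s$ from the Gaussian, and $|Q_\alpha(\tau)|s^{-|\alpha|/2}|\zeta|^{|\alpha|}$ with $|Q_\alpha(\tau)|\lesssim\tau s^{1/8}$. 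Every tail is then $O\bigl(\tau s^{-\frac14(m_N-\frac N2-2)}\|\bnu\|_{H^{m_N}}\bigr)$, and $\frac14(\frac92N+5)\ge\frac N2+1$.

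\textbf{The larger cutoff changes how Step 2 must be paid for.} With $\kappa=\frac14$, your option of bounding the low-frequency remainder by $\|\bnu\|_{L^1}$ times the volume $s^{N\kappa}$ loses $s^{N/4}$. It then no longer beats $\tau s^{-N/2-1}$ once $N\ge2$. Likewise, a loose power $|\zeta|^{Cq_N}$ with $C>1$ would exceed what $H^{m_N}$ controls. Instead:
\begin{itemize}
\item On the ball, use $|\zeta|/\sqrt s\le 1$ to reduce every discarded monomial, the symbol's Taylor remainder, and the exponential-series tail to $|\zeta|^{q_N+1}s^{-(q_N+1)/2}\le s^{-(q_N+1)/4}$.
\item This gives the uniform bound $\lesssim \tau s^{1/8}s^{-\frac{N+3}{2}}=\tau s^{-\frac N2-\frac{11}{8}}$.
\item Pay for it with $\|\hat\bnu(s,\cdot)\|_{L^1}\lesssim\|\bnu(s,\cdot)\|_{H^{N/2+1}}$.
\end{itemize}
The paper's appendix also charges this term to $\|\bnu\|_{L^1}$, but with a volume factor $s^{\varpi}$ where $s^{N\varpi}$ is needed. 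So the Sobolev route is the one that actually closes.

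\textbf{What your argument adds.} With these two adjustments your proof goes through. Your observation that exact and approximate expressions coincide at $\tau=0$ is what produces the factor $\tau$ in the final bound. Concretely, every low-frequency error monomial carries a power of $\tau$, and the high-frequency difference is controlled through its $\tau$-derivative. The paper's appendix does not derive that factor; it stops at an $O(s^{-N/2-1})$ error.
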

The proof of the proposition is conceptually easy but technical, and does not bring additional elements of understanding. It is therefore deferred to the appendix. We just make two remarks: the first one is that the values of $\varsigma$ and $q_N$ may look arbitrary, we have not sought here to give the whole range of values of $q$ and $\varsigma$ for which Proposition \ref{p3.1} is valid, because the version that we have stated is what we will need in Section \ref{sec5}, where will seek improved barriers for our solutions. It is also worth, 
at this stage, pointing out the classical identity
\begin{equation}
\label{e3.2000}
\big(e^{\tau A_n}v_s\big)(x)=\big(e^{\frac\tau{s} A_n}\bnu(s,\cdot)\big)\Big(\frac{x}{\sqrt{s}}\Big).
\end{equation}
that is used in the proof of the proposition, and that will turn out to be quite useful in the computations of Section \ref{sec5}. This identity entails an estimate which is classical in diffusion equations,  but that had only been noticed in \cite{Roquejoffre2023} in the context of integral equation (Chapter 4, Theorem 4.3.1). Because it is of significant use here, we state it and give its proof.
 \begin{proposition}
\label{p3.3}
Consider a family of functions $(v_s)_{s\geq0}$ as in Proposition \ref{p3.1}, such that we have in addition, for every integer $m$:
$$
\Vert \bnu(s,.)\Vert_{H^m(\RR^N)}\leq K_m,
$$
for a constant $K_m$ that depends on $m$ but not on $s$. Also, pick  three constants $\varsigma\in(0,\displaystyle\frac1{8(2N+5)}]$, and $A>1$. Consider $x\in\RR^N$ 
and $\tau\in[0,s^\varsigma]$. We set $K=K_{m_N}$, the notation also being that of Proposition \ref{p3.1}. There exists a constant $\Lambda_{A,K}>0$ such that 
$$
\bigl(1+O(e^{-\frac{\mu s^\varsigma}4})\bigl)\inf_{\vert\xi-\frac{x}{\sqrt s}\vert\leq {s^{\varsigma-\frac12}}}\bnu(s,\xi)
-\frac{\Lambda_{A,K}}{s^{\frac32-q_N\varsigma}}\leq e^{-\tau\cI_*}v_s(x)\leq \bigl(1+O(e^{-\frac{\mu s^\varsigma}4})\bigl)\sup_{\vert\xi-\frac{x}{\sqrt s}\vert\leq
{s^{\varsigma-\frac12}}}\bnu(s,\xi)+\frac{\Lambda_{A,K}}{s^{\frac32-q_N\varsigma}},
$$
where $\mu>0$ is the coercivity constant in \eqref{H_lower bound}.
\end{proposition}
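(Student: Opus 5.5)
We start from Proposition \ref{p3.1}, applied with $\tau\in[0,s^\varsigma]$. It replaces $e^{-\tau\cI_{*}}v_s(x)$, up to an error $\lesssim \tau s^{-\frac N2-1}K\le K s^{\varsigma-\frac N2-1}$, by
$$\bigl(e^{\frac{\tau}{s}A_n}\bnu(s,\cdot)\bigr)\Bigl(\frac{x}{\sqrt s}\Bigr)+\sum_{3\le|\alpha|\le q_N}\frac{Q_\alpha(\tau)}{s^{\frac{|\alpha|}2}}\bigl(e^{\frac{\tau}{s}A_n}\partial^\alpha\bnu(s,\cdot)\bigr)\Bigl(\frac{x}{\sqrt s}\Bigr).$$
This error is clearly much smaller than $s^{-\frac32+q_N\varsigma}$. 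The proof then has two steps: bound the correction sum, and compare the main term with local extrema of $\bnu(s,\cdot)$.

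\emph{Correction terms.} The semigroup $e^{\frac\tau s A_n}$ is a contraction on $L^\infty$, because it is convolution with a Gaussian probability density. By Sobolev embedding ($m_N>\frac N2+q_N$), we have $\Vert\partial^\alpha\bnu(s,\cdot)\Vert_{L^\infty}\lesssim K$ for $|\alpha|\le q_N$. Since $\deg Q_\alpha<q_N$ and $\tau\le s^\varsigma$, we get $|Q_\alpha(\tau)|\lesssim s^{q_N\varsigma}$. The worst term is $|\alpha|=3$, which gives a total $\lesssim K s^{q_N\varsigma-\frac32}$. This produces the additive constant $\Lambda_{A,K}$. The dependence on $A$ is harmless: $A$ can only enter through the choice of constants, for instance if one wants to keep $\varsigma$-room via a factor $A$ in the radius.

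\emph{Main term.} The key point is that $e^{\frac\tau s A_n}$ is convolution with the Gaussian
$$g(\zeta)=\Bigl((2\pi\tau/s)^{N}\det H_n\Bigr)^{-1/2}e^{-\frac{s\,\zeta^TH_n^{-1}\zeta}{2\tau}},$$
whose variance is $\frac{\tau}{s}H_n\le \frac{s^{\varsigma}}{s}H_n$. Set $\xi_0=x/\sqrt s$ and split the convolution integral into $|\zeta|\le s^{\varsigma-\frac12}$ and $|\zeta|>s^{\varsigma-\frac12}$.

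On the inner ball, the integral is at most $\sup_{|\xi-\xi_0|\le s^{\varsigma-\frac12}}\bnu(s,\xi)$ times the mass of $g$ there, which is $\le1$. The lower bound with the infimum is symmetric, using that the inner mass is $1-O(\cdot)$.

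On the outer region, we bound $|\bnu|\le CK$ by Sobolev embedding. The Gaussian tail mass is at most
$$\exp\Bigl(-c\,\frac{s\,s^{2\varsigma-1}}{\tau}\Bigr)\le \exp\bigl(-c\,s^{\varsigma}\bigr),$$
with $c$ related to the eigenvalues of $H_n$; the relevant constant is tied to the coercivity $\mu$ of \eqref{H_lower bound}. It is at this step that one must keep track of constants to obtain the rate $e^{-\mu s^\varsigma/4}$.

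This tail can be absorbed in two ways. One option is to write it as a multiplicative factor $(1+O(e^{-\mu s^\varsigma/4}))$ on the sup/inf term. This accounts for the deficit of mass on the ball; if the sup is negative, the sign flips, and that is why the statement uses a factor $1+O(\cdot)$ rather than $\le1$. The other option is to put it into the additive constant, since $e^{-cs^\varsigma}\ll s^{-\frac32+q_N\varsigma}$.

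The main obstacle I expect is this bookkeeping, not the analysis itself. One has to handle the sign of $\bnu$ when converting the inner-mass deficit into the multiplicative factor. One also has to verify that the exponent in the Gaussian tail uses the right eigenvalue of $H_n$: this is the largest eigenvalue, bounded by the compact support of $\ck_{*,n}$, while the lower bound $\mu$ enters through $H_n^{-1}$. Everything else follows directly from Proposition \ref{p3.1}, from identity \eqref{e3.2000}, and from the positivity of the heat kernel of $A_n$.
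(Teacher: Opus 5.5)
Your proposal is correct and is essentially the paper's proof. Like the paper, you reduce via Proposition~\ref{p3.1} to $e^{\frac{\tau}{s}A_n}\bnu(s,\cdot)$ plus derivative corrections, bound those corrections in $L^\infty$, and localize the Gaussian on the ball of radius $s^{\varsigma-\frac12}$. Your $L^\infty$ bound on the corrections is exactly where the additive $s^{-\frac32+q_N\varsigma}$ comes from, which the paper only sketches as ``in the same fashion''.

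Two caveats. First, like the paper, you tacitly need a bound on $\Vert\bnu(s,\cdot)\Vert_{L^1}$ as well, since it enters the error term of Proposition~\ref{p3.1}. Second, your concern about the constant is justified. The paper's proof writes $H_n$ instead of $H_n^{-1}$ in the exponent of $\mathcal{G}^{app}_n$ (compare with \eqref{e3.4}) and thus invokes \eqref{H_lower bound}. The correct tail rate is $e^{-cs^{\varsigma}}$ with $c$ of order $1/\lambda_{\max}(H_n)$, which changes nothing downstream.
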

\begin{proof} The proof consists in putting in quantitative form the following simple idea: the solution $e^{-t\cI_*}v_s$ is well approximated by $e^{\frac\tau{s}A_n}\bnu(s,.)$, while the latter  is, at time $\di\frac{\tau}s=s^{\varsigma-1}$, is concentrated around $x$. 

From the assumptions of the proposition, the $H^m$ norms of $\bnu(s,.)$ are bounded independently of $s$. Therefore, from Proposition \ref{p3.1}, it is enough to examine $e^{\frac{\tau}sA_v}\bnu(s,.)$ and its successive derivatives up to the order $q_N$.

 Let us  consider the main term in the expansion of $e^{-\tau\cI_*}v_s(x)$, that is, $\bigl(e^{\frac{\tau}{s} A_{n}}\bnu(s,.)\bigl)(\zeta)$. Set  $\zeta=\di\frac{x}{\sqrt s}$;
we have $e^{\frac{\tau}{s} A_{n}}\bnu(s,.)=\mathcal{G}^{app}_{n}(\displaystyle\frac{\tau}{s},.)*_\zeta{{\bnu}}(s,.)$.
Identity \eqref{e3.4} and  estimate \eqref{H_lower bound} entail, for $\xi\in\RR^N$ such that $\vert\xi-\zeta\vert\geq s^{\varsigma-\frac12}$, and for $0<\tau\leq s^\varsigma$:
$$
\mathcal{G}^{app}_{n}(\displaystyle\frac{\tau}{s},\zeta-\xi)=(\frac{s}\tau)^{\frac{N}2}\frac{\mathrm{exp}\biggl(-\di\frac{(\xi-\zeta)^TH_n(\xi-\zeta)}{\di2\tau/{s}}\biggl)}{\sqrt{\mathrm{det}H_n}}\leq
(\frac{s}\tau)^{\frac{N}2}\frac{e^{-\mu\frac{\vert -\xi+\zeta\vert^2}{2\tau/s}}}{\sqrt{\mathrm{det}H_n}}\lesssim  (\frac{s}\tau)^{\frac{N}2}e^{- \frac{\mu s^{2\varsigma}}{2\tau}}\leq e^{-\frac{\mu s^\varsigma}{4}},
$$
if $s$ is large enough. So, we have
$$
\begin{array}{rll}
\mathcal{G}^{app}_{n}(\displaystyle\frac{\tau}{s},.)*_\zeta \bnu(s,.)(\zeta)=&\di\int_{\vert\zeta-\xi\vert\leq s^{\varsigma-\frac12}}\frac{\mathrm{exp}\biggl(-\di\frac{(\xi-\zeta)^TH_n(\xi-\zeta)}{\di2\tau/{s}}\biggl)}{\sqrt{\mathrm{det}H_n}\bigl(\di{\tau}/s\bigl)^{\frac{N}2}}\bnu(s,\xi)\md \xi+O(e^{-\frac{\mu s^{\varsigma}}4})\Vert\bnu(s,.)\Vert_\infty\\
\geq&\di\inf_{\vert\zeta-\xi\vert\leq s^{\varsigma-\frac12}}\bnu(s,\xi)\int_{\vert\xi\vert\leq s^{\varsigma-\frac12}}\frac{\mathrm{exp}\biggl(-\di\frac{\xi^TH_n\xi}{\di2\tau/{s}}\biggl)}{\sqrt{\mathrm{det}H_n}\bigl(\di{\tau}/s\bigl)^{\frac{N}2}}\md \xi+O(e^{-\frac{\mu s^{\varsigma}}4})\Vert\bnu(s,.)\Vert_\infty\\
\geq&\bigl(1+O(e^{-\frac{\mu s^\varsigma}4})\bigl)\di\inf_{\vert\zeta-\xi\vert\leq s^{\varsigma-\frac12}}\bnu(s,\xi)+O(e^{-s^{2\varsigma}})\Vert\bnu(s,.)\Vert_\infty.
\end{array}
$$
In the same way, we have
$$
\mathcal{G}^{app}_{n}(\displaystyle\frac{\tau}{s},.)*_\zeta \bnu(s,.)(\zeta)\leq\bigl(1+O(e^{-\frac{\mu s^\varsigma}4})\bigl)\di\inf_{\vert\zeta-\xi\vert\leq s^{\varsigma-\frac12}}\bnu(s,\xi)+O(e^{-s^{2\varsigma}})\Vert\bnu(s,.)\Vert_\infty.
$$
The higher derivatives are examined in the same fashion, yielding the full   proposition.
\end{proof}
\subsection{Approximation of the Dirichlet heat kernel}
\begin{proposition} \label{p3.2}
	Let $w(t,x)$ be the solution to the linear equation \eqref{w-eqn-linear} for $t>0$ and $x\in\rn$, 	associated with a nontrivial, bounded and compactly supported  smooth initial datum $w_0$ in $\R^N$ satisfying  $w_0(x)\ge0$ for $\{x\in\rn|x\cdot n\ge 0\}$ and
	$w_0\big({\mathcal{R}}_n(x)\big)=-w_0(x)$ for $x\in\R^N$ with 
	\begin{equation}
		\label{eqn_reflection}
		{\mathcal{R}}_n(x):=x-\frac{2(x\cdot n)}{n^TH_nn}H_nn.
	\end{equation}
	Then   there exists $T>0$  large enough such that
	\begin{equation*}
		w(t,x)\approx \displaystyle\frac{\big(2+O\big(t^{-\frac{1}{2}}\big)\big)x\cdot ne^{-\frac{{x^T H_n^{-1}x}}{2t}}}{n^TH_nn\sqrt{\det H_n }(2\pi)^{\frac{N}{2}}t^{\frac{N}2+1}}\int_{y\cdot n\ge0}(y\cdot n)w_0(y)\md y,~~~t\ge T,~ |x|=O(t^{\frac{1}{2}+\delta}).
	\end{equation*}
	
\end{proposition}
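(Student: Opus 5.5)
We would start from the representation given by Theorem \ref{thm_heat kernel estimate}: since $w_0$ is smooth and compactly supported, it lies in every $H^m$. Hence, for $t\ge T$,
$$w(t,x)=\mathcal{G}_n(t,\cdot)*w_0(x)+O\bigl(e^{-t^{1-2\varpi}}\bigr).$$
The exponentially small remainder is negligible against the claimed size $t^{-\frac N2-1}$ whenever $|x|=O(t^{\frac12+\delta})$. Indeed, the Gaussian factor there is at worst $e^{-Ct^{2\delta}}$, and choosing $2\delta<1-2\varpi$ keeps the error smaller; we would shrink $\delta$ if necessary. Because $w_0$ has compact support, only $y$ in a fixed ball contributes, so $x-y$ stays in the range $|x-y|\le t^{\frac12+\delta}$ where \eqref{estimate-G-1} holds. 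The task is therefore to evaluate
$$\int \mathcal{G}_n(t,x-y)w_0(y)\,\md y$$
using the expansion \eqref{estimate-G-1}.

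The key structural step is the antisymmetry under $\mathcal{R}_n$. One checks that $\mathcal{R}_n$ is an involution with $\det\mathcal{R}_n=-1$, and that it is an isometry for the quadratic form $y^TH_n^{-1}y$: with $v=H_nn$, $\mathcal{R}_n$ is the $H_n^{-1}$-orthogonal reflection across $\{y\cdot n=0\}$, since $v^TH_n^{-1}y=y\cdot n$. Splitting the integral into $\{y\cdot n\ge0\}$ and its image, and using $w_0\circ\mathcal{R}_n=-w_0$, we get
$$w\approx\int_{y\cdot n\ge0}\bigl[\mathcal{G}_n(t,x-y)-\mathcal{G}_n(t,x-\mathcal{R}_ny)\bigr]w_0(y)\,\md y .$$

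We then insert the Gaussian leading term. From $(x-y)^TH_n^{-1}(x-y)=x^TH_n^{-1}x-2x^TH_n^{-1}y+O(|y|^2)$, the Gaussian factor becomes
$$e^{-\frac{x^TH_n^{-1}x}{2t}}\Bigl(1+\frac{x^TH_n^{-1}y}{t}+O\bigl(\tfrac{|y|^2}{t}\bigr)+\ldots\Bigr).$$
The difference between $y$ and $\mathcal{R}_ny$ produces
$$\frac{x^TH_n^{-1}(y-\mathcal{R}_ny)}{t}=\frac{2(y\cdot n)(x\cdot n)}{(n^TH_nn)\,t}.$$
Combined with the prefactor $(2\pi t)^{-N/2}(\det H_n)^{-1/2}$, this yields the announced main term. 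Its size is $O\bigl((1+|x|/\sqrt t)\,t^{-\frac N2-\frac12}\bigr)\times t^{-1/2}$, which is of order $t^{-\frac N2-1}$ in the diffusive zone.

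The main obstacle is controlling the correction terms at the right relative order. First, the $O(|y|^2/t)$ Gaussian terms cancel up to odd parts. Second, the cubic correction $t^{-1/2}\mathcal{P}(\beta)$ in \eqref{estimate-G-1} contributes a difference $t^{-1/2}\bigl[\mathcal{P}(\beta_y)-\mathcal{P}(\beta_{\mathcal{R}y})\bigr]$, with $\beta_y=H_n^{-1}(x-y)/\sqrt t$. This difference is $O(t^{-1})$, i.e. $O(t^{-1/2})$ relative to the main term, which is exactly the $O(t^{-1/2})$ allowed in the factor $2+O(t^{-1/2})$. Third, the remainder $O(t^{1-4\varpi})$ does not obviously cancel in the difference, and it is \emph{not} small compared with $t^{-1/2}$. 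To handle this, I would go back to the Fourier representation of $\mathcal{G}_n$ and expand the difference directly as
$$\int e^{\bi(x-y)\cdot\xi}\bigl(1-e^{\bi(y-\mathcal{R}_ny)\cdot\xi}\bigr)\ldots$$
The extra factor is $O(|\xi|)=O(t^{-1/2}|\zeta|)$, so every remainder term gains a factor $t^{-1/2}$. This uses the same contour shift as in the proof of Theorem \ref{thm_heat kernel estimate}, run with the Taylor remainder taken one order higher.

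A second difficulty arises near $x\cdot n\approx0$, where the main term degenerates. There the statement should be read with the $O(t^{-1/2})$ error absorbed relative to $|x\cdot n|/\sqrt t$, or restricted to where $x\cdot n$ is not too small; this needs to be made precise.
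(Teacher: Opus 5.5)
Your proposal is essentially the paper's proof. The paper also splits $\mathcal{G}_n(t,\cdot)*w_0$ over $\{y\cdot n\ge0\}$ and its $\mathcal{R}_n$-image, inserts the Gaussian leading term of Theorem~\ref{thm_heat kernel estimate}, and expands to get $\frac{2(x\cdot n)(y\cdot n)}{(n^TH_nn)\,t}$. It simply pulls the $y$-dependent factor $1+O(t^{-1/2})$ out of the difference and never addresses $x\cdot n\approx0$, so both of your caveats are genuine refinements rather than deviations.

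Two clarifications. First, $t^{1-4\varpi}$ is in fact $o(t^{-1/2})$, since $\varpi>\frac38$; what fails is $t^{1-4\varpi}=O(t^{-1})$. That is the bound a non-cancelling kernel remainder needs, because the main term of the difference is itself of relative size $t^{-1/2}$. Your Fourier detour fixes this. So would integrating the remainder after the $t^{-1/2}\mathcal{P}(\beta)$ term against the Gaussian, instead of bounding it by its supremum on $\mathcal{D}(t^{\frac12-\varpi})$; that gives $O(t^{-1})$ in the diffusive zone.

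Second, near $x\cdot n=0$ the relative form of the statement genuinely fails. The $\mathcal{P}$-term leaves a contribution of order $t^{-\frac N2-1}$ there that does not vanish in general. At $x=0$ it is proportional to $n\cdot\int(z^TH_n^{-1}z)\,z\,\mathcal{K}_{*,n}(z)\,\md z$, which is positive by evenness of $\mathcal{K}$. So only the absolute-error reading holds, and that is the one the paper actually uses in the proof of Theorem~\ref{thm-control beyond diffusive scale-n}.
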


\begin{proof} 
	Based on our observation in  Remark \ref{rk_diffusion eqn},  we start with the diffusion equation on the half space:
	\begin{equation*}\label{eqn_D heat eqn}
		\partial_t w^{app}+A_n w^{app}=	\partial_t w^{app}-\frac12\mathrm{div}\big(H_n\nabla w^{app}\big)=0,~~~t>0,~~x\cdot n\ge 0,
	\end{equation*}
	associated with a nonnegative nontrivial and compactly supported  smooth  initial datum $w^{app}(0,x)$ for $x\cdot n\ge 0$.
	We notice that
	the function $v(t,y)=w^{app}(t,\sqrt{H_n}y)$ solves the standard heat equation 
	\begin{equation*}
		\label{e3.6}
		\partial_tv-\frac{1}{2}\Delta v=0,~~~t>0,~y\cdot \hat n\ge 0,~~~\text{with}~\hat n:=\frac{\sqrt{H_n}n}{|\sqrt{H_n}n|},
	\end{equation*}
	with the initial datum $v_0(y)=w^{app}(0,\sqrt{H_n}y)$ for $y\in\{y\in\rn|y\cdot\hat n\ge 0\}$. The function $v$ is computed by reflection, setting $v_0(\mathcal{R}_{\hat n}(y))=-v_0(y)$ for $y\in\R^N$, with
	\begin{equation*}
		\mathcal{R}_{\hat n}(y):=y-2(y\cdot \hat n)\hat n.
	\end{equation*} 
	This implies that for the original linear problem \eqref{w-eqn-linear}, the proper reflection transform shall be
	\begin{equation*}
		\mathcal{R}_n(x):=\sqrt{H_n}\mathcal{R}_{\hat n}(y)=\sqrt{H_n}\mathcal{R}_{\hat n}\Big(\sqrt{H_n^{-1}}x\Big)=x-\frac{2(x\cdot n)}{n^TH_nn}H_nn,~~~x\in\rn.
	\end{equation*}
	We therefore arrive at \eqref{eqn_reflection}.
	
	Next, we turn back to  $w$, from Theorem \ref{thm_heat kernel estimate} and \eqref{eqn_reflection} there exists  $T>1$ sufficiently large such that 
	\begin{align*}
		w(t,x)\approx\int_{\R^N}\mathcal{G}_n(t,x-y)w_0(y)\md y=\int_{y\cdot n\ge0}\mathcal{G}_n(t,x-y)w_0(y)\md y+\int_{y\cdot n\le0}\mathcal{G}_n(t,x-y)w_0(y)\md y,
	\end{align*}
	for $t\ge T$ and $x\in\R^N$, where
	\begin{align*}
		\int_{y\cdot n\le0}\mathcal{G}_n(t,x-y)w_0(y)\md y=-\int_{y\cdot n\le0}\mathcal{G}_n(t,x-y)w_0(\mathcal{R}_ny)\md y
		=-\int_{z\cdot n\ge0}\mathcal{G}_n(t,x-\mathcal{R}_n(z))w_0(z)dz.
	\end{align*}
	Therefore,  it follows that
	\begin{align*}
		w(t,x)\approx&\int_{y\cdot n\ge0}\Big(\mathcal{G}_n(t,x-y)-\mathcal{G}_n(t,x-\mathcal{R}_n(y))\Big)w_0(y)\md y\\
		=&\frac{1+O\big(t^{-\frac{1}{2}}\big)}{\sqrt{\det H_n }(2\pi t)^{\frac{N}{2}}}\int_{y\cdot n\ge0}\Big(e^{-\frac{(x-y)^T H_n^{-1} (x-y)}{2t}}-e^{-\frac{(x-\mathcal{R}_n(y))^T H_n^{-1} (x-\mathcal{R}_n(y))}{2t}}\Big)w_0(y)\md y\\
		=&\frac{1+O\big(t^{-\frac{1}{2}}\big)}{\sqrt{\det H_n }(2\pi t)^{\frac{N}{2}}}e^{-\frac{x^T H_n^{-1} x}{2t}}\int_{y\cdot n\ge0}\Big(\underbrace{e^{\frac{x^T H_n^{-1} y}{t}-\frac{y^T H_n^{-1} y}{2t}}-e^{\frac{x^T H_n^{-1} \mathcal{R}_n(y)}{t}-\frac{\mathcal{R}_n(y)^T H_n^{-1} \mathcal{R}_n(y)}{2t}}}_{=:\mathcal{E}_n(t,x,y)}\Big)w_0(y)\md y
	\end{align*}
	for all $t\ge T$ and $| x|=O(t^{\frac{1}{2}+\delta})$, in which we derive from a Taylor expansion that
	\begin{align*}
		\mathcal{E}_n(t,x,y)=\frac{2}{n^TH_n n}\frac{x\cdot n}{t} y\cdot n+O\big(t^{-1}\big).
	\end{align*}
	As a consequence, we get
	\begin{equation*}
		\label{e3.3000}
		w(t,x)\approx\frac{2+O\big(t^{-\frac{1}{2}}\big)}{n^TH_n n \sqrt{\det H_n }(2\pi )^{\frac{N}{2}}}\frac{x\cdot n }{ t^{\frac{N}{2}+1}}e^{-\frac{x^T H_n^{-1} x}{2t}}\int_{y\cdot n\ge0}(y\cdot n)w_0(y)\md y
	\end{equation*}
	for  $t\ge T$ and $| x|=O(t^{\frac{1}{2}+\delta})$. \end{proof}  
\begin{remark}
	\label{rmk_Dirichlet heat kernel}
	Under the same initial condition assumption as in Proposition \ref{p3.2}, it  follows that 
	\begin{equation*}
		e^{tA_n}w_0(x)\approx \displaystyle\frac{2x\cdot ne^{-\frac{{x^T H_n^{-1}x}}{2t}}}{n^TH_nn\sqrt{\det H_n }(2\pi)^{\frac{N}{2}}t^{\frac{N}2+1}}\int_{y\cdot n\ge0}(y\cdot n)w_0(y)\md y~~~\text{for}~~t\ge T~~\text{and}~~ | x|=O(t^{\frac{1}{2}}).
	\end{equation*}
	On the other hand,
	let us rephrase Proposition \ref{p3.2}, for later use in Section \ref{sec5},  by introducing the  notation 
	\begin{equation}
		\label{e3.3001}
		\Gamma_n(\zeta):=\frac{2(\zeta\cdot n)e^{-\frac{\zeta^T H_n^{-1} \zeta}{2}}}{n^TH_n n \sqrt{\det H_n }(2\pi )^{\frac{N}{2}}},~~~\zeta\in\RR^N,
	\end{equation}
	and
	\begin{equation}
		\label{e3.3002}
		\mu_n[w_0]:=\int_{\zeta\cdot n>0}(\zeta\cdot n)w_0(\zeta)\md \zeta
	\end{equation}
	for a bounded function $w_0(\zeta)$ such that $\vert\zeta\vert w_0\in L^1(\R^N)$.
	Proposition \ref{p3.2} asserts that
	\begin{equation}
		\label{e3.3003}
		w(t,x)\approx\frac{\mu_n[w_0]}{t^{\frac{N+1}2}}\Gamma_n\Big(\frac{x}{\sqrt t}\Big)~~\hbox{as $t\to+\infty$, \text{uniformly for} $\displaystyle\frac{\vert x\vert}{\sqrt t}\leq t^\delta$.}
	\end{equation}
\end{remark}

\subsection{Control beyond the diffusive scale}

While Theorem  \ref{thm_heat kernel estimate} and its consequences leads to fairly precise estimates of the dynamics of the solution within the diffusive zone, that is, $\vert x\vert=O(\sqrt t)$, we need to compare the solution of the full problem \eqref{kpp} to its linear approximations outside the diffusive zone, in order to assess the quality of the approximation inside. The first theorem concerns the solution $v(t,x)$ of \eqref{e5.101} obtained by the leading edge transform.
\begin{theorem}
	\label{t3.20}
	Let $w$ be the solution to the linear equation \eqref{w-eqn-linear} in $\R_+\times\R^N$ associated with a nonnegative, bounded and compactly supported {smooth} initial datum $w_0$ in $\R^N$. Then, for all $B\ge 1$ and $\vartheta,\varepsilon\in(0,\frac{1}{2})$, there exist  $T>0$ large enough (depending on $B$, $\vartheta$ and $\varepsilon$), and $\eta_B>0$ depending on $B$ and the matrix $H_n$ given in \eqref{def_H matrix}, such that 
	\begin{equation*}
		0\leq w(t-T,x)\leq \frac{e^{-A\big(\frac{|x|}{\sqrt{t}}-\eta_B\big)}}{t^{\frac{N}{2}-\vartheta}}.
	\end{equation*}
	for $t\ge T$ and $x\in\R^N$. 
\end{theorem}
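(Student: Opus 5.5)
Since $\cI_{*,n}$ is a pure-jump operator plus drift, with a compactly supported kernel and positive definite diffusion matrix $H_n$, I would get the exponential bound outside the diffusive zone from an exponential supersolution (Chernoff-type argument). Inside the zone I would use Theorem \ref{thm_heat kernel estimate}. The statement uses $A$ while the hypothesis names $B$; I read $A=B$, so the claim is a decay rate $B$ in the variable $|x|/\sqrt t$ beyond a radius $\eta_B$.

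\emph{Step 1 (exponential moments).} For $p\in\R^N$, the function $e^{p\cdot x - t\,\sigma(p)}$ solves \eqref{w-eqn-linear} with
$$\sigma(p)=\int_{\R^N}\ck_{*,n}(y)\bigl(e^{-p\cdot y}-1\bigr)\md y+w^*(e)\,e\cdot p .$$
By \eqref{e2.10} the linear term cancels, so $\sigma(p)=\tfrac12 p^TH_np+O(|p|^3)$ for $|p|$ small. By comparison (the equation is linear and order preserving, as $\ck_{*,n}\ge0$), $w(t,x)\le \bigl(\int e^{p\cdot y}w_0(y)\md y\bigr)e^{-p\cdot x+t\sigma(p)}$. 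Choose $p=B\,x/(|x|\sqrt t)$, so $|p|=B/\sqrt t$ is small and $t\sigma(p)\le \tfrac12 B^2\Lambda_{\max}(H_n)+O(B^3t^{-1/2})$. Since $w_0$ is compactly supported, the moment is bounded, and we get
$$w(t,x)\le C_B\,e^{-B\frac{|x|}{\sqrt t}+\frac12 B^2\|H_n\|}.$$
This gives the exponential factor with $\eta_B\ge\tfrac12B\|H_n\|$, but without the algebraic prefactor $t^{-N/2+\vartheta}$.

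\emph{Step 2 (recovering the prefactor).} In the range $|x|\le t^{1/2+\delta}$, Theorem \ref{thm_heat kernel estimate} gives $w\le Ct^{-N/2}e^{-x^TH_n^{-1}x/(2t)}$, up to the error $e^{-t^{1-2\varpi}}$, which is negligible. Since $-\tfrac{\rho^2}{2\Lambda}\le -B\rho+\tfrac12B^2\Lambda$, this is bounded by $t^{-N/2}e^{-B(|x|/\sqrt t-\eta_B)}$ once $\eta_B\ge \tfrac12 B\|H_n\|$ plus a constant absorbing $C$. The $t^\vartheta$ slack absorbs all constants, including $T$-dependent ones, and the time shift $t-T$, which only changes $\sqrt{t}$ to $\sqrt{t-T}$; that is harmless for $|x|\le t^{1/2+\delta}$ up to a factor of the same form. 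In the range $|x|\ge t^{1/2+\delta}$, I would redo Step 1 with a larger slope $|p|=B'/\sqrt t$, $B'=2B$. This gives decay $e^{-2B|x|/\sqrt t}\le e^{-B|x|/\sqrt t}e^{-Bt^{\delta}}$, and $e^{-Bt^\delta}$ dominates any power $t^{-N/2+\vartheta}$ for $t\ge T$.

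\emph{Main obstacle.} The delicate point is the uniformity of constants in the intermediate region and the matching of the two regimes with the shifted time $t-T$. One also needs the $O(|p|^3)$ remainder in $t\sigma(p)$ to stay bounded, which holds since $|p|\sqrt t$ is fixed. If the Gaussian estimate near $|x|\sim t^{1/2+\delta}$ proves awkward, I would instead combine the Chernoff bound with the $L^\infty$ bound $Ct^{-N/2}$ from Theorem \ref{thm_heat kernel estimate}, through an interpolation: for $|x|/\sqrt t\le \eta_B$ the target exceeds $t^{-N/2+\vartheta}$, so the $L^\infty$ bound suffices; for $|x|/\sqrt t\ge \eta_B$, the choice $|p|=2B/\sqrt t$ gives extra decay $e^{-B|x|/\sqrt t}\le e^{-B\eta_B}$. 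Choosing $\eta_B$ with $B\eta_B\gtrsim \log t$ is not allowed, though, since $\eta_B$ cannot depend on $t$. So in that region I would rely on the Gaussian form of $\mathcal G_n$, which is exactly where Theorem \ref{thm_heat kernel estimate} is needed.
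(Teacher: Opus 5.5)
Your proof is essentially correct, and reading the undefined $A$ as $B$ is right. It follows a different route from the paper's.

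\textbf{The paper's route.} The paper defers to the proof of Theorem~\ref{thm-control beyond diffusive scale-n}. It takes the explicit barrier $\varphi(t,x)=t^{-\frac N2+\vartheta}e^{-B(\frac{|x|}{\sqrt t}-\eta_B)}$ and checks $\varphi_t+\cI_{*,n}\varphi\ge 0$ on the moving exterior region $\{|x|\ge\eta_B\sqrt t\}$. This uses the second-order expansion $\cI_{*,n}\varphi=-\frac12\mathrm{div}(H_n\nabla\varphi)+O(\Vert D^3\varphi\Vert_\infty)$, which is legitimate because $\varphi$ varies on the scale $\sqrt t$; the term $\frac{B|x|}{2t^{3/2}}\varphi$ coming from $\partial_t\varphi$ wins once $\eta_B$ is large. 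The inner region $\{|x|\le\eta_B\sqrt t\}$ is controlled by the heat-kernel asymptotics, and the paper concludes by comparison on the exterior domain.

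\textbf{Your route.} You verify no differential inequality. Positivity of the semigroup and the exact exponential solutions of the constant-coefficient equation give a Chernoff bound with slope $|p|=B/\sqrt t$. This yields the exponential profile everywhere but without the algebraic prefactor. You recover $t^{-\frac N2}$ from the Gaussian form of $\mathcal{G}_n$ on the larger zone $|x|\le t^{\frac12+\delta}$, via $-\frac{\rho^2}{2\Lambda}\le -B\rho+\frac12B^2\Lambda$. Beyond that zone you pay for the missing prefactor with slope $2B$, which gains $e^{-Bt^\delta}$.

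\textbf{What each buys.} Your argument is shorter and avoids a nonlocal comparison on a domain with moving boundary. The paper's produces a reusable object: the same $\varphi$ is the correction $\cV_2$ in the nonlinear barriers of Section~\ref{sec4}. The barrier method also gives the two-sided control of $w^*-e^{tA_n}w_0^*$ for odd reflected data in Theorem~\ref{thm-control beyond diffusive scale-n}, which a positivity-based Chernoff bound does not provide.

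\textbf{Two points need repair.}

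First, there is a sign slip. With your $\sigma$, $e^{p\cdot x-t\sigma(p)}$ is not a solution; $e^{p\cdot x+t\sigma(p)}$ is. What you want is $w(t,x)\le \Vert w_0\Vert_\infty e^{|p|R_0}e^{-p\cdot x+t\Sigma(p)}$, where $\Sigma(p)=\int\ck_{*,n}(y)(e^{p\cdot y}-1-p\cdot y)\md y\ge 0$ and $R_0$ bounds the support of $w_0$. Comparison yields this sup-type constant, not the moment $\int e^{p\cdot y}w_0$. Since $\Sigma(p)$ has the same quadratic part $\frac12p^TH_np$, nothing downstream changes. Moreover $\Sigma\ge0$ gives $(t-T)\Sigma(p)\le t\Sigma(p)$, which handles the time shift in this step.

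Second, your claim that the $t^\vartheta$ slack absorbs $T$-dependent constants fails for $t$ near $T$. At $t=T$ the claim reads $w_0(x)\le T^{-\frac N2+\vartheta}e^{-B(\frac{|x|}{\sqrt T}-\eta_B)}$, which forces $e^{B\eta_B}\gtrsim\Vert w_0\Vert_\infty T^{\frac N2-\vartheta}$. Also, Theorem~\ref{thm_heat kernel estimate} is unavailable while $t-T$ is below its own threshold. So $\eta_B$ must be allowed to depend on $T$ and $w_0$. The statement is inaccurate on this point, and the paper's proof does the same tacitly (``up to increasing $\eta_A$ if needed''). Granting that, your Chernoff bound $w(t-T,x)\le C_Be^{-B|x|/\sqrt t}$ alone covers $t\in[T,2T]$. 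Your two-zone argument then covers $t\ge 2T$, where $t/(t-T)\le 2$.
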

The proof of this theorem is similar to that of Theorem \ref{thm-control beyond diffusive scale-n} below, therefore we refer to its proof. An obvious consequence is the following.
\begin{corollary}\label{c3.10}
Let $v$ be the solution to the nonlinear equation \eqref{e5.101} in $\R_+\times\R^N$ associated with a nonnegative, bounded and compactly supported {smooth} initial datum $w_0$ in $\R^N$. Then, for all $B\ge 1$ and $\vartheta,\varepsilon\in(0,\frac{1}{2})$, there exist  $T>0$ large enough (depending on $B$, $\vartheta$ and $\varepsilon$), and $\eta_B>0$ depending on $B$ and the matrix $H_n$ given in \eqref{def_H matrix}, such that 
	\begin{equation*}
		0\leq v(t-T,x)\le \frac{e^{-A\big(\frac{|x|}{\sqrt{t}}-\eta_B\big)}}{t^{\frac{N}{2}-\vartheta}}.
	\end{equation*}
	for $t\ge T$ and $x\in\R^N$. 
\end{corollary}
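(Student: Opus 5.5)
Since $R(t,x;v)\ge0$, $v$ is a subsolution of the linear equation \eqref{w-eqn-linear}. Let $w$ solve \eqref{w-eqn-linear} with the same initial datum $w_0$. I will show $0\le v\le w$ for all $t\ge0$, and then apply Theorem \ref{t3.20} to $w$; the bound is then exactly the one stated, with the same $T$, $\eta_B$ and $A$.

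First, nonnegativity. Since $u\ge0$, we have $\bu\ge0$ and hence $v=e^{\lambda^*(n)x\cdot n}\bu\ge0$.

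Second, the comparison. Set $z=w-v$, so that
$$
z_t+\cI_{*,n}z=R(t,x;v)\ge0,\qquad z(0,\cdot)=0.
$$
I expect this to be the only step needing care, because $\cI_{*,n}$ is a first order integro-differential operator posed on an unbounded domain, and $v$ grows like $e^{\lambda^*(n)x\cdot n}$ in the direction $-n$ only in the sense that $\bu\le 1$. The operator itself is harmless. The transport part $-w^*(e)\partial_e$ is removed by following characteristics $x+w^*(e)e\,t$. What remains, $z\mapsto\widehat{\ck_{*,n}}(0)z-\ck_{*,n}*z$, generates a positive semigroup, since $\ck_{*,n}\ge0$.

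The plan is therefore a Duhamel argument along characteristics. Writing $\tilde z(t,x)=z(t,x-w^*(e)te)$, we get
$$
\tilde z(t,x)=\int_0^t e^{-\widehat{\ck_{*,n}}(0)(t-s)}\Big(\ck_{*,n}*\tilde z(s,\cdot)+\tilde R(s,\cdot)\Big)(x)\,\md s .
$$
The negative part $\tilde z^-$ then satisfies $\tilde z^-\le \int_0^t e^{-\widehat{\ck_{*,n}}(0)(t-s)}\ck_{*,n}*\tilde z^-\,\md s$. To close this with Gronwall, I would work in a weighted sup norm $\sup_x e^{-\lambda^*(n)|x|}|\cdot|$. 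Both $v$ and $w$ are bounded in this norm on finite time intervals: $v$ because $\bu\le1$, and $w$ by its Fourier representation and compact support of $w_0$. Convolution by the compactly supported kernel $\ck_{*,n}$ is bounded on this weighted space, with constant at most $\int\ck_{*,n}e^{\lambda^*(n)|y|}\md y<\infty$. Gronwall then gives $\tilde z^-\equiv0$, that is, $v\le w$. Alternatively, one can invoke the standard comparison principle for \eqref{kpp} directly: $u$ is dominated by the solution of the linearised equation \eqref{linearequation}, and applying the same leading edge transformation to that solution gives $w$.

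Finally, Theorem \ref{t3.20} applied to $w$ yields
$$
0\le v(t-T,x)\le w(t-T,x)\le \frac{e^{-A\big(\frac{|x|}{\sqrt t}-\eta_B\big)}}{t^{\frac N2-\vartheta}},\qquad t\ge T,\ x\in\R^N,
$$
which is the claim.
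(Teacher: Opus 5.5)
Your proposal is correct and is the paper's argument. The paper justifies the corollary in one line: $R(t,x;v)\ge 0$ gives $v\le w$, where $w$ solves \eqref{w-eqn-linear}, and Theorem \ref{t3.20} then applies. Your Duhamel-along-characteristics argument with the weighted Gronwall estimate is a valid way to make that comparison step rigorous on $\R^N$; the paper leaves this step implicit.
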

The reason is simply that $v(t,x)\leq w(t,x)$ for all $t\geq0$ and $x\in\RR^N$. Corollary \ref{c3.10} is what we will use in the sequel.
\begin{theorem}
	\label{thm-control beyond diffusive scale-n}
	Let $w^*$ be the solution to the linear equation \eqref{w-eqn-linear} in $\R_+\times\R^N$ associated with a nontrivial, bounded and compactly supported  smooth initial datum $w_0^*$ in $\R^N$ satisfying  $w_0^*(x)\ge0$ for $\{x\in\rn|x\cdot n\ge 0\}$ and
	$w_0^*\big({\mathcal{R}}_n(x)\big)=-w_0^*(x)$ for $x\in\R^N$ with $\mathcal{R}_n$ given by \eqref{eqn_reflection}. Then, for all $A\ge 1$ and $\vartheta,\varepsilon\in(0,\frac{1}{2})$, there exist\footnote{It is worth pointing out that whenever the initial datum $w_0^*$ is not compactly supported anymore, the large time $T$ needs to be chosen depending also on $w_0^*$. } $T>0$ large enough $($depending on $A$, $\vartheta$ and $\varepsilon$$)$, and $\eta_A>N+4+2A^2\bar h$ with $\bar h:=N^2\max_{1\leq i,j\leq N}\vert h_{ij,n}\vert$ and $h_{ij,n}$ given in \eqref{def_H matrix}, such that 
	\begin{equation*}
		e^{tA_n}w_0^*(x)-t^{-\frac{N}{2}-1+\vartheta}e^{-A\big(\frac{|x|}{\sqrt{t}}-\eta_A\big)} \leq w^*(t-T,x)\le e^{tA_n}w_0^*(x)+t^{-\frac{N}{2}-1+\vartheta}e^{-A\big(\frac{|x|}{\sqrt{t}}-\eta_A\big)}
	\end{equation*}
	for $t\ge T$ and $x\in\R^N$. 
\end{theorem}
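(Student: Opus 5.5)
The plan is to construct explicit super- and sub-solutions for the linear equation \eqref{w-eqn-linear} in the far region $\vert x\vert\gtrsim\sqrt t$, and to glue them to the diffusive approximation $e^{tA_n}w_0^*$, which is accurate inside the diffusive zone by Theorem \ref{thm_heat kernel estimate} and Proposition \ref{p3.2}. Set $d(t,x):=w^*(t-T,x)-e^{tA_n}w_0^*(x)$. The goal is $\vert d\vert\le\overline\phi(t,x):=t^{-\frac N2-1+\vartheta}e^{-A(\frac{\vert x\vert}{\sqrt t}-\eta_A)}$, with $\vert x\vert$ replaced by a smooth surrogate $\langle x\rangle=\sqrt{1+\vert x\vert^2}$ so that derivatives are bounded. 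We have
$$(\partial_t+\cI_{*,n})d=-(\cI_{*,n}-A_n)e^{tA_n}w_0^*$$
up to the harmless time shift $T$. By the Taylor expansion of $\ck_{*,n}*$ given after \eqref{e5.103} (first-order terms cancel by \eqref{e2.10}), the right-hand side is $O(\Vert D^3 e^{tA_n}w_0^*\Vert_{L^\infty(B_R(x))})$. Because $w_0^*$ is odd under $\mathcal R_n$, this quantity is of order $t^{-\frac N2-2}(1+\frac{\vert x\vert}{\sqrt t})^{k}e^{-\mu\vert x\vert^2/(2t)}$, i.e.\ one extra power of $t^{-1/2}$ beyond the Dirichlet decay $t^{-\frac N2-1}$. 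This is what makes a $t^{-\frac N2-1+\vartheta}$ error plausible.

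\textbf{Step 1: comparison function.} I verify that $\overline\phi$ is a strict supersolution, $(\partial_t+\cI_{*,n})\overline\phi\ge c\,t^{-1}\overline\phi$, for $t\ge T$. Write $\overline\phi=e^{\Theta}$ with $\Theta=-A\langle x\rangle/\sqrt t+A\eta_A-(\frac N2+1-\vartheta)\log t$. Since $\ck_{*,n}$ has compact support and $\nabla\Theta=O(A/\sqrt t)$, $D^2\Theta=O(A/(\sqrt t\langle x\rangle))$, the expansion of $\cI_{*,n}e^{\Theta}/e^{\Theta}$ reduces to $-\frac12\nabla\Theta^TH_n\nabla\Theta-\frac12\mathrm{tr}(H_nD^2\Theta)+O(A^3t^{-3/2})$, which is bounded below by $-\frac{A^2\bar h}{2t}-O(t^{-1}(\sqrt t/\langle x\rangle))-O(t^{-3/2})$. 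Meanwhile $\partial_t\Theta=\frac{A\langle x\rangle}{2t^{3/2}}-\frac{N/2+1-\vartheta}{t}$. In the region $\langle x\rangle\ge\eta\sqrt t$ with $\eta\gtrsim N+4+2A^2\bar h$, the positive term $\frac{A\eta}{2t}$ dominates; this is why $\eta_A$ has the stated lower bound. In the inner region $\vert x\vert\le\eta_A\sqrt t$, the comparison function is not needed as a supersolution: there $\overline\phi\ge t^{-\frac N2-1+\vartheta}$, and $\vert d\vert=O(t^{-\frac N2-\frac32}\log t)$ by Theorem \ref{thm_heat kernel estimate} combined with the reflection computation of Proposition \ref{p3.2}, where the $\frac1{\sqrt t}\mathcal P$ correction gives the $t^{-1/2}$ gain over the Dirichlet decay. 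Hence $\vert d\vert\le\overline\phi$ there for $T$ large.

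\textbf{Step 2: comparison in the outer region.} On $\Omega=\{\vert x\vert\ge\eta_A\sqrt t\}$, the function $\overline\phi\pm d$ satisfies $(\partial_t+\cI_{*,n})(\overline\phi\pm d)\ge c\,t^{-1}\overline\phi-Ct^{-\frac N2-2}e^{-\mu\vert x\vert^2/(4t)}\ge0$, since the Gaussian beats the exponential for $\vert x\vert\ge\eta_A\sqrt t$. The nonlocal operator sees values within distance $R$ outside $\Omega$, so the boundary condition must be imposed on the $R$-neighbourhood of the inner region, where Step 1 gives it. At the initial time $T$, $w_0^*$ is compactly supported, so $w^*(0,\cdot)=w_0^*$ and $e^{TA_n}w_0^*$ is Gaussian; both are dominated by $T^{-\frac N2-1+\vartheta}e^{A\eta_A-A\vert x\vert/\sqrt T}$ provided $\eta_A$ and $T$ are large. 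A maximum principle for $\partial_t+\cI_{*,n}$ on the exterior domain then gives $\vert d\vert\le\overline\phi$ everywhere. This maximum principle holds because $\cI_{*,n}$ is a positive kernel minus its mass plus a transport term, and the functions involved are bounded, so the first touching point argument applies.

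\textbf{Expected obstacle.} The main difficulty will be the matching at $\vert x\vert\sim\eta_A\sqrt t$, in particular the intermediate range $\sqrt t\le\vert x\vert\le t^{\frac12+\delta}$. Theorem \ref{thm_heat kernel estimate} gives the kernel only up to factors $(1+O(t^{1-4\varpi}))$, and there is also a $\log$-type loss from the approximate Gaussian. Both must be absorbed into the slack $t^{\vartheta}$, and the $\varepsilon$-dependence of $T$ suggests exactly this bookkeeping. I would handle it by applying the barrier only where $\vert x\vert\ge\eta_A\sqrt t$, and by using the explicit $\mathcal P(\beta)$ term together with the oddness under $\mathcal R_n$ to show that the relative error there is $O(t^{-1/2+\varepsilon})$. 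Choosing $\vartheta>\varepsilon$ then closes the argument.
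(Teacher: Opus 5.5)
This is essentially the paper's proof. The same barrier $t^{-\frac N2-1+\vartheta}e^{-A(\frac{|x|}{\sqrt t}-\eta_A)}$ is shown to be a strict supersolution on $\{|x|\ge\eta_A\sqrt t\}$ (which is where the lower bound on $\eta_A$ enters) and absorbs the $O\bigl(t^{-\frac N2-2}e^{-c|x|^2/t}\bigr)$ defect of $e^{tA_n}w_0^*$. The inner values come from Theorem~\ref{thm_heat kernel estimate} and Proposition~\ref{p3.2}, the initial values from compact support, and the maximum principle concludes.

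There are two small corrections to make.
\begin{itemize}
\item In the inner region $|d|$ is only $O(t^{-\frac N2-1+\varepsilon})$, not $O(t^{-\frac N2-\frac32}\log t)$. The $t^{-1/2}\mathcal P$ term alone already contributes at order $t^{-\frac N2-1}$ when $|x|\sim\sqrt t$. This weaker bound is what your last paragraph actually uses, and it suffices because the loss $\varepsilon$ (via $\varpi$ close to $\frac12$) can be taken below $\vartheta$.
\item For $t-T$ below the waiting time of Theorem~\ref{thm_heat kernel estimate}, the inner comparison cannot come from that theorem; already at $t=T$ on the support of $w_0^*$ this is an issue. There $\eta_A$ must be taken large depending on $T$, e.g.\ $e^{A\eta_A}\gtrsim T^{\frac N2+1}$. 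This is what the paper's ``up to increasing $\eta_A$'' does.
\end{itemize}
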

\begin{proof}
	Choose $T>0$ sufficiently large and define
	\begin{equation*}
		\overline w(t,x)=e^{tA_n}w_0^*(x)+\varphi(t,x),~~~\text{with}~~\varphi(t,x):=t^{-\frac{N}{2}-1+\vartheta}e^{-A\big(\frac{|x|}{\sqrt{t}}-\eta_A\big)},
	\end{equation*}
	for $t\ge T$ and $x\in\R^N$.
	We claim that $\overline w$ satisfies $\overline w_t(t,x)+\mathcal{I}_{*,n}\overline w\ge 0$ for $t\ge T$ and $|x|\ge \eta_A\sqrt{t}$. In this range, we first make use of Remark \ref{rmk_Dirichlet heat kernel} together with \eqref{e5.103} and derive  that
	\begin{equation}\label{part1}
		\big(\partial_t+\cI_{*,n}\big)\big(e^{tA_n}w_0^*(x)\big)=O\Big(\big\Vert D^3\big(e^{tA_n}w_0^*\big)\big\Vert_{L^\infty(B_R)(x)}\Big)=O\Big(t^{-\frac{N}{2}-2}e^{-\frac{x^T H_n^{-1}x}{4t}}\Big);
	\end{equation}
we have indeed $e^{-\frac{x^T H_n^{-1}x}{4t}}\gtrsim e^{-\frac{y^T H_n^{-1}y}{2t}}$  for $y\in B_R(x)$.	On the other hand,
	\begin{align*}
		\partial_t\varphi(t,x)=\left(\frac{A|x|}{2t^{\frac{3}{2}}}-\frac{N+2(1-\vartheta)}{2t}\right)\varphi(t,x),
	\end{align*}
	and  the second order spatial derivatives are 
	\begin{equation}
		\label{e5.104}
		\partial_{x_ix_j}\varphi(t,x)=\frac{A^2}t\frac{x_ix_j}{\vert x\vert^2}\varphi(t,x)-\frac{A}{\sqrt t}\biggl(\frac{\delta_{ij}}{\vert x\vert}-\frac{x_ix_j}{\vert x\vert^3}\biggl)\varphi(t,x),
	\end{equation}
	where $\delta_{ij}$ is the Kronecker symbol. As $\vert x\vert\geq\eta_A\sqrt t$, we have
	$$
	\big| \partial_{x_ix_j}\varphi(t,x)\big| \leq\frac{1}{t}\bigg(A^2+\frac{A}{\eta_A}\bigg)\varphi(t,x).
	$$
	Differentiating \eqref{e5.104} once more and using again that $\vert x\vert \geq\eta_A\sqrt t$, we obtain $\big| D^3\varphi\big|=O\big(t^{-\frac{3}{2}}\varphi(t,x)\big)$.
	Consequently,  it follows from \eqref{e5.103} that
	\begin{align}\label{part2}
		\varphi_t(t,x)+\cI_{*,n}\varphi(t,x)\ge \left(\frac{A\eta_A-(N+2-2\vartheta)-2A^2\bar h}{2t}-\frac{C}{t^{\frac{3}{2}}}\right)\varphi(t,x)\ge Ct^{-\frac{N}{2}-2+\vartheta}e^{-A\big(\frac{|x|}{\sqrt{t}}-\eta_A\big)}
	\end{align}
	for $t\ge T$ and $\vert x\vert\ge \eta_A\sqrt{t}$, up to increasing $T$ if necessary.  Combining \eqref{part1} and \eqref{part2} yields that
	\begin{align*}
		\overline w_t(t,x)+\cI_{*,n}\overline w(t,x)\ge O\Big(t^{-\frac{N}{2}-2}e^{-\frac{x^T H_n^{-1}x}{4t}}\Big)+		
		Ct^{-\frac{N}{2}-2+\vartheta} e^{-A\big(\frac{|x|}{\sqrt{t}}-\eta_A\big)}>0
	\end{align*}
	for $t\ge T$ and $\vert x\vert\ge \eta_A\sqrt{t}$.  Our claim is therefore achieved.
	
	In addition, it follows from Proposition \ref{p3.2} and Remark \ref{rmk_Dirichlet heat kernel} that
	\begin{equation*}
		\big|w^*(t-T,x)-e^{tA_n}w_0^*(x)\big|\le C t^{-\frac{N}{2}-1}<t^{-\frac{N}{2}-1+\vartheta}\le \varphi(t,x)
	\end{equation*}
	for $t\ge 2T$ and $|x|\le \eta_A\sqrt{t}$,	 up to increasing $T$. For $t\in[T,2T]$ and $|x|\le \eta_A\sqrt{t}$, we also have $	\big|w^*(t-T,x)-e^{tA_n}w_0^*(x)\big|< \varphi(t,x)$, up to increasing $\eta_A$ if needed. Finally, by virtue of Remark \ref{rmk_Dirichlet heat kernel}, it is seen that $\varphi(T,x)$ is the leading term in the supersolution $\overline w(T,x)$  for $x\ge \eta_A\sqrt{T}$. 
	Given that $w_0^*$ is compactly supported, it easily follows that  $w_0^*(x)< \overline w(T,x)$ for $x\ge \eta_A\sqrt{T}$, up to increasing $\eta_A$.
	The maximum principle implies therefore the second inequality. The first inequality can be proved along the same lines. 
\end{proof}
As already mentioned, Theorem \ref{thm-control beyond diffusive scale-n} is valid only when the initial datum is compactly supported, while need to understand what happens when it is of a more general type. For well spread initial data, the answer is given by the following counterpart of Theorem \ref{thm-control beyond diffusive scale-n}. Note that the scaling of the initial datum is slightly different than in Proposition \ref{p3.1} because, there, we wished to observe the evolution of $e^{-t\cI_*}$ on a time scale $\ll s$. Here, we want here to estimate its evolution over the whole range of times.
\begin{theorem}\label{t3.10}
For all $s\geq0$, let $\bnu^*(s,.)$ be a smooth function of the variable $\zeta\in\R^N$ satisfying  $\bnu^*(\zeta)\ge0$ for $\{\zeta\in\rn|x\cdot n\ge 0\}$ and
	$\bnu^*\big({\mathcal{R}}_n(\zeta)\big)=-\bnu^*(\zeta)$ for $\zeta\in\R^N$. Assume in addition the existence of $\vartheta\in(0,\di\frac14)$ and $A>0$ such that $\bnu^*(s,.)$ has the following decomposition: 
	$$
	\bnu^*(s,\zeta)=\bnu_1^*(s,\zeta)+\frac{\chi^*(\zeta)e^{-A\vert\zeta\vert}}{s^{\frac12-\vartheta}},
	$$
	the function $\bnu_1^*(s,.)$ satisfying $\bnu_1^*\big({\mathcal{R}}_n(\zeta)\big)=-\bnu_1^*(\zeta)$ for $\zeta\in\R^N$ and being, as well as all its derivatives,  by a multiple of $e^{-\mu\vert\zeta\vert^2}$ that is uniformly bounded in $s$. The function $\chi$ is smooth and supported in the interval $[\eta,+\infty)$ with $\eta>0$. 	Set
	$$
	v_s^*(x)=\frac1{s^{\frac{N+1}2}}\bnu^*(s,\frac{x}{\sqrt s}).
	$$
	Let $w^*$ be the solution to the linear equation \eqref{w-eqn-linear} in $\R_+\times\R^N$ associated with the initial datum $v_s$. Then we have
	\begin{equation*}
		e^{tA_n}v_s^*(x)-t^{-\frac{N}{2}-1+\vartheta}e^{-\frac{A|x|}{\sqrt{t}}} \leq w^*(t,x)\le e^{tA_n}v_s^*(x)+t^{-\frac{N}{2}-1+\vartheta}e^{-\frac{A|x|}{\sqrt{t}}}
	\end{equation*}
	for $t\ge 0$ and $x\in\R^N$.
\end{theorem}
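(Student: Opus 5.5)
The plan is to mimic the proof of Theorem \ref{thm-control beyond diffusive scale-n}: build $\overline w=e^{tA_n}v_s^*+\varphi$ and $\underline w=e^{tA_n}v_s^*-\varphi$ with $\varphi(t,x)=t^{-\frac N2-1+\vartheta}e^{-\frac{A|x|}{\sqrt t}}$ (time shifted if needed, see below), check that they are super/subsolutions of $\partial_t+\cI_{*,n}$ away from the diffusive zone, and control the difference $w^*-e^{tA_n}v_s^*$ directly inside it. The substitute for the compact support of $w_0^*$ is the scaling identity \eqref{e3.2000}: $e^{tA_n}v_s^*(x)=s^{-\frac{N+1}{2}}\bigl(e^{\frac ts A_n}\bnu^*(s,\cdot)\bigr)(x/\sqrt s)$. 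The Gaussian part $\bnu_1^*$ evolves explicitly through $\mathcal{G}^{app}_n$. The tail part $\chi^*e^{-A|\zeta|}s^{-\frac12+\vartheta}$ is already of the size of $\varphi$ at $t\sim s$, which is exactly why $\varphi$ carries the factor $t^{\vartheta}$.

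\textbf{Step 1 (defect of the diffusive approximation).} Set $W=e^{tA_n}v_s^*$. By \eqref{e5.103}, $(\partial_t+\cI_{*,n})W=O(\|D^3W\|_{L^\infty(B_R(x))})$. Using antisymmetry under $\mathcal{R}_n$ and the Gaussian bounds on $\bnu_1^*$, the $\bnu_1^*$ contribution gives $D^3W=O\bigl((t+s)^{-\frac N2-\frac52}e^{-c|x|^2/(t+s)}\bigr)$, as in \eqref{part1}; the extra half power comes from the dipole structure, as in Proposition \ref{p3.2}. The tail contribution obeys $D^3W\lesssim s^{-\frac12+\vartheta}(t+s)^{-\frac N2-\frac32}e^{-A'|x|/\sqrt{t+s}}$, obtained by convolving an exponential with a Gaussian.

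\textbf{Step 2 (the barrier).} Compute $(\partial_t+\cI_{*,n})\varphi$ exactly as in \eqref{part2}, using \eqref{e5.104}. The leading term is $\frac{1}{2t}\bigl(A|x|/\sqrt t-(N+2-2\vartheta)-2A^2\bar h\bigr)\varphi$, which is positive and of order $t^{-1}\varphi$ once $|x|\ge\eta\sqrt t$ with $\eta$ large. This dominates the defect of Step 1, since the defect carries an extra factor $t^{-\frac12}$ or $t^{-\vartheta}$ relative to $t^{-1}\varphi$.

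\textbf{Step 3 (comparison).} In the zone $|x|\le\eta\sqrt t$, I need $|w^*-W|\le\varphi$ directly. For $t\ge s^{\varsigma}$, I would combine Theorem \ref{thm_heat kernel estimate} with the antisymmetric reflection argument of Proposition \ref{p3.2}, which gives an error of order $t^{-\frac N2-1-\frac12}$ times the moment $\mu_n$ of the datum. For short times $t\le s^\varsigma$, I would use Proposition \ref{p3.1} and Proposition \ref{p3.3}, whose errors are $\tau s^{-\frac N2-1}$ and are compatible with $t^{-\frac N2-1+\vartheta}$ only once $t$ is comparable to $s$. At $t=0$ the two barriers coincide with $v_s^*$ up to $\pm\varphi$. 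The maximum principle for $\partial_t+\cI_{*,n}$ on the moving exterior domain $\{|x|\ge\eta\sqrt t\}$ then closes the argument; the nonlocal operator needs the ordering on an $R$-neighbourhood of the boundary, which the interior estimate supplies.

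\textbf{Main obstacle.} The difficulty is the claim for all $t\ge0$ with no waiting time. Near $t=0$, $\varphi$ blows up and the parabolic scale $\sqrt t$ does not match the data scale $\sqrt s$. I would first try to reinterpret $t$ as $t+s$ in the barrier, i.e.\ take $\varphi(t+s,x)$, which is consistent with the scaling of $v_s^*$. The interior matching for $t\lesssim s$ then reduces to Proposition \ref{p3.1}, whose error $\tau s^{-\frac N2-1}\le(t+s)^{-\frac N2-1+\vartheta}$ holds trivially. If this fails, I would accept a constant multiple in front of $\varphi$.
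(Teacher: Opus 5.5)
Your strategy is the paper's. Its proof of Theorem \ref{t3.10} is the single sentence ``follow Theorem \ref{thm-control beyond diffusive scale-n} and use Proposition \ref{p3.1} for $t\le s^\varsigma$'', that is, the exterior barrier $e^{tA_n}v_s^*\pm\varphi$ plus a direct interior estimate.

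Your replacement of $\varphi(t,x)$ by $\varphi(t+s,x)$ is correct and necessary. For fixed $x\neq0$, $w^*(t,x)-e^{tA_n}v_s^*(x)$ is generically of exact order $t$ as $t\to0^+$, whereas $t^{-\frac N2-1+\vartheta}e^{-A|x|/\sqrt t}$ vanishes faster than any power of $t$. So the statement can only hold in the shifted form, which is also the form used in Proposition \ref{p5.7}.

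\textbf{The gap.} It is the interior estimate $|w^*-e^{tA_n}v_s^*|\le\varphi(t+s,\cdot)$ on $\{|x|\le\eta\sqrt{t+s}\}$ in the window $s^\varsigma\le t\ll s$. The paper's sketch is silent there too, and none of the tools you cite covers it.
\begin{itemize}
\item Proposition \ref{p3.1} is only proved for $\tau\le s^\varsigma$ (the appendix needs $\tau^q\le s^{1/8}$). So your claim that the matching for $t\lesssim s$ ``reduces to Proposition \ref{p3.1}'' is unfounded.
\item Theorem \ref{thm_heat kernel estimate} does not help. Its remainder is relative to the kernel, $O(t^{1-4\varpi})\,\mathcal{G}^{app}_n$. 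After convolution with $|v_s^*|\sim s^{-\frac{N+1}2}$ it only yields the bound $t^{1-4\varpi}s^{-\frac{N+1}2}$. Since $4\varpi-1<1$, this exceeds the target $s^{-\frac N2-1+\vartheta}$ for all $t\le s^{\frac12-\vartheta}$.
\item Proposition \ref{p3.2} does not help either. Its reflection expansion needs the datum concentrated on a scale $\ll\sqrt t$, while $v_s^*$ lives on the scale $\sqrt s\gg\sqrt t$. In this window $e^{tA_n}v_s^*$ is still close to $v_s^*$, not to a dipole, so your ``error $t^{-\frac N2-\frac32}\mu_n$'' does not apply.
\end{itemize}

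\textbf{A fix.} A direct comparison on the Fourier side handles all $t\ge0$ at once. Set $m(\xi)=\widehat{\ck_{*,n}}(\xi)+\mathbf{i}w^*(e)e\cdot\xi-\widehat{\ck_{*,n}}(0)$. Both $\Re m(\xi)$ and $-\frac12\xi^TH_n\xi$ are $\le -c|\xi|^2$ for $|\xi|\le r_0$ and $\le -c$ for $|\xi|\ge r_0$. The Taylor expansion of $\widehat{\ck_{*,n}}$ used in the proof of Theorem \ref{thm_heat kernel estimate} then gives
\[
\bigl|e^{tm(\xi)}-e^{-\frac t2\xi^TH_n\xi}\bigr|\le t\,\bigl|m(\xi)+\tfrac12\xi^TH_n\xi\bigr|\,e^{-ct|\xi|^2}\le Ct|\xi|^3e^{-ct|\xi|^2}\quad(|\xi|\le r_0),\qquad \bigl|e^{tm(\xi)}-e^{-\frac t2\xi^TH_n\xi}\bigr|\le 2e^{-ct}\quad(|\xi|\ge r_0).
\]
Two facts about the datum:
\begin{itemize}
\item $\widehat{v_s^*}(\xi)=s^{-\frac12}\widehat{\bnu^*}(s,\sqrt s\xi)$.
\item Antisymmetry gives $\int v_s^*=0$, hence $|\widehat{v_s^*}(\xi)|\le|\xi|\int|y||v_s^*(y)|\md y\lesssim|\xi|$ uniformly in $s$.
\end{itemize}
Integrating yields, for every $k$,
\[
\|w^*(t,\cdot)-e^{tA_n}v_s^*\|_{L^\infty}\lesssim\min\bigl(ts^{-\frac N2-2},t^{-\frac N2-1}\bigr)+C_ke^{-ct}s^{-k}\lesssim(t+s)^{-\frac N2-1}.
\]
For $s$ large this lies below $\varphi(t+s,x)$ on $\{|x|\le\eta\sqrt{t+s}+R\}$, which also covers the collar the nonlocal comparison needs. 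This replaces the interior part of your Step 3, including the short-time use of Proposition \ref{p3.1}.

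\textbf{A smaller slip in Step 1.} The dipole structure gives $D^3W=O((t+s)^{-\frac N2-2})$ (compare \eqref{part1}), not $(t+s)^{-\frac N2-\frac52}$. More importantly, your tail bound $s^{-\frac12+\vartheta}(t+s)^{-\frac N2-\frac32}$ is not dominated by $(t+s)^{-1}\varphi(t+s,x)$ once $t\gtrsim s$, contrary to what you claim in Step 2. You must use the antisymmetry of the tail as well, which gives $s^{-\frac12+\vartheta}(t+s)^{-\frac N2-2}$. With that, the margin $(t+s)^{-\vartheta}$ closes Step 2.
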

\begin{proof} The argument follows the line of Theorem \ref{thm-control beyond diffusive scale-n} and uses Proposition \ref{p3.1} to control $w^*(t,x)$ for $t\leq s^\varsigma$.
\end{proof}


\section{Upper and lower barriers along the minimizing direction}\label{sec4}
In this section, we aim to establish refined super- and subsolutions in order to capture the correct order of magnitude of  $v(t,x)$ ahead of the front. Given any direction $e\in\sn$, let $w^*(e)$ be the invasion speed along the direction $e$. Hereafter,  we assume without loss of generality that $n_e=(1,0,\ldots,0)$. Then, the vector $\bm_{n_e}^\perp$ takes the form $\bm_{n_e}^\perp=(0,m_2,\cdots,m_N)=:\bm'$. For notational simplicity, we also set
\begin{equation*}
	c^*(n_e)=c^*, ~~~\ck_{*,n_e}=\ck_{*},~~~\lambda^*(n_e)=\lambda^*,~~~\cI_{*,n_e}=\cI_{*,n}.
\end{equation*}
For $x\in\R^N$, we denote it as $x=x_1(1,0,\ldots,0)+x'$ with $x'=(0,x_2,\cdots,x_N)\in\R^{N}$. 

We then recall from Section \ref{sec2.3} that 
\begin{equation*}
	c^*=\int_{\R^N} x_1\ck_{*}(x)\md x,~~~~\bm'=\int_{\rn} x' \ck_{*}(x)\md x.
\end{equation*}
The matrix $H_n$ introduced in \eqref{def_H matrix} is now denoted by $H_{e_1}$, and its entries $h_{ij,e_1}$. While this is not the lightest notation, it has the merit of recalling where this matrix comes from.
The function $v$ satisfies
\begin{equation}
	\label{v-eqn}
	\mathcal{N} v:=	v_t+\cI_{*}v
	+R(t,x;v)=0,~~~t>0,~x\in\R^N,
\end{equation}
with  $\cI_{*}$ now written as
\begin{equation*}
	\begin{aligned}
		\cI_{*}v=\widehat{\ck_{*}}(0)v-\ck_{*}*v-c^*(n_e)\partial_{x_1}v-\bm'\cdot\nabla v=-\frac{1}{2}\sum_{i,j=1}^{N}h_{ij,e_1}\partial_{x_ix_j}v+O\big(\Vert D^3v\Vert_{L^\infty}\big)
	\end{aligned}
\end{equation*}
thanks to \eqref{e5.103}, and
\begin{equation*}
	R(t,x;v)=f'(0)v-e^{\lambda^*x_1}f\big( e^{-\lambda^*x_1}v
	\big),
\end{equation*}
starting from $v(0,x)=e^{\lambda^*x_1}u(0,x)$.

\vskip 3mm

We shall construct upper and lower barriers for the function $v(t,x)$ ahead of $x_1\approx 0$ for $t$ sufficiently large. To this end, we begin with introducing some parameters.  Let $T$, $A$ and $\eta_A$ be as stated in Theorem \ref{thm-control beyond diffusive scale-n}. 
Moreover, we fix positive parameters $\delta$, $\beta$, $\alpha$, $\gamma$ and  $\vartheta$   such that
\begin{equation}
	\label{parameters-n}
	0<\delta<\gamma<\beta<\frac{4}{25}<\frac{7}{15}<\alpha<\frac{1}{2},~~~~~\beta<\vartheta<1-2\alpha+\beta.
\end{equation}
Fix $0<\eta_A<\eta_1<\eta_2<\eta_3<\eta_4<+\infty$ 
, and define smooth functions $\chi_1$ and $\chi_2$ on $[0,+\infty)$ such that
\begin{itemize}
	\item $r\mapsto \chi_1(r)$ is nondecreasing such that $\chi_1(r)\equiv 0$ for $r\in[0,\eta_1]$, and $\chi_1(r)\equiv 1$ for $r\in[\eta_2,+\infty)$, with bounded derivatives of all orders for $r\in[\eta_1,\eta_2]$;
	
	\item $r\mapsto \chi_2(r)$ is nonincreasing such that $\chi_2(r)\equiv 1$ for $r\in[0,\eta_3]$, and $\chi_2(r)\equiv 0$ for $r\in[\eta_4,+\infty)$, with bounded derivatives of all orders for $r\in[\eta_3,\eta_4]$;
	
\end{itemize}

Moreover, we denote by $w^*$ the solution to \eqref{w-eqn-linear} in $\R_+\times\R^N$
associated with a nontrivial, bounded and compactly supported  smooth initial datum $w^*_0$ in $\R^N$ satisfying 
$w^*_0(x)\ge0$ for $\{x\in\rn|x_1\ge 0\}$ and 
$w^*_0(\mathcal{R}_{e_1}{e_1}(x))=-w^*_0(x)$ for $x\in\R^N$, with $\mathcal{R}_{e_1}(x):=x-2(x_1/h_{11,e_1})H_{e_1}$.  Then, up to increasing $T>0$, it is worth observing from Theorem \ref{thm-control beyond diffusive scale-n} that  the function $w^*$ has the following asymptotics 
\begin{equation}
	\label{w-asympt-n}
	w^*(t,x)\asymp \frac{x_1}{t^{\frac{N}{2}+1}}
\end{equation}
 for $t\ge T$ and for $|x_1|= O(\sqrt{t})$
and $|x'|= O(\sqrt{t})$.

\subsection{The upper barrier} 
For $t\ge T$ and $x_1\ge-t^\delta-R$, we define
\begin{equation}
	\label{upper-n}
	\overline v(t,x)=
	\overline\xi(t)w^*(t,x)+\mathcal{V}_{1}(t,x)+\mathcal{V}_{2}(t+T,x),
\end{equation}
in which $\overline \xi(t)=1-\frac{1}{t^{\gamma}}$,
\begin{equation*}
	\begin{aligned}
	\cV_{1}(t,x)&=\chi_2\left(\frac{|x'|}{\sqrt{t}}\right)\frac{1}{t^{\frac{N}{2}+1-\beta}}\cos\left(\frac{x_1}{t^\alpha}\right)\mathbbm{1}_{\big\{x\in\R^N\big|-t^{\delta}-R\le x_1\le \frac{3\pi}{2}t^{\alpha}\big\}},\\
	\cV_{2}(t,x)&=\chi_1\left(\frac{	|x| }{\sqrt{t}}\right)\frac{1}{t^{\frac{N}{2}+1-\vartheta}}e^{-A\big(\frac{	|x| }{\sqrt{t}}-\eta_A\big)}.~~~~~~~~~~~~~
	\end{aligned}
\end{equation*}

In what follows, we shall check that $\overline v$ is a  supersolution to equation \eqref{v-eqn} for $t\ge T$ and $x_1\ge -t^\delta$, for which will be sufficient if  $\overline v$ satisfies $\mathcal{N}\overline v\ge 0$
for   $t\ge T$ and $x_1 \ge -t^\delta$, thanks to  the comparison principle.

Before proceeding, let us explore each term in our target $\overline v$. First,  the term $\overline\xi(t) w^*(t,x)$, as expected,  is a good candidate for the supersolution within the diffusive regime. As a matter of fact, one finds that 
$	\displaystyle\mathcal{N}\big(\overline \xi(t)w^*(t,x)\big)=\overline\xi'(t)w^*(t,x)	
	\approx \frac{C}{t^{1+\gamma}}\frac{x_1}{t^{\frac{N}{2}+1}}
$for $t\ge T$ and for $|x_1|= O(\sqrt{t})$  and $|x'|=O(\sqrt{t})$. Regarding the more involved correction terms $\mathcal{V}_1$ and $\mathcal{V}_2$,  explicit computations are listed below. 

For the sake of notational simplification,  we adopt the following set of notation
\begin{equation*}\label{short notation-n}
	\zeta_\alpha:=\frac{x_1}{t^\alpha},~~~~\zeta'=\frac{x'}{\sqrt{t}},~~~~\zeta:=\frac{x}{\sqrt{t+T}},
\end{equation*}
and
$	\chi_1:=\chi_1(\vert\zeta\vert),~~\chi_2:=\chi_2(\vert\zeta'\vert).
$
Whenever $\mathcal{V}_1(t,x)$ is nontrivial,  a straightforward calculation yields
\begin{equation*}\label{V1-diff-t}
	t^{\frac{N}{2}+1-\beta}\partial_t\cV_{1}(t,x)=-\frac1t\big(\frac{N}{2}+1-\beta\big)\chi_2\cos(\zeta_\alpha)+\frac{\alpha\zeta_\alpha}t\chi_2\sin(\zeta_\alpha)-\frac{\vert\zeta'\vert}{2t}\chi_2'\cos(\zeta_\alpha)
\end{equation*}
and
\begin{equation*}
	\label{V1-diff-x}
\begin{aligned}
	t^{\frac{N}{2}+1-\beta}\partial^2_{x_1x_1}\cV_{1}(t,x)
	&=-\frac{1}{t^{2\alpha}}\chi_2\cos(\zeta_\alpha),\\
	t^{\frac{N}{2}+1-\beta}\partial^2_{x_1x_j}\cV_{1}(t,x)
	&=-\frac{1}{t^{\alpha+\frac{1}{2}}}\frac{x_j}{|x'|}\chi_2'\sin(\zeta_\alpha),~~~~j\neq 1,\\
	t^{\frac{N}{2}+1-\beta}\partial^2_{x_ix_j}\cV_{1}(t,x)
	&=\frac{1}{t^{\frac{1}{2}}}\Big(\frac{\delta_{ij}}{|x'|}-\frac{x_ix_j}{|x'|^3}\Big)\chi_2'\cos(\zeta_\alpha)+\frac{1}{t}\frac{x_ix_j}{|x'|^2}\chi_2''\cos(\zeta_\alpha),~~~i\neq 1,~j\neq 1.
\end{aligned}
\end{equation*}
Let us also notice  that, in the support of $\chi_2'$, the terms $\delta_{ij}/{\vert x'\vert}$ and ${x_i'x_j'}/{\vert x'\vert^3}$ are of the order $1/t^{1/2}$.
 Furthermore, the computation of the third order spatial derivatives 
 leads to
\begin{equation*}
	\Vert D^3\cV_1\Vert_{L^\infty}
	=O(t^{-3\alpha}).
\end{equation*}
Consequently, whenever $\mathcal{V}_1$ is nontrivial, we arrive at
\begin{equation}
	\label{eqn-V1}
	t^{\frac{N}2+1-\beta}\cN\cV_1\geq \bigg(\frac{h_{11,e_1}}{2t^{2\alpha}}-\frac{\frac{N}{2}+1-\beta}{t}\bigg)\chi_2\cos(\zeta_\alpha)+O\bigg(\frac1{t^{\alpha+\frac{1}{2}}}\bigg).
\end{equation}

Let us now turn to the computation for $\cV_2$ by focusing on the region where $\cV_2$ is supported, that is when $\vert\zeta\vert> \eta_1>\eta_A$. We have
\begin{align*}
	(t+T)^{\frac{N}2+1-\vartheta}e^{A(\vert\zeta\vert-\eta_A)} 	\partial_t\cV_2(t+T,x)=	
	\frac{A\vert\zeta\vert-N-2(1-\vartheta)}{2(t+T)}\chi_1 - \frac{\vert\zeta\vert}{2(t+T)}\chi_1'.
\end{align*}
As for the spatial derivatives,  we deduce that
\begin{align*}
	&(t+T)^{\frac{N}{2}+1-\vartheta}e^{A(\vert\zeta\vert-\eta_A)}	\partial_{x_ix_j}\cV_{2}(t+T,x)\\
	&~~~~~~~~~~~~~~~~~~=\frac{1}{(t+T)^{\frac{1}{2}}} \Big(\frac{\delta_{ij}}{|x|}-\frac{x_ix_j}{|x|^3}\Big)\big(\chi_1'-A\chi_1\big)+\frac{1}{t+T}\frac{x_ix_j}{|x|^2}\big(A^2\chi_1-2A\chi_1'+\chi_1''\big)\\
	&~~~~~~~~~~~~~~~~~~\le \frac{1}{t+T}\Big(\frac{2}{\eta_1}A+A^2\Big)\chi_1+\frac{1}{t+T}\Big(\frac{2}{\eta_1}|\chi_1'|+2A|\chi_1'|+|\chi_1''|\Big)
\end{align*}
and  $\Vert D^3\cV_2\Vert_{L^\infty}=O\big((t+T)^{-\frac{3}{2}}\big)$.
Noticing that $\chi_1'\neq 0$ or $\chi_1''\neq 0$ implies  $\eta_1\le \vert\zeta\vert\le \eta_2$, we get
\begin{equation}
\label{eqn-V2}
\begin{aligned}
&	(t+T)^{\frac{N}{2}+1-\vartheta}e^{A(\vert\zeta\vert-\eta_A)}	\mathcal{N}\cV_{2}(t+T,x)\\
\ge &\frac{1}{2(t+T)}\Bigg(\bigg[A\eta_A\!-\!N\!-\!2\!-\!\Big(\frac{2}{\eta_1}A+A^2\Big)\bar{h}\bigg]
	\chi_1\-\vert\zeta\vert\chi_1' \!-\!\bar{h}\bigg[\frac{2}{\eta_1}|\chi_1'|+2A|\chi_1'|+|\chi_1''|\bigg]\Bigg)\!\!+O\Big(\frac{1}{(t+T)^{\frac{3}{2}}}\Big)
	\ge0,
\end{aligned}
\end{equation}
up to increasing $\eta_A$ if needed.

\medskip

We are now in a position to look at our target $\overline v$ for $t\ge T$ and $x_1\ge -t^\delta-R$.
Based upon the definition of $\overline v$, we will proceed with our analysis by dividing the region into several parts for $t\ge T$:

\medskip

\noindent
\textbf{Step 1}. The range $-t^\delta-R\le x_1\le\di \frac{3\pi}{2}t^\alpha$. 

\medskip

\noindent
\textbf{Step 1.1}.  $-t^\delta-R\le x_1\le t^\delta$. In this region, $\cos(\zeta_\alpha)$ is almost 1. 
\begin{itemize}
	\item  $0\le |x'|\le \eta_3\sqrt{t}$. We have $0\le \chi_1\le 1$ and $\chi_2\equiv 1$. The term $\cV_{2}$ is nonnegative, and the cosine term $\cV_{1}$ itself can dominate the sign of the function $\overline v$ up to increasing $T$, thanks to 
	\eqref{w-asympt-n} and $\delta<\beta$, therefore  $\overline v$ is positive. Moreover, we infer from    \eqref{w-asympt-n}, \eqref{eqn-V1} and \eqref{eqn-V2} that, up to increasing $T$, 
	\begin{equation*}
		\overline\xi'(t)w^*(t,x)+\cN\cV_{1}(t,x)+\cN\cV_{2}(t+T,x)\ge -\frac{C}{t^{\frac{N}{2}+2-\delta+\gamma}}+\frac{C}{t^{\frac{N}{2}+1-\beta+2\alpha}}-\frac{C}{(t+T)^{\frac{N}{2}+2-\vartheta}}>0.
	\end{equation*}
	
	\item  $\eta_3\sqrt{t}\le |x'|\le \eta_4\sqrt{t}$.  We observe that $\chi_1\equiv 1$, $0\le \chi_2\le 1$, and the cosine term is nonnegative, thus  $\cV_{2}(t+T,x)\ge C(t+T)^{\frac{N}{2}+1-\vartheta}$ with $\vartheta>\delta$, which together with \eqref{w-asympt-n} implies that $\overline v$ is positive.
	Since
	$\mathcal{N}\cV_{1}(t,x)\ge -Ct^{-(\frac{N}{2}+1-\beta)-1}$, we get 
	\begin{equation*}
		\overline\xi'(t)w^*(t,x)+\cN\cV_{1}(t,x)+\cN\cV_{2}(t+T,x)\ge -\frac{C}{t^{\frac{N}{2}+2-\delta+\gamma}}-\frac{C}{t^{\frac{N}{2}+2-\beta}}+\frac{C}{(t+T)^{\frac{N}{2}+2-\vartheta}}>0.
	\end{equation*}
	\item  $ |x'|\ge \eta_4\sqrt{t}$.  We have $\chi_1\equiv 1$ and $\chi_2\equiv 0$.  It follows from Theorem \ref{thm-control beyond diffusive scale-n} that the function   $\overline v$ is positive as well, and, up to increasing $T$,
 \begin{equation*}
 	\overline\xi'(t)\big(e^{-t\ci_*}w_0^*\big)(x)+\cN\cV_2(t+T,x)\ge \frac{C}{t^{1+\gamma}}\big(e^{-t\ci_*}w_0^*\big)(x)+\frac{C}{t+T}\cV_2(t+T,x)>0.
 \end{equation*}	
\end{itemize}

\medskip 

\noindent
\textbf{Step 1.2}.  $t^\delta\le x_1\le \frac{\pi}{4}t^\alpha$.  In this region,   $\overline v>0$, since each term is either positive or nonnegative. Furthermore,
\begin{itemize}
	\item  $0\le |x'|\le \eta_3\sqrt{t}$. We find that $0\le \chi_1\le 1$ and $\chi_2\equiv 1$. Combining $\overline\xi'(t)w^*(t,x)>0$ and  \eqref{w-asympt-n}, one has that, up to increasing $T$, 
	\begin{equation*}
		\overline\xi'(t)w^*(t,x)+\cN\cV_{1}(t,x)+\cN\cV_{2}(t+T,x)\ge \frac{C}{t^{\frac{N}{2}+1-\beta+2\alpha}}-\frac{C}{(t+T)^{\frac{N}{2}+2-\vartheta}}>0.
	\end{equation*}
	
	\item  $\eta_3\sqrt{t}\le |x'|\le \eta_4\sqrt{t}$.  Here we have $\chi_1\equiv 1$, $0\le \chi_2\le 1$, and $\mathcal{N}\cV_{1}(t,x)\ge -Ct^{-(\frac{N}{2}+1-\beta)-1}$. Together with  $\overline\xi'(t)w^*(t,x)>0$ and $\vartheta>\beta$, we have 
	\begin{equation*}
		\overline\xi'(t)w^*(t,x)+\cN\cV_{1}(t,x)+\cN\cV_{2}(t+T,x)\ge -\frac{C}{t^{\frac{N}{2}+2-\beta}}+\frac{C}{(t+T)^{\frac{N}{2}+2-\vartheta}}>0.
	\end{equation*}
	
	\item  $ |x'|\ge \eta_4\sqrt{t}$.  From $\chi_1\equiv 1$ and $\chi_2\equiv 0$ (namely, $\cV_{1}\equiv 0$), we  deduce from Theorem \ref{thm-control beyond diffusive scale-n} that, up to increasing $T$,
	\begin{equation*}
		\overline\xi'(t)w^*(t,x)+\cN\cV_{2}(t+T,x)\ge \frac{C}{t^{1+\gamma}}w^*(t,x)+\frac{C}{t+T}\cV_{2}(t+T,x)>0.
	\end{equation*}
	
\end{itemize}

\medskip

\noindent
\textbf{Step 1.3}.  $\frac{\pi}{4}t^\alpha \le x_1\le \frac{3\pi}{2}t^{\alpha}$.  Note that the cosine term $\cV_{1}$ may have negative sign in this regime, and the term $\overline\xi(t)w^*(t,x)$ starts playing a role.
\begin{itemize}
	
	\item $0\le |x'|\le \eta_4\sqrt{t}$. We derive from \eqref{w-asympt-n} that $\overline\xi(t)w^*(t,x)\ge Ct^{-(\frac{N}{2}+1)+\alpha}> Ct^{-(\frac{N}{2}+1)+\beta}\ge \cV_{1}$, thanks to \eqref{parameters-n}. This, together with the fact that $\cV_{2}$ is nonnegative, implies that $\overline v$ is positive. Moreover, since $0\le \chi_1,\chi_2\le 1$, along with \eqref{w-asympt-n}, as well as  $\mathcal{N}\cV_{1}(t,x)\ge -Ct^{-(\frac{N}{2}+1-\beta)-2\alpha}$ from \eqref{eqn-V1}, and $\vartheta<1-2\alpha+\beta<\alpha-\gamma$, we have
	\begin{align*}
		\overline\xi'(t)w^*(t,x)+\cN\cV_{1}(t,x)+\cN\cV_{2}(t+T,x)\ge \frac{C}{t^{\frac{N}{2}+2-\alpha+\gamma}}-\frac{C}{t^{\frac{N}{2}+1-\beta+2\alpha}}-\frac{C}{(t+T)^{\frac{N}{2}+2-\vartheta}}>0.
	\end{align*}

	\item $\eta_4\sqrt{t}\le |x'|$.   We have
	\begin{equation}
		\label{bar v>0}
		\begin{aligned}
			\overline v(t,x)&=\overline\xi(t)w^*(t,x)+\cV_{2}(t+T,x)\\
			&=\overline\xi(t)e^{(t+T)A_n}w_0^*(x)+\overline\xi(t)\big(w^*(t,x)-e^{(t+T)A_n}w_0^*(x)\big)+\cV_{2}(t+T,x)>0
		\end{aligned}
	\end{equation} 
	 by Theorem \ref{thm-control beyond diffusive scale-n}, this also confirms that $\overline v$ is positive.  In addition, we  have
	  \begin{equation}
	 	\label{eqn>0}
	 	\begin{aligned}
	 		&\overline\xi'(t)w^*(t,x)+\cN\cV_{2}(t+T,x)\\
	 		&~~~~~~~\ge \overline\xi'(t)e^{(t+T)A_n}w_0^*(x)+\frac{\gamma}{t^{1+\gamma}}\big(w^*(t,x)-e^{(t+T)A_n}w_0^*(x)\big)  +\frac{C}{t+T}\cV_{2}(t+T,x)>0.
	 	\end{aligned}
	 \end{equation}
\end{itemize}

\medskip

\noindent
\textbf{Step 2}. The range  $\frac{3\pi}{2}t^{\alpha} \le x_1\le \eta_2\sqrt{t}$. 
In this region, it is observed that $\cV_{1}(t,x)\equiv 0$. 
\begin{itemize}
	
	\item $0\le |x'|\le \eta_2\sqrt{t}$.  It is easily seen from \eqref{w-asympt-n} and the nonnegativity of $\cV_{2}$ that $\overline v$ is positive. Since $0\le \chi_1\le 1$, it immediately follows from \eqref{w-asympt-n} and $\vartheta<\alpha-\gamma$ that
	\begin{align*}
		\overline\xi'(t)w^*(t,x)+\cN\cV_{2}(t+T,x)\ge \frac{C}{t^{\frac{N}{2}+2-\alpha+\gamma}}-\frac{C}{(t+T)^{\frac{N}{2}+2-\vartheta}}>0.
	\end{align*}
	
	\item $\eta_2\sqrt{t}\le |x'|$. Then $\chi_1\equiv 1$.  From Theorem \ref{thm-control beyond diffusive scale-n}, we have $\overline v\geq0$, and up to increasing $T$,
	\begin{align*}
		\overline\xi'(t)w^*(t,x)+\cN\cV_{2}(t+T,x)\ge \frac{C}{t^{1+\gamma}}w^*(t,x) +\frac{C}{t+T}\cV_{2}(t+T,x)>0.
	\end{align*}
	
\end{itemize}

\medskip

\noindent
\textbf{Step 3}.  The range  $ x_1\ge \eta_2\sqrt{t}$. 

\medskip

Here, $\chi_1\equiv 1$. The same reasoning as in \eqref{bar v>0} and \eqref{eqn>0} gives that $ \overline v(t,x)=\overline\xi(t)w^*(t,x)+\cV_{2}(t+T,x)>0$,
and $\overline\xi'(t)w^*(t,x)+\cN\cV_{2}(t+T,x)>0$ up to increasing $T$.

\medskip

\noindent  
\textbf{Conclusion}.
We conclude that the function $\overline v(t,x)$ defined in \eqref{upper-n} is a positive function for $t\ge T$ and for $x_1\ge -t^\delta-R$ and satisfies $\cN\overline v\ge 0$. Thus, it is a supersolution to the nonlinear  problem \eqref{v-eqn} for $t\ge T$ and $x_1\ge -t^\delta-R$.

Since $v(0,x)$ is compactly supported for $x\in\R^N$, one can choose $\kappa_+>\Vert e^{\lambda^*x_1}u_0(x)\Vert_{L^\infty}$ large such that $v(0,x)\le\kappa_+ \overline v(T,x)$ for all $x_1\ge -T^\delta-R$. 
Pick $\varpi>0$ small; we will now compare $\kappa_+\overline v(t,x)$ to $v(t-T,x)$ on the set 
 $$
\{-t^\delta-R\leq x_1,~\vert x'\vert\geq t^{\frac12+\varpi}\}\cup\{-t^\delta-R\leq x_1\leq-t^\delta,~x'\in\RR^{N-1}\}.
 $$
 If $v(t-T,x)\leq\kappa_+\overline v(t,x)$ on that set, then the two functions compare everywhere on the set $\{x_1-t^\delta-R\}$. On the one hand, notice that  $\cV_2$ plays a  dominant role in  $\overline v(t,x)$   for  $t\ge T$ and for $x_1 \in[ -t^\delta-R, -t^\delta]$ and $|x'|\ge \eta_2\sqrt{t}$. Hence, by picking $A>1$ in this theorem and choosing $B>A$ in Corollary \ref{c3.10}, we deduce from these two results that $\kappa_+\overline v(t,x)\ge v(t-T,x)$  for  $t\ge T$,  $x_1 \in[ -t^\delta-R, -t^\delta]$ and $|x'|\ge t^{\frac12+\varpi}$. 
On the other hand,
 we derive from the definition of $v$ and from $0\le \bu(t,x)\le 1$ for $t\ge 0$ and $x\in\R^N$ that, up to increasing $T$,
\begin{equation*}
	v(t-T,x)=e^{\lambda^*x_1}\bu(t-T,x)\le  e^{-\lambda^*t^\delta}, ~~~~t\ge T,~x_1 \in[ -t^\delta-R, -t^\delta],
\end{equation*}
As it is  seen from the definition \eqref{upper-n} of $\overline v$ that $	\overline v(t,x)\gtrsim t^{-(\frac{N}{2}+1-\beta)}e^{-At^{\varpi}}$  for  $t\ge T$ and for $x_1 \in[ -t^\delta-R, -t^\delta]$ and $|x'|\le t^{\frac12+\varpi}.$
Consequently, up to increasing $T$, we have $v(t-T,x) \le\kappa_+ \overline v(t,x)$ for  $t\ge T$ and for $x_1 \in[ -t^\delta-R, -t^\delta]$ and $\vert x'\vert\leq t^{\frac12+\varpi}$ if $\varpi$ is small enough. As we also have $v(t-T,x)\lesssim t^{-\frac{N}2+\varsigma}e^{-\frac{B\vert x\vert}{\sqrt t}}$  from Corollary \ref{c3.10}, we have $v(t-T,x)\leq\kappa_+\overline v(t,x)$ for $-t^\delta-R\leq x_1\leq-t^\delta$ and $\vert x'\vert\geq t^{\frac12+\varpi}$, as soon once again as we have chosen $A>B$. It then follows from the comparison principle  that
\begin{equation*}\label{upper-final-2}
	v(t-T,x)\le \kappa_+ \overline v(t,x)
\end{equation*}
for $t\ge T$ and $x_1\ge -t^\delta$.

\subsection{The lower barrier}

For $t\ge T$ and $x_1\ge t^\delta-R$, we define
\begin{equation}  
	\label{lower-n}
	\underline{v}(t,x)=\max \bigg(
	\underline\xi(t)w^*(t,x)-\mathcal{V}_{3}(t,x)-\mathcal{V}_{2}(t+T,x), 0\bigg)
\end{equation}
in which $\underline \xi(t)=1+\frac{1}{t^{\gamma}}$,  $\cV_{2}(t,x)$ is given as in the definition \eqref{upper-n} of the supersolution $\overline v$, and
\begin{equation*}
	\cV_{3}(t,x)=\chi_2\left(\frac{|x'|}{\sqrt{t}}\right)\frac{1}{t^{\frac{N}{2}+1-\beta}}\cos\left(\frac{x_1}{t^\alpha}\right)\mathbbm{1}_{\big\{x\in\R^N\big| t^{\delta}-R\le x_1\le \frac{3\pi}{2}t^{\alpha}\big\}}.
\end{equation*}

Our goal now is to check that $\underline v$ is a  subsolution to equation \eqref{v-eqn} for $t\ge T$ and $x_1\ge t^\delta-R$, namely, $\cN\underline v+R(t,x;\underline v)\le 0$ for $t\ge T$ and $x_1\ge t^\delta$.  The nonnegative term 
$R(t,x;\underline v)$ shall be taken into account this time, and it satisfies
\begin{equation}\label{R-cdn-n}
	R(t,x;\underline v)=f'(0)\underline v-e^{\lambda^*x_1}f\big(e^{-\lambda^*x_1}\underline v\big)\le e^{-\lambda^*x_1}M\underline v^2
\end{equation}
whenever $\underline v$ is not trivial so that $e^{-\lambda^*(n)(x_1)}\underline v(t,x)\in(0,s_0]$, thanks to \eqref{f-cdn2}. 

Based upon the construction of $\underline v$, an easy observation from Theorem \ref{thm-control beyond diffusive scale-n} and the fact that $\beta<\vartheta$ is that $$\underline \xi(t)w^*(t,x)-\mathcal{V}_{3}(t,x)-\mathcal{V}_{2}(t+T,x)<0$$
for $t\ge T$ and for $|x|\ge \eta_3\sqrt{t}$, up to increasing $T$. Therefore, it will be sufficient for us to focus on the region where $t\ge T$,  $t^\delta-R\le x_1\le \eta_3\sqrt{t}$, and $0\le |x'|\le \eta_3\sqrt{t}$.

To proceed with our analysis, we distinguish again several subregions for $t\ge T$:

\medskip

\noindent
\textbf{Step 1}. The range $t^\delta-R\le x_1\le \frac{3\pi}{2}t^\alpha$. 

\medskip

\noindent
\textbf{Step 1.1}.  $t^\delta-R\le x_1\le t^\gamma$. 
\begin{itemize}
	\item  $0\le |x'|\le \eta_3\sqrt{t}$.
	Since $\cV_{2}(t+T,x)$ is nonnegative in this region, this, together with
	\eqref{w-asympt-n} and $\gamma<\beta$, gives that
	\begin{align*}
		\underline \xi(t)w^*(t,x)-\mathcal{V}_{3}(t,x)-\mathcal{V}_{2}(t+T,x)\le \frac{C}{t^{\frac{N}{2}+1-\gamma}}-\frac{C}{t^{\frac{N}{2}+1-\beta}}\le 0,
	\end{align*}
	which implies that $\underline v(t,x)\equiv 0$ in this regime. Therefore, it is also trivial that $\mathcal{N}\underline v+R(t,x;\underline v)=0$.
\end{itemize}

\medskip

\noindent
\textbf{Step 1.2}.  $t^\gamma\le x_1\le \frac{3\pi}{2}t^\alpha$. First of all, we deduce that whenever $\underline v(t,x)=0$, the conclusion is trivial. In the following, we look at the subregion where $\underline v(t,x)>0$. We then have
$\underline v(t,x)\le Ct^{-(\frac{N}{2}+1)+\alpha}$,
thanks to \eqref{w-asympt-n} and $\beta<\alpha$. Furthermore, up to increasing $T$, we also conclude that $e^{-\lambda^*x_1}\underline v(t,x)\in(0,s_0]$, and   that $R(t,x;\underline v)\le Ce^{-\lambda^* t^{\gamma}}t^{-(N+2)+2\alpha}$, by using \eqref{R-cdn-n}. Moreover,
\begin{itemize} 
	\item $0\le |x'|\le \eta_1\sqrt{t}$. Here, $\chi_1\equiv0$, i.e. $\cV_{2}(t,x)\equiv0$, thus 	up to increasing $T$,
	\begin{align*}
		\underline\xi'(t)w^*(t,x)-\cN\cV_{3}(t,x)+R(t,x;\underline v)\le -\frac{Cx_1}{t^{\frac{N}{2}+2+\gamma}}-\frac{C}{t^{\frac{N}{2}+1-\beta+2\alpha}}+Ce^{-\lambda^* t^{\gamma}}t^{-(N+2)+2\alpha}\le 0.
	\end{align*}

	\item $\eta_1\sqrt{t}\le |x'|\le \eta_3\sqrt{t}$.  We infer from $\vartheta<1-2\alpha+\beta$ that, up to increasing $T$,
	\begin{align*}
		&\underline\xi'(t)w^*(t,x)-\cN\cV_{3}(t,x)-\cN\cV_{2}(t+T,x)+R(t,x;\underline v)\\
		&~~~~~~~~~~\le -\frac{Cx_1}{t^{\frac{N}{2}+2+\gamma}}-\frac{C}{t^{\frac{N}{2}+1-\beta+2\alpha}}+\frac{C}{(t+T)^{\frac{N}{2}+2-\vartheta}}+Ce^{-\lambda^* t^{\gamma}}t^{-(N+2)+2\alpha}\le 0.
	\end{align*}
\end{itemize}

\medskip

\noindent
\textbf{Step 2}. We now look at the region where $\frac{3\pi}{2}t^\alpha\le x_1\le \eta_3\sqrt{t}$. Here, $\cV_{3}(t,x)\equiv 0$. We follow exactly the same strategy as above. Whenever $\underline v(t,x)\neq0$, we  have
$\underline v(t,x)\le Ct^{-(\frac{N}{2}+1)+\frac{1}{2}}$,
thanks to \eqref{w-asympt-n}. Furthermore, up to increasing $T$, we also deduce that $e^{-\lambda^*x_1}\underline v(t,x)\in(0,s_0]$, and   that $R(t,x;\underline v)\le Ce^{-\lambda^* t^{\alpha}}t^{-(N+2)+1}$, by using \eqref{R-cdn-n}. Moreover,
\begin{itemize} 
	\item $0\le |x'|\le \eta_1\sqrt{t}$. In this region, $\cV_{2}(t+T,x)\equiv0$. Up to increasing $T$,
	\begin{align*}
		\underline\xi'(t)w^*(t,x)+R(t,x;\underline v)\le -\frac{C}{t^{\frac{N}{2}+2-\alpha+\gamma}}+Ce^{-\lambda^* t^{\alpha}}t^{-(N+2)+1}\le 0.
	\end{align*}

	\item $\eta_1\sqrt{t}\le |x'|\le \eta_3\sqrt{t}$.  We infer from $\vartheta<\alpha-\gamma$ that, up to increasing $T$,
	\begin{align*}
		\underline\xi'(t)w^*(t,x)-\cN\cV_{2}(t+T,x)+R(t,x;\underline v)\le -\frac{C}{t^{\frac{N}{2}+2-\alpha+\gamma}}+\frac{C}{(t+T)^{\frac{N}{2}+2-\vartheta}}\!+\!Ce^{-\lambda^* t^{\alpha}}t^{-(N+2)+1}\le 0.
	\end{align*}
\end{itemize}

\vspace{2mm}

\noindent  
\textbf{Conclusion}.
We conclude that the function $\underline v(t,x)$ defined in \eqref{lower-n} satisfies $\cN\underline v+R(t,x;\underline v)\le 0$ for $t\ge T$ and $x_1\ge t^\delta-R$. Thus, it is a subsolution to the nonlinear  problem \eqref{v-eqn} for $t\ge T$ and $x_1\ge t^\delta-R$.

\medskip
On the other hand, since $v(T,x)$ is positive everywhere for $x\in\R^N$ whereas $\underline v(T,x)$ has compact support in the domain $x_1\ge T^\delta-R$. Therefore, there is $\kappa_->0$ small such that  $\kappa_-\underline v(0,x)\leq v(T,x)$ for $x_1\ge T^\delta-R$.
Moreover,  it is easily seen from Step 1.1 that $\kappa_-\underline v(t,x)=0<v(t,x)$ for $t\ge T$ and $t^\delta-R\le x\le t^\delta$,
up to increasing $T$. It then follows from the comparison principle  that
\begin{equation*}\label{lower-final-n}
	\kappa_-\underline v(t,x)\le  v(t,x)
\end{equation*}
for $t\ge T$ and $x_1\ge  t^\delta$.

\subsection{Conclusion}

Based on the upper and lower barriers previously established, we conclude that  for $t\ge T$,
\begin{equation}\label{conclusion-super+sub}
	\begin{aligned}
	 v(t,x)\ge \kappa_-\underline v(t,x)\ge &\kappa_-	\Bigg\{\left(1+\frac{1}{t^{\gamma}}\right)w^*(t,x)-\chi_2\left(\frac{|x'|}{\sqrt{t}}\right)\frac{1}{t^{\frac{N}{2}+1-\beta}}\cos\left(\frac{x_1}{t^\alpha}\right)\mathbbm{1}_{\big\{t^{\delta}\le x_1\le \frac{3\pi}{2}t^{\alpha}\big\}}\\
	&~~~~~~~~~~~-\chi_1\left(\frac{	|x| }{\sqrt{t+T}}\right)\frac{1}{(t+T)^{\frac{N}{2}+1-\vartheta}}e^{-A\big(\frac{	|x| }{\sqrt{t+T}}-\eta_A\big)}\Bigg\},~~~~x_1\ge t^\delta;\\
	v(t-T,x)\le \kappa_+\overline v(t,x)\le &\kappa_+\Bigg\{
\left(1-\frac{1}{t^{\gamma}}\right)w^*(t,x)+\chi_2\left(\frac{|x'|}{\sqrt{t}}\right)\frac{1}{t^{\frac{N}{2}+1-\beta}}\cos\left(\frac{x_1}{t^\alpha}\right)\mathbbm{1}_{\big\{-t^{\delta}\le x_1\le \frac{3\pi}{2}t^{\alpha}\big\}}\\
	&~~~~~~~~~~~+\chi_1\left(\frac{	|x| }{\sqrt{t+T}}\right)\frac{1}{(t+T)^{\frac{N}{2}+1-\vartheta}}e^{-A\big(\frac{	|x| }{\sqrt{t+T}}-\eta_A\big)}
	\Bigg\},~~~~x_1\ge -t^\delta.
			\end{aligned}
\end{equation}

\section{Convergence ahead of the front}\label{sec5}
In this section, we continue to use the notation and setup introduced in the preceding section.  The consequence of \eqref{conclusion-super+sub} is that there are two positive constants $\kappa_-<\kappa_+$, a small constant $\vartheta\in(0,\frac{1}{2})$  and two positive large constants $A$ and $\Lambda$ such that for all $t$ sufficiently large and for $x_1\geq t^\delta$, 
\begin{equation}
\label{e6.2}
\kappa_-\frac{x_1e^{-\frac{x^TH_{e_1}x}{2t}}}{t^{\frac{N}2+1}}-\frac{\Lambda \chi_1\Big(\frac{\vert x\vert}{\sqrt{t}}\Big)    e^{-\frac{A\vert x\vert}{\sqrt{t}}}}{t^{\frac{N}2+1-\vartheta}}   \leq v(t,x)\leq\kappa_+\frac{x_1e^{-\frac{x^TH_{e_1}x}{2t}}}{t^{\frac{N}2+1}}+  \frac{\Lambda \chi_1\Big(\frac{\vert x\vert}{\sqrt{t}}\Big) e^{-\frac{A\vert x\vert}{\sqrt{t}}}}{t^{\frac{N}2+1-\vartheta}}. 
\end{equation}
The goal of this section is to upgrade this inequality into $\kappa_+=\kappa_-$. 

To state the version of the result that we shall prove, we introduce the following objects and notations.
In agreement with  \eqref{e3.4} and \eqref{e3.3001}-\eqref{e3.3002},  let us set 
\begin{equation}
\label{e5.2000}
\mathcal{G}^{app}_{e_1}(t,x)=\frac{e^{-\frac{x^T H_{e_1}^{-1}x}{2t}}}{\sqrt{\det H_{e_1}}(2\pi t)^{\frac{N}{2}}},~~~~
\Gamma_{e_1}(\zeta)=\frac{2\zeta_1e^{-\frac{\zeta^T H_{e_1}^{-1} \zeta}{2}}}{e_1^TH_{e_1} e_1 \sqrt{\det H_{e_1} }(2\pi )^{\frac{N}{2}}}.
\end{equation}

Given a continuous compactly supported function  $w_0$,
we derive from Remark \ref{rmk_Dirichlet heat kernel} that for  $t$ sufficiently large and $x\in\R^N$,
\begin{equation}
\label{e5.2001}
e^{tA_{e_1}}w_0(x)=\frac{\mu_{e_1}[w_0]}{t^{\frac{N+1}2}}\Gamma_{e_1}\Big(\frac{x}{\sqrt t}\Big)+
O\bigg(\chi_1\Big(\frac{\vert x\vert}{\sqrt{t}}\Big)\frac{ e^{-A\frac{\vert x\vert}{\sqrt t}}}{t^{\frac{N}2+1-\vartheta}}\bigg),~~~\text{with}~~\mu_{e_1}[w_0]=\int_{x_1>0}x_1w_0(x)\md x.
\end{equation}
 Although conservative, this estimate is sufficient for our purposes.
The aim of this section is the following theorem.
\begin{theorem}\label{t6.1}
There exists $\kappa_\infty>0$ and two constants $\vartheta>0$ and $A>0$  such that
\begin{equation}
\label{e6.1}
v(t,x)=\frac{\kappa_\infty+o_{t\to+\infty}(1)}{t^{\frac{N+1}2}}\Gamma_{e_1}\Big(\frac{x}{\sqrt t}\Big)+
O\bigg(\chi_1\Big(\frac{\vert x\vert}{\sqrt{t}}\Big)\frac{ e^{-A\frac{\vert x\vert}{\sqrt t}}}{t^{\frac{N}2+1-\vartheta}}\bigg)
\end{equation}
for all $t$ sufficiently large and $x_1\geq t^\delta$.
\end{theorem}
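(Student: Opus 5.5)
We plan to upgrade \eqref{e6.2} by a "restart at a large time" argument, in the spirit of \cite{Roquejoffre2023}. Define
$$\kappa^+(s)=\sup\mu_{e_1}\text{-type upper coefficient},\qquad \overline\kappa=\limsup_{t\to+\infty}\,\sup_{|x|\le \eta\sqrt t}\frac{t^{\frac{N+1}2}v(t,x)}{\Gamma_{e_1}(x/\sqrt t)},$$
and similarly $\underline\kappa$ with $\liminf$ and $\inf$, over the region $x_1\ge t^\delta$, $|x|\le\eta\sqrt t$. Estimate \eqref{e6.2} gives $0<\kappa_-\lesssim\underline\kappa\le\overline\kappa\lesssim\kappa_+<\infty$. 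The goal is to show $\underline\kappa=\overline\kappa=:\kappa_\infty$; the $O(\cdot)$ tail term outside the diffusive zone is already provided by \eqref{e6.2} and Corollary \ref{c3.10}.

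\textbf{Step 1 (restarting).} At a large time $s$ we write $v(s,x)=s^{-\frac{N+1}2}\bnu(s,x/\sqrt s)$. By \eqref{e6.2} and the gradient bounds of Theorem \ref{t2.20}, Corollary \ref{cor_2.20} and Remark \ref{r2.20}, transported through the leading edge transform, $\bnu(s,\cdot)$ fits the decomposition of Theorem \ref{t3.10}: a Gaussian-controlled part plus a tail of size $s^{-\frac12+\vartheta}e^{-A|\zeta|}$. After odd extension by $\mathcal{R}_{e_1}$ across $x_1=0$ (or across $x_1=s^\delta$, with an error $O(s^{\delta-1/2})$ in the relevant moment), we solve the linear problem from time $s$ to compare $v(s+t,\cdot)$ with $e^{-t\cI_*}v(s,\cdot)$. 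The nonlinear term $R\ge0$ only pushes the solution down, so it is harmless from above. From below, \eqref{R-cdn-n} and \eqref{e6.2} show it contributes, through Duhamel, $O(\int_s^\infty e^{-\lambda^*\tau^\delta}\cdots)=o(s^{-\frac{N+1}2})$ in the region $x_1\ge t^\delta$. The boundary condition at $x_1\approx\pm t^\delta$ is handled exactly as in Section \ref{sec4}: we reuse the barriers $\overline\xi w^*+\cV_1+\cV_2$, now started at time $s$ from the datum $v(s,\cdot)$ instead of a compactly supported one. The cosine corrector $\cV_1$ absorbs the mismatch in the boundary layer at a cost $s^{\beta-\alpha}\to0$ relative to the main term.

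\textbf{Step 2 (identifying the limit via the first moment).} Theorem \ref{t3.10} with Proposition \ref{p3.1}, Proposition \ref{p3.2} and \eqref{e5.2001} gives, uniformly in the diffusive zone for $t\gg s$,
$$v(s+t,x)=\frac{M(s)+O(s^{-\epsilon})}{(s+t)^{\frac{N+1}2}}\Gamma_{e_1}\Big(\frac{x}{\sqrt{s+t}}\Big)+\text{tail},\qquad M(s):=\mu_{e_1}[v(s,\cdot)]\,\frac{(s+t)^{\frac{N+1}2}}{t^{\frac{N+1}2}}\text{-normalised}.$$
Here $M(s)$ is the first moment $\int_{x_1>0}x_1 v(s,x)\,dx$ of the restarted datum, computed via the Dirichlet heat flow $e^{tA_{e_1}}$ on the scale of $\bnu$. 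The main point is an almost-monotonicity of $s\mapsto\int_{x_1>0}x_1v(s,x)\,dx$. Differentiating in $s$ and using that $x_1$ is harmonic for $A_{e_1}$ (this is where $c^*,\bm'$ are absorbed in $\cI_*$, by \eqref{e2.10}), the derivative equals minus a boundary flux near $x_1\sim s^\delta$ minus $\int x_1R\,dx$. Both are integrable in $s$ by \eqref{e6.2}: the flux is $O(s^{\delta}\cdot s^{-\frac N2-1}\cdot s^{\frac{N-1}2})$ times a factor that must be shown integrable. If this decay is not quite sufficient, we instead use a weighted moment cut off by $\chi_2$ and the $e^{-A|\zeta|}$ tail bounds. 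The moment therefore converges to a limit $\kappa_\infty>0$, positive by the lower bound in \eqref{e6.2}. Inserting this into Step 1 for $t=\tau s$ with $\tau\to\infty$, and then letting $s\to\infty$, gives $\overline\kappa\le\kappa_\infty+o(1)$ and $\underline\kappa\ge\kappa_\infty-o(1)$, which is \eqref{e6.1}.

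\textbf{Main obstacle.} I expect the hardest part to be the boundary near $x_1\sim t^\delta$: controlling uniformly the error made by restarting with a datum that is not odd with respect to $\mathcal{R}_{e_1}$ and not compactly supported. This is exactly where the waiting time $T$ of Theorem \ref{thm_heat kernel estimate} must be avoided, and Proposition \ref{p3.3} and Theorem \ref{t3.10} are designed for that purpose. I would first try to show that the boundary-layer mismatch modifies the first moment only by $O(s^{\delta+\beta-\alpha})$, using the barrier computations of Section \ref{sec4} verbatim on the time interval $[s,\infty)$.
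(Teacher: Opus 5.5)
\textbf{There is a genuine gap in Step 1, and the whole theorem rests on that step.} What must be proved is that after restarting at time $s$, $v(t,\cdot)$ is approximated by the Dirichlet heat flow of (an odd extension of) $v(s,\cdot)$. The error has to be $o(1)$ relative to $t^{-\frac{N+1}2}$, uniformly for all $t\ge s$, including $t-s\ll s$.

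You obtain this by asserting that $\bnu(s,\cdot)$ fits the hypotheses of Theorem \ref{t3.10} (and of Propositions \ref{p3.1}, \ref{p3.3}). It does not. Those results need a datum that is odd with respect to $\mathcal{R}_{e_1}$ and whose $\zeta$-derivatives are bounded (by Gaussians) uniformly in $s$, i.e.\ a $v(s,\cdot)$ that genuinely varies on the scale $\sqrt s$. Nothing available gives this:
\begin{itemize}
\item Theorem \ref{t2.20}, Corollary \ref{cor_2.20} and Remark \ref{r2.20} bound $x$-derivatives of $u$ by $O(1)$ or $e^{-\lambda(\cdot)}$ with $\lambda<\lambda^*$. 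Since $\partial^\alpha_\zeta\bnu=s^{\frac{N+1+|\alpha|}2}\partial^\alpha_x v$, these give nothing uniform in $s$.
\item Proposition \ref{p6.1} only gives $\|v(s,\cdot)\|_{H^m}\lesssim s^{m-1}$.
\item There is no regularising effect to help.
\end{itemize}

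Your choices of reflection plane make this worse.
\begin{itemize}
\item Reflecting $v(s,\cdot)|_{x_1>0}$ across $x_1=0$ creates a jump, since $v(s,0,x')\neq0$. The datum is then not even in $H^1$, so Theorem \ref{thm_heat kernel estimate} is unusable.
\item Reflecting across $x_1=s^\delta$, i.e.\ $\zeta_1=s^{\delta-1/2}\to0$, forces any envelope to vanish across a $\zeta$-layer of shrinking width. Its derivatives then blow up with $s$, and Proposition \ref{p3.3} degenerates.
\end{itemize}
Saying that the Section \ref{sec4} barriers can be reused "verbatim" does not address this. Those barriers were built on a fixed, compactly supported, odd $w_0^*$, and all the information was absorbed into the constants $\kappa_\pm$, which is exactly the loss you must avoid. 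The real work is to prove analogues of Propositions \ref{p5.6}--\ref{p5.8} for $e^{-(t-s)\cI_*}$ acting on a restarted datum, uniformly in $s$. Your proposal does not supply a mechanism for this, and the claimed $O(s^{\delta+\beta-\alpha})$ moment error is likewise unsupported.

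\textbf{The missing idea, as in the paper.} Do not evolve $v(s,\cdot)$ itself. Evolve upper and lower envelopes $\bnu^\pm_{s,\varepsilon}$ built as follows:
\begin{itemize}
\item in a layer of width $O(\varepsilon)$ in $\zeta_1$, with $\varepsilon$ fixed, replace $\bnu(s,\cdot)$ by the explicit functions $\tilde\kappa_\pm(\zeta_1\pm\varepsilon)e^{-\mu_\pm|\zeta'|^2}$;
\item odd-extend across the shifted planes $\zeta_1=\mp\varepsilon$.
\end{itemize}
Then treat two time ranges separately.
\begin{itemize}
\item For $t-s\le s^\varsigma$, use order preservation to compare further with envelopes whose derivatives are bounded independently of $s$, and apply Proposition \ref{p3.3}.
\item For $t-s\ge s^\varsigma$, the factor $e^{-(t-s)^{1-2\varpi}}$ in Theorem \ref{thm_heat kernel estimate} beats the polynomial growth from Proposition \ref{p6.1}.
\end{itemize}
This produces the barriers \eqref{e6.7}--\eqref{e6.8}. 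It costs $O(\varepsilon)$ in the first moment, and $\varepsilon\to0$ is taken only after $s\to\infty$.

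\textbf{Your Step 2 does work, by a different route from the paper.} Test the equation against $x_1^+$. By \eqref{e2.10}, its image under the adjoint of $\cI_*$ vanishes outside $\{|x_1|\le R\}$. Using \eqref{conclusion-super+sub}, the resulting flux is $O(t^{-\frac32+\vartheta})$, and $\int x_1^+R$ is smaller still. So $\mu_{e_1}[\bnu(t,\cdot)]$ converges to some $\kappa_\infty>0$ directly, which is simpler than going through \eqref{e6.2018}. But convergence of the moment alone does not give \eqref{e6.1}: without the restarted barriers, nothing ties the profile of $v$ to its first moment.
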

\subsection{An intuitive overview of the proof of Theorem \ref{t6.1}}
Since the proof is rather technical, it is useful first to gain some intuition by taking a closer look at \eqref{conclusion-super+sub}. A first observation is that, in the range $\{t^\delta\leq x_1\leq t^\alpha\}$   when $t$ sufficiently large, 
\begin{equation*}
	v(t,x)\le C\max\bigg(\frac{x_1}{t^{\frac{N}{2}+1}},\frac1{t^{\frac{N}2+1-\beta}}\bigg).
\end{equation*} 
For $x_1\geq t^\delta$, let us recall from  \eqref{v-eqn} that $v$ satisfies 
$$
0=v_t+\cI_{*,n}v+O(e^{-\lambda^*x_1}v^2)=v_t+\cI_{*,n}v+O(e^{-\lambda^*t^\delta}v^2)\approx v_t+\cI_{*,n}v.
$$
Instead of taking $t_0=0$ as the starting time, we take $t_0=s$ with $s>0$ sufficiently large. As $e^{-(t-s)\cI_*}$ closely resembles  $e^{(t-s)A_{e_1}}$ as soon as $t-s$ is large, it is  tempting to expect that in the regime
 $s\to+\infty$ and $t-s\to+\infty$, $v(t,x)$ is well approximated by the solution $w^{app}_s(t,x)$ of the Dirichlet heat equation
 \begin{equation*}
 	\begin{aligned}
 		\begin{cases}
 			\displaystyle	\partial_t w^{app}_s-\displaystyle\frac12\mathrm{div}\big(H_{e_1}\nabla w^{app}_s\big)=0,~~~&t>s,~~x\in \R^N_+=\{x~|~x_1\ge 0,~~x'\in\R^{N-1}\},\\
 		\displaystyle	w^{app}_s(t,x)=0,~~~&t>s,~~x\in\partial\R^{N}_+,\\
 		\displaystyle	w^{app}_s(s,x)=v(s,x),~~~&x\in \R^N_+.
 		\end{cases}
 	\end{aligned}
 \end{equation*}
This intuition is further supported by a second observation, suggested by \eqref{conclusion-super+sub} or \eqref{e6.2}. Heuristically, the characteristic scale on which $v(t,\cdot)$ varies is the diffusive scale  $x_1\sim\sqrt t$. If this were rigorous, then the computation \eqref{e5.103} would provide even stronger evidence that $v(t,x)$ is close to $w^{app}_s(t,x)$ for $t\geq s$. In fact, if the operator appearing in the landscape were $A_{e_1}$ instead of $\cI_*$, these considerations, added to the result of Section 4, would almost amount to a proof that we can upgrade $\kappa_-$ and $\kappa_+$ to the same constant $\kappa$. This is essentially the approach taken, for instance, in \cite{RRR}.

In order to follow this path, we
consider any large $s>0$ and modify the super- and sub-solutions defined in \eqref{upper-n} and \eqref{lower-n} by replacing the initial datum $v_0$ with $v(s,\cdot)$.  We hope that this procedure yields a family of new barriers that become increasingly accurate as $s\to+\infty$.
However, it is important to keep in mind that 
 $v_0$ is not replaced  by $v(s,\cdot)$ itself, 
  but  by the odd reflection  $v^*(s,\cdot)$ of the restriction of $v(s,\cdot)$ to $\R^N_+$. 
   As we have no explicit expression of $e^{-t\cI_*}$ under Dirichlet boundary conditions but only an approximation, we face an issue. 
    Indeed, although we have a fairly precise picture of the structure of $v(s,\cdot)$ for $x_1$ larger than a small power of $s$, its behaviour in the region where $\vert x_1\vert$ is smaller  than such a power  remains unclear. There, we only have the estimate that its size is slightly larger than $s^{-\frac{N}{2}-1}$. As a consequence, the suitable hyperplane around which  one should perform the odd extension of $v(s,\cdot)$ into $v^*(s,\cdot)$ is not obvious. An even more serious question is whether one can compare the new   super and sub-solutions to $v(t,\cdot)$ for $t\geq s$. One might argue that    $e^{-(t-s)\cI_*}v(s,\cdot)$ can be well approximated provided $t-s$ large enough; this is true, but the meaning of ``large'' is itself unclear. It depends on $s$, whereas one needs an estimate uniform in $s$.

The help will come from   the estimates in  \eqref{conclusion-super+sub}  which  in particular imply that the characteristic scale on which $v(s,\cdot)$ varies is of order $\sqrt s$. This suggests that one might apply Proposition \ref{p3.2} to approximate $e^{-(t-s)\cI_*}v(s,\cdot)$ by initial data that are spread over the scale $\sqrt s$. There is, however, a caveat: as noted above, the initial datum should really vary over the scale $\sqrt s$, in the sense that    its derivatives at that scale must remain bounded, which is actually not guaranteed. However, in order to get the barrier property, we do not need the smoothness of $v(s,\cdot)$, but rather the smoothness of some suitably defined upper or  lower envelopes. 
 This is why we modify $v(s,\cdot)$ for $x_1$ around 0 (more precisely, for $\vert x_1\vert$ less than a small power of $s$) and estimate carefully the action of the semigroup $e^{-(t-s)\ci_*}$ for $t$ large but not exceedingly larger than $s$ so as to derive a good approximation of $e^{-(t-s)\cI_*}v(s,\cdot)$ in this time range. Once we are out of this range, things become easier as we rely on Theorem \ref{thm_heat kernel estimate}. 

\subsection{The improved barriers}
We start from a given large time $s$ and try to reproduce the analysis of Section \ref{sec4} from that time. If we were working with a diffusive equation this would  amount to replacing $w^*(t,x)=e^{-t\cI_*}w_0^*(x)$ by $e^{-(t-s)\cI_*}v^*(s,\cdot)(x)$; as we work with a nonlocal equation with no regularising effect we will need to devise a careful modification of $v(s,\cdot)$.  It will be convenient to work with the variable $\zeta=x/\sqrt{s}$, however we eventually would like to give our conclusions in the variable $x$, for which we need some care with the notation\footnote{ At the end of the section   the variable $\zeta$ will have another definition, this will be clearly indicated.}.
Set
\begin{equation}
\label{e5.20}
v(s,x)=\frac1{s^{\frac{N+1}2}}{\boldsymbol\nu}(s,\frac{x}{\sqrt s})\overset{\zeta=\frac{x}{\sqrt{s}}}{=}\frac1{s^{\frac{N+1}2}}{\boldsymbol\nu}(s,\zeta).
\end{equation}
Rephrasing \eqref{e6.2} for the function ${\boldsymbol\nu}(s,\zeta)$ gives that
\begin{equation}
\label{e6.3}
\kappa_-\zeta_1 e^{-\frac{\zeta^TH_{e_1}\zeta}{2}}-\frac{\Lambda e^{-A\vert\zeta\vert}\chi_1(\vert\zeta\vert)}{s^{\frac{1}2-\vartheta}}\leq {\boldsymbol\nu}(s,\zeta)\leq\kappa_+\zeta_1 e^{-\frac{\zeta^TH_{e_1}\zeta}{2}}+\frac{\Lambda e^{-A\vert\zeta\vert}\chi_1(\vert\zeta\vert)}{s^{\frac{1}2-\vartheta}},~~~\zeta_1\ge \frac{1}{t^{\frac{1}{2}-\delta}}.
\end{equation}

The main modification of ${\boldsymbol\nu}(s,\cdot)$ that we propose concerns   the vicinity of the origin, as scarce information is available in this region. Let us first introduce some ingredients.
For $1/s\ll\varepsilon\ll1$,  define a smooth  function $\chi_{3,\varepsilon}$ on $\R$ such that
\begin{equation}
	\label{chi_3}
	r\mapsto \chi_{3,\varepsilon}(r)~\text{is nonincreasing such that}~\chi_{3,\varepsilon}(r)\equiv 0~\text{on}~[2\varepsilon,+\infty)~\text{and}~\chi_{3,\varepsilon}(r)\equiv 1~\text{on}~(-\infty,\varepsilon].
\end{equation}  
 Choose positive constants $\tilde\kappa_\pm$ and $\mu_\pm$ so that 
\begin{equation*}
	\label{e6.5}
	2\tilde\kappa_-\leq\mu_-\leq2\mu_+\leq\tilde\kappa_+,
\end{equation*}
the $\kappa_\pm$ being given in \eqref{e6.2}, and\footnote{With a slight abuse of notation, we identify $\xi'=(\xi_2,\ldots,\xi_N)\in\mathbb R^{N-1}$ with the vector 
	$\xi'=(0,\xi_2,\cdots,\xi_N)\in\R^N$.}
\begin{equation}
	\label{e6.334}
	2\mu_+\vert\xi'\vert^2\leq\xi'^TH_{e_1}\xi'\leq 2 \mu _-\vert\xi'\vert^2.
\end{equation}
Then, 
 define ${\boldsymbol\nu}_{s,\varepsilon}^\pm$ as follows
\begin{equation}
\label{e6.331}
{\boldsymbol\nu}_{s,\varepsilon}^+(\zeta)=\tilde\kappa_+\chi_{3,\varepsilon}(\zeta_1+\varepsilon)e^{-\mu_+\vert\zeta'\vert^2}(\zeta_1+\varepsilon)+\big(1-\chi_{3,\varepsilon}(\zeta_1+\varepsilon)\big){\boldsymbol\nu}(s,\zeta),~~~\zeta_1\geq -\varepsilon,
\end{equation}
and \begin{equation}
	\label{e6.331-lower}
	{\boldsymbol\nu}_{s,\varepsilon}^-(\zeta)=\tilde\kappa_-\chi_{3,\varepsilon}(\zeta_1-\varepsilon)e^{-\mu_-\vert\zeta'\vert^2}(\zeta_1-\varepsilon)+\big(1-\chi_{3,\varepsilon}(\zeta_1-\varepsilon)\big){\boldsymbol\nu}(s,\zeta),~~~\zeta_1\geq \varepsilon.
\end{equation}
As in the dictionary \eqref{e5.20}, we also introduce the functions $v_{s,\varepsilon}^\pm(x)$ as
\begin{equation}
\label{e6.700}
v_{s,\varepsilon}^\pm(x)=\frac1{s^{\frac{N+1}2}}{\boldsymbol\nu}_{s,\varepsilon}^\pm\Big(\frac{x}{\sqrt{s}}\Big)\overset{\zeta=\frac{x}{\sqrt{s}}}{=}\frac1{s^{\frac{N+1}2}}{\boldsymbol\nu}_{s,\varepsilon}^\pm(\zeta).
\end{equation}
Then, we denote by ${\boldsymbol\nu}_{s,\varepsilon}^{\pm,*}(\zeta)$ for $\zeta\in\R$ and $v_{s,\varepsilon}^{\pm,*}(x)$ for $x\in\R$ the extended functions of  ${\boldsymbol\nu}_{s,\varepsilon}^{\pm}$ and $v_{s,\varepsilon}^{\pm}$ respectively, such that 
\begin{equation}\label{extension}
{\boldsymbol\nu}_{s,\varepsilon}^{\pm,*}(\zeta)={\boldsymbol\nu}_{s,\varepsilon}^{\pm}(\zeta)~~\text{for}~\zeta_1\ge\mp \varep,~~~~~{\boldsymbol\nu}_{s,\varepsilon}^{\pm,*}(\zeta)=-{\boldsymbol\nu}_{s,\varepsilon}^+\big(\mathcal{R}_{e_1}(\zeta\pm\varepsilon e_1)\big)~~\text{for}~\zeta\in\R^N,
\end{equation}
and
\begin{equation}
\label{exten_x}
v_{s,\varepsilon}^{\pm,*}(x)=v_{s,\varepsilon}^{\pm}(x)~~\text{for}~x_1\ge \mp \varep\sqrt{s},~~~~~v_{s,\varepsilon}^{\pm,*}(x)=-v_{s,\varepsilon}^+\big(\mathcal{R}_{e_1}(x\pm\varepsilon \sqrt se_1)\big)~~\text{for}~x\in\R^N.
\end{equation}
Finally, let $\cV_1$, $\cV_2$ and $\cV_3$ be as defined in \eqref{upper-n} and \eqref{lower-n}. 

We set
\begin{equation}
\label{e6.7}
\overline{v}_{s,\varepsilon}(t,x)=\Big(1+\frac1{s^\gamma}-\frac1{t^\gamma}\Big)e^{-(t-s)\cI_*}v_{s,\varepsilon}^{+,*}(x)+\cV_1(t,x+\varepsilon\sqrt se_1)+\cV_2(t,x),
\end{equation}
and
\begin{equation}
\label{e6.8}
\underline{v}_{s,\varepsilon}(t,x)=\max\biggl(\Big(1-\frac1{s^\gamma}+\frac1{t^\gamma}\Big)e^{-(t-s)\cI_*}v_{s,\varepsilon}^{-,*}(x)-\cV_2(t,x-\varepsilon\sqrt s e_1)-\cV_3(t,x), 0\biggl).
\end{equation}
From  \eqref{e6.334} and \eqref{e6.331}, we have $\underline v_{s,\varepsilon}(x)\leq v(s,x)$ if $x_1\geq \varepsilon\sqrt s$, and $v(s,x)\leq \overline v_{s,\varepsilon}(x)$ if $x_1\geq -\varepsilon\sqrt s$. The main ingredient for the proof of Theorem \ref{t6.1} is the following theorem.
\begin{theorem}
\label{t6.2}
There is $s_\varepsilon>0$ such that, for $s\geq s_\varepsilon$, the  function $\overline{v}_{s,\varepsilon}$ is a super-solution to \eqref{v-eqn} for $t\geq s$, $x_1\geq -(t-s)^\delta-\varepsilon\sqrt s$ and $x'\in\R^{N-1}$. Still for $s\geq s_\varepsilon$, the function $\underline{v}_{s,\varepsilon}$ is a sub-solution to \eqref{v-eqn} for $t\geq s$, $x_1\geq (t-s)^\delta+\varepsilon\sqrt s$ and $x'\in\R^{N-1}$.
\end{theorem}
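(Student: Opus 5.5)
The plan is to rerun the verification of Section \ref{sec4}, with the time origin moved to $s$ and the compactly supported datum $w^*_0$ replaced by the well spread datum $v^{\pm,*}_{s,\varepsilon}$. I treat the super-solution $\overline v_{s,\varepsilon}$; the sub-solution is symmetric. The only new ingredient is a uniform-in-$s$ description of
$$
W_s(t,\cdot):=e^{-(t-s)\cI_*}v^{+,*}_{s,\varepsilon}
$$
that replaces the asymptotics \eqref{w-asympt-n}. The time range $t\ge s$ is split into two regimes.

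\emph{Short regime, $t-s\le s^\varsigma$.} Here I apply Proposition \ref{p3.1} and Proposition \ref{p3.3} to $\bnu^{+,*}_{s,\varepsilon}$. The key point is that this function is smooth at scale $1$ in $\zeta$, with $H^m$ norms bounded uniformly in $s$. Near $\zeta_1=-\varepsilon$ this holds because there the function is the explicit profile $\tilde\kappa_+(\zeta_1+\varepsilon)e^{-\mu_+|\zeta'|^2}$ and its odd reflection. Away from that hyperplane it follows from Corollary \ref{cor_2.20} and Remark \ref{r2.20}, rescaled: derivatives in $\zeta$ of $\bnu(s,\cdot)$ carry factors $\sqrt s$ times derivatives in $x$, and these are controlled by \eqref{e6.3} together with the exponential decay of $u$ ahead of the front. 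In fact this regularity is the main obstacle, and I would first try to obtain it only for the region $\zeta_1\ge\varepsilon$, using the $x$-derivative bounds of Section 2 and the fact that in the $\zeta$ variable they are $O(1)$. Proposition \ref{p3.1} then gives
$$
W_s(t,x)=\bigl(e^{\frac{t-s}{s}A_{e_1}}\bnu^{+,*}_{s,\varepsilon}\bigr)\Bigl(\frac{x}{\sqrt s}\Bigr)s^{-\frac{N+1}2}
$$
up to an error of size $(t-s)s^{-N-\frac32}$. Since the time $\frac{t-s}{s}$ is tiny, this semigroup factor is essentially $\bnu^{+,*}_{s,\varepsilon}$ itself, and it is positive and comparable to $(x_1+\varepsilon\sqrt s)s^{-\frac N2-1}$ on the diffusive zone.

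\emph{Long regime, $t-s\ge s^\varsigma$.} Here I use Theorem \ref{t3.10}. Its hypotheses hold because of \eqref{e6.3}: the error part of the datum is $\Lambda\chi_1 e^{-A|\zeta|}s^{-\frac12+\vartheta}$, and the remainder has Gaussian bounds. Theorem \ref{t3.10} gives $W_s=e^{(t-s)A_{e_1}}v^{+,*}_{s,\varepsilon}+O\bigl(t^{-\frac N2-1+\vartheta}e^{-A|x|/\sqrt t}\bigr)$. Combined with Remark \ref{rmk_Dirichlet heat kernel}, rewritten for the half space $\{x_1>-\varepsilon\sqrt s\}$, this yields
$$
W_s(t,x)\asymp (x_1+\varepsilon\sqrt s)\,t^{-\frac N2-1}\quad\text{for } |x|\lesssim\sqrt t.
$$
This is exactly the analogue of \eqref{w-asympt-n} with $x_1$ shifted by $\varepsilon\sqrt s$, and it holds uniformly in $s\ge s_\varepsilon$. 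In both regimes $W_s$ is an exact solution of the linear equation, so the term $\bigl(1+s^{-\gamma}-t^{-\gamma}\bigr)W_s$ produces only the positive contribution $\gamma t^{-1-\gamma}W_s$.

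With these bounds in hand, I verify $\mathcal N\overline v_{s,\varepsilon}\ge0$ region by region, following Steps 1.1--3 of the upper barrier in Section \ref{sec4} with $x_1$ replaced by $x_1+\varepsilon\sqrt s$. The correction $\cV_1$ comes from \eqref{eqn-V1} and the correction $\cV_2$ from \eqref{eqn-V2}. The exponents in \eqref{parameters-n} are unchanged, so the same balancing works. For instance, in the range $x_1+\varepsilon\sqrt s\ge\frac\pi4t^\alpha$, the gain $t^{-1-\gamma}W_s\gtrsim t^{-\frac N2-2-\gamma+\alpha}$ beats $t^{-\frac N2-1+\beta-2\alpha}$ and $t^{-\frac N2-2+\vartheta}$.

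One point needs care in the early times $t-s\le s^\varsigma$. There the positivity in Step 1.1 cannot come from $W_s$. It comes instead from the cosine term $\cV_1$, which dominates in the strip $|x_1+\varepsilon\sqrt s|\le(t-s)^\delta$: there $W_s$ is only of size $(t-s)^\delta s^{-\frac N2-1}$, which is smaller than $t^{-\frac N2-1+\beta}$. Choosing $s_\varepsilon$ large absorbs all error terms of Propositions \ref{p3.1} and \ref{p3.3}. The nonlinear term $R$ is nonnegative, so it is harmless for the super-solution.

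For $\underline v_{s,\varepsilon}$ I repeat the lower barrier argument of Section \ref{sec4}, using the bound $R\le Me^{-\lambda^*x_1}\underline v^2$ from \eqref{R-cdn-n}. This bound is exponentially small on $x_1\ge(t-s)^\delta+\varepsilon\sqrt s$, which is a stronger smallness than in Section \ref{sec4} and makes it negligible.
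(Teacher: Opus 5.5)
\textbf{There is a genuine gap.} Your architecture matches the paper's: move the time origin to $s$, split into $t-s\le s^\varsigma$ and $t-s\ge s^\varsigma$, then redo Steps 1.1--3 of Section \ref{sec4}. The problem is the step everything rests on.

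\textbf{Where it fails.} You assert that $\bnu^{+,*}_{s,\varepsilon}$ has $H^m$ norms bounded uniformly in $s$, and you then apply Proposition \ref{p3.1} to get two-sided asymptotics of $W_s$. Nothing in the paper gives this, and your justification runs the wrong way.
\begin{itemize}
\item We have $\partial^\alpha_\zeta\bnu(s,\zeta)=s^{\frac{N+1+|\alpha|}{2}}(\partial^\alpha_x v)(s,\sqrt s\,\zeta)$. Uniform bounds would therefore require $\partial^\alpha_x v=O(s^{-\frac{N+1+|\alpha|}{2}})$, i.e.\ that $v(s,\cdot)$ genuinely varies on the scale $\sqrt s$.
\item Corollary \ref{cor_2.20} only bounds derivatives of $u$ by constants.
\item Remark \ref{r2.20} gives decay like $e^{-\lambda x_1}$ with $\lambda<\lambda^*$. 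After multiplying by $e^{\lambda^* x_1}$, this does not even keep derivatives of $v$ bounded for $x_1\sim\sqrt s$.
\item The bound \eqref{e6.3} is pointwise on $\bnu$ and carries no derivative information.
\item The only Sobolev estimate available is Proposition \ref{p6.1}, $\|v(s,\cdot)\|_{H^m}\lesssim s^{m-1}$. Fed into Proposition \ref{p3.1} with $m_N=5N+7$, the error bound $\tau s^{-\frac N2-1}\|\bnu^{+,*}_{s,\varepsilon}\|_{L^1\cap H^{m_N}}$ is polynomially larger than the $O(1)$ main term, so nothing follows.
\end{itemize}
The paper says explicitly that scale-$\sqrt s$ smoothness of $v(s,\cdot)$ is not guaranteed. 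The same objection applies to your use of Theorem \ref{t3.10} in the long regime. Its hypothesis (a main part whose derivatives are Gaussian-bounded uniformly in $s$) is a regularity statement that \eqref{e6.3} does not provide. Also, Remark \ref{rmk_Dirichlet heat kernel} describes only large times for a fixed compactly supported datum, not times $t-s$ comparable to or smaller than $s$.

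\textbf{The missing idea.} The barrier verification only needs one-sided information on $W_s$:
\begin{itemize}
\item a lower bound $W_s\gtrsim(x_1+\varepsilon\sqrt s)t^{-\frac N2-1}$ in the diffusive zone (Proposition \ref{p5.6});
\item an upper bound $|W_s|\lesssim t^{-\frac N2-1+\delta}$ in the strip (Proposition \ref{p5.8});
\item an exponential bound in the far field (Proposition \ref{p5.7}).
\end{itemize}
Moreover, $e^{-\tau\cI_*}$ is order preserving.

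\emph{Short regime.} For $t-s\le s^\varsigma$, the paper places a smooth envelope below (resp.\ above) $\bnu^{+,*}_{s,\varepsilon}$. It is built from the explicit profiles $\tilde\kappa_\pm(\zeta_1+\varepsilon)e^{-\mu_\pm|\zeta'|^2}$ and the pointwise bounds \eqref{e6.3}, and its $H^m$ norms are independent of $s$. On the reflected side $\zeta_1<-\varepsilon$, the naive odd reflection of a minorant would lie above $\bnu^{+,*}_{s,\varepsilon}$, so a negative tail $-\Lambda e^{-a|(\mathcal{R}_{e_1}\zeta)'|}$ is added. Proposition \ref{p3.3} is then applied to the envelope, not to $\bnu^{+,*}_{s,\varepsilon}$ itself.

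\emph{Long regime.} For $t-s\ge s^\varsigma$, the paper uses Theorem \ref{thm_heat kernel estimate} rather than Theorem \ref{t3.10}. Its error $\|\cdot\|_{H^m}e^{-(t-s)^{1-2\varpi}}$ absorbs the polynomial growth from Proposition \ref{p6.1}. The remaining heat-equation object $e^{\tau A_{e_1}}\bnu^{+,*}_{s,\varepsilon}$ is then handled by parabolic arguments: a smooth minorant for small $\tau$, and the Hopf lemma plus convergence to a multiple of $\Gamma_{e_1}$ for $\tau$ of order one and larger.

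With these one-sided bounds in hand, your region-by-region check, including letting $\cV_1$ dominate in the strip, goes through as you describe.
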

As we have already explored in an extensive manner the functions $\cV_i$ ($i=1,2,3$), the main part of the effort will lie in understanding the quantities  $e^{-(t-s)\cI_*}v_{s,\varepsilon}^{\pm,*}$, for $t\geq s$. Our goal is to show that they are as close as possible to $e^{(t-s)A_{e_1}}v_{s,\varepsilon}^{\pm,*}$, which are objects that are amenable to explicit estimates. The issue here is that the main tool at our disposal  for carrying out this task, namely Theorem \ref{thm_heat kernel estimate},  involves the $H^m$-norm of the initial datum. This is innocent if the initial datum is fixed once and for all, and Proposition \ref{p6.1} shows that it remains harmless in the long time range. The time range close to $s$, however, requires a separate treatment.

To prove Theorem \ref{t6.2}, we will not repeat the heavy computations of Section \ref{sec4}, we will rather insist on the new features, which also entail heavy notations and computations. As the correcting functions $\cV_i$ $(i=1,2,3)$ are unchanged, it follows that $\cN \cV_i$ remains  the same. What one must check is that either of $\cN \cV_i$ $(i=1,2,3)$ shall dominate the terms $\cN \bigl(e^{-(t-s)\cI_*}v_{s,\varepsilon}^{\pm,*}\bigl)$ in the regions where roughly speaking  $\zeta_1$ is close to zero or $|\zeta|$ is very large, and that 
 $\cN \bigl(e^{-(t-s)\cI_*}v_{s,\varepsilon}^{\pm,*}\bigl)$ is supposed to
 dominate $\cN \cV_i$ $(i=1,2,3)$ in all other regions. In view of the computations in the upper and lower barriers in Section \ref{sec4}, the features to be checked are  that $e^{-(t-s)\cI_*}v_{s,\varepsilon}^{\pm,*}$ (i). has a linear behaviour that is controlled from below inside the diffusive zone, at least when $x_1$ is larger than a small power of $t$, (ii). is dominated by $\cV_2$ in the region where $x_1$ is less than a small power of $t$, and (iii). is dominated by $\cV_3$ outside the diffusive zone.  The precise formulation is given in the three propositions below.
\begin{proposition}[The diffusive zone]\label{p5.6} 
	For all $\rho >0$,  there exist $\overline\kappa_\rho >0$ and $\underline\kappa_\rho >0$ such that 
	\begin{equation}\label{e6.85}
		e^{-(t-s)\cI_*}v_{s,\varepsilon}^{+,*}(x)\geq\frac{\overline\kappa_\rho (x_1+\varepsilon\sqrt s)}{t^{\frac{N}2+1}},~~~t\geq s,~~-\varepsilon\sqrt s-t^{\delta}\leq x_1\leq \rho \sqrt t,~~\vert x'\vert\leq \rho \sqrt t,
	\end{equation}
and
	\begin{equation}\label{e6.81}
		e^{-(t-s)\cI_*}v_{s,\varepsilon}^{-,*}(x)\geq \frac{\underline\kappa_\rho (x_1-\varepsilon\sqrt s)}{t^{\frac{N}2+1}},~~~t\geq s,~~-\varepsilon\sqrt s+t^{\delta}\leq x_1\leq \rho \sqrt t,~~\vert x'\vert\leq \rho \sqrt t.
	\end{equation}
\end{proposition}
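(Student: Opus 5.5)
We prove \eqref{e6.85}; \eqref{e6.81} is identical after replacing $\varepsilon$ by $-\varepsilon$ and $\tilde\kappa_+,\mu_+$ by $\tilde\kappa_-,\mu_-$. Work in the translated variable $y=x+\varepsilon\sqrt s e_1$. By construction \eqref{extension}--\eqref{exten_x}, $v_{s,\varepsilon}^{+,*}$ is odd with respect to the reflection $\mathcal{R}_{e_1}$ about the hyperplane $\{y_1=0\}$, nonnegative for $y_1\ge0$, and, by \eqref{e6.3} and the choice of $\chi_{3,\varepsilon}$, bounded below there by $\tilde\kappa_+ y_1e^{-\mu_+|\zeta'|^2}/s^{\frac{N+1}2}$ for $y_1\le\varepsilon\sqrt s$. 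Further from the origin, $\bnu(s,\cdot)$ is bounded below by $\kappa_-\zeta_1e^{-\zeta^TH_{e_1}\zeta/2}$ minus an $O(s^{\vartheta-\frac12})$ error supported in $|\zeta|\ge\eta_1$. We split the time range into two parts and treat each separately.

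\textbf{Short times, $s\le t\le s+s^{1+\varsigma}$ (or more simply $t-s\le s^\varsigma$ followed by an iteration).} First regularise. Since $v_{s,\varepsilon}^{+,*}$ need not be smooth, we replace it by a smooth lower envelope $\bnu_\flat(s,\cdot)$ on the scale $\sqrt s$. For this envelope we take $\tilde\kappa_+\zeta_1^\varepsilon e^{-\mu_+|\zeta'|^2}\phi(\zeta)$ (with $\zeta_1^\varepsilon=\zeta_1+\varepsilon$), oddly extended, where $\phi$ is a fixed smooth Gaussian-type cutoff, chosen so that $\bnu_\flat\le\bnu^{+,*}_{s,\varepsilon}$ on $\{\zeta_1^\varepsilon\ge0\}$. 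Its $H^m$ norms are then bounded uniformly in $s$. Positivity of $e^{-\tau\cI_*}$ would only hold if $\bnu^{+,*}-\bnu_\flat\ge0$ everywhere, which fails on the reflected side. We therefore restrict to $\{y_1\ge0\}$ and use the comparison principle with the boundary region $\{-\tau^\delta\le y_1\le 0\}$, where both functions are small. We then apply Proposition \ref{p3.1}, and more precisely Proposition \ref{p3.3}, to $e^{-\tau\cI_*}v_\flat$. This gives $e^{-\tau\cI_*}v_\flat(x)\ge(1-o(1))\inf_{|\xi-x/\sqrt s|\le s^{\varsigma-1/2}}\bnu_\flat(\xi)\,s^{-\frac{N+1}2}-O(s^{-\frac{N+1}2-\frac32+q_N\varsigma})$. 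Since $\bnu_\flat(\xi)\gtrsim\xi_1^\varepsilon$ and $y_1\ge -t^\delta$, this yields the bound $\gtrsim (x_1+\varepsilon\sqrt s)/t^{\frac N2+1}$, at least when $y_1\gtrsim s^{\varsigma}$. Here $t\asymp s$, so $s^{-\frac{N+1}2}\cdot y_1/\sqrt s\asymp y_1 t^{-\frac N2-1}$. The remaining strip $-t^\delta\le y_1\lesssim s^\varsigma$ has to be handled by comparing with the one-dimensional-in-$y_1$ barrier from Section \ref{sec4}: linear in $y_1$, plus the cosine correction $\cV_1$.

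\textbf{Long times, $t-s\ge s^{\varsigma}$.} Here we use Theorem \ref{t3.10} with initial datum $v_{s,\varepsilon}^{+,*}$, whose structure (a Gaussian part plus an $s^{\vartheta-\frac12}$ exponentially decaying tail) is exactly that required. This gives $e^{-(t-s)\cI_*}v^{+,*}_{s,\varepsilon}\ge e^{(t-s)A_{e_1}}v^{+,*}_{s,\varepsilon}-(t-s)^{-\frac N2-1+\vartheta}e^{-A|y|/\sqrt{t-s}}$. The Dirichlet heat flow in the $H_{e_1}$-half-space is explicit via the reflection, as in the proof of Proposition \ref{p3.2} and Remark \ref{rmk_Dirichlet heat kernel}. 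Using \eqref{e3.2000}, it is bounded below by $c_\rho\,\mu_{e_1}\text{-type mass}\cdot y_1/t^{\frac N2+1}$ for $|y|\le\rho\sqrt t$. The key point is that the mass $\int_{y_1>0}y_1 v^{+,*}_{s,\varepsilon}$ is of order one, by the lower bound coming from \eqref{e6.3}. The error term is lower order as soon as $y_1\ge t^{\delta}$ and $\vartheta$ is small; near $y_1\sim0$ we again use the boundary-layer barrier.

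\textbf{Main obstacle.} The hardest step will be the uniformity in $s$ in the strip $|y_1|\lesssim$ small powers of $t$. Positivity of the semigroup cannot be used across the reflected side, and no regularisation is available there. I would first try a comparison on $\{y_1\ge -t^\delta\}$, taking as subsolution the linear profile multiplied by the Section \ref{sec4} corrector. Its boundary data would be controlled using the oddness of $v^{+,*}$ together with \eqref{w-asympt-n}-type estimates for the regularised datum.
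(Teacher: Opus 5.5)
Your overall plan matches the paper's: split at $t-s\approx s^\varsigma$, feed a smooth lower envelope into Proposition \ref{p3.3} for short times, and reduce to the Dirichlet flow of $A_{e_1}$ for long times. However, the two steps that carry the difficulty do not work as written.

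\textbf{Short times.} You correctly see that the odd extension of a smooth lower envelope lies \emph{above} $\bnu^{+,*}_{s,\varepsilon}$ on the reflected side. Your remedy, a comparison on $\{y_1\ge0\}$ with a boundary strip ``where both functions are small'', is not a valid argument.
\begin{enumerate}
\item For the nonlocal operator $\cI_*$, a half-space comparison needs the two evolutions to be ordered on the exterior strip $\{-R\le y_1\le 0\}$ for all $\tau$, or at least a quantitative lower bound on their difference there. Smallness gives no sign.
\item That information concerns $e^{-\tau\cI_*}v^{+,*}_{s,\varepsilon}$ next to the hyperplane, which is exactly what you are trying to obtain.
\item Oddness cannot rescue this: $\mathcal{R}_{e_1}$ does not commute with $\cI_*$, because $\ck_*$ is not $\mathcal{R}_{e_1}$-invariant and there is a drift.
\end{enumerate}
The missing idea is not to extend the envelope oddly everywhere.
\begin{enumerate}
\item By construction, $\bnu^{+}_{s,\varepsilon}$ is exactly $\tilde\kappa_+(\zeta_1+\varepsilon)e^{-\mu_+|\zeta'|^2}$ on the layer $0\le\zeta_1+\varepsilon\le\varepsilon$. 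An envelope equal to this profile there, reflected only across the mirror layer, therefore coincides with $\bnu^{+,*}_{s,\varepsilon}$ on both layers.
\item Further out on the reflected side, $\bnu^{+,*}_{s,\varepsilon}\ge -Ce^{-A|\zeta|}$ by the upper bound in \eqref{e6.3}. The paper places a large negative multiple of an exponentially decaying function there.
\end{enumerate}
The resulting $\tilde{\bnu}^*_{s,\varepsilon}$ lies below $\bnu^{+,*}_{s,\varepsilon}$ on all of $\R^N$, with Sobolev norms independent of $s$ (though not of $\varepsilon$). Order preservation of $e^{-\tau\cI_*}$ on $\R^N$ plus Proposition \ref{p3.3} then gives the short-time bound with no boundary comparison at all.

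\textbf{Long times.} Theorem \ref{t3.10} cannot be applied to $v^{+,*}_{s,\varepsilon}$.
\begin{enumerate}
\item It requires a splitting $\bnu_1^*+s^{\vartheta-\frac12}\chi^*e^{-A|\zeta|}$ in which $\bnu^*_1$ \emph{and all its derivatives} are Gaussian-bounded uniformly in $s$.
\item Estimate \eqref{e6.3} is only a pointwise two-sided bound. It gives neither such a splitting nor any control of derivatives at scale $\sqrt s$. That missing regularity is the whole difficulty of this section; Proposition \ref{p6.1} only gives $H^m$ norms growing polynomially in $s$.
\end{enumerate}
Even granting the theorem, your claim that its error is lower order fails in three places.
\begin{enumerate}
\item When $s^\varsigma\le t-s\ll s$, the error $(t-s)^{-\frac N2-1+\vartheta}$ dwarfs $y_1t^{-\frac N2-1}$ for $|y|\lesssim\sqrt{t-s}$.
\item When $t-s\asymp t$, it is dominated only for $y_1\gg t^\vartheta$. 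The $\vartheta$ inherited from \eqref{e6.3} satisfies $\vartheta>\beta>\delta$ by \eqref{parameters-n}, so $y_1\ge t^\delta$ does not suffice.
\item The $\Gamma_{e_1}$-asymptotics you invoke for the Dirichlet flow hold only for $t-s\gg s$.
\end{enumerate}
The paper splits at $t-s=s^\varsigma$ precisely so that Theorem \ref{thm_heat kernel estimate} applies. Its error $e^{-(t-s)^{1-2\varpi}}\Vert\cdot\Vert_{H^m}$ is then superpolynomially small and absorbs the growth from Proposition \ref{p6.1}. This reduces everything to $e^{\tau A_{e_1}}\bnu^{+,*}_{s,\varepsilon}$ with $\tau=(t-s)/s$, which is handled in three ranges:
\begin{enumerate}
\item small $\tau$, by a smooth envelope with $\partial_{\zeta_1}\underline{\boldsymbol\omega}_\varepsilon$ bounded below and \eqref{e6.22};
\item bounded $\tau$, by the strong maximum principle and the Hopf lemma;
\item large $\tau$, by convergence to a multiple of $\Gamma_{e_1}$.
\end{enumerate}

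Finally, the strip $-t^\delta\le y_1\lesssim s^\varsigma$ that you leave open cannot be closed with a Section~\ref{sec4}-type corrector. Since $\delta<\beta$, the cosine term has size $t^{\beta-\frac N2-1}\gg|y_1|t^{-\frac N2-1}$ throughout $|y_1|\le t^\delta$. Such a barrier therefore cannot produce a bound linear in $y_1$ there.
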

\begin{proposition}[The far field]\label{p5.7} 
For all $\vartheta\in(0,\di\frac12)$, there are $\Lambda,\Lambda',>0$ depending on $\vartheta$ such that
$$\displaystyle
-\frac{\Lambda e^{-\frac{\Lambda'\vert x\vert}{\sqrt t}}}{t^{\frac{N}2+1-\vartheta}}\leq e^{-(t-s)\cI_*}v_{s,\varepsilon}^{*,\pm}(x)\leq  \frac{\Lambda e^{-\frac{\Lambda'\vert x\vert}{\sqrt t}}}{t^{\frac{N}2+1-\vartheta}}.
$$
\end{proposition}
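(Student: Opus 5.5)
The estimate in Proposition \ref{p5.7} will be obtained by the supersolution argument of Theorem \ref{thm-control beyond diffusive scale-n}, started at time $s$ from the well-spread datum $v_{s,\varepsilon}^{\pm,*}$, in the spirit of Theorem \ref{t3.10}. The first task is to check that $v_{s,\varepsilon}^{\pm,*}$ has the structure required there. Write $v_{s,\varepsilon}^{\pm,*}(x)=s^{-\frac{N+1}2}\bnu_{s,\varepsilon}^{\pm,*}(x/\sqrt s)$. From \eqref{e6.3} and the definitions \eqref{e6.331}--\eqref{extension}, $\bnu_{s,\varepsilon}^{\pm,*}$ splits as
\[
\bnu_{s,\varepsilon}^{\pm,*}=\bnu_1+O\bigl(s^{-\frac12+\vartheta}\chi_1(\vert\zeta\vert)e^{-A\vert\zeta\vert}\bigr),
\]
where $\bnu_1$ is odd with respect to the shifted reflection and Gaussian-bounded. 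The Gaussian part comes from the $\kappa_\pm\zeta_1e^{-\zeta^TH\zeta/2}$ terms and from the cut-off piece $\tilde\kappa_\pm e^{-\mu_\pm\vert\zeta'\vert^2}(\zeta_1\pm\varepsilon)$ near $\zeta_1=\mp\varepsilon$. For the \emph{bound} itself we only need the crude fact
\[
\vert v_{s,\varepsilon}^{\pm,*}(x)\vert\lesssim s^{-\frac{N+1}2}\bigl(1+\vert x/\sqrt s\vert\bigr)e^{-A'\vert x\vert/\sqrt s},
\]
with constants uniform in $s$ and $\varepsilon$. No regularity of $\bnu(s,\cdot)$ is needed for this.

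\textbf{Barrier.} Put $\tau=t-s$ and take as comparison function
\[
\Phi(t,x)=\Lambda\, t^{-\frac N2-1+\vartheta}e^{-\Lambda'\vert x\vert/\sqrt t}.
\]
By the computation \eqref{part2}, with $\Lambda'$ fixed first and then $\eta$ large,
\[
(\partial_t+\cI_*)\Phi\ge \frac{c}{t}\,\Phi \quad\text{on } \{\vert x\vert\ge\eta\sqrt t\}.
\]
Here we use that $\partial_t$ of $e^{-\Lambda'\vert x\vert/\sqrt t}$ yields $\frac{\Lambda'\vert x\vert}{2t^{3/2}}$, which dominates the $\Lambda'^2\bar h/t$ coming from $D^2$ once $\vert x\vert/\sqrt t\ge\eta$, and that the $D^3$ error is $O(t^{-3/2}\Phi)$. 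Since $\cI_*$ is linear and $e^{-\tau\cI_*}$ preserves order, we compare $\pm e^{-\tau\cI_*}v_{s,\varepsilon}^{\pm,*}$ with $\Phi$ on the exterior region $\{\vert x\vert\ge\eta\sqrt t\}$, with data at $t=s$ and on the interior $\{\vert x\vert\le\eta\sqrt t\}$. Nonlocality is harmless here because the kernel has support of radius $R$, so the "boundary" is a layer of width $R$.

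\textbf{Initial time.} At $t=s$ we need $\vert v_{s,\varepsilon}^{\pm,*}\vert\le\Phi(s,\cdot)$. This holds because
\[
s^{-\frac{N+1}2}=s^{-\frac N2-1+\vartheta}\,s^{\frac12-\vartheta},
\]
which is the problematic factor. I would handle it by working with the signed function, and more importantly by splitting the datum as $\bnu_1$ plus the remainder. The remainder already carries the factor $s^{-\frac12+\vartheta}$ and so fits under $\Phi$ directly. The odd Gaussian part $\bnu_1$ is handled through its own heat-like evolution: by Proposition \ref{p3.1} and Proposition \ref{p3.3} for $\tau\le s^\varsigma$, and by Theorem \ref{thm_heat kernel estimate} combined with the reflection computation of Proposition \ref{p3.2} for $\tau\ge s^\varsigma$, one gets
\[
e^{-\tau\cI_*}(\text{odd part})=e^{\tau A_{e_1}}(\text{odd part})+O\bigl(t^{-\frac N2-1+\vartheta}e^{-\Lambda'\vert x\vert/\sqrt t}\bigr).
\]
The explicit Gaussian $e^{\tau A_{e_1}}$ of an odd, Gaussian-bounded datum is $\lesssim t^{-\frac N2-1}\vert x\vert e^{-c\vert x\vert^2/t}$, thanks to the reflection cancellation. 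This is below $\Phi$. This is exactly the statement of Theorem \ref{t3.10}, which I would invoke with $\bnu^*=\bnu^{\pm,*}_{s,\varepsilon}$ after translating by $\pm\varepsilon\sqrt s e_1$. The translation only changes $\vert x\vert$ by $O(\sqrt s)\le O(\sqrt t)$, which is absorbed by lowering $\Lambda'$ and raising $\Lambda$.

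\textbf{Interior and obstacle.} On $\{\vert x\vert\le\eta\sqrt t\}$ the same decomposition gives $\vert e^{-\tau\cI_*}v^{\pm,*}_{s,\varepsilon}\vert\lesssim t^{-\frac N2-1+\vartheta}$. The main obstacle is the short-time range $\tau\le s^{\varsigma}$. There Theorem \ref{thm_heat kernel estimate} is unavailable: its waiting time $T$ and its $H^m$ norm are not uniform in $s$, and the datum is not smooth. I would rely on Proposition \ref{p3.1}, which requires only $L^1\cap H^{m_N}$ bounds on the rescaled profile. That forces the regularity of $\bnu(s,\cdot)$ to enter through Corollary \ref{cor_2.20}: derivatives of $u$ are bounded, so the derivatives of $\bnu(s,\cdot)$ at scale $\sqrt s$ are bounded as well. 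For the remaining non-smooth pieces I would use only positivity of the semigroup together with the crude pointwise Gaussian bound, propagated by the same supersolution $\Phi$ over the short interval $\tau\le s^\varsigma$, where $t\simeq s$. Finally, the maximum principle on the exterior region yields the bound.
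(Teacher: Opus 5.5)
Your overall route is the paper's: translate by $\pm\varepsilon\sqrt s\,e_1$ and invoke Theorem~\ref{t3.10}, i.e.\ the exterior barrier of Theorem~\ref{thm-control beyond diffusive scale-n} plus Proposition~\ref{p3.1} for $t-s\le s^\varsigma$. The paper's proof is a one-line appeal to Theorem~\ref{t3.10}. Your computation $(\partial_t+\cI_*)\Phi\ge\frac ct\Phi$ on $\{\vert x\vert\ge\eta\sqrt t\}$ is correct. So is your intermediate identity $e^{-\tau\cI_*}(\text{odd part})=e^{\tau A_{e_1}}(\text{odd part})+O(\Phi)$.

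\textbf{Main gap.} The step that fails is the next sentence, ``this is below $\Phi$''. With it goes your interior bound $\vert e^{-(t-s)\cI_*}v^{\pm,*}_{s,\varepsilon}\vert\lesssim t^{-\frac N2-1+\vartheta}$ on $\{\vert x\vert\le\eta\sqrt t\}$.
\begin{itemize}
\item Your own estimate $t^{-\frac N2-1}\vert x\vert e^{-c\vert x\vert^2/t}$ is of size $t^{-\frac{N+1}2}$ where $\vert x\vert\sim\sqrt t$. By \eqref{e5.2001} it is essentially $\mu_{e_1}[\bnu^{\pm}_{s,\varepsilon}]\,t^{-\frac{N+1}2}\Gamma_{e_1}(x/\sqrt t)$ for $t-s\gg s$, with a first moment of order one.
\item This exceeds $\Phi$ by the factor $t^{\frac12-\vartheta}\to+\infty$, whatever $\Lambda,\Lambda'$. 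It is the same factor $s^{\frac12-\vartheta}$ you flagged at $t=s$. Splitting off the $O(s^{-\frac12+\vartheta})$ remainder does not touch it, because the odd Gaussian piece carries the full amplitude.
\item In fact no argument can give the two-sided bound as stated. At $t=s$, \eqref{e6.3} gives $v^{+,*}_{s,\varepsilon}(\sqrt s\,e_1)\gtrsim s^{-\frac{N+1}2}$. For $t\ge s$, Proposition~\ref{p5.6} gives $e^{-(t-s)\cI_*}v^{+,*}_{s,\varepsilon}(\rho\sqrt t\,e_1)\ge\overline\kappa_\rho\,\rho\,t^{-\frac{N+1}2}$ for every fixed $\rho>0$. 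Both contradict the stated upper bound for large $s,t$, since $\vartheta<\frac12$. This holds even on the far field $\{\vert x\vert\ge\eta\sqrt t\}$.
\end{itemize}
What Theorem~\ref{t3.10}, and hence the paper's proof, really gives is a bound on the difference $e^{-(t-s)\cI_*}v^{\pm,*}_{s,\varepsilon}-e^{(t-s)A_{e_1}}v^{\pm,*}_{s,\varepsilon}$. It comes from the barrier $e^{(t-s)A_{e_1}}v^{\pm,*}_{s,\varepsilon}\pm\Phi$, the analogue of $\overline w=e^{tA_n}w_0^*+\varphi$, not from $\Phi$ alone. The stated form then holds only in two situations:
\begin{itemize}
\item where the heat-flow term is itself $O(\Phi)$, for instance within $O(t^\vartheta)$ of the reflection hyperplane $x_1=\mp\varepsilon\sqrt s$, or for $\vert x\vert\ge C\sqrt{t\ln t}$;
\item one-sidedly, on the half-space where $e^{(t-s)A_{e_1}}v^{\pm,*}_{s,\varepsilon}$ has a sign by oddness under $\mathcal{R}_{e_1}$.
\end{itemize}
That is the form actually needed in \eqref{e6.100} and Theorem~\ref{t6.2}, and it is what you should aim to prove.

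\textbf{Second gap.} Your appeal to Corollary~\ref{cor_2.20} in the short-time range does not work. Bounded unit-scale derivatives of $u$ do not make $\bnu(s,\cdot)$ smooth at scale $\sqrt s$. Indeed $\partial_\zeta^\alpha\bnu(s,\zeta)=s^{\frac{N+1+\vert\alpha\vert}2}\,\partial_x^\alpha v(s,\sqrt s\,\zeta)$ with $v=e^{\lambda^*x_1}\bu$. So you would need $\vert\partial^\alpha_x v\vert\lesssim s^{-\frac{N+1+\vert\alpha\vert}2}$, a statement about the tail of $\bu$ far beyond Corollary~\ref{cor_2.20}. The paper says explicitly that this regularity is ``not guaranteed'', and it only has the polynomial growth of Proposition~\ref{p6.1}.

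Hence neither the error term of Proposition~\ref{p3.1} nor the smoothness hypothesis on $\bnu_1^*$ in Theorem~\ref{t3.10} can be verified for $\bnu^{\pm,*}_{s,\varepsilon}$ itself. For $t-s\le s^\varsigma$ you must use smooth envelopes and the positivity of $e^{-(t-s)\cI_*}$, as in the proof of Proposition~\ref{p5.6}. An envelope cannot itself be odd: an odd function lying above an odd function coincides with it. So the reflection cancellation, which produces the extra $t^{-\frac12}$, has to be recovered by hand.
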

\begin{proposition}[The region $x_1\pm\varepsilon\sqrt s=O(t^\delta)$]\label{p5.8} 
For all $\rho >0$ there is $\Lambda_\rho >0$ such that, for $t\geq s$, $\vert\zeta'\vert\leq \rho $,  and $-t^\delta\leq x_1 \pm \varepsilon\sqrt s\leq t^\delta$ we have
$$
\vert e^{-(t-s)\cI_*}v_{s,\varepsilon}^{*,\pm}(x)\vert\leq \frac{\Lambda_\rho }{t^{\frac{N}2+1-\delta}}.
$$
\end{proposition}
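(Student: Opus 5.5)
We prove Proposition \ref{p5.8} by splitting the time axis at $t-s=s^\varsigma$, with $\varsigma$ as in Proposition \ref{p3.1}. In each regime the semigroup is compared with the Dirichlet-type heat flow $e^{(t-s)A_{e_1}}$ applied to the odd-reflected datum $v^{\pm,*}_{s,\varepsilon}$. The reflection is taken with respect to the hyperplane $\{x_1=\mp\varepsilon\sqrt s\}$ and with the skew reflection $\mathcal{R}_{e_1}$. For that flow, the reflection identity of Proposition \ref{p3.2} forces vanishing on the reflection hyperplane, with a linear profile near it. So the whole point is to show that $e^{-(t-s)\cI_*}v^{\pm,*}_{s,\varepsilon}$ inherits this linear vanishing, up to errors of order $t^{-\frac N2-1+\delta}$.

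\textbf{Short times $0\le t-s\le s^\varsigma$.} Write $v^{\pm,*}_{s,\varepsilon}(x)=s^{-\frac{N+1}2}\bnu^{\pm,*}_{s,\varepsilon}(x/\sqrt s)$. Near $\zeta_1=\mp\varepsilon$ the rescaled datum is the explicit function $\tilde\kappa_\pm e^{-\mu_\pm|\zeta'|^2}(\zeta_1\pm\varepsilon)$, extended oddly through $\mathcal{R}_{e_1}$; hence it is smooth there with bounded derivatives, uniformly in $s$. Away from $\zeta_1=\mp\varepsilon$, Theorem \ref{t2.20}, Corollary \ref{cor_2.20} and \eqref{e6.3} give Gaussian-type control. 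Together these provide $L^1\cap H^{m_N}$ bounds on $\bnu^{\pm,*}_{s,\varepsilon}$ that are uniform in $s$, which is the one point needing care.

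With these bounds, Proposition \ref{p3.1} at $\tau=t-s$ yields
\[
e^{-\tau\cI_*}v^{\pm,*}_{s,\varepsilon}=s^{-\frac{N+1}2}\Bigl(e^{\frac\tau sA_{e_1}}\bnu^{\pm,*}_{s,\varepsilon}+\sum_{3\le|\alpha|\le q_N}s^{-\frac{|\alpha|}2}Q_\alpha(\tau)\,e^{\frac\tau sA_{e_1}}\partial^\alpha\bnu^{\pm,*}_{s,\varepsilon}\Bigr)\Bigl(\frac x{\sqrt s}\Bigr)+O\bigl(s^{-\frac{N+1}2}\,\tau s^{-\frac N2-1}\bigr).
\]
The heat flow preserves oddness with respect to $\mathcal{R}_{e_1}$ about the shifted hyperplane. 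Each term is therefore odd and Lipschitz in $\zeta_1$, with constants uniform in $s$, so each is bounded by $C|\zeta_1\pm\varepsilon|$. On the region $|x_1\pm\varepsilon\sqrt s|\le t^\delta$ this gives $|\zeta_1\pm\varepsilon|\le t^\delta/\sqrt s$. The main term is then $\lesssim s^{-\frac{N+1}2}t^\delta s^{-\frac12}\asymp t^{-\frac N2-1+\delta}$, since $t\asymp s$ here. The remainder is even smaller.

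\textbf{Long times $t-s\ge s^\varsigma$.} We use the semigroup property and restart at time $s+s^\varsigma$. The short-time step shows that the datum there is of the well-spread form of Theorem \ref{t3.10}: an odd, Gaussian-controlled main part plus a remainder $\chi^* e^{-A|\zeta|}s^{-\frac12+\vartheta}$. Two estimates then combine. Theorem \ref{t3.10}, or alternatively Theorem \ref{thm_heat kernel estimate}, bounds the difference from $e^{(t-s)A_{e_1}}v^{\pm,*}_{s,\varepsilon}$ by $t^{-\frac N2-1+\vartheta}$. The explicit Dirichlet kernel formula of Remark \ref{rmk_Dirichlet heat kernel} bounds the heat flow itself near the hyperplane by $C\,|x_1\pm\varepsilon\sqrt s|\,t^{-\frac N2-1}\le Ct^{-\frac N2-1+\delta}$. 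Here $\mu_{e_1}[\cdot]$ of the datum is $O(1)$ because $\int (y_1\pm\varepsilon\sqrt s)\,v^{\pm,*}_{s,\varepsilon}\lesssim 1$ by scaling, and the restriction $|\zeta'|\le\rho$ keeps us inside the range where these formulas hold uniformly.

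\textbf{Main obstacle.} The error term $t^{-\frac N2-1+\vartheta}$ from Theorem \ref{t3.10} must be at most $t^{-\frac N2-1+\delta}$. I would try first to choose $\vartheta\le\delta$ in that application. If this is not permitted, I would replace Theorem \ref{t3.10} by a direct application of Theorem \ref{thm_heat kernel estimate} and Proposition \ref{p3.2} to the restarted datum, whose $H^m$ norms at scale $\sqrt s$ decay like powers of $s$. The exponentially small error $e^{-(t-s)^{1-2\varpi}}$ in \eqref{linear sol estimate} then becomes negligible once $t-s\ge s^\varsigma$.
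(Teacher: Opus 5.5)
\textbf{The gap: uniform Sobolev bounds.} The problem is in your short-time step, at exactly the point you flag: $\bnu^{\pm,*}_{s,\varepsilon}$ is not known to have $L^1\cap H^{m_N}$ norms bounded uniformly in $s$. Near $\zeta_1=\mp\varepsilon$ the datum is the explicit linear profile. For $\zeta_1\pm\varepsilon\geq\varepsilon$, however, it equals $\bnu(s,\zeta)=s^{\frac{N+1}2}v(s,\sqrt s\zeta)$, and $\Vert\partial^\alpha_\zeta\bnu(s,\cdot)\Vert_{L^2}=s^{\frac{N+2}4+\frac{|\alpha|}2}\Vert\partial^\alpha_x v(s,\cdot)\Vert_{L^2}$. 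Uniform bounds would therefore require $\Vert\partial^\alpha_x v(s,\cdot)\Vert_{L^2}$ to decay like $s^{-\frac{N+2}4-\frac{|\alpha|}2}$, i.e.\ $v(s,\cdot)$ would have to vary on the scale $\sqrt s$ with controlled derivatives. The paper says explicitly that this is not guaranteed.

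None of your sources supplies it. Estimate \eqref{e6.3} is purely pointwise. Theorem~\ref{t2.20} and Corollary~\ref{cor_2.20} control derivatives of $u$ only with a rate $\lambda<\lambda^*$, so after the weight $e^{\lambda^*x_1}$ they grow in $x_1$. Proposition~\ref{p6.1} gives only $\Vert v(s,\cdot)\Vert_{H^m}\lesssim s^{m-1}$. With these inputs, the remainder $\tau s^{-\frac N2-1}\Vert\bnu\Vert_{L^1\cap H^{m_N}}$ in Proposition~\ref{p3.1}, and the correction terms, pick up a large positive power of $s$, and the estimate collapses.

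Three related points follow.
\begin{itemize}
\item The correction terms $e^{\frac\tau sA_{e_1}}\partial^\alpha\bnu^{\pm,*}_{s,\varepsilon}$ are not $\mathcal{R}_{e_1}$-odd, because derivatives of a reflection-odd function are not odd. They are small only through the factor $s^{-|\alpha|/2}$, which again presupposes uniform Sobolev bounds.
\item Your restart via Theorem~\ref{t3.10} needs derivative bounds on $\bnu^*_1$ uniform in $s$, and exact $\mathcal{R}_{e_1}$-oddness at time $s+s^\varsigma$. Pointwise information gives neither. The oddness also fails outright, since $e^{-\tau\cI_*}$, unlike $e^{\tau A_{e_1}}$, does not commute with $\mathcal{R}_{e_1}$.
\item As you suspected, Theorem~\ref{t3.10} is too coarse here anyway: its error carries $t^{\vartheta}$, and $\vartheta>\delta$ by \eqref{parameters-n}.
\end{itemize}

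\textbf{The missing idea.} The paper never feeds $\bnu^{\pm,*}_{s,\varepsilon}$ itself into Proposition~\ref{p3.1}; it uses instead that $e^{-\tau\cI_*}$ is order preserving. Bound $v^{*,\pm}_{s,\varepsilon}$ from above and below by $s^{-\frac{N+1}2}g^\pm(x/\sqrt s)$. Here $g^\pm$ are smooth and independent of $s$ (they may depend on $\varepsilon$ and $\rho$). On $\{|\zeta_1\pm\varepsilon|\leq\varepsilon\}$ they equal a multiple of $(\zeta_1\pm\varepsilon)$; the paper uses $\kappa_+(\zeta_1\pm\varepsilon)$. Outside that strip, $g^+$ goes to $0$ on the reflected side and to the $s$-independent envelope from \eqref{e6.3} on the other side; $g^-$ is built symmetrically, using the reflected envelope.

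Proposition~\ref{p3.3} applies to $g^\pm$. For $t-s\leq s^\varsigma$ and $|x_1\pm\varepsilon\sqrt s|\leq t^\delta$ it gives
$|e^{-(t-s)\cI_*}v^{*,\pm}_{s,\varepsilon}(x)|\lesssim s^{-\frac{N+1}2}\bigl(t^\delta s^{-\frac12}+s^{\varsigma-\frac12}+s^{-\frac32+q_N\varsigma}\bigr)\lesssim t^{-\frac N2-1+\delta}$,
since $\varsigma<\delta/2$ and $t\asymp s$. No oddness of the evolved function is needed.

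Your fallback for $t-s\geq s^\varsigma$ is in line with the paper. There you use Theorem~\ref{thm_heat kernel estimate}, absorb the polynomial growth from Proposition~\ref{p6.1} into $e^{-(t-s)^{1-2\varpi}}$, and then estimate the heat flow. Pointwise envelopes do suffice in that regime, because $C(\zeta_1\pm\varepsilon)$ is a stationary solution of the Dirichlet problem for $A_{e_1}$.
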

These propositions are sufficient to carry out the computations of Section \ref{sec4}, and, eventually, prove Theorem \ref{t6.2}. As Theorem \ref{thm_heat kernel estimate} or proposition \ref{p3.1} involve the $H^m$-norm of the datum to which $e^{-(t-s)\cI_*}$ is applied, we  first should   focus on  analysing the regularity of $v(s,\cdot)$. This is done in the next section. Proposition \ref{p5.6} is proved in the subsequent section, the emphasis being laid on the time range $t-s\ll s$. Then, we prove Propositions \ref{p5.7} and \ref{p5.8}, and we finally conclude.
\subsection{Sobolev estimates for $v$}
Recall that Theorem \ref{t2.20} and Corollary \ref{cor_2.20} in Section \ref{TW} ensure the boundedness of all derivatives of $u$.
The objective is here a polynomial bound for the $H^m$ norm of $v(s,\cdot)$.
\begin{proposition}
\label{p6.1}
For each $m\in\mathbb{N}^*$, we have
\begin{equation*}
\sup_{s>1} s^{-(m-1)}	\Vert v(s,\cdot)\Vert_{H^m(\R^N)}<+\infty.
\end{equation*}
\end{proposition}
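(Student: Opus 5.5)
Write $v(s,x)=e^{\lambda^* x_1}\bu(s,x)$ and split $\partial^\alpha v$ by the Leibniz rule:
\[
\partial^\alpha v=\sum_{\beta\le\alpha}\binom{\alpha}{\beta}(\lambda^*)^{\alpha_1-\beta_1}e^{\lambda^*x_1}\partial^\beta\bu .
\]
By Corollary \ref{cor_2.20} and Remark \ref{r2.20}, every derivative of $u$ is bounded, and ahead of the front it decays at any rate $\lambda<\lambda^*$. This alone does not make $e^{\lambda^*x_1}\partial^\beta\bu$ square integrable, so I will not estimate $\partial^\alpha v$ directly. Instead I will derive an equation for $v_\alpha:=\partial^\alpha v$ and run an energy or Duhamel estimate in weighted form, splitting space into three regions: behind the front ($x_1\le -C$), the transition zone, and the leading edge $x_1\ge C$.

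\textbf{Behind the front and transition zone.} Behind the front, $e^{\lambda^*x_1}$ is exponentially small, so $e^{\lambda^*x_1}|\partial^\beta \bu|\lesssim e^{\lambda^* x_1}$ is square integrable in $x_1$. The transverse directions $x'$ are the only concern. Here I use that $\bu$ and its derivatives are controlled by the upper barrier of Proposition \ref{p2.100} together with Corollary \ref{c3.10}: these give decay like $e^{-B|x'|/\sqrt s}$ at scale $\sqrt s$. Integrating over $x'$ then contributes only a power $s^{(N-1)/4}$ or so, which is absorbed after adjusting. Near $x_1=0$ the situation is the same.

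\textbf{Leading edge.} Ahead of the front I differentiate the $v$-equation \eqref{v-eqn}:
\[
\partial_t v_\alpha+\cI_* v_\alpha+\partial^\alpha R(t,x;v)=0 .
\]
Since $\cI_*$ commutes with derivatives, the Kato-type positivity of Proposition \ref{p3.200} with $\varphi(u)=u^2$ gives
\[
\tfrac{d}{dt}\|v_\alpha\|_{L^2}^2\le 2\,\|\partial^\alpha R\|_{L^2}\,\|v_\alpha\|_{L^2} .
\]
Equivalently, $\int v_\alpha\cI_*v_\alpha\ge0$ by the Fourier computation following Theorem \ref{thm_heat kernel estimate}. The nonlinear term $\partial^\alpha R$ expands as in \eqref{e6.211}: it consists of terms $e^{-\lambda^*x_1}$ times products of derivatives of $v$ of order $\le m$, with coefficients bounded by $\lambda^{*k}$. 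These terms are of two kinds. The top-order ones are $f''$ times $u$ times $v_\alpha$ with a bounded coefficient. The lower-order ones are bounded by induction on $m$. Integrating the differential inequality from time $1$ to $s$ then gives $\|v_\alpha(s)\|_{L^2}\lesssim \int_1^s(\text{lower order})\,d\tau$, and each induction step gains one power of $s$. This is where the exponent $m-1$ comes from. For the base case I take $m=0,1$: the $L^2$ norm of $v$ is bounded from \eqref{e6.2} and Corollary \ref{c3.10}, since the integrals $\int x_1^2 t^{-N-2}e^{-x^2/t}\,dx\sim t^{-N/2}$ are bounded.

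\textbf{Main obstacle.} The difficulty is the top-order part of $\partial^\alpha R$ in the region behind the front. There the factor $e^{-\lambda^* x_1}$ is large, so the term $(f'(0)-f'(u))v_\alpha$ cannot be treated as a perturbation. I will keep it on the left-hand side instead, as a coefficient of the equation: since $f'(u)\le f'(0)$ by \eqref{f-cdn1}, the coefficient $f'(0)-f'(u)$ is nonnegative and therefore only damps the $L^2$ norm. If this sign argument fails because of the nonlocal mixing in $\cI_*$, I will fall back on a pointwise Duhamel bound, as in the proof of Theorem \ref{t2.20}, in the region $x_1\le C$. That bound combined with the exponential weight gives square integrability directly, at the cost of the polynomial losses allowed in the statement.
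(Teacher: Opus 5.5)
Your base case only covers $m=0$, and the exponent $m-1$ depends on $\Vert v(s,\cdot)\Vert_{H^1}$ being bounded, which your scheme does not give. Differentiating \eqref{v-eqn} in $x_1$ produces a source term:
\[
\partial_{x_1}R(t,x;v)=\bigl(f'(0)-f'(\bu)\bigr)\partial_{x_1}v+\lambda^*e^{\lambda^*x_1}\bigl(f'(\bu)\bu-f(\bu)\bigr).
\]
The last term is only $O(v)$. Your energy inequality therefore yields $\Vert\partial_{x_1}v(s,\cdot)\Vert_{L^2}\lesssim1+\int_1^s\Vert v(\sigma,\cdot)\Vert_{L^2}\md\sigma=O(s)$, and the induction then gives $s^{m}$, not $s^{m-1}$.

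The missing idea is the paper's choice of unknown $v_i=e^{\lambda^*x_1}\partial_{x_i}\bu$, i.e.\ $\partial_{x_i}v$ for $i\neq1$ and $\partial_{x_1}v-\lambda^*v$ for $i=1$. This is the leading-edge transform of a solution of the linearised equation, so it solves exactly $\partial_tv_i+\cI_*v_i+(f'(0)-f'(\bu))v_i=0$, with no source. Kato's inequality (Proposition \ref{p3.200}) and the maximum principle then make $\Vert v_i\Vert_{L^1}$ and $\Vert v_i\Vert_{L^\infty}$ nonincreasing; your $L^2$ identity would work equally well here. Higher derivatives come from differentiating this equation. The commutator terms are $\partial^{\alpha-\beta}[f'(\bu)]\,\partial^\beta v_i$, whose coefficients are bounded by Corollary \ref{cor_2.20}.

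The three-region localisation should be dropped, because as written it does not work.
\begin{itemize}
\item An $L^2$ energy identity for $\cI_*$ is global: the nonlocal term and the transport term create flux across any interface, so you cannot run it only ahead of the front.
\item Proposition \ref{p2.100} only covers the tube $\vert y_e^\bot\vert\le2R$. Both it and Corollary \ref{c3.10} bound $u$ or $v$, not derivatives of $\bu$.
\item The loss $s^{(N-1)/4}$ you accept behind the front cannot be absorbed when $m-1<(N-1)/4$. In particular it fails for $m=1$ as soon as $N\ge2$.
\end{itemize}
Your ``main obstacle'' paragraph already contains the right fix: run the estimate on all of $\R^N$, keeping the term $(f'(0)-f'(\bu))v_\alpha$, whose coefficient is nonnegative, on the left.

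For this to close, write the lower-order part of $\partial^\alpha R$ as bounded coefficients times a single lower-order derivative of $v$. In each product $e^{\lambda^*x_1}f^{(k)}(\bu)\,\partial^{\gamma_1}\bu\cdots\partial^{\gamma_k}\bu$, put the weight on one factor: $e^{\lambda^*x_1}\partial^{\gamma_1}\bu$ is a combination of derivatives of $v$ of order $\le\vert\gamma_1\vert$. Bound the other factors by Corollary \ref{cor_2.20}. If you write these terms as you do, with factors $e^{-\lambda^*x_1}$ in front of products of derivatives of $v$, the $L^2$ induction hypothesis does not control them behind the front.

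With these two corrections, your $L^2$ energy argument becomes a legitimate alternative to the paper's $L^1$--$L^\infty$ argument. The paper's route additionally gives $L^1$ and $L^\infty$ control of all the derivatives.
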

\begin{proof}
  Let $v_i(t,x)=e^{\lambda^*x_1}\partial_{x_i}\bu(t,x)$, where we recall that $\bu$ satisfies \eqref{bu}. Then, $v_i$ solves the equation
 \begin{equation}
\label{e6.210}
  \partial_tv_i+\cI_*v_i+\bigl(f'(0)-f'(u)\bigl)v_i=0.
 \end{equation} 
 Multiplying \eqref{e6.210} by $\mathrm{sgn}~v_{i}$ and using  Kato's inequality in Proposition \ref{p3.200}  we obtain
$$
	\partial_t |v_{a,i}|+\mathcal I_* |v_{a,i}| \leq0.
	$$
 Integrating this inequality over $\RR^N$ yields $\Vert v_i(t,\cdot)\Vert_{L^1}\leq\Vert \partial_{x_i}v_0\Vert_{L^1}$, while the maximum principle yields $\Vert v_i(t,\cdot)\Vert_\infty\leq\Vert  \partial_{x_i}v_0\Vert_\infty$.  This entails the boundedness of $\Vert v_i(t,\cdot)\Vert_{L^2}$. As $v_i=\partial_{x_i}v$ if $i\neq 1$ and $v_1=\partial_{x_1}v-\lambda^*v$ if $i=1$ we infer the boundedness of $\Vert \partial_{x_1}v(t,\cdot)\Vert_{L^2}$.
 
 Now, set $v_{\alpha,i}:=\partial^\alpha v_i$, then the equation for $v_{\alpha,i}$ has the following structure as \eqref{e3.330}:
\begin{equation}
\label{e6.211}
\partial_tv_{\alpha,i}+\cI_*v_{\alpha,i}+\big(f'(0)-f'(u)\big)v_{\alpha,i}=\sum_{\vert\beta\vert\leq\vert\alpha\vert-1} v_{\beta,i}\sum_{k,\gamma,\gamma'}b_{k,\gamma,\gamma'}f^{(k)}(u)(\partial_{x_1}^{\gamma_1}u)^{\gamma_1'}\ldots(\partial^{\gamma_N}_{x_N}u)^{\gamma_N'},
\end{equation}
where $k$ runs between $2$ and $\vert\alpha\vert$, and the lengths of the multi-indices $\gamma$ and $\gamma'$ are also $\leq\vert\alpha\vert$. The constants $b_{k,\gamma,\gamma'}$ have, as in Corollary \ref{cor_2.20}, no remarkable value and can be zero.

We finish the proof of the proposition by induction. For any nonnegative integer $m$, we set $V_m(t,x)=\displaystyle\sum_{\vert\alpha\vert=m}\vert v_{\alpha,i}\vert$, with the convention $V_{0,i}=\vert v_i\vert$. We multiply \eqref{e6.211} by $\mathrm{sgn}~v_{\alpha,i}$, and sum over all $\alpha$ with lengths $m$ the resulting inequalities. For $m\geq1$ we obtain, from Corollary \ref{cor_2.20}, an inequality of the form
$$
\partial_tV_m+\cI_*V_m\leq C_m\sum_{k=0}^{m-1}V_k,
$$
where $C_m$ is a constant independent of $t$. And so, we infer
$$
\Vert V_m(t,\cdot)\Vert_{L^\infty(\R^N)}
\leq C_m\sum_{k=0}^{n-1}\int_0^t \Vert V_k(s,\cdot)\Vert_{L^\infty(\R^N)}\md s,
~~~~ \Vert V_m(t,\cdot)\Vert_{L^1}\leq C_m\int_0^t\Vert V_k(s,\cdot)\Vert_{L^1}\md s,
$$
This entails, by induction, a growth of order at most $t^{{m}}$ for the $L^\infty$ and $L^1$ norms of $V_m$, which in turn entails a  growth for all the $\Vert v_{\alpha,i}\Vert_{L^2}$, of order $\leq m$.
To end the proof, notice that every derivative of $v$ is a linear combination of $v$ and the $v_{a,i}$.
\end{proof}

\subsection{ The region where $e^{-(t-s)\cI_*}v_{s,\varepsilon}^{+,*}$ dominate: proofs of Proposition \ref{p5.6}}
We will mainly deal with $e^{-(t-s)\cI_*}v_{s,\varepsilon}^{+,*}(x)$, the function $e^{-(t-s)\cI_*}v_{s,\varepsilon}^{-,*}(x)$ can be treated in exactly the same way. 
Fix $\varsigma\in(0,\displaystyle\frac{\delta}2)$, to be chosen smaller if necessary. We split the analysis into two regimes: the short-time regime $t-s\leq s^\varsigma$ and the long-time regime  $t-s\geq s^\varsigma$. Here it will be convenient to work with the rescaled variable $\zeta=x/{\sqrt s}$.

\medskip

\noindent
{\bf Case of $t-s\leq s^\varsigma$}.
We would like to apply Proposition \ref{p3.1} to compare $ e^{-(t-s)\cI_*}v_{s,\varepsilon}^{+,*}(x)$ to $$ e^{(t-s) A_{e_1}}v_{s,\varepsilon}^{+,*}(x)=\displaystyle\frac1{s^{\frac{N+1}2}}e^{\frac{t-s}{s} A_{e_1}}{\boldsymbol\nu}_{s,\varepsilon}^{+,*}(\zeta).$$ 
However, applying   Proposition \ref{p3.1} directly to ${\boldsymbol\nu}_{s,\varepsilon}^{+,*}(\zeta)$ would produce a large error term: we only have, indeed, Proposition \ref{p6.1} at our disposition, which yields polynomial estimates for $\Vert {\boldsymbol\nu}_{s,\varepsilon}^{+,*}\Vert_{H^m}$.  
We bypass the difficulty as follows:
choose $\rho >10$, and let $\chi_{3,\varepsilon}$ be as given in \eqref{chi_3}. Let the 
function $\tilde{{\boldsymbol\nu}}_{s,\varepsilon}(\zeta)$ be defined as 
$$
\tilde{{\boldsymbol\nu}}_{s,\varepsilon}(\zeta)=\tilde\kappa_+\chi_{3,\varepsilon}(\zeta_1+\varepsilon)e^{-\mu_+\vert\zeta'\vert^2}(\zeta_1+\varepsilon)+
\bigl(1-\chi_{3,\varepsilon}(\zeta_1+\varepsilon)\bigl)\tilde\kappa_-e^{-\mu_-\vert\zeta'\vert^2}(\zeta_1+\varepsilon), ~~\zeta_1\geq -\varepsilon.
$$
 Obviously we have $\tilde{{\boldsymbol\nu}}_{s,\varepsilon}(\zeta)\leq {\boldsymbol\nu}_{s,\varepsilon}^{+,*}={\boldsymbol\nu}_{s,\varepsilon}^{+}$  in the region $\{\zeta_1\ge -\varepsilon\}$. A natural idea would be to do the same extension for $\tilde{{\boldsymbol\nu}}_{s,\varepsilon}(\zeta)$ to the region $\{\zeta_1\leq-\varepsilon\}$ as in \eqref{extension}, but this is not going to work:  in this region, the extension
 is below ${\boldsymbol\nu}_{s,\varepsilon}^{+,*}$, so that one cannot compare $e^{-(t-s)\cI_*}{\boldsymbol\nu}_{s,\varepsilon}^{+,*}$ and $e^{-(t-s)\cI_*}\tilde{{\boldsymbol\nu}}_{s,\varepsilon}^*$.  To overcome this additional inconvenient, let us recall, from \eqref{e6.2}, that we have
$$
 \hbox{for $\zeta_1,\vert\zeta'\vert\geq \rho ,$}\ {\boldsymbol\nu}_{s,\varepsilon}^{+,*}(\zeta)\lesssim e^{-A\vert\zeta\vert}.
$$
for all $A>1$. Pick one, and choose $a<{A}/{\vert\mathcal{R}_{e_1}\vert}$; we set, for $\zeta_1\leq-\varepsilon$:
$$
\tilde{{\boldsymbol\nu}}_{s,\varepsilon}^*(\zeta)=-\tilde\kappa_+\chi_{3,\varepsilon}(-\zeta_1-\varepsilon)e^{-\mu_+\vert\bigl(\mathcal{R}_{e_1}\zeta\bigl)'\vert^2}(\zeta_1+\varepsilon)-\Lambda\bigl(1-\chi_{3,\varepsilon}(-\zeta_1-\varepsilon)\bigl)e^{-a\vert(\mathcal{R}_{e_1}\zeta)'\vert}.
$$
If $\Lambda>0$ is large enough, for $\zeta_1\leq-\varepsilon$,  we have $\tilde{{\boldsymbol\nu}}_{s,\varepsilon}^*(\zeta)\leq {\boldsymbol\nu}_{s,\varepsilon}^{+,*}(\zeta)$. As a consequence we have $e^{-(t-s)\cI_*}\tilde{{\boldsymbol\nu}}_{s,\varepsilon}^*(\displaystyle\frac{.}{\sqrt s})\leq {\boldsymbol\nu}_{s,\varepsilon}^{+,*}(\displaystyle\frac{.}{\sqrt s})$.
Moreover,  $\tilde{{\boldsymbol\nu}}_{s,\varepsilon}^*$ is obviously in all the spaces $H^m(\R^N)$, with derivatives of order $m$ bounded independently of $s$ -- but of course not of $\varepsilon$. A direct application of Proposition \ref{p3.3} yields  estimate \eqref{e6.8}, which of course deteriorates   when $\rho $ increases.

\medskip

\noindent
{\bf Case of $t-s\geq s^\varsigma$}.
 Then we have, by Theorem \ref{thm_heat kernel estimate}, for some $\gamma\in(\displaystyle\frac38,\frac12)$: 
$$
e^{-(t-s)\cI_*}{\boldsymbol\nu}_{s,\varepsilon}^{+,*}(\displaystyle\frac{.}{\sqrt s})=e^{(t-s) A_{e_1}} {\boldsymbol\nu}_{s,\varepsilon}^{+,*}(\displaystyle\frac{.}{\sqrt s})+O\bigl(\Vert {\boldsymbol\nu}_{s,\varepsilon}^{+,*}(\displaystyle\frac{.}{\sqrt s})\Vert_{H^m_x}e^{-(t-s)^{1-2\gamma}} \bigl),
$$
for some suitably large $m$. By Proposition \ref{p6.1}, the $H^m$ norm of $v(t,\cdot)$ grows at most polynomially. So, one may neglect the term $O\bigl(\Vert {\boldsymbol\nu}_{s,\varepsilon}^{+,*}(\displaystyle\frac{.}{\sqrt s})\Vert_{H^m_x}e^{-(t-s)^{1-2\gamma}} \bigl)$ from now on, and work on $e^{(t-s) A_{e_1}} {\boldsymbol\nu}_{s,\varepsilon}^{+,*}(\displaystyle\frac{.}{\sqrt s})$, a much more tractable quantity.
 Set 
 \begin{equation}
\label{e6.21}
\tau=\displaystyle\frac{t-s}s,~~~{\boldsymbol\omega}^{app}(\tau,\zeta):=e^{\tau A_{e_1}} {\boldsymbol\nu}_{s,\varepsilon}^{+,*}(\zeta).
\end{equation}
We should dress ${\boldsymbol\omega}^{app}$ with $+$, $s$, $*$, $\varepsilon$; let us not do so because the notation are already heavy.
Recall that, from the observation \eqref{e3.5}, ${\boldsymbol\omega}^{app}(\tau,\zeta)$ satisfies the equation
\begin{equation}\label{e6.20}
\begin{array}{rll}
\displaystyle \partial_\tau {\boldsymbol\omega}^{app}-\frac12\mathrm{div}_\zeta(H_{e_1}\nabla_\zeta {\boldsymbol\omega}^{app})=&0~~(\tau>0,\zeta_1>-\varepsilon,\zeta'\in\R^{N-1})\\
{\boldsymbol\omega}^{app}(\tau,-\varepsilon,\zeta')=&0,~~~~{\boldsymbol\omega}^{app}(0,\zeta)={\boldsymbol\nu}_{s,\varepsilon}^{+,*}(\zeta)
\end{array}
\end{equation}
Once again, several ranges of $\tau$ should be envisaged. 
\begin{itemize}
\item [--] {\it The range $s^\varsigma\leq \tau\leq\varrho_\varepsilon s$,} where $\varrho_\varepsilon$ is a small number. We use the same idea as in the previous regime, but we are now working with ${\boldsymbol\omega}^{app}$ defined by \eqref{e6.20} rather than with the solution of a nonlocal equation. And so, while ${\boldsymbol\nu}_{s,\varepsilon}^{+,*}(\zeta)$ may not be a smooth initial datum, we may easily place below it a function $\underline {\boldsymbol\omega}_\varepsilon(\zeta)$ that coincides with ${\boldsymbol\nu}_{s,\varepsilon}^{+,*}(\zeta)$ for $\zeta_1\leq\varepsilon$, that is below $\tilde\kappa_-(\zeta_1+\varepsilon)e^{-\mu_+\vert\zeta\vert}$ for $\zeta_1\geq 2\varepsilon$,  such that $\partial_{\zeta_1}\underline {\boldsymbol\omega}_\varepsilon(\zeta)\geq\displaystyle\frac{\tilde\kappa_-}{2}e^{-A\vert\zeta\vert}$ for a large constant $A>0$,  which has  all its derivatives bounded, and which is finally dominated by $e^{-\mu_+\vert\zeta\vert}$. Then, we have ${\boldsymbol\omega}^{app}(\tau,\zeta)\geq e^{\tau A_{e_1}}\underline {\boldsymbol\omega}_\varepsilon(\zeta)$ and, moreover, we have, form classical parabolic theory:
\begin{equation}
\label{e6.22}
\Vert \nabla e^{\tau A_{e_1}}\underline {\boldsymbol\omega}_\varepsilon-\nabla\underline {\boldsymbol\omega}_\varepsilon\Vert_\infty\lesssim\tau\Vert D^2\underline {\boldsymbol\omega}_\varepsilon\Vert_\infty\lesssim\tau\Vert\underline {\boldsymbol\omega}_\varepsilon\Vert_{H^{m_N}},
\end{equation} 
where $m_N$ is chosen so that $H^{m_N}(\RR^N)$ is imbedded in $W^{2,\infty}(\R^N)$. The construction of $\underline {\boldsymbol\omega}_\varepsilon$ entails, for every $\rho >0$, the existence of $\underline\kappa_\rho $ such that 
$\partial_{\zeta_1}\underline {\boldsymbol\omega}_\varepsilon(\zeta)\geq\kappa_\rho$ as soon as $\vert\zeta\vert\leq \rho $.

This implies the desired linear behaviour for $e^{\tau A_{e_1}}\underline {\boldsymbol\omega}_\varepsilon$, as soon as $\varrho_\varepsilon$ is small enough. This also implies the estimate  for $w^{app}$ because it is above  $e^{\tau A_{e_1}}\underline {\boldsymbol\omega}_\varepsilon$.
\item [--] {\it The range $\tau\geq\varrho_\varepsilon $.} This time we prefer to write \eqref{e6.20} in the true self-similar coordinates starting from $\tau=\varrho_\varepsilon$. So, for $\tau\geq\varrho_\varepsilon$ we set 
$$\xi=\displaystyle\frac{\zeta}{\sqrt\tau},~\sigma=\mathrm{ln}~\tau,~~~ \tilde{\boldsymbol\omega}^{app}(\sigma,\xi)=\tau^{\frac{N+1}2}{\boldsymbol\omega}^{app}(\tau,\zeta)=e^{\frac{N+1}2\sigma}{\boldsymbol\omega}^{app}(e^\sigma,e^{\frac\sigma2}\xi).
$$
Equation \eqref{e6.20} becomes
\begin{equation}\label{e6.21}
\begin{array}{rll}
\displaystyle \partial_\sigma \tilde{\boldsymbol\omega}^{app}-\frac12\biggl(\mathrm{div}_\xi H_{e_1}\nabla_\xi +\xi.\nabla+(N+1)\biggl)\tilde{\boldsymbol\omega}^{app}=&0~~(\sigma>\mathrm{ln}~\varrho_\varepsilon,\xi_1>-\displaystyle\frac{\varepsilon}{\sqrt\varrho_\varepsilon},\xi'\in\R^{N-1})\\
\tilde{\boldsymbol\omega}^{app}(\tau,-\varepsilon,\zeta')=&0,~~~~\tilde{\boldsymbol\omega}^{app}(\mathrm{ln}~\varrho_\varepsilon,\zeta)={\boldsymbol\nu}_{s,\varepsilon}^{+,*}(\zeta)
\end{array}
\end{equation}
We may apply the classical parabolic theory: for $\tau\leq T$, with $T>0$ a large constant, the strong maximum principle implies the positivity of $\tilde{\boldsymbol\omega}^{app}$ away from the boundary $\{\zeta_1=-\displaystyle\frac{\varepsilon}{\sqrt\varrho_\varepsilon}\}$, while  the Hopf lemma imply the positivity of $\partial_{\zeta_1}\tilde{\boldsymbol\omega}^{app}$ in the vicinity of the boundary. This once again entails estimate \eqref{e6.8}. For $\tau$ going to $+\infty$, we use the convergence, according to \eqref{e5.2001}, of $\tilde{\boldsymbol\omega}^{app}$ to a positive multiple of $\Gamma_{e_1}(\xi)$. Reverting to ${\boldsymbol\omega}^{app}$ we have ${\boldsymbol\omega}^{app}(\tau,\zeta)\gtrsim\displaystyle\frac {\zeta_1+\varepsilon}{\tau^{\frac{N+1}2}}$ in every cube $\{0\leq\zeta_1+\varepsilon\leq \rho ,~\vert\zeta'\vert\leq \rho \}$. This, with the definition \eqref{e6.21} of $\tau$, and the relation \eqref{e5.20} between the $v$'s and $w$'s, implies 
\eqref{e6.8}.
\end{itemize}
The sub-solution property is shown in a similar fashion, taking into account that it is compactly supported.  So, in order to estimate $e^{-(t-s)\cI_*}v_{s,\varepsilon}^{-,*}$ from below we may use the function
$$
\tilde{{\boldsymbol\nu}}_{s,\varepsilon}(\zeta)=\chi_4(\vert\zeta'\vert)\biggl(\tilde\kappa_-\chi_{3,\varepsilon}(\zeta_1+\varepsilon)e^{-\mu_+\vert\zeta'\vert^2}(\zeta_1+\varepsilon)+
\bigl(1-\chi_{3,\varepsilon}(\zeta_1+\varepsilon)\bigl)\frac{\tilde\kappa_-}2e^{-\mu_-\vert\zeta'\vert^2}(\zeta_1+\varepsilon)\biggl),
$$
where $\chi_4$ is a function of the real variable $h$, equal to $1$ if $h\leq \rho -1$ and 0 if $h\geq \rho +1$.

\subsection{The regions where $\cV_2$ and $\cV_3$ dominate: Proofs of Propositions \ref{p5.7} and \ref{p5.8}}
Proposition \ref{p5.7} is now a direct consequence of Theorem \ref{t3.10}. As for Proposition \ref{p5.8},
the principle is the same as that of Proposition \ref{p5.6}, the emphasis being laid on the stages $t-s\leq s^\varsigma$. One needs to bound $\vert e^{-(t-s)\cI_*}v_{s,\varepsilon}^{*,\pm}(x)\vert$ from above; to bound $v_{s,\varepsilon}^{*,\pm}(x)$ from above we may use $\kappa_+(\zeta_1\pm\varepsilon)$ on $\{\vert \zeta_1\pm\varepsilon\vert\leq\varepsilon\}$, and a smooth function transitionning from $\kappa_+(\zeta_1\pm\varepsilon)$ to 0 for $\zeta_1\pm\varepsilon\leq-\varepsilon$.

\subsection{Proof of Theorem \ref{t6.1}}
Consider the functions $\overline{v}_{s,\varepsilon}$ and $\underline{v}_{s,\varepsilon}$ defined by \eqref{e6.7} and \eqref{e6.8} respectively, and let us first show that, when $s$ is large, we have
\begin{equation}
\label{e6.100}
\forall t\geq s,~~x_1\geq-\varepsilon\sqrt s-(t-s)^\delta:~~~\overline{v}_{s,\varepsilon}(t,x)\geq v(t,x).
\end{equation}
     Propositions \ref{p5.7} and \ref{p5.8} show that we have, in this strip, the estimate   $\overline{v}_{s,\varepsilon}(t,x)\gtrsim t^{-\frac{N}2-1+\vartheta}e^{-\frac{A\vert x\vert}{\sqrt t}} $, for all $A>1$, while   $v(t,x)\lesssim\inf\bigl(e^{-\lambda^*t^\delta},t^{-\frac{N}2-\varsigma}e^{-\frac{B\vert x\vert}{\sqrt t}}\bigl)$ as soon as $t$ is large enough. This is enough to show \eqref{e6.100}.
In the same spirit we have $v(t,x)\geq \underline{v}_{s,\varepsilon}(t,x)$ for $t\geq s$, $\vert x'\vert\leq\rho$  and $\varepsilon\sqrt s+(t-s)^\delta\leq x_1\leq -\varepsilon\sqrt s+(t-s)^\delta+1$, still  if $s$ is large. So, there is $s_\varepsilon>0$ such that $\underline{v}_{s,\varepsilon}(t,x)\leq v(t,x)\leq\overline{v}_{s,\varepsilon}(t,x)$ as soon as $s\geq s_\varepsilon$, $t\geq s$, $\vert x'\vert\leq\rho$ and $x_1\geq-\varepsilon\sqrt s-(t-s)^\delta.$
 
 We may now conclude; for this we are going to show that, for $t\geq s$, and $s$ larger than a time that depends of course on $\varepsilon$, we have
 \begin{equation}
\label{e6.2018}
 \bigl\vert \mu_{e_1}[{\boldsymbol\nu}(t,\cdot)]-\mu_{e_1}[{\boldsymbol\nu}(s,\cdot)]\bigl\vert=O(\varepsilon)+
O\bigl((t-s)^{-\frac{1}2+\vartheta}\bigl),
\end{equation} 
Our main effort will be devoted to comparing  the functions  $v(t,\cdot)$ and $e^{(t-s)A_{e_1}}v^*(s,\cdot)$. As the latter quantity is the solution of a nice linear heat equation, the conclusion will be easy. Let us introduce the functions $\overline{v}_{s,\varepsilon}$ and $\underline{v}_{s,\varepsilon}$ in the picture, so that we have
$$
2\bigl\vert v(t,\cdot)-e^{(t-s)A_{e_1}}v^*(s,\cdot)\bigl\vert\leq \bigl(\overline v_{s,\varepsilon}-v(t,\cdot)\bigl)+\bigl\vert\overline v_{s,\varepsilon}-e^{(t-s)A_{e_1}}v^*(s,\cdot)\bigl\vert
+ \bigl(v(t,\cdot)-\underline v_{s,\varepsilon}\bigl)+\bigl\vert\underline v_{s,\varepsilon}-e^{(t-s)A_{e_1}}v^*(s,\cdot)\bigl\vert
$$
From  the definitions \eqref{e6.7} and \eqref{e6.8}  we have:
 $$
 \begin{array}{rll}
 &2\bigl\vert v(t,\cdot)-e^{(t-s)A_{e_1}}v^*(s,\cdot)\bigl\vert\\
\leq&\displaystyle\bigl\vert \frac1{s^{\frac{N+2}2}}e^{-(t-s)\cI_*}{\boldsymbol\nu}_{s,\varepsilon}^{+,*}(\frac{.}{\sqrt s})-e^{(t-s)A_{e_1}}v_{s,\varepsilon}^{+,*}\bigl\vert+\bigl\vert \frac1{s^{\frac{N+2}2}}e^{-(t-s)\cI_*}{\boldsymbol\nu}_{s,\varepsilon}^{-,*}(\frac{.}{\sqrt s})-e^{(t-s)A_{e_1}}v_{s,\varepsilon}^{-,*}\bigl\vert\\
 &+\displaystyle\bigl\vert e^{(t-s)A_{e_1}}\bigl[\frac1{s^{\frac{N+1}2}}w^*(s,\frac{.}{\sqrt s})-v_{s,\varepsilon}^{+,*}\bigl]\bigl\vert+\frac1{s^{\frac{N+1}2}}\bigl\vert e^{(t-s)A_{e_1}}\bigl[\frac1{s^{\frac{N+1}2}}w^*(s,\frac{.}{\sqrt s})-v_{s,\varepsilon}^{-,*}\bigl]\bigl\vert\\
 &+\displaystyle\frac{\mathbbm{1}_{-t^\delta\leq x_1\leq \frac{3\pi t^\alpha}2}(x_1)\mathbbm{1}_{\vert x'\vert\leq\eta_2\sqrt t}(x')}{t^{\frac{N}2+1-\beta}}+\frac{\Lambda e^{-A\frac{\vert x\vert}{\sqrt t}}\mathbbm{1}_{\vert x\vert\geq\eta_1\sqrt t}(x)}{t^{\frac{N}2+1-\vartheta}}
 \end{array}
 $$
 The first two terms are estimated by Theorem \ref{thm_heat kernel estimate}, so that we have
 $$ 
 \begin{array}{rll}
&\bigl\vert v(t,\cdot)-e^{(t-s)A_{e_1}}v^*(s,\cdot)\bigl\vert\\
 \leq&\displaystyle\frac1{s^{\frac{N+1}2}}\biggl\vert e^{(t-s)A_{e_1}}\bigl[w^*(s,\frac{.}{\sqrt s})-v_{s,\varepsilon}^{+,*}\bigl]\biggl\vert+\frac1{s^{\frac{N+1}2}}\biggl\vert e^{(t-s)A_{e_1}}\bigl[w^*(s,\frac{.}{\sqrt s})-v_{s,\varepsilon}^{-,*}\bigl]\biggl\vert\\
 &+\Lambda \max\bigl(\displaystyle \frac{e^{-(t-s)^{1-2\gamma}}}{s^{\frac{N}2-m_N+1}},\frac{e^{-A\frac{\vert x\vert}{\sqrt t}}}{t^{\frac{N}2+1-\vartheta}}\bigl)+\displaystyle\frac{\Lambda\mathbbm{1}_{-t^\delta\leq x_1\leq \frac{3\pi t^\alpha}2}(x_1)\mathbbm{1}_{\vert x'\vert\leq\eta_2\sqrt t}(x')}{t^{\frac{N}2+1-\beta}}+\frac{\Lambda e^{-A\frac{\vert x\vert}{\sqrt t}}\mathbbm{1}_{\vert x\vert\geq\eta_1\sqrt t}(x)}{t^{\frac{N}2+1-\vartheta}}
 \end{array}
 $$
 The above considerations have allowed us to replace the  operator $\cI_*$ by the   operator $A_{e_1}$, so that the remainder of the argument is a classical consideration on the heat equation. As it is short, we detail it nevertheless. We send $t$ to $+\infty$, so that $t$ and $t-s$ are comparable so that, in particular,   the term $e^{-A\frac{\vert x\vert}{\sqrt{t}}}$ may be replaced by $e^{-A\frac{\vert x\vert}{\sqrt{t-s}}}$ with an error of size $t^{-1}$ at most. From \eqref{e5.2001}, we have
$$e^{(t-s)A_{e_1}}v_{s,\varepsilon}^{*,\pm}(x)=\frac1{s^{\frac{N+1}2}}e^{\frac{t-s}sA_{e_1}}{\boldsymbol\nu}_{s,\varepsilon}^{*,\pm}(.)(\zeta)=\frac{\mu_{e_1}\bigl[{\boldsymbol\nu}^\pm_{s,\varepsilon}(.)\bigl]\Gamma_{e_1}(\zeta\sqrt{\frac{s}{t-s}})}{(t-s)^{\frac{N+1}2}}+
O(\frac{e^{-A\frac{\vert \sqrt s\zeta\vert}{\sqrt {t-s}}}}{(t-s)^{\frac{N}2+1}}),
$$
with
 $$
 \begin{array}{rll}
\displaystyle \mu_{e_1}\bigl[{\boldsymbol\nu}^\pm_{s,\varepsilon}(.))\bigl]=&\displaystyle\int_{\zeta_1\pm\varepsilon>0}(\zeta_1\pm\varepsilon){\boldsymbol\nu}_{s,\varepsilon}^\pm(\zeta)\md \zeta=\displaystyle\int_{\mp\varepsilon\leq\zeta_1\leq2\varepsilon}(\zeta_1\pm\varepsilon){\boldsymbol\nu}_{s,\varepsilon}^\pm(\zeta)\md \zeta +\int_{\zeta_1>2\varepsilon}(\zeta_1\pm\varepsilon){\boldsymbol\nu}(s,\zeta)\md \zeta\\
 =&\mu_{e_1}[{\boldsymbol\nu}(s,\cdot)]+O(\varepsilon).
 \end{array}
 $$
 Reverting to the variable $x=\sqrt s\zeta$, we all in all have, for $s\geq s_\varepsilon$ and $t\gg s$:
\begin{equation}
\label{e6.2017}
 \bigl\vert v(t,x)-\frac{\mu_{e_1}[{\boldsymbol\nu}(s,\cdot)]\Gamma_{e_1}(\frac{x}{\sqrt t-s})}{(t-s)^{\frac{N+1}2}}\bigl\vert=\frac{O(\varepsilon)\Gamma_{e_1}(\frac{x}{\sqrt t-s})}{(t-s)^{\frac{N+1}2}}+
O(\frac{e^{-A\frac{\vert x\vert}{\sqrt {t-s}}}}{(t-s)^{\frac{N}2+1-\vartheta}}).
\end{equation}
Let us define a new self-similar variable, that we still denote $\zeta$: $\zeta=\displaystyle\frac{x}{\sqrt{t-s}}$, and  $v(t,x)=\displaystyle\frac{{\boldsymbol\nu}(t,\zeta)}{(t-s)^{\frac{N+1}2}}$. Taking into account that $\zeta_1\Gamma_{e_1}(\zeta)$ has unit mass, we deduce from equation \eqref{e6.2017} the estimate \eqref{e6.2018} by multiplying the inequality by $\zeta_1$ and integrating on $\{x_1\geq-\varepsilon\sqrt s-(t-s)^\delta\}$.
As $\varepsilon$ is as small as we wish, we obtain the convergence of the function $t\mapsto\mu_{e_1}[{\boldsymbol\nu}(t,\cdot)]$ as $t\to+\infty$:  
estimate \eqref{e6.2018} indeed entails, for every $\varepsilon>0$, and for a constant $C>0$:
$$
\limsup_{s\to+\infty}\mu_{e_1}[{\boldsymbol\nu}(s,\cdot)]-C\varepsilon\leq\liminf_{t\to+\infty}\mu_{e_1}[{\boldsymbol\nu}(t,\cdot)]\leq\limsup_{t\to+\infty}\mu_{e_1}[{\boldsymbol\nu}(t,\cdot)]\leq\liminf_{t\to+\infty}\mu_{e_1}[{\boldsymbol\nu}(s,\cdot)]+C\varepsilon.
$$
Let us denote
\begin{equation*}
\kappa_\infty=\lim_{t\to+\infty}\mu_{e_1}[{\boldsymbol\nu}(t,\cdot)];
\end{equation*}
 a  last application of \eqref{e6.2017} implies \eqref{e6.1}, hence Theorem \ref{t6.1}.

 \section{ The final argument}\label{sec6}

 Fix any $\mu\in(4/25,1/4)$ 
 and any $\varep>0$ small enough, then it follows from Section \ref{sec5} that there exists $T_\varep>0$ sufficiently large such that  for $t\ge T_\varep$ and $x_1=t^\mu+o(t^\mu)$,
 \begin{equation}
 	\label{v-est1}
 	\begin{aligned}
 			(\kappa-\varep) x_1 e^{-\frac{x^T H_{e_1} x}{2t}}t^{-\frac{N}{2}-1}	\le v(t,x)\le	(\kappa+\varep) x_1 e^{-\frac{x^T H_{e_1} x}{2t}}t^{-\frac{N}{2}-1}
 	\end{aligned}
 \end{equation}
 when $|x'|=O(\sqrt{t})$,  and 
 	\begin{equation}
 		\label{v-est2}
 		v(t,x)=O\Big(x_1 e^{-\frac{x^T H_{e_1} x}{2t}}t^{-\frac{N}{2}-1}\Big)+O\Big( (t+T)^{-\frac{N}{2}-1+\vartheta}e^{-A\big(\frac{|x'|}{\sqrt{t+T}}-\eta_A\big)}	\Big)
 	\end{equation}   
 	when $|x'|\ge \eta_A\sqrt{t}$, where $\kappa>0$ is given in Theorem \ref{t6.1}.
 
 For any $\sigma\in[\kappa-\varep, \kappa+\varep]$, we introduce     
 \begin{equation}
 	\label{1-TW shift}
 	\psi_\sigma(t,x):=e^{\lambda^*(x_1+\frac{N+2}{2\lambda^*}\ln t)}     U_{c^*}\Big(x_1+\frac{N+2}{2\lambda^*}\ln t+\zeta_\sigma(t)\Big),~~~~~t\ge  T_\varep,~x_1\in\R.
 \end{equation}
 Here, the function $\zeta_\sigma(t)$  is chosen through the following constraint 
 \begin{equation}
 	\label{constraint}
 	\psi_\sigma(t,x)=\sigma t^{\mu}e^{-\frac{h_{11,e_1}}{2} t^{2\mu-1}},~~~~~~t\ge  T_\varep,~x_1=\mathcal{Y}^++R,
 \end{equation}
 where $h_{11,e_1}>0$ given in \eqref{def_H matrix}, and we have introduced for convenience 
 $$\mathcal{Y}^\pm(t):=-\frac{N+2}{2\lambda^*}\ln t\pm  t^\mu,~~~~~t\ge  T_\varep.$$
 Recalling that $ U_{c^*}$ satisfies the normalization $ U_{c^*}(s)\approx s e^{-\lambda^*s}$ as $s\to+\infty$, we find that for $t\ge  T_\varep$, 
 \begin{equation}\label{zeta_k<-3}
 	\zeta_\sigma(t)=-\frac{1}{\lambda^*}\ln\sigma+\mathcal{O}( t^{2\mu-1}),
 	~~~~~~~	|\dot\zeta_\sigma(t)|\le C t^{2\mu-2},
 \end{equation}
 with some $C>0$ independent of $\sigma$.

 On the other hand, by defining $
 V(t,x)=t^{\frac{N}{2}+1}v(t,x)$  for $t\ge T_\varep$ and $x\in\R^N$, then  \eqref{v-eqn} can be recast as 
 \begin{equation}
 	\label{eqn-V}
 	V_t+\mathcal{I}_{*}V-\frac{N+2}{2t}V+\underbrace{f'(0)V-e^{\lambda^*(x_1+\frac{N+2}{2\lambda^*}\ln t)}f\big(e^{-\lambda^*(x_1+\frac{N+2}{2\lambda^*}\ln t)} V\big)}_{=:Q(t,x;V)}=0,~~t\ge  T_\varep,~x\in\R^N.
 \end{equation}

 Substituting $\psi_\sigma$ into  \eqref{eqn-V}, together with \eqref{zeta_k<-3}, one has
 \begin{align*}
 	&\bigg|\partial_t \psi_\sigma+\mathcal{I}_{*}\psi_\sigma-\frac{N+2}{2t}
 	\psi_\sigma+Q(t,x;\psi_\sigma)\bigg|\\
 	&~~~~~~~~~~~~~~=\bigg|e^{\lambda^*(x_1+\frac{N+2}{2\lambda^*}\ln t)} U_{c^*}'\bigg(x_1+\frac{N+2}{2\lambda^*}\ln t+\zeta_\sigma(t)\bigg)\bigg(\dot\zeta_\sigma(t)+\frac{N+2}{2\lambda^*t}\bigg)\bigg|\le C t^{\mu-1}
 \end{align*}
 for $t\ge T_\varep$ and $x\in\{x\in\R^N|\mathcal{Y}^-(t)\le x_1\le\mathcal{Y}^+(t)\}$.

 By virtue of  \eqref{v-est1} and \eqref{constraint}, it is obvious to see that  for $x\in\{x\in\R^N|\mathcal{Y}^+(t)\le x_1\le \mathcal{Y}^+(t)+R\}$, 
 \begin{equation*}
 	\psi_{\kappa+\varep}\big(t,x_1\big)\ge V(t,x),~~~~~~\text{for}~t\ge T_\varep.
 \end{equation*}
 
 \begin{proposition}
 	\label{prop-5.3}
 	For $\varep>0$ small enough, there holds
 	\begin{equation*}
 		\lim_{t\to+\infty}\big(\psi_{\kappa\pm\varep}(t,x)-V(t,x) \big)=0, 
 	\end{equation*}
 	uniformly for $\mathcal{Y}^-(t)\le x_1 \le \mathcal{Y}^+(t)$ and $x'$ in any compact.
 \end{proposition}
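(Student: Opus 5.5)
The plan is to prove the two one-sided estimates
$$
\psi_{\kappa-\varep}(t,x)-o(1)\le V(t,x)\le \psi_{\kappa+\varep}(t,x)+o(1),
$$
uniformly in $\{\mathcal{Y}^-(t)\le x_1\le\mathcal{Y}^+(t)\}$ and for $x'$ in compacts. The stated limit then follows for each choice of sign, modulo $O(\varep)$ since $\psi_{\kappa+\varep}-\psi_{\kappa-\varep}=O(\varep)$ uniformly on the zone. I will not attempt a direct energy estimate. Instead I will compare $V$ with explicit barriers built from $\psi_\sigma$, in the spirit of \cite{nrr,Roquejoffre2023}.

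\textbf{Step 1: return to the unweighted frame.} It is convenient to undo the exponential weight and write $V=e^{\lambda^*(x_1+\frac{N+2}{2\lambda^*}\ln t)}\tilde u$. Here $\tilde u(t,x)=u\bigl(t,x+X_e(t)n_e+tw^*(e)e-c^*(n_e)tn_e\bigr)$ is the solution seen from the logarithmically corrected front. It solves \eqref{kpp} in a moving frame with an extra drift $\frac{N+2}{2\lambda^* t}\partial_{x_1}$. In the same way, $\psi_\sigma$ corresponds to the shifted wave $U_{c^*}(x_1+\frac{N+2}{2\lambda^*}\ln t+\zeta_\sigma(t))$. The computation above shows that this wave solves the $V$-equation \eqref{eqn-V} up to an error of size $Ct^{\mu-1}$ in the zone. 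In the $\tilde u$ variables the error carries the factor $e^{-\lambda^*(\cdot)}$, so it is integrable against the weight.

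\textbf{Step 2: barriers.} I will build barriers of the form
$$
\overline\psi(t,x)=\psi_{\kappa+\varep}\bigl(t,x_1-q(t)\bigr)+p(t)\,e^{\lambda^*(x_1-\mathcal{Y}^-(t))}\phi(x_1),
$$
with a symmetric subsolution $\underline\psi$ for $\kappa-\varep$. Here $p(t)=t^{-\omega}$ and $q(t)=\int_t^\infty Cs^{\mu-2+\omega'}\md s$ with small exponents, and $\phi$ is a bounded positive profile. The correction term has two jobs. It absorbs the defect $Ct^{\mu-1}$: this is where the monotonicity $U_{c^*}'<0$, together with the strict negativity of $f'(1)$ where $\tilde u\sim1$, produces a favourable sign. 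It also dominates the mismatches on the parabolic boundary of the zone. Those boundary data are handled as follows.
\begin{itemize}
\item On the right edge $\{\mathcal{Y}^+\le x_1\le\mathcal{Y}^++R\}$ (the kernel has range $R$), the constraint \eqref{constraint} and \eqref{v-est1} give $V\le\psi_{\kappa+\varep}$ and $V\ge\psi_{\kappa-\varep}$ directly.
\item On the left edge $\{\mathcal{Y}^--R\le x_1\le\mathcal{Y}^-\}$, Proposition~\ref{p2.100} (bounds $U^\pm$) shows that $\tilde u$ is close to the stable state, up to the $\theta_\pm$ margin. In the weighted variable both $V$ and $\psi_\sigma$ are exponentially small relative to their values at $x_1=0$, so the mismatch is of size $e^{-\lambda^*t^\mu}$.
\item For large $|x'|$, I will use \eqref{v-est2} and Corollary~\ref{c3.10}. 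Then $\overline\psi$ dominates in the whole slab, not only for $x'$ in a compact, and the comparison principle can be applied in an $N$-dimensional slab.
\end{itemize}
Since $\psi_\sigma$ does not depend on $x'$, the lateral variation of $V$ on the scale $\sqrt t$ is harmless at the level of $o(1)$ on compacts in $x'$.

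\textbf{Step 3: initial time.} At the starting time $T_\varep$, ordering is not given a priori. I will restart the comparison from a later time $s\ge T_\varep$. Using the coarse bounds of Proposition~\ref{p2.100}, I first place $V(s,\cdot)$ below $\psi_{\kappa+\varep}(s,\cdot-q_0)+p_0(\ldots)$ for some large shift $q_0$ and amplitude $p_0$. Then I show that these decay as $t\to+\infty$, via the exponential stability of the critical wave in the weighted space. The mechanism is that in the variable $e^{\lambda^*z}$ the linearization of \eqref{TW} around $U_{c^*}$ is, up to a compact perturbation, the operator $\cI_*$ restricted to one dimension. Its Dirichlet problem on $(\mathcal{Y}^-,\mathcal{Y}^+)$ has principal eigenvalue of order $t^{-2\mu}$. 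That rate is slow but still integrable over the time $t$ available, since $2\mu<1$.

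\textbf{Main obstacle.} I expect the hardest step to be this decay of $(q,p)$ from arbitrary initial mismatches to $o(1)$. The weighted linearized operator is only marginally coercive, and a naive barrier yields decay only after times of order $t^{2\mu}$, which must still beat the defect $t^{\mu-1}$. The first thing I would try is a sliding argument in the style of Berestycki--Nirenberg \cite{BN}, combined with compactness. Suppose the limsup of $V-\psi_{\kappa+\varep}$ is positive along a sequence. Extract limits of translates using the uniform derivative bounds of Corollary~\ref{cor_2.20}. The limit is an entire solution of \eqref{TW}-type, squeezed between two translates of $U_{c^*}$ and matching the same tail $z e^{-\lambda^*z}$ coefficient. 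By the uniqueness of the critical wave (Carr--Chmaj \cite{CC}) it must equal the translate fixed by $\zeta_\sigma$, which is a contradiction. The potential loss of $O(\varep)$ in this step is then absorbed by letting $\varep\to0$.
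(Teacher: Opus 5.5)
Your overall frame (two one-sided comparisons on the window $\{\mathcal{Y}^-\le x_1\le\mathcal{Y}^+\}$, with strips of width $R$ carrying the boundary data) matches the paper. However, the step meant to absorb the $O(t^{\mu-1})$ defect fails, and the fallback you propose for the ``main obstacle'' does not close.

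\emph{Why Step 2 fails.} Write $z=x_1+\frac{N+2}{2\lambda^*}\ln t$. The defect of $\psi_\sigma$ is $e^{\lambda^*z}U_{c^*}'(z+\zeta_\sigma)\,(\dot\zeta_\sigma+\frac{N+2}{2\lambda^*t})$. For large $z$ one has $e^{\lambda^*z}U_{c^*}'(z+\zeta_\sigma)\approx-\lambda^*\sigma z$, so the defect has size $z/t$. It therefore reaches $t^{\mu-1}$ exactly at the leading edge $z\sim t^\mu$, and is exponentially small where $\tilde u\sim1$.
\begin{itemize}
\item At the leading edge the weighted linearisation is just $\mathcal{I}_*$. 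Its symbol has nonnegative real part vanishing at $\xi=0$, so there is no spectral gap, and the term $-\frac{N+2}{2t}$ even has the bad sign. Hence neither $U_{c^*}'<0$ nor $f'(1)<0$ helps there.
\item Your correctors are too small: $\dot p\sim t^{-1-\omega}$ and $\dot q\,\partial_{x_1}\psi_\sigma\sim t^{\mu-2+\omega'}$ are both $\ll t^{\mu-1}$.
\item A translation large enough to compensate would need $\dot q\approx\frac{N+2}{2\lambda^*t}$, i.e.\ it would cancel the logarithmic delay.
\item As written, $p(t)e^{\lambda^*(x_1-\mathcal{Y}^-)}\phi(x_1)$ is moreover of size $p(t)e^{2\lambda^*t^\mu}\phi$ at the right edge.
\end{itemize}

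\emph{Why Step 3 does not rescue it.} The critical wave is not exponentially stable in the $e^{\lambda^*z}$-weighted space. The sliding/compactness argument loses exactly the information you need:
\begin{itemize}
\item Local limits of translates forget the tail coefficient. That coefficient is imposed at $z\sim t^\mu$, and transporting it down to $z=O(1)$ is the content of the proposition.
\item Points where the discrepancy concentrates may drift to $z\to+\infty$, where unweighted limits are trivial.
\item No Liouville theorem is available for entire solutions of this nonlocal equation trapped between two shifts of $U_{c^*}$.
\end{itemize}
Moreover, Corollary~\ref{cor_2.20} and Proposition~\ref{p2.100} are derived in the paper from Corollary~\ref{cor_6.2}. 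That corollary is proved by the same argument as this proposition, which the paper stresses uses no gradient estimate, so invoking them here is circular. On the left strip you do not need them anyway: $0\le u\le1$ alone gives the $e^{-\lambda^*t^\mu}$ bound.

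\emph{The missing idea.} It is the one you mention only in passing in Step 3: the window has width $2t^\mu$ with $\mu<\frac14$, which gives an effective Dirichlet gap far larger than $t^{-1}$. It must be applied directly to the difference, not to a shifted wave. The paper sets $\mathcal{S}=(V-\psi_{\kappa+\varepsilon})^+$. Since $f'\le f'(0)$, the nonlinear difference $\mathcal{H}$ is $\ge0$, so:
\begin{itemize}
\item $\mathcal{S}_t+\mathcal{I}_*\mathcal{S}-\frac{N+2}{2t}\mathcal{S}\le Ct^{\mu-1}$ in the window;
\item $\mathcal{S}\le e^{-\lambda^*t^\mu}$ on the left strip;
\item $\mathcal{S}=0$ on the right strip, by \eqref{constraint} and \eqref{v-est1}.
\end{itemize}
It then compares $\mathcal{S}$ with
\[
\overline{\mathcal{S}}(t,x)=\frac{\mathcal{B}}{t^\upsilon}\cos\Bigl(\frac{x_1+\frac{N+2}{2\lambda^*}\ln t}{t^\rho}\Bigr),\qquad \mu<\rho<\tfrac12,\quad 2\rho+\mu<1,\quad 0<\upsilon<1-2\rho-\mu .
\]
Because $\rho>\mu$, the cosine stays in $[\frac12,1]$ on the window and the strips. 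Then $\mathcal{I}_*\overline{\mathcal{S}}\approx\frac{h_{11}}{2t^{2\rho}}\overline{\mathcal{S}}$, hence $(\partial_t+\mathcal{I}_*-\frac{N+2}{2t})\overline{\mathcal{S}}\gtrsim t^{-2\rho-\upsilon}\gg t^{\mu-1}$. Taking $\mathcal{B}$ large dominates whatever mismatch exists at $T_\varepsilon$.

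Comparison then gives $(V-\psi_{\kappa+\varepsilon})^+=O(t^{-\upsilon})$, with no shift, no restart, no stability of the wave and no compactness. The lower bound is identical, with $t^\mu\chi_1(|x'|/\sqrt t)$ added to handle large $|x'|$.

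Finally, only these one-sided bounds can hold. Contrary to your opening remark, $\psi_{\kappa+\varepsilon}-\psi_{\kappa-\varepsilon}\approx2\varepsilon z$ is of order $\varepsilon t^\mu$ at the right end of the window, not $O(\varepsilon)$.
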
  
 \begin{proof} We divide into two steps.
 	
 	\noindent
 	\textbf{Step 1}.
 	We start with proving
 	\begin{equation*}
 		\label{prop6.1-1}
 		\limsup_{t\to+\infty}\big(V(t,x)-\psi_{\kappa+\varep}(t,x) \big)\le 0,
 	\end{equation*}  
 	uniformly in $\mathcal{Y}^-(t)\le x_1 \le \mathcal{Y}^+(t)$ and $x'$ in any compact. To do so, 
 	define
 	$\mathcal{S}(t,x):=(V-\psi_{\kappa+\varep})^+(t,x)$ for $t\ge  T_\varep$ and $x\in\{x\in\R^N|\mathcal{Y}^-(t)\le x_1 \le \mathcal{Y}^+(t)\}$. We are  led to the problem
 	\begin{equation}
 		\label{1-s_varep}
 		\begin{aligned}
 			\begin{cases}
 				\displaystyle	\Big| \mathcal{S}_t +\mathcal{I}_{*}\mathcal{S}-\frac{N+2}{2t}
 				\mathcal{S}+\mathcal{H}(t,x;\mathcal{S})\Big|<C t^{\mu-1},~~~&t\ge T_\varep,~\mathcal{Y}^-(t)\le x_1 \le \mathcal{Y}^+(t),\vspace{3pt}\\
 				\displaystyle	\mathcal{S}(t,x) \le   e^{-\lambda^* t^\mu}, ~~~~~~ ~~~~~~~~~~~~~~~~~ &t\ge T_\varep,~\mathcal{Y}^-(t)-R\le x_1\le  \mathcal{Y}^-(t),\vspace{3pt}\\
 				\mathcal{S}(t,x)  =0, ~~~~~~~~~~~~~~~~~~~~~~~~~~~~~~~~~~~~~~~~~&t\ge T_\varep,~\mathcal{Y}^+(t)\le x_1\le  \mathcal{Y}^+(t)+R,\vspace{3pt}\\
 				\displaystyle	\mathcal{S}( T_\varep,x)\le V( T_\varep,x), &\mathcal{Y}^-( T_\varep)-R\le x_1 \le \mathcal{Y}^+(T_\varep)+R.
 			\end{cases}
 		\end{aligned}
 	\end{equation}
 	Here, 
 	\begin{align*}
 		\mathcal{H}(t,x;\mathcal{S}):=Q(t,x;V)-Q(t,x;\psi_{\kappa+\varep})=f'(0)\mathcal{S}-d(t,x)\mathcal{S}\ge0, ~~~~~~\mathcal{S}\ge 0,
 	\end{align*} 
 	uniformly for $t\ge  T_\varep$ and $x\in\{x\in\R^N|\mathcal{Y}^-(t)\le x_1 \le \mathcal{Y}^+(t)\}$,
 	in which $d(t,x)$ is a continuous and  bounded function satisfying  $\Vert d(t,x)\Vert_{L^\infty}\le f'(0)$
 	since $0<f(s)\le f'(0)s$ for $s\in(0,1)$ and $f$ has linear extension outside $[0,1]$. It then suffices for us to show that $\mathcal{S}(t,x)\to 0$ as $t\to+\infty$, uniformly in $x\in\{x\in\R^N|\mathcal{Y}^-(t)\le x_1 \le \mathcal{Y}^+(t)\}$. 
 	
 	Remember that $\mu\in(4/25,1/4)$, one can then choose $\rho\in(\mu,1/2)$ such that $2\rho+\mu<1$, and finally fix $\upsilon\in(0,1-2\rho-\mu).$ 
 	Up to increasing $T_\varep$, let us assume that 
 	$\cos\big( t^{\mu-\rho}\big)>\frac{1}{2}$ for $t\ge  T_\varep$. Then fix $\mathcal{B}>0$ so large that $\frac{\mathcal{B}}{2} T_\varep^{-\upsilon}\ge V( T_\varep,x)$ uniformly in $x\in\{x\in\R^N|\mathcal{Y}^-(t)\le x_1 \le \mathcal{Y}^+(t)\}$, which is achievable thanks to \eqref{v-est1} and \eqref{v-est2}.
 	Define
 	\begin{equation}\label{bar S}
 		\overline{\mathcal{S}}(t,x)=\frac{\mathcal{B}}{ t^\upsilon}\cos\left(\frac{x_1+\frac{N+2}{2\lambda^*}\ln t}{ t^\rho}\right),~~~~~~~~~t\ge  T_\varep, ~~\mathcal{Y}^-(t)-R\le x_1 \le \mathcal{Y}^+(t)+R.
 	\end{equation}

 	At time $t= T_\varep$, we observe that $\overline{\mathcal{S}}( T_\varep,x)>\frac{\mathcal{B}}{2} T_\varep^{-\upsilon}\ge V(\overline{T}_\varep,x)\ge \mathcal{S}(T_\varep,x)$ for $\mathcal{Y}^-( T_\varep)-R\le x_1 \le \mathcal{Y}^+(T_\varep)+R$. For $x\in[\mathcal{Y}^-(t)-R,\mathcal{Y}^-(t)]\cup [\mathcal{Y}^+(t),\mathcal{Y}^+(t)+R]$,  up to further increasing $T_\varep$ if necessary, there holds $\overline{\mathcal{S}}(t,x)>\frac{\mathcal{B}}{2} t^{-\upsilon}>Ce^{-\lambda^* t^\mu}\ge \mathcal{S}(t,x)$ for $t\ge T_\varep$. Eventually,  a direct computation gives that
 	\begin{align*}
 		\displaystyle	\overline{\mathcal{S}}_t +\mathcal{I}_{*}\overline{\mathcal{S}}-\frac{N+2}{2t}
 		\overline{\mathcal{S}}&=\Big(\frac{-\upsilon}{ t}-\frac{N+2}{2t}+\frac{h_{11}}{2 t^{2\rho}}\Big)\overline{\mathcal{S}}-\frac{\mathcal{B}(N+2)}{2\lambda^*t^{\upsilon+\rho+1}}
 		\sin\left(\frac{x_1+\frac{N+2}{2\lambda^*}\ln t}{ t^\rho}\right)+O(t^{-3\rho})\\
 		&\ge \frac{C}{ t^{2\rho+\upsilon}}\gg \frac{C}{ t^{1-\mu}}, ~~~~~~~~~t\ge T_\varep, ~~\mathcal{Y}^-(t)\le x_1 \le \mathcal{Y}^+(t).
 	\end{align*} 
 	Together with $\mathcal{H}(t,x;\overline{\mathcal{S}})\ge 0$ uniformly for $t\ge T_\varep$ and $x\in\{x\in\R^N|\mathcal{Y}^-(t)\le x_1 \le \mathcal{Y}^+(t)\}$, we then conclude that $\overline{\mathcal{S}}(t,x)$ is a supersolution of \eqref{1-s_varep} for $t\ge T_\varep$ and $x\in\{x\in\R^N|\mathcal{Y}^-(t)\le x_1 \le \mathcal{Y}^+(t)\}$. The comparison principle implies that $\mathcal{S}(t,x)\le \overline{\mathcal{S}}(t,x)$ for $t\ge T_\varep$ and $x\in\{x\in\R^N|\mathcal{Y}^-(t)\le x_1 \le \mathcal{Y}^+(t)\}$. Thus,
 	\begin{equation*}
 		V(t,x)-	\psi_{\kappa+\varep}(t,x)\le 	\mathcal{S}(t,x)\le \overline{\mathcal{S}}(t,x)=o_{t\to+\infty}(1), 
 	\end{equation*}
 	uniformly in $x\in\{x\in\R^N|\mathcal{Y}^-(t)\le x_1 \le \mathcal{Y}^+(t)\}$.
 	
 	\medskip
 	
 	\noindent
 	\textbf{Step 2}.
 	It remains to show $\limsup_{t\to+\infty}\big(\psi_{\kappa-\varep}(t,x)-V(t,x) \big)\le 0$,  uniformly in $\mathcal{Y}^-(t)\le x_1 \le \mathcal{Y}^+(t)$ and $x'$ in any compact, for which we follow the same lines as above and introduce indispensible modifications.   With a slight abuse of notation, we now define
 	$\mathcal{S}(t,x):=(\psi_{\kappa-\varep}-V)^+(t,x)$ for $t\ge  T_\varep$ and $x\in\{x\in\R^N|\mathcal{Y}^-(t)\le x_1 \le \mathcal{Y}^+(t)\}$. We are  led to the problem
 	\begin{equation*}
 		\begin{aligned}
 			\begin{cases}
 				\displaystyle	\Big| \mathcal{S}_t +\mathcal{I}_{*}\mathcal{S}-\frac{N+2}{2t}
 				\mathcal{S}+\mathcal{H}(t,x;\mathcal{S})\Big|<C t^{\mu-1},~~~&t\ge  T_\varep,~\mathcal{Y}^-(t)\le x_1 \le \mathcal{Y}^+(t),\vspace{3pt}\\
 				\displaystyle	\mathcal{S}(t,x) \le   e^{-\lambda^* t^\mu}, ~~~~~~ ~~~~~~~~~~~~~~~~~ &t\ge  T_\varep,~\mathcal{Y}^-(t)-R\le x_1\le  \mathcal{Y}^-(t),\vspace{3pt}\\
 				\mathcal{S}(t,x)  =t^\mu\mathbbm{1}_{\{|x'|\ge \eta_2\sqrt{t}\}}, ~~~~~~~~~~~~~~~~~~~~~~~~~~~~~~~~~~~~~~~~~&t\ge  T_\varep,~\mathcal{Y}^+(t)\le x_1\le  \mathcal{Y}^+(t)+R,\vspace{3pt}\\
 				\displaystyle	\mathcal{S}( T_\varep,x)\le\psi_{\kappa-\varep}( T_\varep,x), &\mathcal{Y}^-( T_\varep)-R\le x_1 \le \mathcal{Y}^+( T_\varep)+R.
 			\end{cases}
 		\end{aligned}
 	\end{equation*}
 	Here, parameter $\eta_2>0$ is given after \eqref{parameters-n}, and $\mathcal{H}(t,x;\mathcal{S})$ now is given by
 	\begin{align*}
 		\mathcal{H}(t,x;\mathcal{S}):=Q(t,x;\psi_{\kappa-\varep})-Q(t,x;V)=f'(0)\mathcal{S}-d(t,x)\mathcal{S}\ge0, ~~~~~~\mathcal{S}\ge 0,
 	\end{align*} 
 	uniformly for $t\ge  T_\varep$ and $x\in\{x\in\R^N|\mathcal{Y}^-(t)\le x_1 \le \mathcal{Y}^+(t)\}$.

 	Define
 	\begin{equation*}
 		\overline{\mathcal{S}}^\star (t,x)=\overline{\mathcal{S}}(t,x)
 		+t^\mu\chi_1\Big(\frac{|x'|}{\sqrt{t}}\Big)
 		,~~~~~~~~~t\ge  T_\varep, ~~\mathcal{Y}^-(t)-R\le x_1 \le \mathcal{Y}^+(t)+R.
 	\end{equation*}
 	where $\overline{\mathcal{S}}$ is given in \eqref{bar S}, and we recall that the nondecreasing function $r\in\R_+\mapsto\chi_1(r)$ is given at the beginning of Section \ref{sec4}.
 	
 	At time $t= T_\varep$, we observe that $\overline{\mathcal{S}}^\star ( T_\varep,x)>\frac{\mathcal{B}}{2} T_\varep^{-\upsilon}\ge\max_{x_1\in[\mathcal{Y}^-( T_\varep)-R,\mathcal{Y}^+( T_\varep)+R]}\psi_{\kappa+\varep}( T_\varep,x)\ge \mathcal{S}( T_\varep,x)$ for $\mathcal{Y}^-( T_\varep)-R\le x_1 \le \mathcal{Y}^+( T_\varep)+R$. For $x_1\in[\mathcal{Y}^-(t)-R,\mathcal{Y}^-(t)]$,  up to further increasing $ T_\varep$ if necessary, there holds $\overline{\mathcal{S}}^\star(t,x)>\frac{\mathcal{B}}{2} t^{-\upsilon}>Ce^{-\lambda^* t^\mu}\ge \mathcal{S}(t,x)$ for $t\ge  T_\varep$. For $x_1\in [\mathcal{Y}^+(t),\mathcal{Y}^+(t)+R]$, we have $\overline{\mathcal{S}}^\star(t,x)>\frac{\mathcal{B}}{2} t^{-\upsilon}>0= \mathcal{S}(t,x)$ for $t\ge  T_\varep$ and $|x'|\le \eta_2\sqrt{t}$, and $\overline{\mathcal{S}}^\star(t,x)>\frac{\mathcal{B}}{2} t^{-\upsilon}+t^\mu\ge \mathcal{S}(t,x)$ for $t\ge  T_\varep$ and $|x'|\ge \eta_2\sqrt{t}$. Moreover, one can check that
 	\begin{align*}
 		\displaystyle	\overline{\mathcal{S}}^\star_t +\mathcal{I}_{*}\overline{\mathcal{S}}^\star-\frac{N+2}{2t}
 		\overline{\mathcal{S}}^\star
 		&\ge \frac{C}{ t^{2\rho+\upsilon}}+ \frac{C}{ t^{1-\mu}}+ \frac{C}{ t^{\frac{3}{2}-\mu}}\gg \frac{C}{ t^{1-\mu}}, ~~~~~~~~~t\ge  T_\varep, ~~\mathcal{Y}^-(t)\le x_1 \le \mathcal{Y}^+(t).
 	\end{align*} 
 	Together with $\mathcal{H}(t,x;\overline{\mathcal{S}}^\star)\ge 0$ uniformly for $t\ge  T_\varep$ and $x\in\{x\in\R^N|\mathcal{Y}^-(t)\le x_1 \le \mathcal{Y}^+(t)\}$, we then conclude that $\overline{\mathcal{S}}^\star(t,x)$ is a supersolution of \eqref{1-s_varep} for $t\ge  T_\varep$ and $x\in\{x\in\R^N|\mathcal{Y}^-(t)\le x_1 \le \mathcal{Y}^+(t)\}$. The comparison principle implies that $\mathcal{S}(t,x)\le \overline{\mathcal{S}}^\star(t,x)$ for $t\ge  T_\varep$ and $x\in\{x\in\R^N|\mathcal{Y}^-(t)\le x_1 \le \mathcal{Y}^+(t)\}$. Thus,
 	\begin{equation*}
 		\psi_{\kappa-\varep}(t,x)-V(t,x)\le 	\mathcal{S}(t,x)\le \overline{\mathcal{S}}^\star(t,x)=o_{t\to+\infty}(1), 
 	\end{equation*}
 	uniformly for $\mathcal{Y}^-(t)\le x_1 \le \mathcal{Y}^+(t)$ and $x'$ in any compact. This finishes the proof. 
 \end{proof}

 Proposition \ref{prop-5.3}, along with the definition  \eqref{1-TW shift} of $\psi_{\kappa\pm\varep}$, gives that
 \begin{equation}\label{5.10}
 	\Big|V(t,x)- e^{\lambda^*(x_1+\frac{N+2}{2\lambda^*}\ln t)} U_{c^*}\Big(x_1+\frac{N+2}{2\lambda^*}\ln t+\zeta_{\kappa\pm\varep}(t)\Big)\Big| \to 0~~~\text{as}~t\to+\infty,
 \end{equation}
 uniformly for $|x_1+\frac{N+2}{2\lambda^*}\ln t|\le  t^\mu$ and $x'$ in any compact, where
 \begin{equation*}
 	\zeta_{\kappa\pm\varep}(t)=-\frac{1}{\lambda^*}\ln(\kappa\pm\varep)+\mathcal{O}( t^{2\mu-1}).
 \end{equation*}
 Since $\varep>0$ is chosen arbitrarily small, one can pass to the limit in \eqref{5.10} by taking $\varep\to 0$ to conclude that
 \begin{equation*}
 	\Big|V(t,x)- e^{\lambda^*(x_1+\frac{N+2}{2\lambda^*}\ln t)}U_{c^*}\Big(x_1+\frac{N+2}{2\lambda^*}\ln t-z_\infty\Big)\Big| \to 0~~~\text{as}~t\to+\infty, 
 \end{equation*}
 uniformly for $|x_1+\frac{N+2}{2\lambda^*}\ln t|\le  t^\mu$ and $x'$ in any compact,  with $z_\infty:=\frac{1}{\lambda^*}\ln\kappa$ depending on $u_0$ (remember that $\kappa>0$ is given in Theorem \ref{t6.1}). 
 This implies that 
 \begin{equation}\label{-1}
 	\max_{0\le x_1+\frac{N+2}{2\lambda^*}\ln t\le t^\mu}\Big|\bu(t,x)-U_{c^*}\Big(x_1+\frac{N+2}{2\lambda^*}\ln t-z_\infty\Big)\Big|\to 0~~\text{as}~t\to+\infty,
 \end{equation}
 uniformly for $x'$ in any compact. This completes the proof. 
 
 \medskip
We end this section by two corollaries whose proofs follow the same lines as above.
\begin{corollary} 
	\label{cor_6.2}
	For $x_1=t^\mu+o(t^\mu)$, and for all $\rho>0$, we have
	\begin{itemize}
		\item[(i)] if
		$ v(t,x_1,0)\le	\kappa_+ x_1 t^{-\frac{N}{2}-1}$ with $\kappa_+>0$, then $$\limsup_{t\to+\infty}\Big(\bu(t,x)- U_{c^*}\Big(x_1+\di\frac{N+2}{2\lambda^*}\ln t-z_+\Big)\Big)\le 0$$ uniformly in $0\le x_1+\di\frac{N+2}{2\lambda^*}\ln t\le t^\mu$ and $\vert x'\vert\leq\rho$, with $z_+=\di\frac{1}{\lambda^*}\ln\kappa_+$;
		\item[(ii)] if	$ v(t,x_1,0)\ge	\kappa_- x_1 t^{-\frac{N}{2}-1}$ with $\kappa_->0$, then  $$\liminf_{t\to+\infty}\Big(\bu(t,x_1,0)- U_{c^*}\Big(x_1+\frac{N+2}{2\lambda^*}\ln t-z_-\Big)\Big)\ge 0$$ uniformly in $0\le x_1+\di\frac{N+2}{2\lambda^*}\ln t\le t^\mu$ and $\vert x'\vert\leq\rho$,  with $z_+=\di\frac{1}{\lambda^*}\ln\kappa_-$.
	\end{itemize}
\end{corollary}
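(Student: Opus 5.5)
We follow the proof of Proposition \ref{prop-5.3}, with the two-sided information of Theorem \ref{t6.1} replaced by a one-sided one, and then read off the consequence for $\bu$ exactly as in the derivation of \eqref{5.10}--\eqref{-1}. Consider (i). The constant in the traveling-wave ansatz is now $\sigma=\kappa_+$. We keep the definition \eqref{1-TW shift} of $\psi_\sigma$ and fix the shift $\zeta_{\kappa_+}(t)$ by the constraint \eqref{constraint}, imposed on the line $x_1=\mathcal{Y}^+(t)+R$. By \eqref{zeta_k<-3}, $\zeta_{\kappa_+}(t)=-z_++O(t^{2\mu-1})$. For $V=t^{\frac N2+1}v$, the hypothesis $v(t,x_1,0)\le\kappa_+x_1t^{-\frac N2-1}$ gives $V\le\kappa_+x_1$ on the slab where $x_1=t^\mu+o(t^\mu)$. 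On the boundary slab $[\mathcal{Y}^+(t),\mathcal{Y}^+(t)+R]$ we have $x_1\approx t^\mu$, and the normalisation \eqref{normalization of mTW} gives $\psi_{\kappa_+}\approx\kappa_+ t^\mu$ there. Hence, after adding an arbitrarily small $\varep$ to $\kappa_+$, we get $V\le\psi_{\kappa_++\varep}$ on that boundary strip. For $|x'|\le\rho$ we use the upper bound of Section \ref{sec4}, or the gradient bounds of Remark \ref{r2.20}, to transfer the hypothesis from $x'=0$ to the whole compact set in $x'$. The $e^{-\frac{x^TH_{e_1}x}{2t}}$ factor is $1+o(1)$ there, so this costs only an $\varep$.

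Next we set $\mathcal{S}=(V-\psi_{\kappa_++\varep})^+$ on $\{\mathcal{Y}^-(t)\le x_1\le\mathcal{Y}^+(t)\}$. By Kato's inequality (Proposition \ref{p3.200}) and the KPP structure $\mathcal{H}\ge0$, $\mathcal{S}$ satisfies the differential inequality in \eqref{1-s_varep}. The residual $Ct^{\mu-1}$ comes from the computation of $\psi_\sigma$ in the equation for $V$, which is unchanged. Its boundary data are: $\mathcal{S}\le e^{-\lambda^*t^\mu}$ on the left strip, since $\bu\le1$; and $\mathcal{S}$ is $0$ on the right strip, or $o(1)$ after adjusting $\varep$. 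The cosine barrier $\overline{\mathcal{S}}$ of \eqref{bar S}, with the same exponents $\rho,\upsilon$, is then a supersolution. If the transverse variable is not controlled for large $|x'|$, we use the modified barrier $\overline{\mathcal{S}}^\star$ with the $t^\mu\chi_1(|x'|/\sqrt t)$ term from Step 2. This yields $V-\psi_{\kappa_++\varep}\le o(1)$ uniformly.

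We then divide by $e^{\lambda^*(x_1+\frac{N+2}{2\lambda^*}\ln t)}\ge1$ on $x_1+\frac{N+2}{2\lambda^*}\ln t\ge0$. This gives $\bu\le U_{c^*}(x_1+\frac{N+2}{2\lambda^*}\ln t-z_+-\varep')+o(1)$, and letting $\varep\to0$ gives (i). Part (ii) is symmetric, with $\mathcal{S}=(\psi_{\kappa_--\varep}-V)^+$ and the barrier $\overline{\mathcal{S}}^\star$ as in Step 2 of Proposition \ref{prop-5.3}.

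The main obstacle is the initial-time comparison $\mathcal{S}(T_\varep,\cdot)\le\overline{\mathcal{S}}(T_\varep,\cdot)$. It requires a crude a priori bound on $V$ of size $\lesssim T^{-\upsilon}$ relative to $\mathcal{B}$. For (i) this bound comes from the upper barrier \eqref{conclusion-super+sub}. For (ii) it is immediate, since $\mathcal{S}\le\psi_{\kappa_--\varep}$, which is bounded on the region by \eqref{normalization of mTW}. It is crucial that this uses only Section \ref{sec4} and not Theorem \ref{t2.20}, so the argument is not circular when fed into Proposition \ref{p2.100}. We expect to have to check that the comparison principle for $\mathcal{I}_*$ on the moving slab needs only continuity of $V$, not gradient bounds.
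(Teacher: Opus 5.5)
Your strategy is the paper's: the paper only says the corollary follows the lines of Proposition \ref{prop-5.3}. Your residual, left-strip and barrier bookkeeping is fine. The gap is in the transverse variable.

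The comparison between $\mathcal{S}$ and $\overline{\mathcal{S}}$ (or $\overline{\mathcal{S}}^\star$) takes place on the slab $\{\mathcal{Y}^-(t)\le x_1\le\mathcal{Y}^+(t)\}\times\R^{N-1}$. So on the right strip $[\mathcal{Y}^+(t),\mathcal{Y}^+(t)+R]$ you need $\mathcal{S}\le\frac{\mathcal{B}}{2}t^{-\upsilon}$ at every $x'$ where the barrier is only of size $t^{-\upsilon}$. For $\overline{\mathcal{S}}$ that is all $x'$; for $\overline{\mathcal{S}}^\star$ it is $|x'|\le\eta_1\sqrt t$. Since $V$ and $\psi_\sigma$ are of size $t^\mu$ there, this is essentially the hypothesis itself, $V\le(\kappa_++\varep)x_1$ (resp. $V\ge(\kappa_--\varep)x_1$), on a transverse range of order $\sqrt t$, not merely on $|x'|\le\rho$. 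You cannot move the cutoff to a bounded scale $r$ either. The term $\mathcal{I}_*\bigl(t^\mu\chi_1(|x'|/r)\bigr)$ is of size $t^\mu r^{-2}$ with no sign. It beats the gain $\mathcal{B}t^{-2\rho-\upsilon}$ of $\overline{\mathcal{S}}$ (here $\rho$ is the exponent in \eqref{bar S}) unless $r\gg t^{(\mu+2\rho+\upsilon)/2}$.

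Neither of the tools you invoke produces this from the value at $x'=0$.
\begin{itemize}
\item The upper barrier of Section \ref{sec4} gives $V\le Cx_1$ with its own constant $C$, unrelated to $\kappa_+$. So it cannot carry the constant $\kappa_+$ from $x'=0$ to other $x'$. For (ii), the Section \ref{sec4} lower bound has its own $\kappa_-$ and a Gaussian factor, so there is nothing to transfer.
\item The gradient bound of Remark \ref{r2.20} is circular. Theorem \ref{t2.20} is proved through Proposition \ref{p2.100}, which invokes this very corollary, as you yourself note at the end. It is also useless quantitatively: passing to $v$ loses a factor $e^{(\lambda^*-\lambda)x_1}$, which is exponentially large at $x_1\sim t^\mu$.
\end{itemize}

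The way out is to assume the transverse information rather than derive it. Read the hypothesis as a bound on the right strip for $|x'|$ up to order $\sqrt t$. A Gaussian factor as in \eqref{v-est1} is allowed, and it is $\le1$ in (i). The far field is then handled with the crude bounds \eqref{conclusion-super+sub} and a term $t^\mu\chi_1$. In (ii) that cutoff should live at scale $\eta'\sqrt t$ with $\eta'$ small, so that the Gaussian factor is $\ge1-\varep$ where the cutoff vanishes. This costs nothing, because the cutoff's contributions are $O(t^{\mu-1})\ll t^{-2\rho-\upsilon}$. This is exactly what \eqref{conclusion-super+sub} supplies in the one place the corollary is used, Proposition \ref{p2.100}, and with this reading your argument goes through.

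Finally, the initial-time comparison you single out is not an obstacle. $T_\varep$ is fixed before $\mathcal{B}$, and $V(T_\varep,\cdot)$ is bounded on the slab simply because $\bu\le1$.
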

\begin{corollary} 
	\label{cor_6.3}
	For all $\rho>0$, there is $\overline \kappa_\rho>0$ such that, for all $x=(x_1,x')\in[1,+\infty)\times B_\rho(0)$ we have  
	$$\mathbf{u}(t,x)\leq\overline\kappa_\rho\bigl(x_1+\displaystyle\frac{N+2}{2\lambda^*}\ln t\bigl)\frac{e^{-\lambda^*x_1}}{t^{\frac{N}2+1}}.
	$$
	\end{corollary}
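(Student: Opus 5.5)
We prove Corollary \ref{cor_6.3} by comparing $\mathbf u$ with a shifted bottom travelling wave in the moving frame, using the upper barrier of Section \ref{sec4} as the input at the far boundary. In the leading-edge variable this is the inequality
$$v(t,x)=e^{\lambda^*x_1}\mathbf u(t,x)\lesssim \bigl(x_1+\tfrac{N+2}{2\lambda^*}\ln t\bigr)t^{-\frac N2-1},\qquad x_1\geq1,\ \vert x'\vert\le\rho .$$

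\textbf{Step 1: the range $x_1\geq t^\mu$.} Here \eqref{conclusion-super+sub}, equivalently \eqref{e6.2}, gives directly
$$v(t,x)\le \kappa_+x_1t^{-\frac N2-1}+\Lambda t^{-\frac N2-1+\vartheta}\chi_1\Bigl(\tfrac{\vert x\vert}{\sqrt t}\Bigr)e^{-A\vert x\vert/\sqrt t}.$$
The second term vanishes unless $\vert x\vert\ge\eta_1\sqrt t$. When it does not vanish, $\vert x'\vert\le\rho$ forces $x_1\gtrsim\sqrt t$, so $t^{\vartheta}e^{-A\vert x\vert/\sqrt t}\lesssim t^{\vartheta}\lesssim x_1$ because $\vartheta<\frac12$. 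This gives the claim for $x_1\ge t^\mu$, and in particular on the strip $x_1\in[t^\mu,t^\mu+R]$. Since $\kappa_+\overline v(t+T,\cdot)$ dominates $v(t,\cdot)$ after the time shift $T$, the extra factor is harmless.

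\textbf{Step 2: the range $1\le x_1\le t^\mu$.} We repeat Step 1 of the proof of Proposition \ref{prop-5.3}, with the constant $\kappa+\varepsilon$ replaced by a large $\overline\kappa$. In the variable $V=t^{\frac N2+1}v$, take the comparison function $\psi_{\overline\kappa}$ defined in \eqref{1-TW shift}, with $\zeta_{\overline\kappa}$ fixed by \eqref{constraint}. By Step 1, $V\le\psi_{\overline\kappa}$ on $x_1\in[\mathcal Y^+,\mathcal Y^++R]$ once $\overline\kappa\ge\kappa_+$, uniformly for $\vert x'\vert\le\rho$.

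The function $\mathcal S=(V-\psi_{\overline\kappa})^+$ then satisfies the system \eqref{1-s_varep}. The cosine supersolution $\overline{\mathcal S}$ of \eqref{bar S} bounds it, and yields
$$V\le\psi_{\overline\kappa}+Ct^{-\upsilon}\qquad\text{on }\mathcal Y^-\le x_1\le\mathcal Y^+ .$$
Finally we use the wave asymptotics. Since $U_{c^*}(z)\lesssim (1+z)e^{-\lambda^*z}$ for $z\ge0$ and $\zeta_{\overline\kappa}$ is bounded, we get
$$\psi_{\overline\kappa}(t,x)\lesssim 1+x_1+\tfrac{N+2}{2\lambda^*}\ln t .$$
Multiplying back by $e^{-\lambda^*x_1}t^{-\frac N2-1}$ gives the stated bound. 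The constant $1$ and the error $t^{-\upsilon}$ are absorbed, because $x_1+\frac{N+2}{2\lambda^*}\ln t\ge1$.

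\textbf{Step 3: bounded times.} For $t\le T_\varepsilon$ the bound is immediate from $\mathbf u\le1$ together with the exponential decay of $\mathbf u$ at bounded times, which follows from the compact support of $u_0$. The constant $\overline\kappa_\rho$ is enlarged to cover this range.

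\textbf{Main obstacle.} The delicate point is the lateral boundary $x_1\in[\mathcal Y^--R,\mathcal Y^-]$, where we need $\mathcal S\le e^{-\lambda^* t^\mu}$. This follows from $\mathbf u\le1$ only after multiplying by $e^{\lambda^*x_1}$, and the region lies near $x_1\approx -t^\mu$, which is outside the range of the claim. There one must check that the left boundary data remain compatible while $x'$ ranges in $B_\rho(0)$. I would handle this exactly as in Proposition \ref{prop-5.3}, with uniformity in $x'$ coming from the $x'$-independence of $\overline{\mathcal S}$.
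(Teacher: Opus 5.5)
You follow the paper's intended route (``same lines as above''). The Section~\ref{sec4} upper barrier supplies the input near $x_1\approx t^\mu$. Step~1 of the proof of Proposition~\ref{prop-5.3} is then rerun with $\kappa+\varepsilon$ replaced by some $\overline\kappa\ge\kappa_+$, so Theorem~\ref{t6.1} is not needed. The multiplicative bound comes out because you keep the error $Ct^{-\upsilon}$ at the level of $V$, and only afterwards use $e^{\lambda^*z}U_{c^*}(z+\zeta_{\overline\kappa})\lesssim 1+z$.

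One step needs tightening. The comparison in \eqref{1-s_varep} is posed on the whole slab $\{\mathcal Y^-\le x_1\le\mathcal Y^+\}\times\R^{N-1}$. You therefore need $V\le\psi_{\overline\kappa}$ on the right strip $\mathcal Y^+\le x_1\le\mathcal Y^++R$ for \emph{all} $x'$, not only for $\vert x'\vert\le\rho$ as you state it. Where $\vert x\vert\ge\eta_1\sqrt t$, the far-field term of \eqref{conclusion-super+sub} contributes a term of order $t^{\vartheta}$ to $V$. That term stays below $\psi_{\overline\kappa}\approx\overline\kappa t^\mu$ only if $\vartheta<\mu$. This choice is admissible, since \eqref{parameters-n} only asks $\vartheta>\beta$ and $\beta<\frac4{25}<\mu$, but it must be imposed explicitly.

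The strip also sits at $x_1\approx t^\mu-\frac{N+2}{2\lambda^*}\ln t$, not at $[t^\mu,t^\mu+R]$. This is harmless: $\mathcal V_1\lesssim t^{-\frac N2-1+\beta}$, so your Step~1 bound already holds for $x_1\ge t^\beta$. That same observation closes the small gap in $x_1$ between your Steps~1 and~2.

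Finally, the left strip, which you single out as the main obstacle, is immediate for every $x'$. There $x_1\le\mathcal Y^-$, so $\mathbf u\le1$ gives $V\le t^{\frac N2+1}e^{\lambda^*x_1}\le e^{-\lambda^*t^\mu}$.
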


\appendix
\section{Proof of Proposition \ref{p3.1}}\label{A1}
Let us consider $\varpi\in(0,\displaystyle\frac12)$ to be chosen later, and set
$$
\begin{array}{rll}
	Q_{*,n}(s,\tau;x)=&\di\frac1{(2\pi)^N}\int_{\vert\xi\vert\leq s^{\varpi -1/2}}e^{\mathbf{i}x\cdot\xi+\tau\left(\widehat{\ck_{*,n}}(\xi)+\mathbf{i}w^*(e) e\cdot\xi-\widehat{\ck_{*,n}}(0)\right)}\widehat{v_s}(\xi)\md\xi\\
	=&\di\frac1{(2\pi)^N}\int_{\vert\xi\vert\leq s^{\varpi -1/2}}e^{\mathbf{i}(x-y)\cdot\xi+\tau\left(\widehat{\ck_{*,n}}(\xi)+\mathbf{i}w^*(e) e\cdot\xi-\widehat{\ck_{*,n}}(0)\right)}{v_s}(y)\md y\md\xi\\
	=&\di\frac{1}{(2\pi)^N}\int_{\vert\zeta\vert\leq s^{\varpi}}\int_{\R^N}e^{\mathbf{i}(\frac{x}{\sqrt s}-z)\cdot\zeta+\tau\left(\widehat{\ck_{*,n}}(\frac{\zeta}{\sqrt s})+\mathbf{i}w^*(e) e\cdot\frac{\zeta}{\sqrt s}-\widehat{\ck_{*,n}}(0)\right)}\bnu(s,z)\md z\md\zeta.
\end{array}
$$
In the last integral we have used the changes of variables $y=\sqrt s z$ and $\zeta=\sqrt s\xi$. Let us also set
$
R_{*,n}(s,\tau;x)=e^{-\tau\cI_*}v_s(x)-Q_{*,n}(s,\tau;x).$ In the sequel, $\widehat\bnu(s,.)$ will denote the Fourier transform of $\bnu$ with respect to its second variable.  Consider an integer $p$ such that $2p>N+1$; we have
$$
\begin{array}{rll}
	R_{*,n}(s,\tau;x)=&\di\frac1{(2\pi)^N}\int_{\vert\xi\vert\geq s^{\varpi -1/2}}e^{\mathbf{i}x\cdot\xi+\tau\left(\widehat{\ck_{*,n}}(\xi)+\mathbf{i}w^*(e) e\cdot\xi-\widehat{\ck_{*,n}}(0)\right)}\widehat{v_s}(\xi)\md\xi\\
	\leq&\di\bigl(\frac{\sqrt s}{2\pi}\bigl)^{N}\di\int_{\vert\xi\vert\geq s^{\varpi -1/2}}\widehat\bnu(s,\sqrt s\xi)\md\xi\\
	=&\di\frac{1}{(2\pi)^N}\di\int_{\vert\zeta\vert\geq s^{\varpi}}\widehat\bnu(s,\zeta)\md\zeta\\
	\leq&\di\frac{1}{2\pi}\biggl(\int_{\vert\zeta\vert\geq s^{\varpi}}\frac{\md\zeta}{(1+\vert \zeta\vert^p)^2}\biggl)^{1/2}\biggl(\int_{\vert\zeta\vert\geq s^\varpi}\bigl(1+\vert\zeta\vert^{p})^2\bigl)\vert\widehat\bnu(s,\zeta)\vert^2\md\zeta\biggl)^{1/2}
\end{array}
$$
We have successively used  the classical identity $\widehat v_s(\xi)=s^{\frac{N}2}\widehat\bnu(s,\sqrt s\xi)$, the fact  that the real part of $\widehat \ck_*(0)-\widehat\ck_*(\xi)$ is nonnegative, and the fact that $v_0$ is in  $H^m(\R^N)$ for all integer $m$.
So, there is   $C_p>0$ such that
\begin{equation}
	\label{eA.1}
	\vert R_{*,n}(s,\tau;x)\vert\leq \frac {C_p\Vert\bnu(s,.)\Vert_{H^p}}{\sqrt{(2p-N-1)s^{2p-N-1}}}.
\end{equation}
Let us now deal with $Q_{*,n}(s,\tau;x)$; for that, it is convenient to set, for $y\in\R^N$ and $\vert\zeta\vert\leq s^{\varpi}$:
$$
\Phi_{*,n}(s,\tau;y,\zeta)=\mathbf{i}y\cdot\zeta+\tau\left(\widehat{\ck_{*,n}}(\frac{\zeta}{\sqrt s})+\mathbf{i}w^*(e) e\cdot\frac{\zeta}{\sqrt s}-\widehat{\ck_{*,n}}(0)\right),
$$
and 
$$
G_{*,n}(s,\tau;y)=\int_{\vert\zeta\vert\leq s^{\varpi}}e^{\Phi_{*,n}(s,\tau;y,\zeta)}\md\zeta.
$$
The function $v_0$ belongs to $L^1(\R^N)$; Fubini's theorem asserts therefore that order of the integrals entering in the computation of $Q_{*,n}$ may be inverted. So we have:
$$
Q_{*,n}(s,\tau;x)=\frac1{(2\pi)^N}\int_{\RR^N}G_{*,n}(s,\tau;\frac{x}{\sqrt s}-z)\bnu(s,z)\md z.
$$
Recall the identities $\nabla\widehat\ck_{*,n}(0)=-\mathbf{i}\displaystyle\int\ck_{*,n}(x)\md x=-\mathbf{i}w^*(e)e$ and $D^2\widehat\ck_{*,n}(0)=-H_n$.  Let now  $q$ be an integer to be chosen later. As $\displaystyle\frac{\vert\zeta\vert}{\sqrt s}\leq s^{\varpi-\frac12}$, one may perform a Taylor expansion of  $\widehat\ck_{*,n}(\displaystyle\frac{\zeta}{\sqrt s})$ around 0 up to the order $q$ and obtain the following expression for $\Phi_{*,n}$:
$$
\Phi_{*,n}(s,\tau;y,\zeta)=\mathbf{i}y\cdot\zeta-\frac\tau{2s}\zeta^TH_n\zeta+\tau\bigl(\sum_{3\leq\vert\alpha\vert\leq q}\frac{a_\alpha\zeta^\alpha}{s^{\frac{\vert\alpha\vert}2}}+\psi_{*,n;q}(\frac{\zeta}{\sqrt s})\bigl),
$$
with $\vert \psi_{*,n;q}(\displaystyle\frac{\zeta}{\sqrt s})\vert\leq C_qs^{-(\frac12-\varpi)q}$  and $C_q$ a constant depending only on $q$ and the data.  
The precise expressions of the $a_\alpha$'s is of no interest to us.  As  $\tau\leq s^\varsigma$ and $\varsigma$ is assumed to be small, the quantity $\displaystyle\frac{\tau}{\sqrt s}$ is small and, from the use of the classical Taylor expansion of the exponential, the following expression of $e^{\Phi_{*,n}}$  is available:
$$
e^{\Phi_{*,n}(s,\tau;y,\zeta)}=\bigl(1+\sum_{3\leq\vert\alpha\vert\leq q}\frac{P_\alpha(\tau)\zeta^\alpha}{s^{\frac{\vert\alpha\vert}2}}+\tilde\psi_{*,n;q}(\tau,\frac{\zeta}{\sqrt s})\bigl)e^{\mathbf{i}y\cdot\zeta-\frac\tau{2s}\zeta^TH_n\zeta}.
$$
In the sum, the  function $\tilde \psi$ is estimated as
\begin{equation}
	\label{A10}
	\vert \tilde\psi_{*,n;q}(\tau,\displaystyle\frac{\zeta}{\sqrt s})\vert\leq C_q\tau^qs^{-(\frac12-\varpi)q}\leq C_q{s^{-(\frac12-2\varpi)q},}
\end{equation}
with a possibly different $C_q$. The function $P_\alpha$ is a polynomial of degree $\leq\vert\alpha\vert$, and we do not seek any precise expression of its coefficients. We may now estimate $Q_{*,n}$: we have 
$$
\begin{array}{rll}
	&(2\pi)^NQ_{*,n}(s,\tau;x)\\
	=&\displaystyle\int_{\RR^N}\biggl(\int_{\vert\zeta\vert\leq s^{\varpi}}\bigl(1+\sum_{3\leq\vert\alpha\vert\leq q}\frac{P_\alpha(\tau)\zeta^\alpha}{s^{\frac{\vert\alpha\vert}2}}+\tilde\psi_{*,n;q}(\tau,\frac{\zeta}{\sqrt s})\bigl)e^{\mathbf{i}(\frac{x}{\sqrt s}-z)\cdot\zeta-\frac\tau{2s}\zeta^TH_n\zeta}\md\zeta\biggl)\bnu(s,z)\md z\\
	=&\displaystyle\int_{\RR^N}\int_{\RR^N}\bigl(1+\sum_{3\leq\vert\alpha\vert\leq q}\frac{P_\alpha(\tau)\zeta^\alpha}{s^{\frac{\vert\alpha\vert}2}}\bigl)e^{\mathbf{i}(\frac{x}{\sqrt s}-z)\cdot\zeta-\frac\tau{2s}\zeta^TH_n\zeta}\bnu(s,z)\md\zeta\md z\\
	&+\displaystyle\int_{\RR^N}\biggl(\int_{\vert\zeta\vert\geq s^{\varpi}}\bigl(1+\sum_{3\leq\vert\alpha\vert\leq q}\frac{P_\alpha(\tau)\zeta^\alpha}{s^{\frac{\vert\alpha\vert}2}}\bigl)e^{\phi_{n,*}(s,\tau;z,x)}\md\zeta+\int_{\vert\zeta\vert\leq s^{\varpi}}\tilde\psi_{*,n;q}(\tau,\frac{\zeta}{\sqrt s})\bigl)e^{\phi_{n,*}(s,\tau;z,x)}\md\zeta\biggl)\bnu(s,z)\md z
\end{array}
$$
where we have denoted, because its precise expression will not matter much in the last two integrals, $\phi_{n,*}(s,\tau;z,x)=\displaystyle \mathbf{i}(\frac{x}{\sqrt s}-z)\cdot\zeta-\frac\tau{2s}\zeta^TH_n\zeta$.  The first integral is 
$$ \bigl(e^{\frac{\tau}{s}A_n}\bnu(s,.)\bigl)(\frac{x}{\sqrt s})+\sum_{3\leq\vert\alpha\vert\leq q}\mathbf{i}^{\vert\alpha\vert}\frac{P_\alpha(\tau)\zeta^\alpha}{s^{\frac{\vert\alpha\vert}2}}\bigl(e^{\frac{\tau}{s}A_n}\partial^\alpha \bnu(s,.)\bigl)(\frac{x}{\sqrt s}),
$$
which is the main term announced in the proposition. We set $Q_\alpha(\tau)=\mathbf{i}^{\vert\alpha\vert}P_\alpha$; as $\bnu(s,.)$ is real valued this polynomial has to be real valued -- one could check it from the expansion of the exponential, but that would be a tedious and not so useful task. As for the remaining terms, we treat them like $R_{*,n}(s,\tau;x)$, that is: consider an integer $p> \displaystyle\frac{N+1}2$ to be chosen later.   We additionally impose $\tau^q\leq s^{\frac18}$, that is, $q\varsigma\leq\displaystyle\frac18$. We have, for a constant $C$ depending on $q$, $m$ $\varpi$, but not on $s$:
$$
\begin{array}{rll}
	&\displaystyle\biggl\vert\int_{\RR^N}\int_{\vert\zeta\vert\geq s^{\varpi}}\bigl(1+\sum_{3\leq\vert\alpha\vert\leq q}\frac{P_\alpha(\tau)\zeta^\alpha}{s^{\frac{\vert\alpha\vert}2}}\bigl)e^{\phi_{n,*}(s,\tau;z,x)}\bnu(s,z)\md z\md\zeta\biggl\vert\\
	=&\displaystyle\biggl\vert\int_{\vert\zeta\vert\geq s^{\varpi}}e^{\mathbf{i}\zeta\cdot\frac{x}{\sqrt s}}\bigl(1+\sum_{3\leq\vert\alpha\vert\leq q}\frac{P_\alpha(\tau)\zeta^\alpha}{s^{\frac{\vert\alpha\vert}2}}\bigl)e^{-\frac\tau{2s}\zeta^TH_n\zeta}\widehat \bnu(s,\zeta)\md\zeta\biggl\vert\\
	\leq&C\displaystyle\int_{\vert\zeta\vert\geq s^{\varpi}}(1+\vert\zeta\vert^q)\vert\widehat \bnu(s,\zeta)\vert\md\zeta\\
	\leq&C\biggl(\displaystyle\int_{\vert\zeta\vert\geq s^{\varpi}}\frac{\md\zeta}{1+\vert\zeta\vert^{2p}}\biggl)^{\frac12}\biggl(\int_{\vert\zeta\vert\geq s^\varpi}(1+\vert\zeta\vert^{2q+2p}\vert \widehat \bnu(s,\zeta)\vert^2\md\zeta\biggl)^{\frac12}\leq \frac{C\Vert \bnu(s,.)\vert_{H^{p+q}}}{s^{(2p-N-1)\varpi}}.
\end{array}
$$
The remaining integral is estimated in a similar sprit; given the estimate \eqref{A10} for $\tilde\psi$,  we may bound it by $Cs^{-(\frac12-\varpi)q+\varpi}\Vert\bnu(s,.)\Vert_{L^1}$ for a constant $C$ depending on $q$ and $\varpi$, but not on $s$. It now remains to choose the integers $p$ and $q$, as well as the constant $\varpi$, such that 
$$
(2p-N-1)\varpi\geq\frac{N}2+1,~~(\frac12-\varpi)q-\varpi\geq\frac{N}2+1,
$$
This is possible with $\varpi=\displaystyle\frac14$, $q=2N+5$, and $p=3N+2$.  Finally we choose $\varsigma>0$ such that  $q\varsigma\leq\displaystyle\frac18$, that is, 
$\varsigma\leq\displaystyle\frac1{8(2N+5)}$, and $m=p+q=5N+7$, as claimed. \hfill $\Box$


\end{document}